\documentclass[11pt,a4paper]{amsart}

\usepackage{amsmath,amscd}
\usepackage{amssymb}
\usepackage{amsthm}

\usepackage{graphicx}

\usepackage{mathrsfs}
\usepackage[ocgcolorlinks, linkcolor=blue]{hyperref}

\usepackage{calc}
             {\begin{list}{\arabic{enumi}.}{\usecounter{enumi}%
              \setlength{\labelsep}{0.5em}%
              \settowidth{\labelwidth}{(\arabic{enumi})}%
              \setlength{\leftmargin}{\labelwidth+\labelsep}}}%
             {\end{list}}

\usepackage{comment}

\DeclareRobustCommand{\SkipTocEntry}[5]{}

\setlength{\oddsidemargin}{0.0in}
\setlength{\evensidemargin}{0.0in}
\setlength{\textwidth}{6.5in}
\setlength{\baselineskip}{19pt}
\setlength{\parskip}{5pt}
\pagestyle{plain}
\setlength{\topmargin}{0in}
\setlength{\textheight}{8.7in}

\usepackage{xcolor}
\definecolor{LOcolor}{RGB}{150,100,0}

\DeclareMathOperator{\WF}{WF}


\newtheorem{Theorem}{Theorem}[section]

\newtheorem{Lemma}[Theorem]{Lemma}

\newtheorem{Proposition}[Theorem]{Proposition}

\theoremstyle{definition}

\newtheorem*{DefinitionNoNumber}{Definition}

\newtheorem{Example}[Theorem]{Example}
\newtheorem*{ExampleNoNumber}{Example}
\newtheorem*{ExamplesNoNumber}{Examples}
\newtheorem{Remark}[Theorem]{Remark}
\newtheorem*{RemarkNoNumber}{Remark}

\numberwithin{equation}{section}

\newcommand{\mR}{\mathbb{R}}                    
\newcommand{\mC}{\mathbb{C}}                    
\newcommand{\mZ}{\mathbb{Z}}                    
\newcommand{\abs}[1]{\lvert #1 \rvert}          
\newcommand{\norm}[1]{\lVert #1 \rVert}         

\newcommand{\ol}[1]{\overline{#1}}

\newcommand{\im}{\mathrm{Im}}

\newcommand{\mDp}{\mathscr{D}^{\prime}}

\newcommand{\mF}{\mathscr{F}}

\newcommand{\supp}{\mathrm{supp}}

\newcommand{\eps}{\varepsilon}

\newcommand{\p}{\partial}

\DeclareMathOperator{\tr}{tr}
\def\I{\mathcal I}
\def\R{\mathcal R}

\def\subp{\text{\rm sub}}
\def\lat{\text{\rm lat}}

\newcommand{\pr}{\mathrm{pr}}

\newcounter{sidenote}

\begin{document}

\title{Inverse problems for real principal type operators} 

\author[L. Oksanen]{Lauri Oksanen}
\address{Department of Mathematics, University College London}
\email{l.oksanen@ucl.ac.uk}

\author[M. Salo]{Mikko Salo}
\address{Department of Mathematics and Statistics, University of Jyv\"askyl\"a}
\email{mikko.j.salo@jyu.fi}

\author[P. Stefanov]{Plamen Stefanov}
\address{Department of Mathematics, Purdue University}
\email{stefanop@purdue.edu}

\author[G. Uhlmann]{Gunther Uhlmann}
\address{Department of Mathematics, University of Washington / Jockey Club Institute for Advanced Study, HKUST, Hong Kong}
\email{gunther@math.washington.edu}




\begin{abstract}
We consider inverse boundary value problems for general real principal type differential operators. The first results state that the Cauchy data set uniquely determines the scattering relation of the operator and bicharacteristic ray transforms of lower order coefficients. We also give two different boundary determination methods for general operators, and prove global uniqueness results for determining coefficients in nonlinear real principal type equations. The article presents a unified approach for treating inverse boundary problems for transport and wave equations, and highlights the role of propagation of singularities in the solution of related inverse problems.
\end{abstract}

\maketitle

{{
\parskip=.155em
\tableofcontents
}}

\vspace{-39pt}

\section{Introduction and main results} \label{sec_introduction}

\addtocontents{toc}{\SkipTocEntry}
\subsection{Motivation}

This article studies inverse boundary value problems for linear and nonlinear differential operators with variable coefficients. In many works in the theory of inverse boundary value problems, it has been customary to focus on specific problems (e.g.\ the Calder\'on problem for Laplace type equations, 
the inverse boundary problem for the wave equation that we will call \emph{Gel'fand problem} due to the roughly equivalent formulation posed in \cite{Gelfand}, boundary and scattering rigidity problems for transport equations, or geodesic X-ray transforms). 
However, there have been unexpected connections between problems having different character, such as 
\begin{itemize}
\item 
the explicit appearance of geodesic X-ray transforms in the Calder\'on problem \cite{DKSaU}; 
\item 
a reduction from the Calder\'on problem to the Gel'fand problem involving the boundary control method for the wave equation \cite{DKLS}; and
\item 
a direct relation between the boundary rigidity problem and the Calder\'on problem \cite{PestovUhlmann}.
\end{itemize}

In this article we will work in the spirit of a general point of view to linear partial differential equations, as advocated in \cite{Hormander}. The objective is to find structural conditions that allow us to consider large classes of operators, and to identify fundamental principles that make it possible to solve related inverse problems. In addition to obtaining results for general equations, we hope that the new point of view will yield a better understanding of the methods that are currently available, the connections between different problems, and the extent to which it is possible to push the methods. Of course this is a large program and we view the present work as a starting point.

Here, we will consider operators of real principal type and the consequences of propagation of singularities for related inverse problems. This will include inverse problems for transport equations (boundary and scattering rigidity) and wave equations (Gel'fand problem) as special cases. As an example, if one knows the outgoing Dirichlet-to-Neumann map for the operator $P$ appearing in the wave equation on a Riemannian manifold $(M,g)$ with time-independent magnetic potential $A$ and electric potential $V$, then up to suitable gauge transformations it is possible to recover 
\begin{itemize}
\item 
the Taylor series of the coefficients at the boundary, up to the natural gauge,
\item 
the scattering relation for the metric $g$,
\item 
the geodesic X-ray transform of the magnetic potential $A$ up to $2\pi \mZ$, and 
\item 
the geodesic X-ray transform of the electric potential $V$.
\end{itemize}
We refer to \cite{StefanovYang}, where these results are obtained for Lorentzian wave equations, including classical ones  with time-dependent coefficients, and to references therein for earlier results. This reduces the problem of finding the coefficients of $P$ to the well studied boundary/lens rigidity one and to the problem of inverting the geodesic X-ray transform on functions or more generally  on tensor fields. 

In this work we prove counterparts of the above results for any real principal type differential operator $P$. The scattering relation for the metric (i.e.\ for geodesic flow) will be replaced by the scattering relation for the null bicharacteristic flow for $P$. The geodesic X-ray transform will in turn be replaced by the relevant null bicharacteristic ray transform. In order to recover the actual coefficients it remains to solve the related scattering rigidity and bicharacteristic ray transform problems, if possible. These issues are left to forthcoming works. However, we will also prove boundary determination results for real principal type operators (these can be used to recover real-analytic coefficients), and interior determination results for related nonlinear operators.

\addtocontents{toc}{\SkipTocEntry}
\subsection{Real principal type operators}

Before stating the main results, we will need to give some facts concerning real principal type operators. First we recall the definition of these operators from \cite[Definition~26.1.8 in Section 26.1]{Hormander}.

\begin{DefinitionNoNumber}
Let $X$ be an open $C^{\infty}$ manifold. A properly supported pseudodifferential operator $P \in \Psi^m(X)$ is of \emph{real principal type} if it has real homogeneous principal symbol $p_m$ of order $m$, and if no complete null bicharacteristic of $P$ stays over a compact subset of $X$.
\end{DefinitionNoNumber}

\begin{ExamplesNoNumber}
Any real vector field with no trapped integral curves is of real principal type. The wave operator $\partial_t^2 - \Delta_{g_0}$ in a space-time cylinder $\mathbb R \times M_0$, with $(M_0, g_0)$ a Riemannian manifold, is of real principal type, and so are more general Lorentzian wave operators and higher order hyperbolic operators if the nontrapping condition is satisfied. Tricomi type operators $x_2 \partial_{x_1}^2 + \partial_{x_2}^2$ are real principal type, but Keldysh type operators $x_1 \partial_{x_1}^2 + \partial_{x_2}^2$ are not. Elliptic operators with real principal symbol are real principal type, but they have no null bicharacteristics so for such operators most of our results are void. Our results do apply to operators with elliptic factors such as $(\p_t^2 - \Delta_{g_0})(\p_t^2 + \Delta_{g_0})$. Of course, lower order terms with complex coefficients can be included (heuristically, principal type operators are those whose main behaviour is described by the principal symbol alone). We remark that different definitions of real principal type exist in the literature (e.g.\ the stronger local condition $d_{\xi} p \neq 0$ appears, ensuring that null bicharacteristic curves have no cusps). Moreover, operators such as the Schr\"odinger operator $i\p_t - \Delta$ or the plate operator $\p_t^2 + \Delta^2$ are not real principal type, but using a suitable (anisotropic) weighting for the time derivative makes it possible to treat such operators in a similar way as real principal type operators, although new phenomena appear (see e.g.\ \cite{Tataru_survey}). One could also consider systems of real principal type, and there are several interesting physical examples (e.g.\ Maxwell and elastic wave equations).
\end{ExamplesNoNumber}

A major feature of operators with real principal symbol is the fact that singularities propagate along \emph{null bicharacteristics} (i.e.\ integral curves of the Hamilton vector field $H_{p_m}$ in the characteristic set $\mathrm{Char}(P) := \{ (x,\xi) \in T^* X \setminus 0 \,;\, p_m(x,\xi) = 0 \}$): if $u$ solves $Pu = 0$ in $X$, then the wave front set of $u$ is contained in $\mathrm{Char}(P)$ and it is invariant under the bicharacteristic flow there. In the definition of real principal type operators, the condition that no complete null bicharacteristic of $P$ stays over a compact subset of $X$ is a \emph{nontrapping condition} for the bicharacteristic flow. It ensures that the equation $Pu = f$ can be solved in any compact set if $f$ satisfies finitely many linear constraints. These facts from \cite{DH72} and \cite[Section 26.1]{Hormander} are recalled in Section \ref{sec_preliminaries}.

We wish to study inverse boundary value problems for real principal type operators. In order to avoid  technicalities related to the presence of a boundary, we will mostly focus on \emph{differential operators} on \emph{compact manifolds with smooth boundary}. We will next examine the real principal type condition and introduce several related definitions in the boundary case.

Let $M$ be a compact manifold with smooth boundary and let $P$ be a differential operator of order $m \geq 1$ having smooth coefficients on $M$. Assume that $P$ has real principal symbol $\sigma_{\pr}[P] = p_m$. Then for any $(x,\xi) \in \mathrm{Char}(P)$, there is a maximal interval $[-\tau_-(x,\xi),\tau_+(x,\xi)]$, where $\tau_{\pm}(x,\xi) \in [0,\infty]$, such that the null bicharacteristic curve 
\[
\gamma = \gamma_{x,\xi}: [-\tau_-(x,\xi),\tau_+(x,\xi)] \to T^*M \setminus 0
\]
with $\gamma_{x,\xi}(0) = (x,\xi)$ cannot be extended to a strictly larger closed interval as a smooth bicharacteristic curve in $T^* M \setminus 0$. A bicharacteristic curve is called \emph{maximal} if it is defined in its maximal interval. 
The null bicharacteristic flow for $P$ is called \emph{nontrapping} if $\tau_{\pm}(x,\xi) < \infty$ for every  $(x,\xi) \in \mathrm{Char}(P)$. The operator $P$  
 is said to be of \emph{real principal type} on a compact manifold $M$ with boundary if it has real principal symbol, and if the null bicharacteristic flow is nontrapping. 
We remark that the above definition is compatible with that of real principal type operators on an open manifold, in the sense that any real principal type differential operator $P$ on a compact manifold $M$ with boundary can be extended smoothly as a real principal type differential operator in some open manifold $X$ containing $M$ (see \cite[proof of Theorem 26.1.7]{Hormander}).

We now define the (null bicharacteristic) scattering relation for $P$. This will be a map that takes the initial point of a maximal null bicharacteristic to its end point, and vice versa. The definition is chosen so that it allows the bicharacteristics to have arbitrary tangential intersections with the boundary.

\begin{DefinitionNoNumber}
Let $M$ be a compact manifold with boundary, and let $P$ be a real principal type differential operator in $M$. The (null) \emph{incoming} and \emph{outgoing} boundaries are defined as 
\begin{align*}
\p_{\mathrm{null}}^{\pm}(T^* M) &:= \{ (x,\xi) \in \mathrm{Char}(P) \ ;\, \tau_{\mp}(x,\xi) = 0 \}.
\end{align*}
We set $\p'_{\mathrm{null}}(T^* M) := \p_{\mathrm{null}}^+(T^* M) \cup \p_{\mathrm{null}}^-(T^* M)$. The \emph{scattering relation} for $P$ is the map 
\begin{gather*}
\alpha_P: \p'_{\mathrm{null}}(T^* M) \to \p'_{\mathrm{null}}(T^* M), \\
\alpha_P(x,\xi) := \left\{ \begin{array}{cl} \gamma_{x,\xi}(\tau_+(x,\xi)), & (x,\xi) \in \p_{\mathrm{null}}^+(T^* M), \\[3pt] \gamma_{x,\xi}(-\tau_-(x,\xi)), & (x,\xi) \in \p_{\mathrm{null}}^-(T^* M). \end{array} \right.
\end{gather*}
\end{DefinitionNoNumber}

It is immediate that $\p'_{\mathrm{null}}(T^* M) \subset \p(T^* M) \cap \mathrm{Char}(P)$, and that the scattering relation has the following properties:
\begin{gather*}
\alpha_P(\p_{\mathrm{null}}^{\pm}(T^* M)) = \p_{\mathrm{null}}^{\mp}(T^* M), \\
\alpha_P \circ \alpha_P = \mathrm{Id}.
\end{gather*}

The next step is to consider boundary measurements for $P$. For second order operators it is customary to consider a Dirichlet-to-Neumann map, which maps the Dirichlet boundary value (on a suitable part of the boundary) of a solution of $Pu = 0$ in $M$ (the outgoing one for wave equations) to the Neumann boundary value. For a general real principal type operator $P$ of order $m \geq 1$, it is not clear if there is a suitable well-posed boundary value problem or a corresponding boundary map. However, one can always define boundary measurements in terms of the Cauchy data set.

For the next definition and for later purposes, we fix an auxiliary smooth Riemannian metric $g$ on $M$ and let $\nabla$ be the covariant derivative with respect to $g$. Then $\nabla^k u$ is the $k$th total covariant derivative of $u$, and the Cauchy data set below encodes the derivatives of solutions of $Pu = 0$ in $M$ up to order $m-1$ on the boundary.

\begin{DefinitionNoNumber}
The \emph{Cauchy data set} of $P$ is defined as 
\[
C_P = \{ (u|_{\p M}, \nabla u|_{\p M}, \ldots, \nabla^{m-1} u|_{\p M}) \,;\, u \in H^m(M) \text{ solves } Pu = 0 \text{ in $M$} \}.
\]
We say that two differential operators $P_1$ and $P_2$ \emph{agree to infinite order on $\p M$}, if $(P_1 - P_2)w$ vanishes to infinite order on $\p M$ for any $w \in C^{\infty}(M)$.
\end{DefinitionNoNumber}

Note that even though $C_P$ depends on the choice of $g$, the property $C_P=C_{\tilde P}$ for two operators is independent of $g$.

\addtocontents{toc}{\SkipTocEntry}
\subsection{Determining the scattering relation and bicharacteristic ray transforms}

Let $P$ be a differential operator of order $m$ on $M$ and let $\mu$ be a nonvanishing half density on $M$. We obtain a differential operator $P^\mu$ acting on half densities by the following definition 
    \begin{align}\label{def_Pmu}
P^\mu = \mu P \mu^{-1},
    \end{align}
where the operators $\mu: u \mapsto u \mu$ and $\mu^{-1}: u \mu \mapsto u$ give an isomorphism between functions and half densities. 
Writing $\sum_{j=0}^m p_j^\mu$ for the polyhomogeneous full symbol of $P^\mu$ in a local coordinate system, 
the subprincipal symbol 
    \begin{align}\label{def_subprin_symb}
\sigma_\subp[P^\mu] = p_{m-1}^\mu + \frac{i}{2} \sum_{j=1}^n \p_{x_j \xi_j} p_m^\mu
    \end{align}
is an invariantly defined function on $T^* M \setminus 0$, see e.g.\ \cite[Theorem 18.1.33]{Hormander}.

We can now state the first main theorem, showing that the boundary measurements for any real principal type differential operator $P$ uniquely determine the scattering relation which depends on the principal symbol $p_m$. Moreover, if $p_m$ is known then the integrals of the subprincipal symbol $\sigma_\subp[P^\mu]$ 
over maximal null bicharacteristics are uniquely determined modulo $2 \pi \mathbb Z$.

The result assumes that the coefficients of $P$ have already been determined up to infinite order on $\p M$ (after suitable gauge transformations); this is typically done using boundary determination results, which are discussed later.

\begin{Theorem} \label{thm_main1}
Let $M$ be a compact manifold with smooth boundary, and let $P_1, P_2$ be real principal type differential operators of order $m \geq 1$ on $M$. If 
\[
C_{P_1} = C_{P_2}
\]
and if $P_1 = P_2$ to infinite order on $\p M$, then 
\begin{align*}
\p_{\mathrm{null},P_1}'(T^*M) &= \p_{\mathrm{null},P_2}'(T^*M), \\
\alpha_{P_1} &= \alpha_{P_2}.
\end{align*}
Moreover, if the principal symbols of $P_1$ and $P_2$ coincide, then
for any nonvanishing half density $\mu$ on $M$ one has 
   \begin{align}\label{parallel_tr}
\exp \left[ i\int_0^T \sigma_\subp[P_1^\mu](\gamma(t)) \,dt \right] 
= 
\exp \left[ i\int_0^T \sigma_\subp[P_2^\mu](\gamma(t)) \,dt \right]
    \end{align}
whenever $\gamma : [0,T] \to T^* M$ is a maximal null bicharacteristic curve for $P_1$.
\end{Theorem}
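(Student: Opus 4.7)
The plan is to exploit propagation of singularities by constructing, for each null bicharacteristic of $P_1$, a distribution on a larger manifold whose wavefront set detects the endpoints of the bicharacteristic and which is simultaneously a solution of $P_2$. I first extend $M$ to an open manifold $X$ and smoothly extend $P_1, P_2$ to real principal type operators on $X$. Since the coefficients of $P_1-P_2$ vanish to infinite order on $\partial M$, these extensions can be chosen so that the coefficient difference is supported in $M$; thus $P_1=P_2=:P$ on $X\setminus M^\circ$, and in particular the two bicharacteristic flows coincide there.

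Fix $(x_0,\xi_0)\in\partial^+_{\mathrm{null},P_1}(T^*M)$ and pick $(x_{-1},\xi_{-1})\in X\setminus M$ on the backward extension of the $P_1$-bicharacteristic through $(x_0,\xi_0)$. Using the real principal type solvability/parametrix theory recalled in Section~\ref{sec_preliminaries}, I construct $u_1\in H^m_{\mathrm{loc}}(X)$ with $P_1 u_1=f$, where $f$ is compactly supported in $X\setminus M$ with $\WF(f)=\{(x_{-1},t\xi_{-1}):t>0\}$; propagation of singularities then forces $\WF(u_1)$ to equal the forward $P_1$-bicharacteristic from $(x_{-1},\xi_{-1})$, which enters $M$ at $(x_0,\xi_0)$ and exits at $\alpha_{P_1}(x_0,\xi_0)$. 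Since $u_1|_M\in H^m(M)$ solves $P_1 u_1=0$, the hypothesis $C_{P_1}=C_{P_2}$ furnishes $u_2\in H^m(M)$ with $P_2 u_2=0$ and matching Cauchy data on $\partial M$. Glue $w:=u_1$ on $X\setminus M^\circ$ and $w:=u_2$ on $M$; matching Cauchy data gives $w\in H^m_{\mathrm{loc}}(X)$, and an integration-by-parts computation using $P_1=P_2$ outside $M^\circ$, the infinite-order agreement of $P_1,P_2$ on $\partial M$, and $P_2 u_2=0$ shows that $P_2 w=f$ globally on $X$, so $\WF(P_2 w)=\WF(f)$.

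The Duistermaat--H\"ormander propagation theorem applied to $w$ and $P_2$ then forces the $P_2$-bicharacteristic through $(x_0,\xi_0)\in\WF(w)$ to lie in $\WF(w)$; inside $M$ it exits at $\alpha_{P_2}(x_0,\xi_0)$ and continues outside along the common $P$-flow. But on $X\setminus M$, $\WF(w)=\WF(u_1)$ is a single bicharacteristic whose outgoing branch issues from $\alpha_{P_1}(x_0,\xi_0)$, which matches only if $\alpha_{P_1}(x_0,\xi_0)=\alpha_{P_2}(x_0,\xi_0)$. Interchanging the roles of $P_1,P_2$ and of $\partial^\pm_{\mathrm{null}}$ delivers $\partial'_{\mathrm{null},P_1}(T^*M)=\partial'_{\mathrm{null},P_2}(T^*M)$ and $\alpha_{P_1}=\alpha_{P_2}$.

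When additionally $p_{m,1}=p_{m,2}$, the bicharacteristic $\gamma$ is common to both operators, and I refine the construction so that $u_1$ is a Lagrangian distribution $u_1\in I^s(X;\Lambda)$ with $\Lambda$ the flowout of $(x_{-1},\xi_{-1})$. The principal symbol $\sigma(u_j)$, viewed as a half-density section on $\Lambda$ via $\mu$, satisfies the first transport equation
\begin{equation*}
\bigl(\mathcal{L}_{H_{p_m}}+i\sigma_\subp[P_j^\mu]\bigr)\sigma(u_j)=0\qquad (j=1,2),
\end{equation*}
which integrates to $\sigma(u_j)(\gamma(T))=\sigma(u_j)(\gamma(0))\exp\bigl[-i\int_0^T\sigma_\subp[P_j^\mu](\gamma(t))\,dt\bigr]$. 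Cauchy data matching on $\partial M$ identifies the principal symbols of $u_1$ and $u_2$ at both endpoints $\gamma(0)=(x_0,\xi_0)$ and $\gamma(T)=\alpha_P(x_0,\xi_0)$, and the quotient of the two transport formulas yields \eqref{parallel_tr}; the modulo $2\pi\mathbb{Z}$ ambiguity is absorbed by the complex exponential. The main obstacle is this Lagrangian bookkeeping: arranging $u_1\in H^m(M)$ with wavefront set a single bicharacteristic together with a nondegenerate principal symbol at the boundary endpoints, and translating Cauchy data equality on $\partial M$ into equality of these principal symbols in directions that may be tangential to $\partial M$. Tangential intersections of $\gamma$ with $\partial M$ also require extra care, though they do not disrupt the basic propagation argument.
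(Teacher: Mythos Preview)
Your argument for the scattering relation is essentially the same mix-and-match idea the paper uses: build a solution of $P_1 u_1=0$ concentrated on the bicharacteristic, use $C_{P_1}=C_{P_2}$ to produce a $P_2$-solution with matching Cauchy data, glue, and apply propagation of singularities. The paper carries this out with \emph{semiclassical} wave front sets, so the special solutions are smooth families $u_h\in C^\infty(M)\subset H^m(M)$ and the Cauchy data set is automatically applicable; you instead use the classical $\WF$, which is a legitimate variant, but you then need $u_1|_M\in H^m(M)$ while still having $\WF(u_1)$ nontrivial on the bicharacteristic. This can be arranged by taking $f$ in a sufficiently high Sobolev class, but it is not a consequence of ``the solvability/parametrix theory recalled in Section~\ref{sec_preliminaries}'' alone---one must actually invoke the Duistermaat--H\"ormander construction (or equivalent) to force $\WF(u_1)$ to contain the forward bicharacteristic; mere propagation of singularities only says that $\WF(u_1)\setminus\WF(f)$ is flow-invariant, not that it is nonempty. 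You also flag tangential contacts as ``extra care''; the paper handles these through an explicit limiting argument (shrinking neighbourhoods $X_j$ and sequences $t_j^\pm$), and without such an argument your conclusion that the outgoing branch of $\WF(w)$ outside $M$ forces $\alpha_{P_1}(x_0,\xi_0)=\alpha_{P_2}(x_0,\xi_0)$ is incomplete when the bicharacteristic grazes $\partial M$.

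For the subprincipal part there is a genuine gap. First, a single bicharacteristic is one-dimensional and hence isotropic, not Lagrangian, so the notation $u_1\in I^s(X;\Lambda)$ with $\Lambda$ the flowout of a \emph{point} is not the standard Lagrangian-distribution framework; you would need either an $n$-dimensional Lagrangian flowout from an $(n-1)$-dimensional initial manifold or a different symbol calculus (isotropic/Hermite distributions). More importantly, the step ``Cauchy data matching on $\partial M$ identifies the principal symbols of $u_1$ and $u_2$ at both endpoints'' is asserted but not proved, and you yourself name it as the main obstacle. Reading off the principal symbol of a Lagrangian distribution at a boundary point from its order-$\leq m-1$ traces is delicate in general and genuinely problematic when $\gamma$ is tangent to $\partial M$, since the traces no longer control microlocal data in conormal directions.

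The paper avoids this entirely by taking a different route for the subprincipal part: instead of matching symbols at endpoints, it uses the integral identity $((P_1-P_2)u_1,u_2)_{L^2(M)}=0$ for solutions of $P_1u_1=0$ and $P_2^*u_2=0$ (Lemma~\ref{lemma_integral_identity_potential}), inserts the semiclassical quasimodes of Theorem~\ref{thm_quasimode_direct}, and reads off the limit via the explicit concentration formula~\eqref{concentration}. The integrand there is exactly $q_{m-1}(\gamma(t))\exp[-i\int_0^t q_{m-1}(\gamma(s))\,ds]$, a total $t$-derivative, so the vanishing of the integral gives~\eqref{parallel_tr} directly. This bypasses any need to interpret Cauchy data microlocally at $\partial M$ and works uniformly through cusps and tangential boundary intersections, which is precisely where your transport-equation approach runs into trouble.
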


\begin{RemarkNoNumber}
The equation (\ref{parallel_tr}) can be formulated equivalently without half densities as 
    \begin{align*}
\exp \left[ i\int_0^T (p_{m-1,1} - p_{m-1,2})(\gamma(t)) \,dt \right] 
= 0,
    \end{align*}
where $\sum_{j=0}^m p_{j,1}$ and $\sum_{j=0}^m p_{j,2}$ are polyhomogeneous full symbols of $P_1$ and $P_2$, acting on functions, in a local coordinate system. 
The above expression is coordinate invariant by Lemma \ref{lem_coordinv_subdiff} under the assumptions above, and the equivalence follows from \eqref{subprin_diff} below.
\end{RemarkNoNumber}

The conclusion \eqref{parallel_tr} is of course equivalent to  
\[
\int_0^T \sigma_\subp[P_1^\mu](\gamma(t)) \,dt = \int_0^T \sigma_\subp[P_2^\mu](\gamma(t)) \,dt \quad \text{ modulo $2\pi \mZ$}.
\]
This nonuniqueness modulo $2\pi \mZ$ is related to the \emph{Aharonov-Bohm effect}, which appears when one tries to determine subprincipal terms on domains with nontrivial topology (see Lemma \ref{lemma_aharonov_bohm_general}).

The next result shows that it is also possible to determine bicharacteristic ray transforms of the coefficients of order $\leq m-2$, and that no nonuniqueness modulo $2\pi \mZ$ appears in this case.

\begin{Theorem} \label{thm_main2}
Let $M$ be a compact manifold with smooth boundary, let $P$ be a real principal type differential operator of order $m \geq 2$ on $M$, and let $Q_1$ and $Q_2$ be differential operators of order $\le m - 2$ on $M$ with $Q_1=Q_2$ to infinite order on $\p M$. If 
\[
C_{P+Q_1} = C_{P+Q_2},
\]
then 
    \begin{align}\label{ray_tr}
\int_{0}^T \sigma_{\pr}[Q_1](\gamma(t)) \,dt = \int_{0}^T \sigma_{\pr}[Q_2](\gamma(t)) \,dt 
    \end{align}
whenever $\gamma : [0,T] \to T^* M$ is a maximal null bicharacteristic curve for $P$.
\end{Theorem}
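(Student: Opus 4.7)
The plan is to combine equality of Cauchy data with Gaussian-beam quasimodes concentrated on the chosen bicharacteristic, along the lines of the standard geometric-optics argument for wave and transport inverse problems. Note that Theorem \ref{thm_main1} applied directly to $P+Q_1$ and $P+Q_2$ yields nothing here: the two operators share the principal symbol $p_m$ and, because $Q_j$ has order $\le m-2$, also the subprincipal symbol $\sigma_\subp[P^\mu]$, so one needs a sharper extraction one order below the subprincipal level.

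From the hypothesis $C_{P+Q_1}=C_{P+Q_2}$, any $u_1 \in H^m(M)$ solving $(P+Q_1)u_1=0$ can be matched with some $u_2 \in H^m(M)$ solving $(P+Q_2)u_2=0$ having the same $(m-1)$-jet on $\p M$. The difference $w=u_1-u_2$ satisfies $(P+Q_2)w=(Q_2-Q_1)u_1$ with vanishing Cauchy data, so Green's identity for the order-$m$ operator $P+Q_2$ applied to the pair $(w,v)$ produces no boundary contribution. Therefore for every $v\in C^\infty(M)$ with $(P+Q_2)^* v = 0$,
\[
\int_M (Q_2-Q_1)u_1 \cdot v\, dV = 0.
\]
The hypothesis that $Q_1=Q_2$ to infinite order on $\p M$ ensures that $(Q_2-Q_1)u_1$ is flat at $\p M$, so no spurious boundary contributions can survive the asymptotics below.

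Fix a maximal null bicharacteristic $\gamma:[0,T]\to T^*M$ of $P$ and extend $P$ and $Q_j$ to real principal type operators on an open manifold $X\supset M$, as permitted by the preliminary discussion. Following the Ralston-type Gaussian-beam construction, valid for any real principal type operator, I would produce a complex eikonal phase $\phi$ in a tubular neighbourhood of $\pi(\gamma)$ with $p_m(x,d\phi)$ vanishing to infinite order on $\pi(\gamma)$, $\im\phi\ge 0$ vanishing to exactly second order on $\pi(\gamma)$, and positive-definite transverse Hessian. Solving the successive transport equations produces amplitudes $a_\lambda = a_0 + \lambda^{-1}a_1 + \cdots$ such that $\tilde u_1^\lambda := e^{i\lambda\phi} a_\lambda$ satisfies $(P+Q_1)\tilde u_1^\lambda = O(\lambda^{-\infty})$; a parallel construction with conjugate phase produces $\tilde v^\lambda = e^{-i\lambda\bar\phi} b_\lambda$ with $(P+Q_2)^*\tilde v^\lambda = O(\lambda^{-\infty})$. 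Duistermaat--H\"ormander solvability in $X$ (recalled in Section \ref{sec_preliminaries}) then allows me to correct these quasimodes to exact solutions $u_1^\lambda$, $v^\lambda$ differing from the quasimodes by $O(\lambda^{-N})$ for every $N$. Substituting into the integral identity, expanding, and applying Laplace's method in the $n-1$ directions transverse to $\pi(\gamma)$, the leading contribution is
\[
0 = \lambda^{m-2-(n-1)/2} \int_0^T \sigma_{\pr}[Q_2-Q_1](\gamma(t))\, w(t)\, dt + o\bigl(\lambda^{m-2-(n-1)/2}\bigr),
\]
the factor $\lambda^{m-2}$ coming from $Q_j$ having order at most $m-2$, and $w(t)$ being a positive weight built from $a_0\ol{b_0}$ and the transverse Hessian of $\im\phi$. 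Using the standard Gaussian-beam normalisation — in which the principal transport equations for $a_0,b_0$ combine with the Riccati equation for the transverse Hessian of $\im\phi$ to make the relevant weight constant along $\gamma$ — $w$ can be arranged to be a nonzero constant, giving \eqref{ray_tr}.

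The main obstacle I anticipate is keeping the Gaussian-beam construction and its correction to an exact solution valid globally along $\gamma$, including at points of tangential contact with $\p M$ that the scattering relation's definition explicitly permits. Performing the whole construction in the open extension $X$ — where Duistermaat--H\"ormander solvability is cleanly available and the bicharacteristic extends slightly beyond $\p M$ — and only restricting to $M$ at the final step is the cleanest route. A secondary technical point is the careful bookkeeping of half-density factors and transport normalisations for general order $m$, so that the concentration weight really is constant along $\gamma$ rather than merely positive.
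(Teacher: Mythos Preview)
Your approach is essentially the paper's: the integral identity (Lemma~\ref{lemma_integral_identity_potential}), quasimodes along $\gamma$ extended slightly into an open manifold $X\supset M$, correction to exact solutions via Proposition~\ref{prop_real_principal_type_solvability}, and a concentration computation in which the exponential weight is constant precisely because $P+Q_1$ and $P+Q_2$ share their order-$(m-1)$ symbol. The paper packages the last step as formula~\eqref{concentration} in Theorem~\ref{thm_quasimode_direct}, which makes the ``weight $w(t)$ constant'' observation you sketch completely explicit.

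One technical point deserves attention. The Ralston-type Gaussian beam you invoke requires $\dot x(t)\neq 0$ along $\gamma$, and the paper explains (see the discussion before the proof of Theorem~\ref{thm_quasimode_direct}) that this construction genuinely fails at cusp points, which the theorem does not exclude --- e.g.\ Tricomi-type operators. The paper remedies this by writing the quasimode as an integral of Gaussian wave packets over the curve, $u(x)=\int_0^T e^{i\Phi(x,t)/h}a(x,t)\,dt$, with $\Phi$ solving a modified eikonal equation~\eqref{eikonal_equation_third} that is unaffected by cusps. The obstacle you flagged (tangential boundary contact) is, by contrast, harmless once you work in $X$ and restrict at the end, exactly as you proposed.
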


\begin{RemarkNoNumber}
Note that $\sigma_{\pr}[Q_j](x,\xi) = q_j^{j_1\dots j_k}(x) \xi_{j_1} \cdots \xi_{j_k}$ is a polynomial in $\xi$ in local coordinates. Hence \eqref{ray_tr} may be written as 
\[
\int_{0}^T q_1^{j_1\dots j_k}(x(t)) \xi_{j_1}(t)\dots \xi_{j_k}(t) \,dt = \int_{0}^T q_2^{j_1\dots j_k}(x(t)) \xi_{j_1}(t)\dots \xi_{j_k}(t) \,dt
\]
where $\gamma(t) = (x(t), \xi(t))$.
\end{RemarkNoNumber}

Note that the above theorems are rather general, in the sense that they are valid for 
\begin{itemize}
\item 
differential operators of any order, with real principal symbol and satisfying the nontrapping condition (in particular, no wellposedness assumptions are required); and 
\item 
any maximal null bicharacteristic, possibly with cusp points (such as for Tricomi operators) or tangential intersections with the boundary.
\end{itemize}

The proofs of the above theorems extend in a straightforward way to the case where one only has access to measurements on a subset $\Gamma \subset \partial M$. Then one obtains information along null bicharacteristics that do not meet $\p M$ away from $\Gamma$. We refer to Section \ref{sec_scattering_relation} for precise statements.

Another related case is that of hyperbolic operators, for instance the wave operator in a space-time cylinder $M_0 \times (0,T)$, where the boundary measurements are typically given on the lateral boundary $\p M_0 \times (0,T)$ for outgoing solutions that vanish near $t=0$. This case can also be included with small modifications, and it is a special case of the following result.

Consider a differential operator $P$ of order $m$ on a smooth open manifold $X$ equipped with a smooth function $\phi$. We say that $P$ is \emph{strictly hyperbolic} with respect to the level surfaces of $\phi$ (see \cite[Definition 23.2.3]{Hormander}) if, writing $p_m = \sigma_{\pr}[P]$, one has $p_m(x,d\phi(x)) \ne 0$ for $x \in X$ and if the polynomial $\tau \mapsto p_m(x, \xi + \tau d\phi(x)) = 0$ has $m$ distinct real roots for all $(x,\xi) \in T^* X$ such that $\xi$ is linearly independent from $d\phi(x)$. Without loss of generality we may assume that $p_m$ is real valued (see \cite[Lemma 8.7.3]{Hormander}).

Let $M_1 \subset X$ be a compact domain with smooth boundary and 
let $T > 0$.
We consider the cylinder
    \begin{align}\label{def_M_cylinder_intro}
M = \{x \in M_1 \,;\, 0 \le \phi(x) \le T\},
    \end{align}
and write $\Gamma = \p M \cap \p M_1$, $\Gamma_- = \p M \cap \{x \in M_1 \,;\, \phi(x) = 0\}$ and $\Gamma_+ = \p M \cap \{x \in M_1 \,;\, \phi(x) = T\}$.
If $\Sigma \subset \p M$, we use the shorthand notation
    \begin{align*}
\tr_{\Sigma}^{m-1} u = (u|_{\Sigma}, \nabla u|_{\Sigma}, \ldots, \nabla^{m-1} u|_{\Sigma}).
    \end{align*}
 Finally we define the lateral Cauchy data set
    \begin{align*}
C_P^\lat = \{ \tr_{\Gamma}^{m-1} u \,;\, u \in H^m(M) \text{ solves } Pu = 0 \text{ in $M$ and $\tr_{\Gamma_-}^{m-1} u = 0$} \}.
    \end{align*}

We have the following analogue of Theorems \ref{thm_main1} and  \ref{thm_main2}.

\begin{Theorem} \label{thm_main_hyper_intro}
Let $M$ be as in \eqref{def_M_cylinder_intro}, and let $P_1, P_2$ be differential operators of order $m \geq 1$,
strictly hyperbolic with respect to the level surfaces of $\phi$.
Suppose that $P_1 = P_2$ to infinite order on $\Gamma$.

If $C_{P_1}^\lat = C_{P_2}^\lat$, then 
\begin{align*}
\alpha_{P_1}(x,\xi) = \alpha_{P_2}(x,\xi)
\end{align*}
for any $(x,\xi) \in \p'_{\mathrm{null},P_1}(T^* M)$ such that both for $j=1,2$, the maximal null $P_j$-bicharacteristic curve through $(x,\xi)$ does not meet $\p(T^* M)$ away from $\Gamma$.

Moreover, if the principal symbols of $P_1$ and $P_2$ coincide, then
for any nonvanishing half density $\mu$ on $M$ one has 
   \begin{align}\label{hyper_parallel_tr}
\exp \left[ i\int_0^T \sigma_\subp[P_1^\mu](\gamma(t)) \,dt \right] 
= 
\exp \left[ i\int_0^T \sigma_\subp[P_2^\mu](\gamma(t)) \,dt \right]
    \end{align}
whenever $\gamma : [0,T] \to T^* M$ is a maximal null bicharacteristic curve for $P_1$ whose spatial projection does not meet $\p M$ away from $\Gamma$.

Finally, if $m \geq 2$ and if $P_j = P + Q_j$ where $Q_j$ has order $\leq m-2$ for $j=1,2$, then 
  \[
\int_{0}^T \sigma_{\pr}[Q_1](\gamma(t)) \,dt = \int_{0}^T \sigma_{\pr}[Q_2](\gamma(t)) \,dt 
\]
whenever $\gamma : [0,T] \to T^* M$ is a maximal null bicharacteristic curve for $P$ whose spatial projection does not meet $\p M$ away from $\Gamma$.
\end{Theorem}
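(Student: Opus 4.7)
The plan is to parallel the proofs of Theorems \ref{thm_main1} and \ref{thm_main2}, with two modifications reflecting the hyperbolic setup: the test solutions must satisfy the zero initial condition on $\Gamma_-$, and matching of Cauchy data takes place only on the lateral boundary $\Gamma$. Strict hyperbolicity of the $P_j$ with respect to $\phi$ will play a dual role. First, it guarantees that a null bicharacteristic whose spatial projection avoids $\p M \setminus \Gamma$ must have both endpoints over $\Gamma$: differentiating $p_m(x,\xi + \tau d\phi)=0$ in $\tau$ at a simple real root and using that $p_m(x,d\phi) \ne 0$ shows that $H_{p_m}\phi \ne 0$ on $\mathrm{Char}(P_j)$, so $\phi \circ \pi\gamma$ is strictly monotone for any null bicharacteristic $\gamma$. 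Second, it ensures that the initial-boundary value problem with zero data on $\Gamma_-$ is wellposed in the classical sense, which will be used to pass from formal Gaussian beam quasimodes to exact solutions.

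The main step is to construct, for each maximal null $P_1$-bicharacteristic $\gamma \colon [0,T] \to T^*M$ whose spatial projection avoids $\p M \setminus \Gamma$, a family of Gaussian beam solutions $u_h \in H^m(M)$ with $P_1 u_h = 0$, microlocally concentrated on $\gamma$, and satisfying $\tr_{\Gamma_-}^{m-1} u_h = 0$. The WKB ansatz $u_h = a(\cdot,h) e^{i\varphi/h}$ with complex phase $\varphi$ having $\operatorname{Im}\varphi \geq 0$ and vanishing only on $\pi\gamma$ is produced from the eikonal equation and the transport equations along $\gamma$, exactly as in the proof of Theorem \ref{thm_main1}. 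The vanishing of $\tr_{\Gamma_-}^{m-1} u_h$ is enforced by truncating the beam in the $\phi$-variable so that it vanishes identically near $\Gamma_-$ and then correcting by a forward solution of the strictly hyperbolic mixed problem in the cylinder; the correction contributes only lower-order terms supported away from $\pi\gamma$ and preserves the microlocal concentration.

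Next I would invoke $C_{P_1}^\lat = C_{P_2}^\lat$ to obtain $v_h \in H^m(M)$ with $P_2 v_h = 0$, $\tr_{\Gamma_-}^{m-1} v_h = 0$, and $\tr_\Gamma^{m-1} v_h = \tr_\Gamma^{m-1} u_h$. Mimicking the proof of Theorem \ref{thm_main1}, propagation of singularities for $P_2$, together with the vanishing of $v_h$ near $\Gamma_-$ and the lateral trace match with the Gaussian beam $u_h$, forces $v_h$ to be microlocally concentrated along the maximal forward null $P_2$-bicharacteristic $\tilde\gamma$ through $\gamma(0)$; its other endpoint is $\alpha_{P_2}(\gamma(0))$ and, by the same transversality argument, lies over $\Gamma$. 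Matching traces on $\Gamma$ at the exit yields $\alpha_{P_1}(\gamma(0)) = \alpha_{P_2}(\gamma(0))$. When the principal symbols agree, $\gamma = \tilde\gamma$, and comparing the leading amplitudes of $u_h$ and $v_h$, which solve first-order transport equations along $\gamma$ with $\sigma_\subp[P_j^\mu]$ as zeroth-order coefficient, produces \eqref{hyper_parallel_tr}; the $2\pi\mZ$ indeterminacy reflects that traces on $\Gamma$ determine amplitudes only up to an overall phase.

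For the final assertion, with $P_j = P + Q_j$ and $Q_j$ of order $\le m-2$, I would apply the same Gaussian beam construction to the common $P$; then the principal symbol of $Q_j$ first appears in the $h^{-(m-2)}$-coefficient of $(P+Q_j)(a_0 e^{i\varphi/h})$, entering the transport equation for the first \emph{subleading} amplitude $a_1$ as an inhomogeneity $i\sigma_{\pr}[Q_j]\, a_0$. Matching the corresponding traces on $\Gamma$ at the exit gives $\int_0^T \sigma_{\pr}[Q_1]\,dt = \int_0^T \sigma_{\pr}[Q_2]\,dt$ along $\gamma$, with no $2\pi\mZ$ ambiguity since these integrals now appear as additive coefficients in a subleading amplitude rather than in the exponent of the leading one. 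The principal technical obstacle I anticipate is step two: constructing, for arbitrary order $m$ strictly hyperbolic operators, an exact Gaussian beam whose $(m-1)$-jet on $\Gamma_-$ vanishes identically and whose microlocal concentration on $\gamma$ survives the truncation--correction procedure, especially when $\pi\gamma$ has tangential intersections with the lateral boundary $\Gamma$ between the entry and exit points.
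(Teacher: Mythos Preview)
Your overall strategy is sound and parallels Theorems \ref{thm_main1}--\ref{thm_main2}, but the route you take for the subprincipal and lower-order statements differs from the paper's and contains a gap.

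\textbf{Different approach.} For \eqref{hyper_parallel_tr} and the $Q_j$-integrals, the paper does \emph{not} use mix-and-match plus amplitude comparison. Instead it proves a hyperbolic analogue of the integral identity in Lemma~\ref{lemma_integral_identity_potential}: if $u_1$ solves $P_1 u_1=0$ with $\tr_{\Gamma_-}^{m-1}u_1=0$ and $u_2$ solves $P_2^* u_2=0$ with $\tr_{\Gamma_+}^{m-1}u_2=0$, then $((P_1-P_2)u_1,u_2)_{L^2(M)}=0$. One constructs $u_1$ as a forward-in-$\phi$ correction of the quasimode from Theorem~\ref{thm_quasimode_direct} (solvability from \cite[Theorem 23.2.4]{Hormander}), and $u_2$ analogously but \emph{backward} in $\phi$ for the adjoint. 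Plugging into the identity and invoking \eqref{concentration} gives \eqref{hyper_parallel_tr} and the $Q_j$-integrals directly, with no need to control a matched solution $v_h$. This is cleaner: it avoids having to analyze $v_h$ microlocally beyond existence.

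\textbf{Gap in your subprincipal argument.} You write that you would ``compare the leading amplitudes of $u_h$ and $v_h$'', where $v_h$ is the $P_2$-solution obtained from $C_{P_1}^\lat=C_{P_2}^\lat$. But $v_h$ is not a Gaussian beam; it is only an $H^m$ solution with prescribed lateral trace, so it has no a priori ``leading amplitude'' along $\gamma$. To make your approach work you must construct a separate $P_2$-quasimode $w_h$ along $\gamma$ with the same initial data as $u_h$ at $\gamma(0)$, and then show $v_h-w_h=O(h^\infty)$ near the exit point by semiclassical propagation of singularities (this is exactly the alternative the paper sketches in the Remark following the proof of Theorem~\ref{thm_main1}, valid here since strict hyperbolicity forces $\dot x(t)\neq 0$, i.e.\ no cusps). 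Only then can you read off the exponential of $\int\sigma_\subp$ from the value of $w_h$ at $x(T)$ via Lemma~\ref{lemma_bicharacteristic_solution_values}. The same remark applies to your lower-order argument: the subleading amplitude you want to compare belongs to $w_h$, not $v_h$.

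\textbf{Minor point.} The correction term you obtain from forward solvability is $O(h^\infty)$ in $H^m(M)$ (since $P_1 v_1=O(h^\infty)$ there), not merely ``lower-order terms supported away from $\pi\gamma$''. It is small everywhere, not spatially localized.
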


\addtocontents{toc}{\SkipTocEntry}
\subsection{Boundary determination results}

We proceed to the statement of boundary determination results. Given $(x, \xi) \in T^*(\p M)$, the boundary determination results at $x$ will depend on properties of the characteristic polynomial in the normal direction, i.e.\ on the polynomial 
\[
\tau \mapsto p_m(x,\xi+\tau \nu),
\]
where $\nu$ is the \emph{inward pointing unit conormal} to $\p M$ with respect to the auxiliary metric $g$. 
This is a polynomial of order $m$ with real coefficients, and hence has $m$ complex roots counted with multiplicity so that the non-real roots occur in complex conjugate pairs. We will employ two methods for determining boundary values at $x$:
\begin{itemize}
\item 
(Elliptic region) If $\tau \mapsto p_m(x,\xi + \tau \nu)$ has a simple non-real root, we use exponentially decaying solutions that concentrate near $x$ to give an analogue of boundary determination results for second order elliptic equations.
\item 
(Hyperbolic region) If $\tau \mapsto p_m(x,\xi + \tau \nu)$ has at least two distinct real roots, we use solutions concentrating near two null bicharacteristics through $x$ and obtain an analogue of boundary determination results for the wave equation.
\end{itemize}
In fact the regions could be mixed, and we will use a combination of both methods. The terminology above is analogous to the case of second order operators \cite[Section 24.2]{Hormander}: if $P$ is of second order with real principal symbol $p_2$ and the boundary is noncharacteristic, then the elliptic region (resp.\ hyperbolic region) is the set of those points in $(x,\xi) \in T^*(\p M)$ such that the map $\tau \mapsto p_2(x,\xi+\tau \nu)$ has no real zeros (resp.\ has two distinct real zeros).

The next result proves that the Cauchy data set determines the Taylor series of the principal symbol of $P$ at $x$, modulo a natural gauge invariance given by $C_P = C_{cP}$ where $c \in C^{\infty}(M,\mR)$ is nonvanishing. The result assumes that there is some $\xi \in T_x^*(\p M)$ such that $\tau \mapsto p_m(x,\xi+\tau \nu)$ has only simple roots, with an additional geometric condition for the real roots.

\begin{DefinitionNoNumber}
We say that two smooth curves $\gamma_j = (x_j, \xi_j): [0,T_j] \to T^* M$ \emph{intersect nicely} at $x \in \p M$ if $x_1(0) = x_2(0) = x$ with $\dot{x}_1(0)$ and $\dot{x}_2(0)$ linearly independent, if the curves $x_j(t)$ never intersect at the boundary again, and if $\xi_1(t) \neq \xi_2(s)$ when $x_1(t) = x_2(s)$.
\end{DefinitionNoNumber}

\begin{ExampleNoNumber}
Let $P = \p_t^2 - \Delta_{g_0}$ be the wave operator in the space-time cylinder $M = M_0 \times (0,T)$ where $(M_0,g_0)$ is a compact Riemannian manifold with smooth boundary, and let $(x_0, t_0) \in \p M_0 \times (0,T)$. Consider two null bicharacteristics $\gamma_1$ and $\gamma_2$ through $(x_0,t_0)$ so that $\gamma_1$ goes forward in time and $\gamma_2$ backward in time. Then $\gamma_1$ and $\gamma_2$ intersect nicely at $(x_0,t_0)$. A similar picture holds for strictly hyperbolic operators.
\end{ExampleNoNumber}

\begin{Theorem} \label{thm_boundary_determination_principal_intro}
Let $M$ be a compact manifold with smooth boundary and let $P_1, P_2$ be real principal type differential operators of order $m \geq 2$. Assume that  
\[
C_{P_1} = C_{P_2}.
\]
Suppose that $x_0 \in \p M$ is noncharacteristic for $P_j$, and for some $\xi_0 \in T_{x_0}^*(\p M)$ the maps $\tau \mapsto p_{m,j}(x_0,\xi_0+\tau \nu)$ have only simple roots. Moreover, assume one of the following conditions:
\begin{enumerate}
\item[(1)] 
some root is non-real, and no bicharacteristic corresponding to a real root returns to $x_0$; or 
\item[(2)] 
all roots are real, and whenever $p_{m,1}(x_0,\xi_0+\tau_1 \nu) = p_{m,2}(x_0,\xi_0+\tau_2 \nu) = 0$ for $\tau_1 \neq \tau_2$ the bicharacteristics through $(x_0,\xi_0+\tau_1 \nu)$ and $(x_0,\xi_0+\tau_2 \nu)$ intersect nicely at $x_0$.
\end{enumerate}
Then there is a nonvanishing function $c \in C^{\infty}(M)$ so that in boundary normal coordinates $(x',x_n)$ (with respect to the background Riemannian metric) one has 
\begin{equation} \label{boundary_determination_principal_claim}
\p_{x_n}^j ( p_{m,2}(x,\eta) - c(x) p_{m,1}(x,\eta) )|_{x=x_0} = 0, \qquad j \geq 0, \ \eta \in T_{x_0}^* M.
\end{equation}
\end{Theorem}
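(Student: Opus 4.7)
The plan is to deduce the claimed infinite-order equality of the principal symbols at $x_0$ (up to a scalar $c$) by testing the hypothesis $C_{P_1}=C_{P_2}$ against families of quasi-mode solutions concentrating at $(x_0,\xi_0)$. The argument splits into a base case (agreement of the principal symbols at $x_0$, as polynomials in $\eta$, up to the scalar $c(x_0)$), and an induction on the order of the Taylor jet in the normal variable $x_n$.

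For the base case, work in boundary normal coordinates $(x',x_n)$ around $x_0$ with $\nu=\partial_{x_n}$, and let $\tau_1^{(j)},\dots,\tau_m^{(j)}$ denote the (simple) roots of $\tau\mapsto p_{m,j}(x_0,\xi_0+\tau\nu)$ for $j=1,2$. Given a root $\tau_k^{(1)}$ of $p_{m,1}$, I would build a Gaussian-beam quasi-mode $u_\lambda=\chi(x)\,e^{i\lambda\Phi(x)}\sum_{r\geq 0}\lambda^{-r}a_r(x)$ with $d\Phi(x_0)=\xi_0+\tau_k^{(1)}\nu$ and tangential Gaussian concentration $\Phi|_{x_n=0}=x'\cdot\xi_0+\tfrac{i}{2}|x'-x_0'|^2+O(|x'-x_0'|^3)$, so that $P_1u_\lambda=O(\lambda^{-\infty})$. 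In case (1), if $\tau_k^{(1)}$ is non-real with $\im\tau_k^{(1)}>0$ the phase is complex and $u_\lambda$ is an evanescent mode decaying exponentially into $M$; if $\tau_k^{(1)}$ is real, the non-return hypothesis gives a global Gaussian beam exiting at some $y_k\neq x_0$. In case (2) every $\tau_k^{(1)}$ is real and the Gaussian-beam construction along the bicharacteristic works. The simple-root assumption ensures that the WKB transport equations are solvable to all orders, and either a local parametrix (for evanescent modes) or the real-principal-type solvability theory allows one to correct $u_\lambda$ to a genuine $H^m$ solution of $P_1u_\lambda=0$ whose Cauchy data is microlocally concentrated at $(x_0,\xi_0)$ modulo an $O(\lambda^{-\infty})$ error.

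By $C_{P_1}=C_{P_2}$ there exists $v_\lambda\in H^m(M)$ solving $P_2v_\lambda=0$ with the same Cauchy data. A parallel WKB analysis for $P_2$ decomposes $v_\lambda=\sum_{l=1}^m\alpha_l\, v_\lambda^{(l)}+o(1)$ in a neighbourhood of $x_0$, where $v_\lambda^{(l)}$ is the normalized Gaussian beam of $P_2$ at normal frequency $\tau_l^{(2)}$. Matching the leading terms of $\partial_{x_n}^j u_\lambda|_{x_n=0}$ and $\partial_{x_n}^j v_\lambda|_{x_n=0}$ at $x_0$ produces the Vandermonde system $(\tau_k^{(1)})^j=\sum_l\alpha_l\,(\tau_l^{(2)})^j$ for $j=0,\dots,m-1$. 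Next, any $v_\lambda^{(l)}$ attached to a real root $\tau_l^{(2)}\neq \tau_k^{(1)}$ propagates along a bicharacteristic that, by the non-return hypothesis in (1) or the nice-intersection hypothesis in (2), exits $\partial M$ at a point $y_l\neq x_0$ where it would deposit nonzero Cauchy data; since $v_\lambda$ has microlocalized Cauchy data only at $(x_0,\xi_0)$, this forces $\alpha_l=0$ for such $l$. In case (1) the same conclusion holds for unmatched non-real $\tau_l^{(2)}$ by comparing the characteristic exponential decay profiles $e^{-\lambda x_n\im\tau_l^{(2)}}$ against $e^{-\lambda x_n\im\tau_k^{(1)}}$ in a boundary layer of thickness $\lambda^{-1}$. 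Combined with the Vandermonde identity, $\tau_k^{(1)}$ must appear among $\{\tau_l^{(2)}\}$; running through every $k$ and exchanging the roles of $P_1, P_2$ the two root sets coincide, and the two polynomials $p_{m,j}(x_0,\xi_0+\tau\nu)$ agree up to the scalar $c(x_0)=p_{m,2}(x_0,\nu)/p_{m,1}(x_0,\nu)$ determined by the noncharacteristic assumption. Openness of the simple-root hypothesis extends the equality to $\eta=\xi_0'+\tau\nu$ for $\xi_0'$ in a conic tangential neighbourhood of $\xi_0$, and polynomiality in $\eta$ propagates it to all of $T_{x_0}^*M$, giving \eqref{boundary_determination_principal_claim} for $j=0$.

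For the inductive step, assume \eqref{boundary_determination_principal_claim} holds modulo $O(x_n^{k+1})$. I would refine the WKB amplitudes $a_0,\dots,a_k$ using the higher-order transport equations for $P_1$, re-apply the Cauchy-data comparison to the same quasi-mode pair at order $\lambda^{m-k-1}$, and read off $\partial_{x_n}^{k+1}(p_{m,2}-c\,p_{m,1})|_{x_0}$ from the stationary-phase expansion at $x_0$, which must vanish identically in $\eta$. The main obstacle throughout is the passage from WKB quasi-modes to genuine global $H^m$ solutions of $P_j u=0$ without polluting the Cauchy data at $(x_0,\xi_0)$: the non-return condition in (1), the nice-intersection hypothesis in (2), and the real-principal-type solvability from \cite[Section~26.1]{Hormander} enter precisely to correct the $O(\lambda^{-\infty})$ residual by an inhomogeneity whose Cauchy data vanishes to infinite order near $x_0$, so that the asymptotic matching analysis is not disturbed. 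A secondary technical point is verifying that the Vandermonde extraction remains valid after successive differentiations in $x_n$, which relies on the persistence of simple roots under the induction.
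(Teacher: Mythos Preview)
Your strategy of building concentrated quasimodes and matching Cauchy data is natural, but there is a genuine gap at the decomposition step. You assert that the solution $v_\lambda$ of $P_2 v_\lambda=0$ furnished by $C_{P_1}=C_{P_2}$ satisfies $v_\lambda=\sum_l\alpha_l v_\lambda^{(l)}+o(1)$ near $x_0$, i.e.\ decomposes into WKB modes for $P_2$. But $v_\lambda$ is produced only abstractly: the hypothesis guarantees the \emph{existence} of some $H^m$ solution with the prescribed Cauchy data, nothing more. To justify such a decomposition one would need a boundary parametrix for $P_2$, equivalently a well-posed boundary value problem, and none is assumed for general real principal type operators (indeed the whole point of working with Cauchy data sets rather than a Dirichlet-to-Neumann map is that no such BVP may be available). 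The same gap undermines your argument for eliminating the $\alpha_l$ attached to non-real $\tau_l^{(2)}$ via ``comparing exponential decay profiles in a boundary layer'': that comparison presupposes the interior WKB form of $v_\lambda$, which is exactly what has not been established. Note also that here one does \emph{not} know $P_1=P_2$ to infinite order on $\p M$ (that is the conclusion, not a hypothesis), so the mix-and-match extension trick used for the scattering relation, which would give access to $\WF_{\mathrm{scl}}(v_\lambda)$ via propagation from outside $M$, is unavailable. Without the decomposition the Vandermonde matching never gets started.

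The paper sidesteps this entirely by replacing pointwise Cauchy-data matching with the bilinear identity $((P_1-P_2)u_1,u_2)_{L^2(M)}=0$, valid whenever $P_1 u_1=0$ and $P_2^{*} u_2=0$. The key structural difference is that \emph{both} test functions are now explicitly constructed quasimodes (plus $O(h^\infty)$ corrections from real-principal-type solvability): one never has to analyse an abstractly given solution. For a non-real root one takes $u_2$ to be an exponentially decaying solution of $P_2^{*}u_2=0$ concentrating at $x_0$; for real roots one uses bicharacteristic beams, and the non-return or nice-intersection hypotheses serve only to guarantee that the stationary-phase integral over $M$ has its sole critical point at $x_0$. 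Stationary phase then reads off $p_{m,1}(x_0,\xi_0+\tau\nu)=p_{m,2}(x_0,\xi_0+\tau\nu)$ at each root $\tau$ directly, with no Vandermonde step and no need to decompose anything. The induction in $j$ is then carried out by first dividing out the scalar $c$ (replacing $P_1$ by $c^{-1}P_1$), writing $p_{m,1}-p_{m,2}=x_n^k f$ via Taylor, and repeating the same stationary-phase computation with the extra $x_n^k$ factor.
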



The next result considers boundary determination of Taylor series of lower order terms. It turns out that to determine coefficients of order $r$, one needs that $\tau \mapsto p_m(x_0,\xi_0 + \tau \nu)$ has at least $r+1$ simple roots, with an additional geometric condition for the real roots.

\begin{Theorem} \label{thm_boundary_determination_lower_order_intro}
Let $M$ be compact with smooth boundary, let $P$ be a real principal type differential operator of order $m \geq 2$, and let $Q_1, Q_2$ be differential operators of order $r \leq m-1$. Assume that  
\[
C_{P+Q_1} = C_{P+Q_2}.
\]
Suppose that for some $(x_0,\xi_0) \in T^*(\p M)$ the map $\tau \mapsto p_m(x_0,\xi_0+\tau \nu)$ has at least $s$ simple roots $\tau_1, \ldots, \tau_s$ with nonnegative imaginary parts, and assume one of the following two conditions:
\begin{enumerate}
\item[(1)] 
$s \geq r+1$, one of the $\tau_j$ is non-real, and no bicharacteristic corresponding to a real $\tau_j$ returns to $x_0$; or 
\item[(2)] 
$s \geq \max(r+1,2)$, each $\tau_j$ is real, and for $j \neq k$ the bicharacteristics through $(x_0,\xi_0+\tau_j \nu)$ and $(x_0,\xi_0+\tau_k \nu)$ intersect nicely at $x_0$.
\end{enumerate}
Then in boundary normal coordinates 
\[
\p_{x_n}^j \sigma_{\pr}[Q_1](x_0,\eta) = \p_{x_n}^j \sigma_{\pr}[Q_2](x_0,\eta), \qquad j \geq 0, \ \eta \in T_{x_0}^* M.
\]
Moreover, if $Q_1-Q_2$ has real principal symbol, the result is valid without the assumption that the roots $\tau_j$ have nonnegative imaginary parts.
\end{Theorem}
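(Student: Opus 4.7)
The plan is to derive an integral identity linking $Q_1-Q_2$ to solutions of $(P+Q_1)$ and of the formal transpose $(P+Q_2)^t$, insert boundary quasimodes attached to each of the $s$ simple roots $\tau_i$, and extract the normal Taylor coefficients of $\sigma_{\pr}[Q_1-Q_2]$ at $x_0$ via a Vandermonde-type argument.

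First, I would establish that, under the hypothesis $C_{P+Q_1}=C_{P+Q_2}$,
\[
\int_M v\,(Q_1-Q_2)\,u\,dx = 0
\]
for all $u\in H^m(M)$ with $(P+Q_1)u=0$ and $v\in H^m(M)$ with $(P+Q_2)^t v=0$. Given such $u$, pick $\tilde u$ solving $(P+Q_2)\tilde u=0$ with the same Cauchy data. Green's identity for $(\tilde u,v)$ against $P+Q_2$ shows the boundary contribution $B_{P+Q_2}(\tilde u,v)$ vanishes; this boundary term depends only on the Cauchy data of its arguments on $\p M$, so $B_{P+Q_2}(u,v)=0$ as well. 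Applying Green's identity to $(u,v)$ and using $(P+Q_2)^t v=0$ gives $\int_M (P+Q_2)u\cdot v\,dx=0$, and rewriting $(P+Q_2)u=(Q_2-Q_1)u$ via $(P+Q_1)u=0$ yields the identity above.

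Next, I would construct boundary quasimodes. For each simple root $\tau_i$ of $\tau\mapsto p_m(x_0,\xi_0+\tau\nu)$ I would build $u_i^\lambda=e^{i\lambda\phi_i}(a_{i,0}+\lambda^{-1}a_{i,1}+\cdots)$ in boundary normal coordinates $(x',x_n)$, with $\phi_i=\xi_0\cdot x'+\tau_i x_n+O(|x|^2)$ solving the eikonal $p_m(x,d\phi_i)=0$ near $x_0$. When $\im\tau_i>0$ this is an elliptic boundary layer exponentially concentrated at $x_0$; when $\tau_i$ is real it is a Gaussian beam along the maximal bicharacteristic through $(x_0,\xi_0+\tau_i\nu)$, cut off so that the correction errors remain far from $x_0$, which is possible thanks to the non-returning hypothesis in case (1) or the nice-intersection hypothesis in case (2). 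Dual quasimodes $v_i^\lambda$ for $(P+Q_2)^t$ are built analogously from $\bar\tau_i$ (elliptic case) or the corresponding bicharacteristic (hyperbolic case), with phase signs arranged so that $v_i^\lambda u_i^\lambda$ (or $v_j^\lambda u_i^\lambda$ for $j\neq i$ in case (2)) decays exponentially away from $x_0$ in case (1) and localizes at $x_0$ by stationary phase in case (2). Standard correction using solvability for real principal type operators turns these into exact $H^m(M)$ solutions modulo an $O(\lambda^{-\infty})$ error that does not affect the leading-order extraction.

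Inserting the quasimodes into the integral identity and using the expansion $(Q_1-Q_2)u_i^\lambda=\lambda^r e^{i\lambda\phi_i}\sigma_{\pr}[Q_1-Q_2](x,d\phi_i)a_{i,0}+O(\lambda^{r-1})$, the leading order in $\lambda^{-1}$ yields $\sigma_{\pr}[Q_1-Q_2](x_0,\xi_0+\tau_i\nu)=0$ for each of the $s\geq r+1$ distinct roots. Since $\tau\mapsto\sigma_{\pr}[Q_1-Q_2](x_0,\xi_0+\tau\nu)$ is a polynomial in $\tau$ of degree at most $r$ with $r+1$ distinct zeros, it vanishes identically; the root and bicharacteristic hypotheses are open in $(x_0,\xi_0)$ along $T^*(\p M)$, so varying $\xi_0$ in a neighborhood and using polynomiality of $\sigma_{\pr}[Q_1-Q_2](x_0,\cdot)$ in $\eta$ give $\sigma_{\pr}[Q_1-Q_2](x_0,\eta)=0$ for all $\eta\in T_{x_0}^*M$. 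Higher normal derivatives are handled by induction: once $\p_{x_n}^{j'}\sigma_{\pr}[Q_1-Q_2](x_0,\eta)=0$ for all $j'<j$ and all $\eta$, enriching the amplitudes with nontrivial $x_n$-Taylor data makes $\p_{x_n}^j\sigma_{\pr}[Q_1-Q_2]$ the leading contribution at the next order of the $\lambda^{-1}$ expansion, and the same Vandermonde argument closes the step. For the final addendum (real $\sigma_{\pr}[Q_1-Q_2]$), a root $\tau_j$ with $\im\tau_j<0$ is replaced by $\bar\tau_j$, which is a simple root with $\im>0$, and reality transfers the vanishing back to $\tau_j$ by complex conjugation. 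The main obstacle I anticipate is the hyperbolic case (2), where sharp localization of the oscillatory integral $\int v_j^\lambda u_i^\lambda$ at $x_0$ and global $H^m(M)$ control of cutoff Gaussian beams both rely on careful use of the nice-intersection hypothesis and require a detailed stationary phase analysis.
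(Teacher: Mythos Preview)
Your overall strategy matches the paper's: the same integral identity (the paper uses the adjoint $P_2^*$ rather than the transpose, but this is cosmetic), elliptic boundary-layer quasimodes for non-real roots and bicharacteristic quasimodes for real roots, the degree-count argument on the polynomial $\tau\mapsto\sigma_{\pr}[Q_1-Q_2](x_0,\xi_0+\tau\nu)$, the induction on the normal-derivative order, and the conjugation trick for the real-symbol addendum.

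There is one genuine gap. In Case~(1) you pair each $u_i^\lambda$ with a dual $v_i^\lambda$ built from the \emph{same} root and assert that $v_i^\lambda u_i^\lambda$ ``decays exponentially away from $x_0$.'' This fails when $\tau_i$ is real: two beams along the same null bicharacteristic have a product concentrated along the entire curve, not just at $x_0$, so the integral does not localize and you cannot extract $\sigma_{\pr}[Q_1-Q_2](x_0,\xi_0+\tau_i\nu)$. The paper's remedy is to fix one non-real root $\sigma$ (guaranteed by hypothesis~(1)) and use the elliptic boundary-layer solution attached to $\sigma$ as the dual quasimode for \emph{every} pairing in Case~(1). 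Its exponential decay in $x_n$ forces localization near $x_0$ regardless of whether $\tau_i$ is real or non-real; the non-returning hypothesis on real $\tau_i$ then guarantees that the only stationary-phase contribution along the bicharacteristic comes from $t=0$. With this adjustment your argument is complete and coincides with the paper's.
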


The results above give an immediate consequence for determining zero order terms.

\begin{Theorem} \label{thm_main_potential_boundary}
Let $M$ be a compact manifold with smooth boundary, and let $P$ be a real principal type differential operator of order $m \geq 2$. Let $V_1, V_2 \in C^{\infty}(M)$, and assume that 
\[
C_{P+V_1} = C_{P+V_2}.
\]
Then the Taylor series of $V_1$ and $V_2$ agree at any point $x \in \p M$ so that for some $\xi \in T_x^*(\p M)$, the map $\tau \mapsto p_m(x,\xi+\tau\nu)$ either has a simple non-real root, or two distinct real roots $\tau_1$ and $\tau_2$ so that the corresponding bicharacteristics intersect nicely at $x$.

In particular, if $M$, $V_1$ and $V_2$ are real-analytic and one such point $x$ exists, then $V_1 = V_2$ in $M$.
\end{Theorem}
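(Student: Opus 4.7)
The plan is to deduce the theorem as a direct corollary of Theorem \ref{thm_boundary_determination_lower_order_intro} applied to $Q_j = V_j$ (an operator of order $r = 0$), and then to propagate the resulting normal-derivative information in tangential directions by exploiting openness of the hypothesis.

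First I would match the hypotheses. In case (1) of the present theorem the map $\tau \mapsto p_m(x_0,\xi+\tau\nu)$ has a simple non-real root, one of whose two conjugates has positive imaginary part; taking it as $\tau_1$, the hypothesis of Theorem \ref{thm_boundary_determination_lower_order_intro} with $s = 1 \ge r + 1$ is satisfied, the ``no real bicharacteristic returns to $x_0$'' condition being vacuous. In case (2), the two distinct real roots $\tau_1, \tau_2$ whose bicharacteristics intersect nicely at $x_0$ verify the hypothesis with $s = 2 \ge \max(r + 1, 2)$. In either case Theorem \ref{thm_boundary_determination_lower_order_intro} yields $\p_{x_n}^j V_1(x_0) = \p_{x_n}^j V_2(x_0)$ for every $j \ge 0$ in boundary normal coordinates.

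Next I would argue that these hypotheses are open conditions on $x \in \p M$. Simple roots depend continuously (in fact smoothly) on the coefficients of the characteristic polynomial, so the existence of a simple non-real root (case (1)) or of two distinct simple real roots (case (2)) persists for $x$ near $x_0$ with an appropriate choice of $\xi \in T_x^*(\p M)$. The nice intersection condition is stable under small perturbations by smooth dependence of the bicharacteristic flow on initial data, together with transversality of the spatial projections at $x_0$ and non-reintersection with $\p M$ along the maximal intervals. Thus Theorem \ref{thm_boundary_determination_lower_order_intro} applies on an entire neighborhood $U \subset \p M$ of $x_0$, giving $\p_{x_n}^j V_1 = \p_{x_n}^j V_2$ on $U$ for all $j \ge 0$. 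Differentiating tangentially in boundary normal coordinates then yields every mixed derivative $\p_{x'}^\alpha \p_{x_n}^j (V_1 - V_2)(x_0) = 0$, i.e.\ the full Taylor series agreement at $x_0$.

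For the real-analytic statement, in boundary normal coordinates $V_1 - V_2$ is real-analytic in a neighborhood of $x_0 \in \p M$ and has vanishing Taylor series there, hence $V_1 = V_2$ in an open subset of $M$; real-analytic unique continuation on the connected manifold $M$ then extends the equality globally. The step I expect to be the main obstacle is verifying the openness of the nice intersection condition in case (2), which requires a stability argument for the maximal bicharacteristic flow asserting that small perturbations of $(x_0, \xi_0)$ produce bicharacteristics that still fail to re-meet the boundary before their endpoints and whose spatial projections still cross only transversally at the common starting point.
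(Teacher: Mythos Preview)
Your proposal is correct and matches the paper's intended approach: the paper states Theorem \ref{thm_main_potential_boundary} as an ``immediate consequence'' of Theorem \ref{thm_boundary_determination_lower_order_intro} and gives no separate proof, so your reduction with $Q_j = V_j$, $r = 0$, and $s \in \{1,2\}$ is exactly right. You are being more careful than the paper by spelling out the openness argument needed to pass from normal derivatives $\p_{x_n}^j(V_1 - V_2)(x_0) = 0$ to the full Taylor series; this step is genuinely required for the real-analytic conclusion and the paper leaves it implicit.
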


The approach for proving the above results yields the following, perhaps surprising observations:
\begin{itemize}
\item 
It is possible to perform boundary determination for general real principal type operators, including wave operators, just as for the Laplace operator by working in the elliptic region. The method also works for elliptic operators of any order if sufficiently many roots are simple.
\item 
Boundary determination in the elliptic region is local in character (one does not need any global assumption on behaviour of bicharacteristics).
\item 
Boundary determination in the hyperbolic region may be local or global in character. If two null bicharacteristics only meet at their initial point (e.g.\ they go in opposite time directions), one recovers the Taylor series at that point. If they meet at two distinct boundary points, then one recovers a sum of contributions from these two points.
\item 
Boundary determination for the wave operator is possible also at the bottom of a space-time cylinder, and this argument is global in character. Measuring local Cauchy data at the bottom is not enough to determine the Taylor series of coefficients, since the Cauchy problem is well-posed (i.e.\ Cauchy data at the bottom does not carry any information about the coefficients). However, it is possible to use two null bicharacteristics intersecting at a point in the bottom, and information measured at the lateral boundary can be used to determine the Taylor series of coefficients at the bottom.
\end{itemize}

\addtocontents{toc}{\SkipTocEntry}
\subsection{Nonlinear equations}

We will next state our results for nonlinear equations whose principal part is a linear real principal type operator. In this case, instead of obtaining conditional results that reduce the problem of recovering coefficients to inverting a scattering relation or ray transform, we can actually determine the unknown coefficients at all points satisfying a geometric condition. Here we adapt the methods initiated in the case of Einstein equations and nonlinear wave equations \cite{KLU_nonlinear, KLOU, LLLS, FO}, which are based on higher order linearizations and use the fact that the presence of nonlinear interactions may make the inverse problem easier to solve.

The model equation that we consider here is 
\[
Pu + a(x,u) = 0 \text{ in $M$}
\]
where $P$ is a linear real principal type differential operator of order $m$ and $a(x,u)$ is a nonlinearity satisfying the following conditions: for a fixed integer $s > \max(m, n/2)$, \begin{gather}
\text{$a(x,z)$ is analytic in $z$ near $0$ as a $H^s(M)$-valued function}, \label{semilinear_condition_one_intro} \\
a(x,0) = \p_z a(x,0) = \p_z^2 a(x,0) = 0. \label{semilinear_condition_two_intro}
\end{gather}
The condition $\p_z^2 a(x,0) = 0$ simplifies the proof, and possibly could be removed with extra work. The condition $\p_z a(x,0) = 0$ however is important, since it ensures that the equation is not linear (if one removes this assumption then one could only determine the genuinely nonlinear parts, i.e.\ $\p_z^j a(x,0)$ for $j \geq 2$).

We consider boundary measurements given in terms of the Cauchy data set with $\delta > 0$ small, 
\[
C_{a,\delta} = \{(u|_{\p M}, \nabla u|_{\p M}, \ldots, \nabla^{m-1} u|_{\p M}) \,;\, u \in H^s(M), \ Pu + a(x,u) = 0, \ \text{and } \norm{u}_{H^s(M)} \leq \delta \}.
\]
Here it is instrumental to work in the regime of small solutions, since the argument relies on higher order linearizations around the zero solution. We will also need to make a weak uniqueness assumption for the linearized equation and its adjoint. The space 
\[
N(P) = \{u \in H^m_0(M) \,;\, Pu = 0 \text{ in $M$} \}
\]
is always finite dimensional (see Proposition \ref{prop_real_principal_type_solvability}). We will assume that this space is trivial, which roughly means that the linearized equation does not admit nontrivial compactly supported solutions.

\begin{Theorem} \label{thm_semilinear_uniqueness_intro}
Let $M$ be a compact manifold with smooth boundary, and let $P$ be a real principal type differential operator on $M$. Let $a$, $\tilde{a}$ satisfy \eqref{semilinear_condition_one_intro}--\eqref{semilinear_condition_two_intro}, and assume that $a$ and $\tilde{a}$ agree to high order on $\p M$ in the sense that 
\begin{equation*} 
a(\,\cdot\,,z)-\tilde{a}(\,\cdot\,,z) \in H^s_0(M) \text{ for $z$ near $0$.}
\end{equation*}
Assume also that 
\[
N(P) = N(P^*) = \{0 \}.
\]
If for any sufficiently small $\delta > 0$ there is $\delta_1 < \delta$ so that 
\[
C_{a,\delta_1} \subset C_{\tilde{a},\delta},
\]
then 
\[
\text{$a(x_0,z) = \tilde{a}(x_0,z)$ for any $x_0 \in B$ and any $z$ near $0$},
\]
where 
\begin{align*}
B = \{x_0 \in M^{\mathrm{int}} \,;\, &\text{there are two maximal null bicharacteristics whose spatial }\\
 &\text{projections $x_j$, $j=1,2$, only intersect once at $x_0$ and 
}\\
 &\text{the tangent vectors of $x_j$ are linearly independent at $x_0$} \}.
\end{align*}
\end{Theorem}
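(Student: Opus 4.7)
The plan is to recover the full germ $a(x_0, \cdot)$ at each $x_0 \in B$ by a sequence of higher-order linearizations of the nonlinear equation around $u = 0$, producing integral identities that can be localized at $x_0$ by Gaussian beam solutions running along the two null bicharacteristics through $x_0$. Expand $a(x, z) = \sum_{k \ge 3} \tfrac{1}{k!} a_k(x) z^k$ in $H^s(M)$ (and similarly for $\tilde a$); the series converges by \eqref{semilinear_condition_one_intro}--\eqref{semilinear_condition_two_intro}. I would argue by induction on $k \ge 3$ that $a_k(x_0) = \tilde a_k(x_0)$ at every $x_0 \in B$, which together with analyticity in $z$ yields the claim.

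For the induction step, the triviality of $N(P)$ and $N(P^*)$ together with Fredholm theory for real principal type operators (cf.\ Proposition \ref{prop_real_principal_type_solvability}) realizes $P : H^m_0(M) \to L^2(M)$ as a Banach isomorphism. The analytic implicit function theorem then provides, for any fixed $v_1, \ldots, v_k \in H^m(M)$ with $Pv_j = 0$ and every $\varepsilon \in \mR^k$ near $0$, a unique analytic family $u_\varepsilon \in H^s(M)$ with $P u_\varepsilon + a(x, u_\varepsilon) = 0$ and $u_\varepsilon - \sum_j \varepsilon_j v_j \in H^m_0(M)$; the contraction closes because $\p_z a = \p_z^2 a = 0$ at $z = 0$. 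The same construction for $\tilde a$ yields $\tilde u_\varepsilon$, and the inclusion $C_{a, \delta_1} \subset C_{\tilde a, \delta}$ together with uniqueness forces $u_\varepsilon$ and $\tilde u_\varepsilon$ to share Cauchy data, so $u_\varepsilon - \tilde u_\varepsilon \in H^m_0(M)$ depends analytically on $\varepsilon$. Differentiating the two PDEs $k$ times in $\varepsilon_1 \cdots \varepsilon_k$ at $\varepsilon = 0$ gives $w - \tilde w \in H^m_0(M)$ with
\[
P(w - \tilde w) \;=\; -(a_k - \tilde a_k)(x)\, v_1(x) \cdots v_k(x) \;+\; \text{(terms involving $a_j - \tilde a_j$ for $3 \le j < k$)},
\]
and pairing with any $\psi \in H^m(M)$ satisfying $P^* \psi = 0$ (boundary terms vanish since $w - \tilde w$ has zero $(m-1)$-jet on $\p M$) produces
\[
\int_M (a_k - \tilde a_k)(x)\, v_1(x) \cdots v_k(x)\, \psi(x)\, dV_g \;+\; (\text{lower-order contributions}) \;=\; 0.
\]

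To extract the value at $x_0 \in B$, let $\gamma_1, \gamma_2$ be the two null bicharacteristics through $x_0$ and construct semiclassical Gaussian beam approximate solutions of $Pv = 0$ and $P^* \psi = 0$ concentrated along the spatial projections of $\gamma_1, \gamma_2$ and of their reversals (so the available covector directions at $x_0$ are $\pm\xi_1, \pm\xi_2$). Correct each approximate beam to an exact $H^m$-solution by inverting $P$, respectively $P^*$, on $H^m_0(M)$. Choosing the phase signs so that the total phase of $v_1 \cdots v_k \psi$ has a nondegenerate critical point at $x_0$ and no other critical point in $M$ (possible as in \cite{KLU_nonlinear, LLLS}; this uses the transversality of $\dot x_1(0), \dot x_2(0)$ and the fact that the spatial projections meet only at $x_0$), the integrand concentrates at $x_0$ as $h \to 0$. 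Stationary phase then isolates $(a_k - \tilde a_k)(x_0)$ multiplied by a nonzero prefactor, while the lower-order contributions also localize at $x_0$ and vanish by the inductive hypothesis. This yields $a_k(x_0) = \tilde a_k(x_0)$ and closes the induction.

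The principal technical obstacle is the Gaussian beam step: for a general real principal type operator, the bicharacteristics may display cusps and tangential contacts with $\p M$, so global existence of the phases and the uniform control of the corrector $O(h^N)$ beyond the leading Gaussian profile both rely delicately on the nontrapping hypothesis and on the bijectivity guaranteed by $N(P) = N(P^*) = \{0\}$. A secondary difficulty is combinatorial: one must arrange, for every $k \ge 3$, a configuration of $k + 1$ Gaussian beams drawn from the four directions $\pm \xi_1, \pm \xi_2$ that produces a nondegenerate critical point of the total phase at $x_0$, and this combinatorics must be compatible with the preceding steps of the induction.
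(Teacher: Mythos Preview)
Your strategy matches the paper's: higher-order linearization, Cauchy-data matching under $N(P)=N(P^*)=\{0\}$ (this is the content of Lemmas~\ref{lemma_semilinear_solution_existence}--\ref{lemma_utilde_differentiability}), quasimodes along $\gamma_1,\gamma_2$ and their reversals, and stationary phase at $x_0$. For $k=3$ your outline is essentially the proof in Section~\ref{sec_semilinear}. The gap is precisely the step you flag as a ``secondary difficulty'', and it cannot be resolved the way you suggest. If all $k+1$ factors in the integral identity are $h$-dependent quasimodes with covectors at $x_0$ drawn from $\{\pm\xi_1,\pm\xi_2\}$, a critical point of the total phase at $x_0$ forces these covectors to sum to zero; since $\xi_1$ and $\xi_2$ are linearly independent this requires $k+1$ to be even, so the scheme fails for every even $k\ge4$.

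The paper does not try to balance more beams. For each $k\ge3$ it keeps the \emph{same} four $h$-dependent quasimodes $v_1,\dots,v_4$ along $\gamma_1,\gamma_2,\gamma_3,\gamma_4$ (with $\xi_3(0)=-\xi_1(0)$, $\xi_4(0)=-\xi_2(0)$, so $\sum_{j=1}^4\xi_j(0)=0$ and the Hessian computation \eqref{theta_hessian_semilinear} is unchanged), and takes the remaining $k-3$ factors to be fixed $h$-\emph{independent} solutions of $Pv=0$ (respectively of $\bar P^*v=0$ for the test function) with $v(x_0)\neq0$; such solutions are produced by Lemma~\ref{lemma_bicharacteristic_solution_values} along $\gamma_1$ at a fixed small $h_0$. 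Stationary phase then yields $(a_k-\tilde a_k)(x_0)$ multiplied by the nonzero product of these fixed values. Two smaller corrections: the assertion that $P:H^m_0(M)\to L^2(M)$ is a Banach isomorphism is too strong --- Proposition~\ref{prop_real_principal_type_solvability} only gives a right inverse on $H^s(M)$, and what the implicit-function step needs (and what the proof of Lemma~\ref{lemma_utilde_differentiability} establishes) is an isomorphism onto a closed \emph{subspace}; and the claim that the lower-order contributions ``localize at $x_0$ and vanish by the inductive hypothesis'' is too quick, since those terms involve $\partial_{\varepsilon_i\varepsilon_j\varepsilon_k}u_\varepsilon|_{0}$, which solve inhomogeneous $P$-equations and do not themselves concentrate at $x_0$.
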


\begin{ExampleNoNumber}
Let $M$ be a compact subdomain of the space-time cylinder $M_0 \times (0,T)$ where $(M_0,g_0)$ is a compact Riemannian manifold with boundary, and let $P = \p_t^2 - \Delta_{g_0}$ be the wave operator. If $(x_0, t_0) \in M$ and if there is a geodesic $\eta(s)$ in $M_0$ so that $\eta(0) = x_0$ and some Jacobi field along $\eta$ only vanishes when $s=0$, then by looking at the corresponding variation through geodesics one can show that $(x_0,t_0)$ is in the set $B$ of Theorem \ref{thm_semilinear_uniqueness_intro}. In particular, $(x_0,t_0) \in B$ if there is at least one geodesic through $x_0$ with no conjugate points, but one may have $(x_0,t_0) \in B$ even if all geodesics through $x_0$ have conjugate points as long as some variation field only vanishes when $s=0$. A similar argument can be given for general real principal type operators by looking at variation fields of null bicharacteristic curves. However, this argument fails in the special situation when $x_0 \in M_0$ and every $M_0$-geodesic through $x_0$ has a conjugate point $y$ of maximal order (i.e.\ the space of normal Jacobi fields vanishing both at $x_0$ and $y$ has maximal order $\dim(M_0)-1$). The sphere is the standard example of a manifold where any point is maximally conjugate to its antipodal point. If $M_0$ contains a neighborhood of the hemisphere, then the set $B$ in Theorem \ref{thm_semilinear_uniqueness_intro} would be empty.
\end{ExampleNoNumber}

\addtocontents{toc}{\SkipTocEntry}
\subsection{Special cases}

We next give a few basic examples illustrating the above results, related to 
\begin{itemize}
\item 
the boundary / scattering rigidity problem; and
\item 
the inverse boundary problem for the Lorentzian wave equation.
\end{itemize}

\begin{Example}
(Boundary / scattering rigidity) Let $(M_0, g_0)$ be a compact Riemannian manifold with strictly convex boundary, and assume that the geodesic flow in $(M_0,g_0)$ is nontrapping. Let $M := SM_0 = \{ (x,v) \in T M_0 \,;\, \abs{v}_{g_0} = 1 \}$ be the unit sphere bundle of $M_0$, and let 
\[
P := X + \mathcal{A}
\]
where $X$ is the geodesic vector field in $S M_0$ and $\mathcal{A} \in C^{\infty}(S M_0)$. Since $P$ is a first order operator, projections of null bicharacteristic curves to $M$ are integral curves of $P$ (i.e.\ geodesics in $S M_0$) and the nontrapping assumption ensures that $P$ is of real principal type. Moreover, the scattering relation $\alpha_P$ induces a corresponding relation $\beta: \p(S M_0) \to \p(S M_0)$ so that $\beta \circ \pi_M = \pi_M \circ \alpha_P$ where $\pi_M$ is the projection $T^* M \to M$. Thus $\beta$ is just the scattering relation for the metric $g_0$ in $M_0$. The Cauchy data set is 
\[
C_P = \{ u|_{\p(SM_0)} \,;\, (X+\mathcal{A})u = 0 \text{ in $SM_0$} \}.
\]
Theorem \ref{thm_main1} states that $C_P$ determines $\alpha_P$, and hence $\beta$, as well as the quantities 
\[
\int_{\eta} \mathcal{A}(\eta(t)) \,dt \text{ modulo $2\pi \mZ$}
\]
whenever $\eta$ is a maximal geodesic in $SM_0$. Conversely, the above information uniquely determines $C_P$ (see Theorem \ref{thm_first_order_equivalence}). These results are quite elementary since $P$ is of first order, but it is instructive to observe that the inverse boundary problem for $X + \mathcal{A}$ is equivalent to the scattering rigidity problem of determining a metric $g_0$ up to gauge from its scattering relation $\beta$, and to determining $\mathcal{A}$ from its geodesic X-ray transform. It is well known that a general function $\mathcal{A} \in C^{\infty}(SM_0)$ is not determined uniquely by its X-ray transform. In fact, the kernel of the X-ray transform on $C^{\infty}(S M_0)$ consists precisely of the functions $\mathcal{A} = X \mathcal{B}$ where $\mathcal{B} \in C^{\infty}(SM_0)$ with $\mathcal{B}|_{\p SM_0} = 0$ (the proof is an elementary argument combined with \cite[Proposition 5.2]{PSU2}). However, in applications $\mathcal{A}$ often arises from a function or a tensor field on the base manifold $M_0$ and there are many available results in this case.
\end{Example}

\begin{Example}
(Lorentzian wave equation) Let $(M,g)$ be a compact Lorentzian manifold with boundary, and consider the canonical wave operator $\Box_g$ on $M$. In local coordinates, 
\[
\Box_g u = -|g|^{-1/2} \p_{x_j} (|g|^{1/2} g^{jk}  \p_{x_k} u),
\]
where $\abs{g} = \det(g_{jk})$ and $(g^{jk})$ is the inverse of $(g_{jk})$. 
The principal symbol $p_2$ of $\Box_g$ is 
    \begin{align}\label{prin_symb_box}
p_2(x,\xi) = g^{jk} \xi_j \xi_k = \langle \xi, \xi \rangle_g.
    \end{align}
In this case, results similar to Theorems \ref{thm_main1}-\ref{thm_main_potential_boundary} are contained in \cite{StefanovYang}, with stability estimates. 

The operator $\Box_g$ is not necessarily of real principal type 
 if no further assumptions on the geometry is made. Writing $S^n$ for the unit sphere of dimension $n$, a counter-example is given by $M = S^1 \times M_0$, where $M_0$ is a compact domain with boundary on $S^n$, containing a closed geodesic, and $g(t,x) = -dt^2 + g_0(x)$ for $(t,x) \in S^1 \times M_0$ with $dt$ and $g_0$ the natural metrics on $S^1$ and $S^n$, respectively. Indeed, in this case there is a closed null geodesic on $M$, and $\Box_g$ is not of real principal type.

If all null geodesics exit $M$, then $\Box_g$ is of real principal type. This is the case, for example, if $(X,g)$ is a globally hyperbolic Lorentzian manifold and $M \subset X$ is a compact domain with smooth boundary. We refer to the books \cite{O,R} for the definition and properties of globally hyperbolic Lorentzian manifolds. The classical case where $M = M_0 \times (0,T)$ for some $T > 0$ and 
    \begin{align}\label{Lorentz_prod}
g(x,t) = -dt^2 + g_0(x),
    \end{align}
with $(M_0, g_0)$ a Riemannian manifold with boundary, is covered in Theorem \ref{thm_main_hyper_intro}. 

Let $a$ be a one form on $M$, and consider the wave operator 
$P = (d + i a)^* (d + i a)$. Here the adjoint is induced by the Lorentzian inner product $g$ and $d$ is the exterior derivative.
Then the principal symbol of $P$ is given by $p_2$ in (\ref{prin_symb_box}).
Defining a half density in a local coordinate system by $\mu = |g|^{1/4} \abs{dx_1 \wedge \ldots \wedge dx_n}^{1/2}$, consistent with the volume form, yields that 
    \begin{align*}
\sigma_\subp[P^\mu](x,\xi) = g^{jk}a_j \xi_k = \langle a, \xi \rangle_g.
    \end{align*}
The expression $\exp \left[ i\int_0^T \sigma_\subp[P^\mu](\gamma(t)) \,dt \right]$, cf. (\ref{parallel_tr}), gives the parallel transport map with respect to the connection $d + i a$ along the bicharacteristic $\gamma : [0,T] \to T^* M$ of $P$ as follows: if $z(t) = \pi(\gamma(t))$ where $\pi : T^* M \to M$ is the natural projection, and if $S(t)$ is a parallel section of the trivial bundle $M \times \mC$ along $z(t)$ (i.e.\ $\dot{S}(t) + ia(\dot{z}(t)) S(t) = 0$), then $S(T) = \exp \left[ -i\int_0^T \sigma_\subp[P^\mu](\gamma(t)) \,dt \right] S(0)$.

Let $V \in C^\infty(M)$ have principal symbol $v_0(x,\xi) = V(x)$, and suppose that $P$ has principal symbol $p_2$ in (\ref{prin_symb_box}). Then the map in \eqref{ray_tr} corresponds to 
    \begin{align*}
LV(\gamma) = \int_0^T v_0(\gamma(t)) dt = \int_0^T V(\pi(\gamma(t))) dt,
    \end{align*}
where $\gamma : [0,T] \to T^* M$ is a maximal bicharacteristic of $P$. This 
is the light ray transform of $V$. 
It is known that $L$ is invertible in the case of certain 
stationary, globally hyperbolic manifolds \cite{FIO}, and in the case of certain real analytic manifolds \cite{Plamen_lightray}.
We refer also to \cite{FIKO} for the case of product geometries
of the form (\ref{Lorentz_prod}).

We remark that the gauge invariances in Theorems \ref{thm_main1}-\ref{thm_main_potential_boundary}, when specialized to the Lorentzian wave equation, are slightly different from those in \cite{StefanovYang}. In the boundary determination result in Theorem \ref{thm_boundary_determination_lower_order_intro} there is no gauge invariance $a \to a - d\psi$ where $\psi \in C^{\infty}(M)$ satisfies $\psi|_{\p M} = 0$, whereas such a gauge invariance appears in \cite[Lemma 2]{StefanovYang}. This is due to the fact that our boundary measurements (Cauchy data set) for $P$ are defined in terms of a reference Riemannian metric on $M$, whereas \cite{StefanovYang} defines boundary measurements in terms of the Lorentzian metric $g$ and $1$-form $a$ appearing in $P$. The gauge invariances are discussed in more detail in Section \ref{sec_preliminaries}.
\end{Example}

\addtocontents{toc}{\SkipTocEntry}
\subsection{Methods}

The proofs of our theorems are essentially based on propagation of singularities for real principal type operators. If $P$ is such an operator, propagation of singularities is often understood as a regularity result stating that if a solution $u$ of $Pu = 0$ is smooth at some point on a null bicharacteristic curve, then $u$ has to be smooth at all points on this curve. This result has a complement stating that for any suitable non-trapped null bicharacteristic segment $\gamma$ for $P$, there is a function $u$ solving $Pu=0$ away from the end points of $\gamma$ so that the wave front set of $u$ is precisely on $\gamma$, see \cite{DH72} and \cite[Theorem 26.1.7]{Hormander}. This complementary result, which is basically a quasimode (or approximate solution) construction, produces interesting special solutions to the equation $Pu = 0$. It is this result and its variants that will be useful in the solution of inverse problems.

It is natural to define the Cauchy data set for sufficiently smooth solutions of $Pu = 0$. Thus the singular solutions mentioned above are not directly useful, but rather we will use a semiclassical version of such a quasimode construction. There are various methods for constructing quasimodes. If the null bicharacteristic segment has no cusps, a classical geometrical optics or Gaussian beam type construction is sufficient (see e.g.\ \cite{BabichLazutkin, Hormander1971, Ralston, Ralston_note}). The general case with cusps is more involved, and in \cite{DH72} it is dealt with by conjugating $P$ microlocally in a full neighborhood of the null bicharacteristic segment to the normal form $D_{x_1}$ using Fourier integral operators. In principle the quasimode construction in this article could be done by implementing a semiclassical version of the Fourier integral operator conjugation argument (see \cite{DKLS} for a special case). However, in the proofs of the main theorems we need to perform various computations with these quasimodes on manifolds with boundary. To handle the boundary effects it is beneficial to have a more direct construction. We will use a modification of the Gaussian beam construction based on representing the quasimode as an integral of Gaussian wave packets (coherent states) over the bicharacteristic as in \cite{PaulUribe, KarasevVorobjev}. This approach resolves the problems near cusps and gives an elementary and direct construction. In \cite{GuilleminUribeWang}, which describes a general framework, such quasimodes are called semiclassical states associated with isotropic submanifolds of phase space.

Given a quasimode $v$ concentrating near a null bicharacteristic, we employ the solvability theory of real principal type operators to produce exact solutions of $Pu = 0$ that are close to $v$. We use these special solutions to pass from the Cauchy data set $C_P$ to the scattering relation or null bicharacteristic ray transform by using two different methods: a mix-and-match construction combined with semiclassical propagation of singularities, or an integral identity (Lemma \ref{lemma_integral_identity_potential}) analogous to standard identities used in the Calder\'on and Gel'fand problems. For the boundary determination results we again use integral identities and solutions concentrating near null bicharacteristics in the hyperbolic region. However, in the elliptic region we need different special solutions that concentrate near a boundary point and are exponentially decreasing in the interior (Theorem \ref{prop_exponentially_localized_solution}). These solutions are analogous to those used in boundary determination for elliptic second order equations. Similar arguments appear in the construction of a boundary parametrix for the wave equation in the elliptic region (see e.g.\ \cite[Chapter IX]{Taylor1981}). The coefficients are eventually recovered via the complex stationary phase method, see \cite[Section 7.7]{Hormander} or \cite{MelinSjostrand} for the original approach.

The proof of Theorem \ref{thm_semilinear_uniqueness_intro} for nonlinear equations is based on higher order linearizations. The idea is that given solutions $v_1, \ldots, v_N$ of the linearized equation (i.e.\ $Pv_j = 0$) and small parameters $\eps_1, \ldots, \eps_N$, there is a small solution $u$ of the nonlinear equation $Pu + a(x,u) = 0$ so that $u$ is close to $\eps_1 v_1 + \ldots + \eps_N v_N$. Taking suitable derivatives with respect to $\eps_j$ and using the knowledge of the Cauchy data of small solutions, one recovers information about Taylor coefficients of $a(x,u)$ integrated against products of the solutions $v_j$. The key point is to use special solutions $v_j$ that concentrate along different null bicharacteristic curves, which leads to the geometric condition in Theorem \ref{thm_semilinear_uniqueness_intro}.

\addtocontents{toc}{\SkipTocEntry}
\subsection{Previous literature and outlook}

There is a large literature on inverse boundary problems of the above type, and we only give a few relevant references. The prototype for Theorems \ref{thm_main1}--\ref{thm_main_hyper_intro} are corresponding results for the wave equation, going back to \cite{RakeshSymes, Rakesh_geometric_optics, Stefanov_timedependent} in the Euclidean case. The scattering relation was considered in \cite{Guillemin_scattering_relation}, and \cite{Uhlmann_scattering_relation} outlines how to determine the scattering relation for a Riemannian metric from the hyperbolic Dirichlet-to-Neumann map. The case of Lorentzian wave equations is discussed in \cite{StefanovYang}, together with references to earlier work. Boundary determination results for wave equations may be found in \cite{SylvesterUhlmann_anisotropic, SU_generic, Montalto, StefanovYang}. Boundary determination for second order elliptic equations goes back to \cite{KohnVogelius, SylvesterUhlmann_boundary}, with \cite{LeeUhlmann, DKSaU} considering the geometric case.

Inverse problems for nonlinear hyperbolic equations have been studied in \cite{KLU_nonlinear, KLOU, LUW}, and we have also used ideas from related results for elliptic equations \cite{LLLS, FO}. We also mention the work \cite{Isakov_general} studying inverse boundary problems for certain general equations of the form $P(D)u + Vu = 0$, where $P(D)$ is a constant coefficient operator. The present article addresses similar questions for operators with variable coefficients.

As mentioned above, we consider this article to be a starting point in approaching inverse problems from a more general point of view. This point of view, in addition to the particular theorems stated here, is one of the main contributions of this article. It suggests many future directions including the following:

\begin{enumerate}
\item[1.] 
What kind of information about $P$ (more precisely, about the principal symbol $\sigma_{\pr}[P]$) can be determined from the null bicharacteristic scattering relation $\alpha_P$? If $P$ is the geodesic vector field on the unit sphere bundle of a compact Riemannian manifold with boundary, this is the scattering/lens/boundary rigidity problem studied in \cite{PestovUhlmann, StefanovUhlmannVasy, StefanovUhlmannVasy2}.
\item[2.] 
For which operators $P$ and for which classes of functions on $T^* M$ is the null bicharacteristic ray transform invertible? If $P$ is the geodesic vector field, this transform is the geodesic ray transform for which there is a substantial literature, see the survey paper \cite{IlmavirtaMonard}. If $P$ is the wave operator on a Lorentzian manifold, this transform is the light ray transform studied in 
\cite{FIKO, FIO, LOSU, Plamen_lightray, VasyWang}.
\item[3.] 
The real principal type condition includes a nontrapping assumption for the null bicharacteristic flow. Can one obtain results in the presence of (sufficiently mild) trapping? For the geodesic vector field and hyperbolic trapping, this has been studied in detail in \cite{Guillarmou_trapping}.
\item[4.] 
We have only considered uniqueness results stating that $C_P$ uniquely determines some information about $P$. Is it possible to study stability, reconstruction, range characterization, or further partial data cases? Moreover, instead of looking at scalar operators, can one give similar results for real principal type systems?
\item[5.] 
In this article we have studied the consequences of propagation of singularities for inverse problems for real principal type operators. What other classes of operators or general mechanisms for inverse problems could be studied in this way?
\item[6.] 
In examples such as the Calder\'on or Gel'fand problem, knowing the Cauchy data set is equivalent to knowing a boundary map (Dirichlet-to-Neumann map) which is a pseudodifferential or a Fourier integral operator. In these cases one can determine information about the coefficients by symbol computations. Our proofs are similar, but we use integral identities as a substitute for symbol computations. If $P$ is a general operator, is it possible to associate a symbol or Lagrangian manifold directly to the Cauchy data set $C_P$?
\item[7.] 
The symbol computations mentioned in the previous item only use the singularities of the integral kernel of the boundary map. The arguments in this article are in a similar spirit. Is it possible to extract information from the $C^{\infty}$ part of the integral kernel? This is what happens in the Calder\'on problem, see e.g.\ \cite{Salo_normalforms} for a related discussion. Also the Boundary Control method, originating from \cite{BelishevBC}, uses the smooth part in the case of wave equations with time-independent coefficients. 
\end{enumerate}

The rest of the article is organized as follows. In Section \ref{sec_preliminaries} we collect some preliminaries related to real principal type operators, semiclassical wave front sets, and Cauchy data sets. Section \ref{sec_quasimode_construction} gives the construction of semiclassical quasimodes associated with null bicharacteristic segments. Section \ref{sec_scattering_relation} proves Theorems \ref{thm_main1}--\ref{thm_main_hyper_intro} related to determining the scattering relation and bicharacteristic ray transforms from the Cauchy data set. The boundary determination results, Theorems \ref{thm_boundary_determination_principal_intro}--\ref{thm_main_potential_boundary}, are established in Section \ref{sec_boundary_determination}. Finally, Section \ref{sec_semilinear} considers semilinear equations and proves Theorem \ref{thm_semilinear_uniqueness_intro}. The proofs of some auxiliary results are given in Appendix \ref{sec_appendix}.

\addtocontents{toc}{\SkipTocEntry}
\subsection*{Acknowledgements}
L.O.\ was supported by EPSRC grants EP/P01593X/1 and EP/R002207/1.
M.S.\ was supported by the Academy of Finland (Finnish Centre of Excellence in Inverse Modelling and Imaging, grant numbers 312121 and 309963) and by the European Research Council under Horizon 2020 (ERC CoG 770924). P.S.\ was partly supported by  NSF  Grant DMS-1900475. G.U.\ was partly supported by NSF, a Si-Yuan Professorship at HKUST and a Walker Family Professorship at UW. This material is based upon work supported by the Clay Foundation and NSF under Grant No.\ 1440140, while M.S.\ and G.U.\ were in residence at MSRI in Berkeley, California, during the semester on Microlocal Analysis in 2019.

\section{Preliminaries} \label{sec_preliminaries}

\addtocontents{toc}{\SkipTocEntry}
\subsection*{Real principal type operators}

We begin by recalling basic facts about real principal type operators from \cite[Section 26.1]{Hormander}.

First assume that $P \in \Psi^m(X)$ is properly supported and has real homogeneous principal symbol $p = \sigma_{\pr}[P]$ of order $m$. The bicharacteristic curves of $P$ are integral curves of the Hamilton vector field $H_{p}$ on $T^* X \setminus 0$. In local coordinates, a bicharacteristic curve $\gamma(t) = (x(t), \xi(t))$ solves the Hamilton equations 
\begin{align*}
\dot{x}(t) &= d_{\xi} p(x(t), \xi(t)), \\
\dot{\xi}(t) &= -d_x p(x(t), \xi(t)).
\end{align*}
The function $p$ is constant along any bicharacteristic curve. In particular, the bicharacteristic curves in the characteristic set $p^{-1}(0)$ are called null bicharacteristic curves. A major feature of operators with real principal symbol is the fact that singularities propagate along null bicharacteristics:  if $Pu = f$ in $X$, then $\WF(u) \setminus \WF(f)$ is contained in $p^{-1}(0)$ and is invariant under the flow of $H_p$.

The operator $P$ is of real principal type if it additionally satisfies a nontrapping assumption: no bicharacteristic curve stays over a compact subset of $X$ infinitely long. Combined with propagation of singularities, this implies that any  solution  of $Pu \in C^\infty$ in $X$ with a compact singular set has to be smooth.

\begin{Remark}
Several different conditions related to real principal type operators appear in the literature, such as $d_{\xi} p \neq 0$, or that $dp$ and the canonical $1$-form $\xi_j \,dx^j$ are linearly independent (equivalently, $H_p$ is not \emph{radial}, i.e.\ proportional to the radial direction $\xi_j\,\p_{\xi_j}$). We wish to clarify the relations between these conditions. Clearly 
\[
d_{\xi} p \neq 0 \implies \text{$H_p$ is not radial}.
\]
Moreover, if $d_{\xi} p(x_0,\xi) \neq 0$ then the bicharacteristic through $(x_0,\xi)$ moves away from $x_0$. Thus 
\[
d_{\xi} p \neq 0 \text{ at any $(x_0, \xi) \in p^{-1}(0)$} \implies \text{$P$ is real principal type near $x_0$.}
\]
In the converse direction, 
\[
\text{$P$ is real principal type} \implies \text{$H_p$ is not radial anywhere on $p^{-1}(0)$},
\]
since if $H_p$ were radial at some $(x_0, \xi_0) \in p^{-1}(0)$, then there would be a bicharacteristic $\gamma(t) = (x_0, \xi(t))$ staying over $x_0$ for infinite time. The Tricomi operator $P = x_2 \p_1^2 + \p_2^2$ is of real principal type, but does not satisfy $d_{\xi} p \neq 0$ on $p^{-1}(0)$ (in fact null bicharacteristic curves may have cusps).
\end{Remark}


We will next give a solvability result for real principal type operators. This is a direct adaptation of \cite[Theorem 26.1.7]{Hormander}, but we will state it in a form adapted to manifolds with boundary.

Let us introduce some notation for Sobolev spaces and distributions. Let $X$ be an open manifold and let $M$ be a compact subdomain with boundary. We fix some Riemannian metric on $X$ with volume form $dV$, and consider Sobolev spaces $H^s(M)$ and $H^s_{\mathrm{loc}}(X)$ with respect to the $L^2$ norm induced by $dV$. Here we slightly abuse notation and write $H^s(M)$ instead of $H^s(M^{\mathrm{int}})$, where the latter space can be defined as the restriction of $H^s_{\mathrm{loc}}(X)$ to $M^{\mathrm{int}}$. 

The distribution space $\mDp(X)$ is the set of continuous linear functionals on $C^{\infty}_c(X)$, and we identify functions $u \in L^1_{\mathrm{loc}}(X)$ with distributions using the pairing 
\[
( u, \varphi ) = \int_X u \bar{\varphi} \,dV, \qquad \varphi \in C^{\infty}_c(X).
\]
We continue to write $( u, \varphi )$ for the distributional pairing. Here we deviate from standard practice by requiring that the pairing is conjugate linear in the second variable. This convention will be helpful below.

Write $E_M(X) = E_M$ for those elements in some space $E(X)$ that are supported in $M$. With respect to the distributional pairing, one has for any $s \in \mR$ the duality statements (see e.g.\ \cite[Section 2]{AssylbekovStefanov}) 
\[
(H^s(M))^* = H^{-s}_M, \qquad (H^s_M)^* = H^{-s}(M), \qquad (C^{\infty}(M))^* = \mDp_M.
\]

We also write $( u, \varphi )$ for the pairings for spaces on $M$. Any differential operator $P$ on $M$ induces a map $P: \mDp_M \to \mDp_M$ via 
\[
( Pv, \varphi ) = ( v, P^* \varphi ),
\]
where $P^*$ is the formal $L^2$ adjoint of $P$. Finally, given a set $S \subset C^{\infty}_M$, we consider its annihilator  
\[
S^{\perp} = \{ v \in \mDp_M \,;\, ( v, \varphi ) = 0 \text{ for all $\varphi \in S$} \}.
\]

\begin{Proposition} \label{prop_real_principal_type_solvability}
Let $P$ be a real principal type differential operator of order $m \geq 1$ on a compact manifold $M$ with smooth boundary, and let $s \in \mR$.
\begin{enumerate}
\item[(a)]
If $u \in \mDp_M$ solves $Pu = f$ in $X$ where $f \in H^s_M$, then $u \in H^{s+m-1}_M$.
\item[(b)]
The set 
\[
N(P) = \{ v \in \mDp_M \,;\, Pv = 0 \}
\]
is a finite dimensional space contained in $C^{\infty}_M$.
\item[(c)]
Define the space 
\[
Y_s = \{v \in \mDp_M \,;\, P^* v \in H^s_M \}.
\]
Then $Y_s \subset H^{s+m-1}_M$, and for some $C_s > 0$ one has the estimates 
\begin{equation} \label{v_htm_equation_first}
\norm{v}_{H^{s+m-1}_M} \leq C_s ( \norm{P^* v}_{H^s_M} + \norm{v}_{H^{s+m-2}_M} ), \qquad v \in Y_s,
\end{equation}
and 
\begin{equation} \label{v_htm_equation}
\norm{v}_{H^{s+m-1}_M} \leq C_s \norm{P^* v}_{H^s_M}, \qquad v \in Y_s \cap N(P^*)^{\perp}.
\end{equation}
\item[(d)]
Given any $f \in H^s(M) \cap N(P^*)^{\perp}$, there is a solution $u \in H^{s+m-1}(M)$ of the equation 
\[
Pu = f \text{ in $M$}.
\]
Moreover, there is a bounded linear operator $E_s: H^s(M) \cap N(P^*)^{\perp} \to H^{s+m-1}(M)$ such that $P E_s f = f$.
\end{enumerate}
\end{Proposition}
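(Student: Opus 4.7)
The plan is to deduce all four parts from the solvability theory on open manifolds (i.e.\ Hörmander's Theorem 26.1.7 applied on $X$) combined with standard functional analytic arguments adapted to distributions supported in $M$. Following the remark made earlier in the excerpt, I would first extend $P$ smoothly across $\p M$ to a properly supported differential operator $\tilde{P}$ of real principal type on the open manifold $X \supset M$. Note that any $u \in \mDp_M$ is canonically an element of $\mDp(X)$ with compact support, and $Pu$ as defined by $( Pu, \varphi ) = ( u, P^* \varphi )$ agrees with $\tilde{P} u$ computed on $X$.

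For part (a), the point is that propagation of singularities for $\tilde{P}$, applied on $X$, lets us transport regularity from $X \setminus M$ (where $u \equiv 0$, hence is $C^\infty$) along the null bicharacteristic flow. Since $\tilde P$ is real principal type, every bicharacteristic passing through a point of $M$ enters the interior from the region where $u$ is smooth, so the semiclassical/microlocal version of propagation (Hörmander, Theorem 26.1.4 and its Sobolev-space refinement) yields $u \in H^{s+m-1}_{\mathrm{loc}}(X)$, hence $u \in H^{s+m-1}_M$. Part (b) is then immediate: $N(P) \subset \mDp_M$ is a subspace on which $Pv = 0 \in C^\infty_M$, so by (a) repeatedly (or in one shot) we get $N(P) \subset C^\infty_M$; finite dimensionality follows from combining the a priori estimate in (c) with the compact embedding $H^{s+m-1}_M \hookrightarrow H^{s+m-2}_M$ and a standard Riesz argument.

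For (c), the estimate \eqref{v_htm_equation_first} is really the closed-graph/a priori version of statement (a) applied to $P^*$ (which is again of real principal type with the same bicharacteristic structure): one considers $Y_s$ as a Banach space under the graph norm, uses (a) applied to $P^*$ to see that the inclusion $Y_s \hookrightarrow H^{s+m-1}_M$ is everywhere defined, and then closed graph gives the bound. The sharper estimate \eqref{v_htm_equation} is obtained from \eqref{v_htm_equation_first} by the classical compactness-plus-uniqueness trick: assume it fails, take $v_j \in Y_s \cap N(P^*)^\perp$ with $\norm{v_j}_{H^{s+m-1}_M}=1$ and $\norm{P^* v_j}_{H^s_M} \to 0$, extract an $H^{s+m-2}_M$-convergent subsequence via compactness, and observe that the limit $v$ lies in $N(P^*) \cap N(P^*)^\perp = \{0\}$ while simultaneously $\norm{v}_{H^{s+m-1}_M} \geq 1/C_s$ by \eqref{v_htm_equation_first}, a contradiction.

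For (d), estimate \eqref{v_htm_equation} for $P^*$ means that the range of $P: Y_s \to H^{-s+1-m}_M$-type is closed (more precisely, $P: H^{s+m-1}(M) \to H^s(M)$ has closed range when restricted appropriately), and by the duality pairings $(H^s(M))^* = H^{-s}_M$ together with the fact that the orthogonal complement of the range of $P$ in $H^s(M)$ coincides with $N(P^*)$, one obtains solvability of $Pu = f$ for every $f \in H^s(M) \cap N(P^*)^\perp$; the right inverse $E_s$ is constructed by taking, for each $f$, the unique solution orthogonal to $N(P)$, and boundedness follows either from the open mapping theorem or from an explicit estimate via \eqref{v_htm_equation} applied to $P$ in place of $P^*$. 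The main obstacle I expect is keeping track of the correct duality pairings between $H^s(M)$ and $H^{-s}_M$ (and between $C^\infty(M)$ and $\mDp_M$) to ensure that $N(P^*)^\perp$ is interpreted consistently and that the annihilator-range duality really produces a solution in $H^{s+m-1}(M)$ rather than only in $\mDp_M$; this is where part (a) must be invoked one more time to upgrade distributional solvability to the claimed Sobolev regularity.
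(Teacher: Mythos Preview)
Your proposal for (a)--(c) matches the paper's proof: extend $P$ to a real principal type operator on an open $X$, invoke H\"ormander's Theorems 26.1.4 and 26.1.7 for (a) and (b), apply the closed graph theorem on $Y_s$ with its graph norm for \eqref{v_htm_equation_first}, and run the compactness-plus-uniqueness contradiction for \eqref{v_htm_equation}.

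For (d) the paper takes a more concrete route than the abstract closed-range theorem. It defines the conjugate-linear functional $\ell_f(P^*v) = (f,v)$ on $P^* C^\infty_M$, bounds it via \eqref{v_htm_equation} with $s$ replaced by $-(s+m-1)$, extends to the closure $W \subset H^{-(s+m-1)}_M$, composes with the orthogonal projection onto $W$, and represents the resulting bounded functional by $u_f \in H^{s+m-1}(M)$ via the duality $(H^{-(s+m-1)}_M)^* = H^{s+m-1}(M)$. This yields $Pu_f = f$ directly in the correct Sobolev space with linear dependence on $f$ built in, so no further appeal to (a) is required and your anticipated obstacle does not arise. Your closed-range formulation is equivalent in spirit, but watch two wrinkles: $P$ is not bounded $H^{s+m-1}(M) \to H^s(M)$ (it loses $m$ derivatives, not $m-1$), so the operator must be set up carefully before invoking closed range; and ``the unique solution orthogonal to $N(P)$'' is not well-posed, since the full solution space of $Pu=f$ in $H^{s+m-1}(M)$ is infinite-dimensional---it is only $N(P)$, the compactly supported kernel, that is finite-dimensional. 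The paper's Hahn--Banach construction sidesteps both issues.
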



\begin{proof}
Parts (a) and (b) follow from \cite[Theorems 26.1.4 and 26.1.7]{Hormander}, after extending $P$ as a real principal type operator to a slightly larger open manifold $X$. To prove part (c), note that by part (a) one has $Y_s \subset H^{s+m-1}_M$, and $Y_s$ with norm $\norm{v}_{Y_s} = \norm{v}_{H^{s+m-2}_M} + \norm{P^* v}_{H^s_M}$ becomes a Hilbert space. The inclusion map $\iota: Y \to H^{s+m-1}_M$ has closed graph, and the closed graph theorem yields \eqref{v_htm_equation_first}. This in turn implies \eqref{v_htm_equation}. Indeed, if \eqref{v_htm_equation} were false one would have a sequence $(v_j)$ in $Y_s \cap N(P^*)^{\perp}$ with $\norm{v_j}_{H^{s+m-1}_M} = 1$ and $\norm{P^* v_j}_{H^s_M} \to 0$, and by compact Sobolev embedding some subsequence would converge strongly in $H^{s+m-2}_M$ to some $v \in H^{s+m-2}_M \cap N(P^*)^{\perp}$ with $P^* v = 0$. Then \eqref{v_htm_equation_first} would give $\norm{v}_{H^{s+m-2}_M} \geq 1/C$, and we would obtain a contradiction since $v$ would be a nonzero element of both $N(P^*)$ and $N(P^*)^{\perp}$.

Finally, we prove part (d). Given $f \in H^s(M) \cap N(P^*)^{\perp}$, we define the (conjugate) linear functional 
\[
\ell_f: P^* C^{\infty}_M \to \mC, \ \ \ell_f(P^* v) = ( f, v ) \text{ for $v \in C^{\infty}_M$}.
\]
This is well defined by the assumption on $f$. One has 
\[
\abs{( f, v )} \leq \norm{f}_{H^s(M)} \norm{v}_{H^{-s}_M}\leq C \norm{f}_{H^s(M)} \norm{P^* v}_{H^{-(s+m-1)}_M}, \qquad v \in C^{\infty}_M,
\]
since this holds for $v \in C^{\infty}_M \cap N(P^*)^{\perp}$ by \eqref{v_htm_equation} (with $s$ replaced by $-s-m-1$) and both sides remain the same when a function in $N(P^*)$ is added to $v$. Thus, denoting by $W$ the closure of $P^* C^{\infty}_M$ in $H^{-(s+m-1)}_M$, $\ell_f$ extends uniquely to a bounded functional on $W$ such that 
\[
\abs{\ell_f(w)} \leq C \norm{f}_{H^s(M)} \norm{w}_{H^{-(s+m-1)}_M}, \qquad w \in W.
\]
Let $Q$ be the orthonormal projection from $H^{-(s+m-1)}_M$ to $W$, and define 
\[
\bar{\ell}_f: H^{-(s+m-1)}_M \to \mC, \ \ \bar{\ell}_f(u) = \ell_f(Qu).
\]
Then $\bar{\ell}_f$ is a bounded linear functional on $H^{-(s+m-1)}_M$, and it depends linearly on $f$ by construction. By duality there exists $u _f \in H^{s+m-1}(M)$ with $\norm{u_f}_{H^{s+m-1}(M)} \leq C \norm{f}_{H^s(M)}$ such that 
\[
( u_f, P^* v ) = ( f, v ), \qquad v \in C^{\infty}_M.
\]
In particular $P u_f = f$ in $M^{\mathrm{int}}$. Letting $E_s f = u_f$ concludes the proof.
\end{proof}

\addtocontents{toc}{\SkipTocEntry}
\subsection*{Semiclassical wave front sets}

Next, we will consider the action of real principal type operators on functions depending on a small parameter $h > 0$. We recall certain semiclassical notions following \cite{Zworski}. A family $u = u_h \subset L^2_{\mathrm{loc}}(X)$, $0 < h \leq 1$, is called \emph{$L^2$-tempered} if for any $\chi \in C^{\infty}_c(X)$ there is $N \geq 0$ so that 
\[
\norm{\chi u_h}_{L^2} = O(h^{-N})
\]
as $h \to 0$. We say that $u_h$ is \emph{semiclassically smooth} at $(x_0, \xi_0) \in T^* X$ if, in some local coordinates near $x_0$, there is $\varphi \in C^{\infty}_c$ with $\chi = 1$ near $x_0$ and $\psi \in C^{\infty}_c$ with $\psi = 1$ near $\xi_0$ so that 
\[
\psi \mF_h (\varphi u_h) = O_{L^2}(h^{\infty})
\]
where $\mF_h$ is the semiclassical Fourier transform, 
\[
\mF_h f(\xi) = (2\pi h)^{-n} \int_{\mR^n} e^{-i\frac{x \cdot \xi}{h}} f(x) \,dx.
\]
The definition is independent of the choice of local coordinates. The \emph{semiclassical wave front set} $\WF_{\text{scl}}(u_h)$ is the complement of those points $(x_0,\xi_0) \in T^* X$ at which $u_h$ is semiclassically smooth. Roughly, one can think of $\WF_{\text{scl}}(u_h)$ as the set where the family $u_h$ is localized in phase space as $h \to 0$.

We will need the following characterization of the semiclassical wave front set in terms of FBI (Fourier-Bros-Iagolnitzer) transforms. This is a local statement and reduces to the corresponding result in Euclidean space \cite[Theorem 13.14]{Zworski}.

\begin{Proposition} \label{prop_wavefront_fbi}
Let $u = u_h$ be $L^2$-tempered in $X$. One has $(x_0,\xi_0) \notin \WF_{\mathrm{scl}}(u)$ if and only if  
\[
\int_X u(y) \ol{e^{i\Psi(y;x,\xi)/h} b(y)} \,dV(y) = O(h^{\infty})
\]
uniformly over $(x,\xi)$ near $(x_0,\xi_0)$, where $\Psi = \Psi(\,\cdot\,;x,\xi)$ and $b = b(\,\cdot\,)$ are smooth functions in $X$ so that, in some local coordinates near $x_0$, 
\[
\Psi(y;x,\xi) = \xi \cdot (y-x) + \frac{i}{2} \abs{y-x}^2, \qquad \supp(b) \text{ is close to $x_0$}, \qquad b = 1 \text{ near $x_0$}.
\]
\end{Proposition}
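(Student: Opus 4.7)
The plan is to reduce the equivalence to the classical Euclidean FBI-transform characterization of the semiclassical wave front set, as in \cite[Theorem 13.14]{Zworski}. Since both conditions refer only to a neighborhood of $(x_0,\xi_0)$, I would work in the local coordinates near $x_0$ in which $\Psi$ takes the stated form. Writing $dV = \rho(y)\,dy$ for a smooth positive density $\rho$ and absorbing $\rho$ into $b$ (replacing $b$ by $\rho b$, which is still smooth and equals $\rho(x_0)$ near $x_0$), the integral in the proposition becomes
\[
I_h(x,\xi) := \int_{\mR^n} u(y)\, e^{i\xi\cdot(x-y)/h - |x-y|^2/(2h)}\, \ol{b(y)}\,dy,
\]
which, up to a scalar factor $c_n h^{-3n/4}$ that plays no role in the condition $O(h^{\infty})$, is a cut-off version of the standard semiclassical FBI transform $(Tu)(x,\xi)$.

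Next I would show that the cutoff $b$ can be removed or inserted without affecting the $O(h^{\infty})$ behavior of $I_h$ near $(x_0,\xi_0)$. Choose $\chi \in C^\infty_c(X)$ with $\chi = 1$ on a neighborhood of $x_0$ contained in $\{b = 1\}$. When $(x,\xi)$ ranges over a sufficiently small neighborhood of $(x_0,\xi_0)$, the complement $\{b \ne 1\}$ satisfies $|x - y| \ge c > 0$ uniformly, so the Gaussian factor $e^{-|x-y|^2/(2h)}$ is $O(e^{-c^2/(2h)})$ there. Combining this with the $L^2$-tempered bound $\norm{\chi_1 u}_{L^2} = O(h^{-N})$ for a suitable cutoff $\chi_1$ and applying Cauchy--Schwarz, the contribution of $\{b \ne 1\}$ is $O(h^{\infty})$; the same estimate shows that replacing $u$ by $\chi u$ changes $I_h$ only by $O(h^{\infty})$. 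Hence $I_h(x,\xi) = O(h^{\infty})$ uniformly near $(x_0,\xi_0)$ if and only if the full Euclidean FBI transform $T(\chi u)(x,\xi)$ has the same decay.

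Finally I would invoke \cite[Theorem 13.14]{Zworski}, which for any $L^2$-tempered family on $\mR^n$ asserts that $(x_0,\xi_0) \notin \WF_{\mathrm{scl}}(v)$ if and only if $(Tv)(x,\xi) = O(h^{\infty})$ uniformly in a neighborhood of $(x_0,\xi_0)$. Applying this to $v = \chi u$ and using that $\WF_{\mathrm{scl}}$ is a local notion (so $\WF_{\mathrm{scl}}(\chi u)$ and $\WF_{\mathrm{scl}}(u)$ coincide at $(x_0,\xi_0)$), both implications of the proposition follow. The main obstacle is essentially bookkeeping --- matching our phase $\Psi$, the cutoff $b$, and the density $\rho$ to the objects in the Euclidean FBI setup and tracking the uniformity of the $O(h^{\infty})$ estimate in $(x,\xi)$ --- rather than any new analytic difficulty, since the substantive characterization of $\WF_{\mathrm{scl}}$ via the FBI transform is already carried out in \cite{Zworski}.
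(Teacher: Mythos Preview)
Your proposal is correct and matches the paper's approach: the paper does not give a detailed proof but simply remarks that the statement is local and reduces to \cite[Theorem 13.14]{Zworski}, which is exactly the reduction you carry out (localizing via the cutoff $b$, absorbing the density, and invoking the Euclidean FBI characterization).
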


If $u_h$ and $v_h$ are $L^2$-tempered, the semiclassical wave front set has the following simple properties:
\begin{itemize}
\item 
If $u_h$ and $v_h$ are semiclassically smooth at $(x_0,\xi_0)$, then so is $u_h + v_h$. Thus 
\[
\WF_{\text{scl}}(u_h) \triangle \WF_{\text{scl}}(v_h) \subset \WF_{\text{scl}}(u_h+v_h) \subset \WF_{\text{scl}}(u_h) \cup \WF_{\text{scl}}(v_h), 
\]
where $A \triangle B = (A \setminus B) \cup (B \setminus A)$.
\item 
If $\norm{\chi v_h}_{L^2} = O(h^{\infty})$ for $\chi \in C^{\infty}_c(X)$, then $\WF_{\text{scl}}(v_h) = \emptyset$ and $\WF_{\text{scl}}(u_h + v_h) = \WF_{\text{scl}}(u_h)$.
\end{itemize}

The following result is a semiclassical version of the propagation of singularities theorem.

\begin{Proposition}
Let $P$ be a real principal type differential operator in an open manifold $X$, and let $u_h$ be $L^2$-tempered. If 
\[
Pu_h = f_h \text{ in $X$}
\]
where $f_h$ is $L^2$-tempered, then $\WF_{\text{scl}}(u_h) \setminus \WF_{\text{scl}}(f_h)$ is invariant under the Hamiltonian flow of the principal symbol of $P$.
\end{Proposition}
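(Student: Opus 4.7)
The plan is to adapt H\"ormander's positive-commutator proof of the classical propagation of singularities theorem \cite[Theorem 18.1.28]{Hormander} to the semiclassical setting, in the spirit of \cite{Zworski}. Since the statement is local on $T^*X$, I work in a coordinate chart. A classical differential operator $P$ of order $m$ with real principal symbol $p$ is viewed semiclassically via $h^m P \in \Psi_h^m$, which has semiclassical principal symbol $p$ on bounded frequencies, and whose Hamilton vector field is $H_p$. By an open--closed argument along the bicharacteristic $\gamma(t) = \exp(tH_p)(x_0,\xi_0)$ (using reversibility of the real flow to handle both directions), it suffices to prove the following infinitesimal version: if $(x_0,\xi_0) \notin \WF_{\text{scl}}(u_h)$ and a short segment $\gamma([0,t_0])$ is disjoint from $\WF_{\text{scl}}(f_h)$, then $\gamma(t_0) \notin \WF_{\text{scl}}(u_h)$.

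The heart of the proof is a positive-commutator estimate. I construct an escape function $a \in C^\infty_c(T^*X)$, $a \geq 0$, concentrated in a thin tube around $\gamma([0,t_0])$, with
\[
-H_p a \;=\; b^2 - c^2 - r,
\]
where $b > 0$ near $\gamma(t_0)$, $\supp(c)$ lies in a small neighborhood of $(x_0,\xi_0)$, and $r \geq 0$ is supported in $\supp(a)$. Such $a$ is obtained in flow-box coordinates around $\gamma$ in $T^*X$ by taking $a(s,y) = \phi(s)\psi(y)$, with $\psi$ a bump transverse to $\gamma$ and $\phi$ suitably monotone in the flow parameter $s$, so that $H_p a = \phi'(s)\psi(y)$ has the required sign structure. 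Weyl quantization produces $A, B, C, R \in \Psi_h^0$, and semiclassical symbol calculus gives
\[
\tfrac{i}{h}[h^m P, A] \;=\; -B^*B + C^*C + R + hE, \qquad E \in \Psi_h^m \text{ compactly supported}.
\]

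Pairing this identity against $u_h$, using $Pu_h = f_h$ together with the fact that $P^* - P \in \Psi^{m-1}$ contributes only a lower-order remainder in the semiclassical scaling, yields
\[
\langle B^*B u_h, u_h\rangle \;\leq\; \langle C^*C u_h, u_h\rangle + \langle R u_h, u_h\rangle + Ch\,\|\widetilde A u_h\|^2 + O(h^\infty),
\]
where $\widetilde A \in \Psi_h^0$ has symbol supported in a slightly larger tube than $\supp(a)$ and the $O(h^\infty)$ term absorbs all pairings involving $f_h$, since $\supp(a) \cap \WF_{\text{scl}}(f_h) = \emptyset$. The sharp G\aa rding inequality bounds $-\langle R u_h, u_h\rangle$ by a further $O(h)$ error of the same form, and $\|C u_h\| = O(h^\infty)$ by the hypothesis at $(x_0,\xi_0)$. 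A standard nested-tube bootstrap (cf.\ \cite{Zworski}) then upgrades the $O(h)$ remainder to $O(h^\infty)$, so $\|B u_h\| = O(h^\infty)$; semiclassical ellipticity of $b$ at $\gamma(t_0)$ delivers $\gamma(t_0) \notin \WF_{\text{scl}}(u_h)$.

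The main obstacle is the construction of the escape function with the prescribed sign structure along $\gamma$ together with the careful absorption of the $h$-order error via the sharp G\aa rding inequality and the bootstrap that upgrades the $O(h)$ loss to $O(h^\infty)$. These steps are standard in the semiclassical propagation literature but require some bookkeeping; note that cusps of the spatial projection of $\gamma$ (which can occur for real principal type operators such as Tricomi) do not present any difficulty here, since the construction of $a$ and $H_pa$ takes place on $T^*X$, where the bicharacteristic itself is a smooth embedded curve.
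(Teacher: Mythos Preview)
Your argument is correct: this is the standard positive-commutator proof of semiclassical propagation of singularities, and the steps you outline (escape function in flow-box coordinates for $H_p$, commutator identity, sharp G\aa rding, nested-tube bootstrap) are exactly the right ones. The paper, however, does none of this. Its entire proof is a two-line reduction: set $P_h := h^m P$, observe that the semiclassical principal symbol of $P_h$ equals the classical principal symbol $p$ of $P$, rewrite the equation as $P_h u_h = h^m f_h$, and invoke \cite[Theorem~12.5]{Zworski} directly. In other words, you have sketched the proof of the black box the paper cites. Your approach is self-contained and your remark that cusps of the spatial projection cause no difficulty (since the construction lives on $T^*X$, where the bicharacteristic is a smooth embedded curve) is well observed; the paper's approach is simply to quote the existing semiclassical result after the rescaling $P \mapsto h^m P$.
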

\begin{proof}
Define $P_h := h^m P$, so that the semiclassical principal symbol of $P_h$ is precisely the same as the classical principal symbol of $P$. Then $P_h u_h = h^m f_h$, and the result follows from \cite[Theorem 12.5]{Zworski}.
\end{proof}

\addtocontents{toc}{\SkipTocEntry}
\subsection*{Cauchy data sets}

Next we will discuss Cauchy data sets, starting with three basic invariance properties.

\begin{Lemma} \label{lemma_cauchy_data_set_invariances}
Let $P$ be a differential operator of order $m$, and let $c \in C^{\infty}(M)$ be nonvanishing. Then 
\[
C_P = C_{cP}.
\]
If additionally $c|_{\p M} = 1$ and $\nabla^j c|_{\p M} = 0$ for $1 \leq j \leq m-1$, then 
\[
C_P = C_{P + c^{-1}[P,c]}.
\]
Moreover, if $\Phi: M \to M$ is a diffeomorphism such that  $\Phi$ and $\mathrm{Id}_M$ agree to order $m-1$ on $\p M$, then 
\[
C_P = C_{\Phi^* P}.
\]
Here $\Phi^* P$ is the differential operator on $M$ defined via $\Phi^* P(\Phi^* v) = \Phi^* (Pv)$ for $v \in C^{\infty}(M)$.
\end{Lemma}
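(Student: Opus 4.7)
The plan is to handle the three invariances one at a time; in each case I will exhibit a bijection between the solution spaces of the two operators and then check that the bijection preserves the boundary jets of order $\leq m-1$ under the stated hypotheses.

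For the first claim, since $c$ is nowhere vanishing and $c \in C^{\infty}(M)$, multiplication by $c$ is a bijection on $C^{\infty}(M)$-valued expressions, so $Pu=0$ in $M$ if and only if $(cP)u = c(Pu) = 0$ in $M$. Thus the solution spaces $\{u \in H^m(M) : Pu = 0\}$ and $\{u \in H^m(M) : (cP)u = 0\}$ are literally the same set, and hence $C_P = C_{cP}$.

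For the second claim, I will use the algebraic identity $(P + c^{-1}[P,c])v = c^{-1}P(cv)$, which follows directly from $[P,c]v = P(cv) - cPv$. Thus $v \in H^m(M)$ solves $(P + c^{-1}[P,c])v = 0$ if and only if $w := cv \in H^m(M)$ solves $Pw = 0$. Since $c$ is nonvanishing and smooth, the map $v \mapsto cv$ is a bijection on $H^m(M)$. It remains to verify that $v$ and $cv$ have identical Cauchy data up to order $m-1$. By the Leibniz rule for covariant derivatives,
\[
\nabla^j(cv)|_{\p M} = \sum_{k=0}^{j} \binom{j}{k}\, \nabla^k c \otimes \nabla^{j-k} v \Big|_{\p M}, \qquad 0 \le j \le m-1.
\]
The hypotheses $c|_{\p M} = 1$ and $\nabla^k c|_{\p M} = 0$ for $1 \le k \le m-1$ kill every term except $k=0$, yielding $\nabla^j(cv)|_{\p M} = \nabla^j v|_{\p M}$ for all $j \leq m-1$. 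This gives $C_{P + c^{-1}[P,c]} = C_P$.

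For the third claim, the defining relation $(\Phi^* P)(\Phi^* v) = \Phi^*(Pv)$ shows that $u \in H^m(M)$ solves $(\Phi^* P)u = 0$ if and only if $v := (\Phi^{-1})^* u$ solves $Pv = 0$ (the pullbacks are bijections on $H^m(M)$ because $\Phi$ is a diffeomorphism). To match Cauchy data, note that $u = v \circ \Phi$, so for $x \in \p M$ one has $u(x) = v(\Phi(x))$, and the chain rule / Faà di Bruno expansion expresses $\nabla^j u(x)$ as a universal polynomial in $\nabla^{\ell} v (\Phi(x))$ for $\ell \le j$ and in $\nabla^k \Phi(x)$ for $1 \le k \le j$. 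The assumption that $\Phi$ agrees with $\mathrm{Id}_M$ to order $m-1$ on $\p M$ means, in local coordinates near $x \in \p M$, that $\Phi(x)=x$ and $\nabla^k(\Phi - \mathrm{Id})|_{\p M} = 0$ for $1 \le k \le m-1$. Substituting these into the chain-rule expansion collapses it, term by term, to $\nabla^j u|_{\p M} = \nabla^j v|_{\p M}$ for $0 \le j \le m-1$, giving $C_P = C_{\Phi^* P}$.

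The only mildly delicate point is the third part: care is required to phrase "$\Phi$ agrees with $\mathrm{Id}_M$ to order $m-1$ on $\p M$" coordinate-invariantly and to check that the chain-rule/Faà di Bruno cancellation is clean once the higher jets of $\Phi - \mathrm{Id}$ on $\p M$ are zero. I would handle this by working in boundary normal coordinates (with respect to the auxiliary metric $g$ used in defining $C_P$), where the statement becomes a straightforward vanishing-Taylor-coefficient computation. All other steps are purely algebraic manipulations combined with the Leibniz rule.
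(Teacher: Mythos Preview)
Your proposal is correct and follows essentially the same approach as the paper; the paper's proof is extremely terse (one sentence per part, invoking only the commutator formula $P(cv)=cPv+[P,c]v$ and the observation that $u$ and $\Phi^*u$ agree to order $m-1$ on $\partial M$), while you have simply spelled out the Leibniz-rule and chain-rule computations that the paper leaves implicit.
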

\begin{proof}
The first part is clear, and the second part follows from the formula 
\[
P(cv) = cPv + [P,c]v.
\]
The third part holds since $u$ and $\Phi^* u$ agree to order $m-1$ on $\p M$ by the assumption on $\Phi$.
\end{proof}

It is instructive to verify how these invariances affect the scattering relation and bicharacteristic ray transforms. The next lemma shows that Theorem \ref{thm_main1} is consistent with the above invariances. It also shows that if $M$ is simply connected, then changing $P$ to $P + c^{-1}[P,c]$ changes the subprincipal symbol by a term $\p_{\xi_a} p_m \p_a \varphi$ for some $\varphi \in C^{\infty}(M)$ with $\varphi|_{\p M} = 0$. If $P$ is a second order operator and $A$ is a $1$-form encoding the first order coefficients, this corresponds to the standard invariance $A \to A + d\varphi$ obtained by conjugating $P$ with $e^{\pm i\varphi}$. See e.g.\ \cite{DKSaU, StefanovYang} for the Riemannian and Lorentzian cases. However, if $M$ has nontrivial topology, there is a more general invariance (corresponding to $A \to A + d\varphi + h$ in the second order case) described by harmonic $1$-forms $h$ whose integrals over closed loops are in $2\pi i \mZ$. This is related to the Aharonov-Bohm effect where in fact the Cauchy data sets may differ if the integrals of $h$ over closed loops are not in $2\pi i \mZ$, see \cite{AB} and the review \cite{Eskin_AB}. This fact also explains the nonuniqueness modulo $2\pi \mZ$ in Theorem \ref{thm_main1}.

\begin{Lemma} \label{lemma_aharonov_bohm_general}
All three invariances in Lemma \ref{lemma_cauchy_data_set_invariances} preserve the scattering relation $\alpha_P$ (for the third one one needs to assume $m \geq 2$). Moreover, if $M$ is connected and if $c \in C^{\infty}(M,\mC)$ satisfies $c|_{\p M} = 1$, then 
\begin{equation} \label{subprincipal_gauge_formula}
\sigma_{\subp}[(P+c^{-1}[P,c])^{\mu}] = \sigma_{\subp}(P^{\mu}) - i \sum_{a=1}^n \p_{\xi_a} p_m (\p_a \varphi + h_a)
\end{equation}
for some $\varphi \in C^{\infty}(M,\mC)$ with $\varphi|_{\p M} = 0$, and for some harmonic $1$-form $h = h_a \,dx^a$ on $M$ (with respect to the auxiliary Riemannian metric) whose tangential part vanishes on $\p M$ and which satisfies 
\[
\int_{\eta} h \in 2\pi i \mZ \qquad \text{for any closed loop $\eta$ in $M$}.
\]
The integral of $\sum_{a=1}^n \p_{\xi_a} p_m (\p_a \varphi + h_a)$ over any null bicharacteristic segment between boundary points is in $2\pi i \mZ$.
\end{Lemma}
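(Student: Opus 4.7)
The plan is to handle the three invariance statements one by one and then compute the subprincipal symbol directly, reducing the final period statement to Hodge theory on manifolds with boundary.

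\textbf{Scattering relation.} First I would observe that $P$ and $cP$ have principal symbols $p_m$ and $cp_m$; on the characteristic set $\{p_m = 0\}$ the Hamilton vector fields satisfy $H_{cp_m} = cH_{p_m} + p_m H_c = cH_{p_m}$, so the bicharacteristics are only reparametrized and the scattering relation is preserved. For the second invariance, I would rewrite $P + c^{-1}[P,c] = c^{-1} P c$, whose principal symbol is still $p_m$, hence the same characteristic set and the same null bicharacteristics. For $\Phi^* P$ with $m \geq 2$, the principal symbol is the pullback of $p_m$ by the cotangent lift $\widetilde\Phi$; since $\Phi$ and $\mathrm{Id}_M$ agree to order $m-1 \geq 1$ on $\partial M$, we have $\widetilde\Phi = \mathrm{Id}$ on $T^*M|_{\partial M}$, so the bicharacteristic endpoints (which lie in $T^*M|_{\partial M}$) are unchanged.

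\textbf{Subprincipal symbol of $c^{-1}Pc$.} Next I would check that conjugation commutes with passing to half densities, so that $(P + c^{-1}[P,c])^\mu = c^{-1} P^\mu c$. Write locally $c = e^\psi$. Using $e^{-\psi} D_j e^\psi = D_j - i\partial_j \psi$ and expanding $e^{-\psi} P^\mu e^\psi$ in local coordinates, the order $m$ part of the full symbol is unchanged, and the order $m-1$ part becomes $p_{m-1}^\mu - i\{p_m, \psi\}$. Since the second term in the definition \eqref{def_subprin_symb} of the subprincipal symbol depends only on $p_m$, I get
\[
\sigma_\subp\bigl[(P+c^{-1}[P,c])^\mu\bigr] = \sigma_\subp[P^\mu] - i\{p_m,\psi\} = \sigma_\subp[P^\mu] - i \sum_{a=1}^n \partial_{\xi_a} p_m \cdot (c^{-1}\partial_a c).
\]
The right hand side is globally defined on $M$ since $c^{-1}dc$ is, even when $\log c$ is multivalued.

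\textbf{Hodge decomposition of $c^{-1}dc$.} The key is that $\omega := c^{-1}dc$ is a closed $1$-form with complex coefficients, and $c|_{\partial M} = 1$ forces the pullback of $\omega$ to $\partial M$ to vanish (all tangential derivatives of $c$ along $\partial M$ are zero). I would invoke the Hodge--Friedrichs decomposition for a compact Riemannian manifold with boundary relative to $\partial M$: any closed $1$-form with vanishing tangential part on $\partial M$ may be written as $d\varphi + h$ where $\varphi \in C^\infty(M,\mC)$ with $\varphi|_{\partial M}=0$ and $h$ is harmonic with vanishing tangential part on $\partial M$ (this is the cohomological content of $H^1(M,\partial M;\mC) \cong \mathcal{H}^1_D(M)$). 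Since $c \colon M \to \mC\setminus\{0\}$ is continuous, the period of $\omega$ around any closed loop $\eta$ equals $2\pi i$ times the winding number of $c \circ \eta$, hence lies in $2\pi i \mZ$. As $d\varphi$ has zero periods, the same holds for $h$.

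\textbf{The final period statement.} Along a null bicharacteristic $\gamma(t) = (x(t),\xi(t))$ the tangent vector of the spatial projection is $\dot x^a(t) = \partial_{\xi_a} p_m(\gamma(t))$. Therefore
\[
\int_0^T \sum_a \partial_{\xi_a} p_m \,(\partial_a \varphi + h_a)(\gamma(t))\,dt = \bigl[\varphi(x(T))-\varphi(x(0))\bigr] + \int_{x([0,T])} h.
\]
The bracketed term vanishes because $x(0), x(T) \in \partial M$ and $\varphi|_{\partial M}=0$. For the second term, I close the path $x([0,T])$ by any curve $\eta \subset \partial M$ from $x(T)$ to $x(0)$; since $h$ has vanishing tangential part on $\partial M$, $\int_\eta h = 0$, so the integral equals the period of $h$ around the closed loop $x([0,T]) \cup \eta$, which lies in $2\pi i \mZ$ by the previous step.

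The main obstacle I anticipate is the Hodge-theoretic step: verifying that one can choose $h$ and $\varphi$ simultaneously with the required boundary behaviour, and matching the periods of $c^{-1}dc$ cleanly with those of $h$. The scattering relation and symbol computations are standard once the correct algebraic identity $P+c^{-1}[P,c] = c^{-1}Pc$ is noticed.
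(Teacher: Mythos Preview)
Your argument is essentially the paper's, and the symbol computation and Hodge step are fine. There is, however, a genuine gap in the last paragraph: you close the spatial curve $x([0,T])$ by a path $\eta \subset \partial M$, but the hypothesis is only that $M$ is connected, not that $\partial M$ is. If $x(0)$ and $x(T)$ lie in different components of $\partial M$ (e.g.\ $M$ an annulus), no such $\eta$ exists and your closing argument breaks down.

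The paper avoids this by not closing the loop at all. It fixes $x_0 \in \partial M$, defines $\psi(x) := \int_{\eta_{x_0,x}} h$ modulo $2\pi i\mZ$ using paths in $M$ (well-defined by the period condition), and then observes that $d(ce^{-\varphi}e^{-\psi}) = (c^{-1}dc - d\varphi - h)\,ce^{-\varphi}e^{-\psi} = 0$, so $ce^{-\varphi}e^{-\psi}$ is constant; evaluating at $x_0$ gives $e^\psi|_{\partial M} = 1$, i.e.\ $\psi(y) \in 2\pi i\mZ$ for every $y \in \partial M$ regardless of component. Then $\int_0^T \partial_{\xi_a} p_m\, h_a(\gamma(t))\,dt = \psi(x(T)) - \psi(x(0)) \in 2\pi i\mZ$. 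An equivalent one-line fix for your version: compute directly $\int_{x([0,T])} (d\varphi + h) = \int_{x([0,T])} c^{-1}\,dc$, which equals $2\pi i$ times the winding number of the path $c\circ x$ in $\mC\setminus\{0\}$ from $1$ to $1$, hence lies in $2\pi i\mZ$; this uses only $c|_{\partial M}=1$ and needs no boundary path.
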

\begin{proof}
Note that if $P$ has real principal symbol and $c \in C^{\infty}(M,\mR)$ is nonvanishing, then changing $P$ to $cP$ or $P + c^{-1}[P,c]$ does not change the null bicharacteristic flow. Replacing $P$ by $\Phi^* P$ changes the null bicharacteristics in the interior, but not at the boundary if $\Phi$ and $\mathrm{Id}_M$ agree to first order on $\p M$. Thus the three invariances above preserve the scattering relation at least when $m \geq 2$.

If $P$ has full symbol with polyhomogeneous expansion $\sum_{j=0}^m p_j(x,\xi)$ in some local coordinate system, then the order $m-1$ term in the corresponding expansion for $P + c^{-1}[P,c]$ is 
\[
p_{m-1}(x,\xi) -i c^{-1} \sum_{a=1}^n \p_{\xi_a} p_m(x,\xi) \p_a c(x).
\]
Thus \eqref{subprincipal_gauge_formula} follows if we can prove that 
\[
c^{-1} dc = d\varphi + h
\]
where $\varphi$ and $h$ are as stated. This is just the Hodge decomposition for the $1$-form $c^{-1} dc$, which can be obtained by using the auxiliary Riemannian metric $g$ on $M$ and solving 
\[
-\Delta_{g} \varphi = \delta_{g}(c^{-1} dc) \text{ in $M$}, \qquad \varphi|_{\p M} = 0.
\]
It follows that $h := c^{-1} dc - d\varphi$ is harmonic (i.e.\ $dh = \delta_{g} h = 0$) with vanishing tangential part on $\p M$. If $\eta$ is a closed loop in $M$, we have 
\[
\int_{\eta} h = \int_{\eta} c^{-1} dc = \int_{c \circ \eta} \frac{dz}{z}.
\]
The last quantity is $2\pi i$ times the winding number of the curve $c \circ \eta$ in $\mC \setminus \{ 0 \}$, and hence belongs to $2\pi i \mZ$.

Finally, using that $M$ is connected, we may fix $x_0 \in \p M$ and define the function 
\[
\psi: M \to \mC/2\pi i \mZ, \ \ \psi(x) = \int_{\eta_{x_0,x}} h \quad \text{modulo $2\pi i \mZ$}
\]
where $\eta_{x_0,x}$ is any smooth curve from $x_0$ to $x$ in $M$. This is well defined by the condition on $h$, and $e^{\pm \psi}$ are well defined smooth functions $M \to \mC \setminus \{0\}$. Moreover, 
\[
d(c e^{-\varphi} e^{-\psi}) = (c^{-1} dc - d\varphi - h)c e^{-\varphi} e^{-\psi} = 0.
\]
Thus $c e^{-\varphi} e^{-\psi}$ is a constant. Evaluating at $x_0$ gives that $c = e^{\varphi} e^{\psi}$ and that $e^{\psi}|_{\p M} = c e^{-\varphi}|_{\p M} = 1$, i.e.\ $\psi(y) = 0$ modulo $2\pi i \mZ$ whenever $y \in \p M$. Now, if $\gamma: [0,T] \to M$, $\gamma(t) = (x(t),\xi(t))$ is a null bicharacteristic segment between boundary points, the fact that $\p_{\xi_a} p_m(x(t), \xi(t)) = \dot{x}_a(t)$ yields 
\[
\int_0^{T} (\p_{\xi_a} p_m \p_a \varphi)(\gamma(t)) \,dt = \int_0^{T} \frac{d}{dt} (\varphi(x(t))) \,dt = 0
\]
since $\varphi|_{\p M} = 0$. For the part involving $h$ we have 
\[
\int_0^{T} (\p_{\xi_a} p_m h_a)(\gamma(t)) \,dt = \int_0^{T} h(\dot{x}(t)) \,dt.
\]
Now we may write $\psi(x(t)) = \int_{\eta_{x_0,x(0)}} h + \int_0^t h(\dot{x}(s)) \,ds$ modulo $2\pi i \mZ$, leading to 
\[
\int_0^{T} (\p_{\xi_a} p_m h_a)(\gamma(t)) \,dt =  \psi(x(T)) - \psi(x(0)) = 0 \text{ modulo } 2\pi i \mZ
\]
since $\psi(y) = 0$ modulo $2\pi i \mZ$ for $y \in \p M$.
\end{proof}

Note also that the gauge invariances in Lemma \ref{lemma_cauchy_data_set_invariances} are formulated in a different way compared to the usual invariances in the Calder\'on or Gel'fand problems for second order operators. For instance, if $\bar{g}$ is a Riemannian metric, $A$ is a $1$-form and $q$ is a function on $M$, the corresponding second order operator (written in local coordinates) in the Calder\'on problem is 
\[
Pu = \abs{\bar{g}}^{-1/2}(D_j + A_j)(\abs{\bar{g}}^{1/2} \bar{g}^{jk} (D_k + A_k)u) + qu.
\]
In this case one typically defines a normal derivative with respect to $\bar{g}$ and $A$, leading to the Cauchy data set 
\[
\tilde{C}_P = \{(u|_{\p M}, (du + iAu)(\nu_{\bar{g}})|_{\p M} \,;\, Pu = 0 \text{ in $M$} \},
\]
see \cite{DKSaU}. The Cauchy data set in this article is instead defined in terms of a (known) reference Riemannian metric $g$ on $M$ as 
\[
C_P = \{(u|_{\p M}, \nabla_g u|_{\p M}) \,;\, Pu = 0 \text{ in $M$} \}.
\]
Of course, if $\bar{g}$ and $A$ are known on $\p M$, then $C_P$ determines $\tilde{C}_P$ and vice versa. However, the difference between $C_P$ and $\tilde{C}_P$ implies for instance that there is no analogue of the gauge invariance $A \to A + d\psi$ with $\psi|_{\p M} = 0$ in our boundary determination result for the subprincipal symbol.

Finally, we discuss a simple but fundamental integral identity. We first give a characterization of the inclusion $C_{P_1} \subset C_{P_2}$ in terms of an estimate for a certain inner product.

\begin{Lemma} \label{lemma_integral_identity}
Let $P_1$ and $P_2$ be differential operators of order $m$. One has $C_{P_1} \subset C_{P_2}$ if and only if for any $u_1 \in H^m(M)$ solving $P_1 u_1 = 0$ in $M$, there is $C_{u_1} > 0$ so that 
\begin{equation} \label{pone_ptwo_v}
\abs{((P_1-P_2)u_1, v)_{L^2(M)}} \leq C_{u_1} \norm{P_2^* v}_{H^{-m}(M)}, \qquad v \in H^m(M).
\end{equation}
\end{Lemma}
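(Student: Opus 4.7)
The plan is to handle the two implications separately. The forward direction is a short integration-by-parts argument, while the backward direction uses Hahn--Banach together with the duality $(H^m_M)^{*}=H^{-m}(M)$ recorded just before the lemma.

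For the forward direction, fix $u_1\in H^m(M)$ with $P_1u_1=0$. By $C_{P_1}\subset C_{P_2}$, there is $u_2\in H^m(M)$ solving $P_2u_2=0$ with the same boundary data up to order $m-1$ as $u_1$. Setting $w=u_1-u_2$, smoothness of $\p M$ gives $w\in H^m_0(M)\cong H^m_M$ via extension by zero. Then for any $v\in H^m(M)$,
\[
((P_1-P_2)u_1,v) \;=\; -(P_2u_1,v) \;=\; -(P_2w,v) \;=\; -(w,P_2^{*}v),
\]
where the last equality is integration by parts, justified by density from smooth compactly supported approximations of $w$ since all traces up to order $m-1$ vanish. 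The duality $(H^m_M)^{*}=H^{-m}(M)$ yields
\[
|((P_1-P_2)u_1,v)| \;\le\; \|w\|_{H^m_M}\,\|P_2^{*}v\|_{H^{-m}(M)},
\]
so $C_{u_1}:=\|w\|_{H^m_M}$ works.

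For the backward direction, the aim is to produce $w\in H^m_0(M)$ with $P_2w=(P_2-P_1)u_1$ in $M^{\mathrm{int}}$; then $u_2:=u_1-w\in H^m(M)$ will satisfy $P_2u_2=P_2u_1-(P_2-P_1)u_1=P_1u_1=0$ and will share Cauchy data with $u_1$, giving $(u_1|_{\p M},\ldots,\nabla^{m-1}u_1|_{\p M})\in C_{P_2}$ as required. To construct such a $w$, define on the subspace $P_2^{*}H^m(M)\subset H^{-m}(M)$ the functional
\[
\ell(P_2^{*}v) \;:=\; ((P_2-P_1)u_1,v).
\]
The assumed estimate (\ref{pone_ptwo_v}) says $|\ell(P_2^{*}v)|\le C_{u_1}\|P_2^{*}v\|_{H^{-m}(M)}$; in particular $\ell$ is well defined (if $P_2^{*}v=0$ in $H^{-m}(M)$ the estimate forces $\ell(P_2^{*}v)=0$) and bounded. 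Since the paper's pairing is conjugate linear in the second slot, $\ell$ is a bounded conjugate linear functional on this subspace. Extending $\ell$ by Hahn--Banach to $H^{-m}(M)$ and invoking the duality $(H^{-m}(M))^{*}=H^m_M=H^m_0(M)$ yields a unique $w\in H^m_0(M)$ representing it, so that
\[
(w,P_2^{*}v) \;=\; ((P_2-P_1)u_1,v), \qquad v\in H^m(M).
\]
Integration by parts (again valid because $w\in H^m_0(M)$) rewrites the left side as $(P_2w,v)$; testing against $v\in C^{\infty}_c(M^{\mathrm{int}})$ gives $P_2w=(P_2-P_1)u_1$ distributionally in $M^{\mathrm{int}}$, which is what we needed.

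The only nonformal step I expect is the backward direction's combination of Hahn--Banach with the correct identification of the dual of $H^{-m}(M)$ as $H^m_0(M)$, plus careful bookkeeping of the conjugate linear pairing convention adopted in Section~\ref{sec_preliminaries}. Neither is a real obstacle; the proof is essentially the standard Alessandrini-type duality trick, lifted from the second order Calder\'on/Gel'fand setting to order-$m$ Cauchy data.
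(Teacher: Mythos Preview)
Your proof is correct and follows essentially the same route as the paper: the forward direction is the same integration-by-parts/duality computation, and the backward direction is the same Hahn--Banach construction of $w\in H^m_0(M)$ from the bounded functional $P_2^*v\mapsto ((P_2-P_1)u_1,v)$ on $P_2^*H^m(M)\subset H^{-m}(M)$. The only cosmetic differences are a sign convention (the paper writes $f=-P_2u_1$ and sets $\tilde u_2=u_1+w$) and that the paper does not claim uniqueness of $w$---Hahn--Banach extensions need not be unique, though any choice works here.
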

\begin{proof}
Assume that $C_{P_1} \subset C_{P_2}$ and that $u_1 \in H^m(M)$ solves $P_1 u_1 = 0$ in $M$. Then we can find $\tilde{u}_2 \in H^m(M)$ with $P_2 \tilde{u}_2 = 0$ in $M$ so that $u_1-\tilde{u}_2 \in H^m_0(M)$. For any $v \in H^m(M)$, one has in terms of $L^2(M)$ inner products 
\begin{align*}
( (P_1-P_2) u_1, v) = - (P_2 u_1, v) = - (P_2 (u_1 - \tilde{u}_2), v) = - (u_1 - \tilde{u}_2, P_2^* v).
\end{align*}
The inequality \eqref{pone_ptwo_v} follows from the duality of $H^m_0(M)$ and $H^{-m}(M)$.

Conversely, let $u_1 \in H^m(M)$ solve $P_1 u_1 =  0$, and assume that \eqref{pone_ptwo_v} holds. Then $f := -P_2 u_1$ satisfies 
\begin{equation} \label{f_v_inner_product_estimate}
\abs{(f,v)} \leq C \norm{P_2^* v}_{H^{-m}(M)}, \qquad v \in H^m(M).
\end{equation}
Define a linear functional 
\[
\ell_f: P_2^* H^m(M) \to \mC, \ \ \ell_f(P_2^* v) = (f,v).
\]
This is well defined and bounded with respect to the $H^{-m}(M)$ norm by \eqref{f_v_inner_product_estimate}. By the Hahn-Banach theorem $\ell_f$ extends continuously to $H^{-m}(M)$, and by duality there is $w \in H^m_0(M)$ so that 
\[
(f,v) = \ell_f(P_2^* v) = (w, P_2^* v), \qquad v \in H^m(M).
\]
It follows that $P_2 w = f = -P_2 u_1$. Thus $\tilde{u}_2 := u_1 + w$ solves $P_2 \tilde{u}_2 = 0$ with $u_1-\tilde{u}_2 \in H^m_0(M)$. This proves that $C_{P_1} \subset C_{P_2}$.
\end{proof}


If we choose $v$ in \eqref{pone_ptwo_v} to be a solution of $P_2^* v = 0$, we immediately obtain an integral identity from the condition $C_{P_1} = C_{P_2}$. For our applications we indeed need $P_1-P_2$ to appear on the left hand side, and thus one of the functions needs to solve the adjoint equation.

\begin{Lemma} \label{lemma_integral_identity_potential}
If $C_{P_1} = C_{P_2}$, then 
\[
( (P_1 - P_2) u_1, u_2)_{L^2(M)} = 0
\]
whenever $u_j \in H^m(M)$ solve $P_1 u_1 = P_2^* u_2 = 0$ in $M$. \\[5pt]
\end{Lemma}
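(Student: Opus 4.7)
The plan is to derive this as an immediate consequence of the preceding Lemma \ref{lemma_integral_identity}, which characterizes the inclusion $C_{P_1} \subset C_{P_2}$ via the bound
\[
\abs{((P_1-P_2)u_1, v)_{L^2(M)}} \leq C_{u_1} \norm{P_2^* v}_{H^{-m}(M)}, \qquad v \in H^m(M),
\]
valid for every $u_1 \in H^m(M)$ with $P_1 u_1 = 0$.

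The idea is simply to test this estimate against $v = u_2$, where $u_2 \in H^m(M)$ satisfies $P_2^* u_2 = 0$ in $M$. Since $u_2 \in H^m(M)$ is a legitimate test function, and since $P_2^* u_2 = 0$ as an element of $L^2(M)$ (hence certainly as an element of $H^{-m}(M)$, with norm zero), the right-hand side of the estimate vanishes. This forces the left-hand side to vanish, giving the desired identity $((P_1 - P_2)u_1, u_2)_{L^2(M)} = 0$.

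I do not anticipate any obstacle: the hypothesis $C_{P_1} = C_{P_2}$ gives in particular $C_{P_1} \subset C_{P_2}$, so Lemma \ref{lemma_integral_identity} applies as stated. The only mild subtlety is to note that $P_2^* u_2 = 0$ in $M$ really does give $\norm{P_2^* u_2}_{H^{-m}(M)} = 0$; this is immediate since $H^{-m}(M)$ here denotes the usual Sobolev space on the interior (the restriction of $H^{-m}_{\mathrm{loc}}(X)$, as set up before Proposition \ref{prop_real_principal_type_solvability}), and the zero distribution has zero norm. Thus the proof collapses to a single line, and no further machinery (solvability, duality, Hahn-Banach) is needed beyond what has already been used in proving Lemma \ref{lemma_integral_identity}.
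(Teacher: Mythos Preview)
Your proposal is correct and is exactly the approach the paper takes: the sentence immediately preceding Lemma \ref{lemma_integral_identity_potential} already says that choosing $v$ in \eqref{pone_ptwo_v} to be a solution of $P_2^* v = 0$ gives the identity, and the lemma is stated without further proof. Your one-line argument via Lemma \ref{lemma_integral_identity} is precisely what is intended.
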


\section{Quasimode construction} \label{sec_quasimode_construction}

We will now proceed to the construction of approximate solutions that concentrate near an injective segment of a null bicharacteristic curve. This can be done for any operator with real principal symbol.

\begin{Theorem} \label{thm_quasimode_direct}
Let $P$ be a differential operator of order $m$ on a smooth manifold $X$, with real valued principal symbol $p_m$. Let $\gamma: [0,T] \to T^* X \setminus 0$ be a segment of a null bicharacteristic curve. If $\gamma$ is injective on $[0,T]$, there is a family $u = u_h \in C^{\infty}_c(X)$ for $0 < h \leq 1$ such that 
\[
\WF_{\mathrm{scl}}(u) = \gamma([0,T]), \qquad \WF_{\mathrm{scl}}(Pu) = \gamma(0) \cup \gamma(T).
\]
Moreover, if $\tilde{P}$ is another differential operator on $X$ with the same principal symbol as $P$ and if $v = v_h$ is the analogous family for $\tilde{P}^*$, then for any differential operator $Q$ of order $\ell \geq 0$ on $X$ whose coefficients vanish near the end points of $\gamma$ one has 
\begin{equation} \label{concentration}
\lim_{h \to 0} h^{-\frac{n+1}{2}+\ell} \int_X (Q u) \bar{v} \,dV = c_0 \int_0^T q_{\ell}(\gamma(t)) \exp \left[-i \int_0^t (p_{m-1} - \tilde{p}_{m-1})(\gamma(s)) \,ds \right] \,dt
\end{equation}
where $c_0 \neq 0$ is the constant in \eqref{czero_constant_formula}, $q_{\ell}$ is the principal symbol of $Q$, and $\sum_{j=0}^m p_j$
and $\sum_{j=0}^m \tilde p_j$ are the polyhomogeneous full symbols of $P$ and $\tilde P$ in a local coordinate system. 
\end{Theorem}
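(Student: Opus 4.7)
The plan is to realize $u_h$ as a superposition of Gaussian coherent states indexed by points of $\gamma$, as in Paul--Uribe / Karasev--Vorobjev. In a local chart containing $\gamma([0,T])$, extended slightly to $(-\delta,T+\delta)$ to accommodate the endpoints, I would set
\[
u_h(y) \;=\; h^{-N}\int_{-\delta}^{T+\delta} \chi(t)\, a(t,y;h)\, e^{i\Phi(t,y)/h}\,dt,
\quad \Phi(t,y) = \xi(t)\cdot(y-x(t)) + \tfrac{i}{2}|y-x(t)|^2,
\]
with $\chi\in C^\infty_c(-\delta,T+\delta)$, $\chi=1$ on $[0,T]$, $a\sim\sum_{k\ge 0} h^k a_k$, and $N=(n+1)/4$ so that $\|u_h\|_{L^2}\sim 1$. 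Each slice is a Gaussian coherent state microlocalized at $\gamma(t)$, and the representation is smooth in $t$, so cusps of $\dot x(t)$ cause no difficulty. To read off $\WF_{\mathrm{scl}}(u_h)=\gamma([0,T])$, I apply the FBI criterion (Proposition \ref{prop_wavefront_fbi}): the test integral involves the phase $\Phi(t,y)-\Psi(y;y_0,\eta_0)$, whose imaginary part is positive definite in $y$; complex stationary phase in $y$ reduces it to a $t$-integral which is nonstationary unless $(x(t),\xi(t))$ is close to $(y_0,\eta_0)$, and by injectivity of $\gamma$ this happens only when $(y_0,\eta_0)\in\gamma([0,T])$.

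The core step is the choice of amplitudes so that $Pu_h$ is semiclassically smooth off the endpoints. Writing
$P(ae^{i\Phi/h}) = e^{i\Phi/h}\sum_{j\ge 0} h^{-m+j} R_j[a]$, one has $R_0 = p_m(y,\nabla_y\Phi)$, and since $\nabla_y\Phi|_{y=x(t)}=\xi(t)$ with $p_m(\gamma(t))=0$, $R_0$ vanishes to first order at $y=x(t)$. The crucial device is the identity
\[
\partial_t\, e^{i\Phi/h} = \tfrac{i}{h}\bigl[\dot\xi(t)\cdot(y-x(t)) - \dot x(t)\cdot\xi(t) - i\dot x(t)\cdot(y-x(t))\bigr]\,e^{i\Phi/h},
\]
which, combined with the Hamilton equations $\dot x=\partial_\xi p_m$, $\dot\xi=-\partial_x p_m$, lets one integrate by parts in $t$ to absorb the first-order vanishing of $R_0$ into $\partial_t a_0$. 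The equation $Pu_h = O(h^{-m+2})$ then reduces along $y=x(t)$ to a first-order ODE in $t$ for $a_0(t,x(t))$ whose source is the subprincipal symbol, and solving yields the factor $\exp[-i\int_0^t p_{m-1}(\gamma)\,ds]$. Iteratively solving analogous transport equations for $a_1,a_2,\ldots$ and Borel-summing gives $Pu_h = O_{L^2}(h^\infty)$ up to contributions from $\chi'(t)$ on $(-\delta,0)\cup(T,T+\delta)$; these have semiclassical wave front set contained in $\gamma(0)\cup\gamma(T)$ by the same coherent-state FBI analysis.

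For the asymptotic \eqref{concentration}, let $v_h$ be the analogous object for $\tilde P^*$ on the same $\gamma$ (permissible since $\tilde p_m = p_m$), with amplitudes $\tilde a_k$; the transport equation for $\tilde a_0$ along $\tilde P^*$ produces, after the complex conjugation in $\overline{v_h}$, a factor $\exp[i\int_0^t \tilde p_{m-1}(\gamma)\,ds]$. The pairing becomes
\[
\int_X (Qu_h)\overline{v_h}\,dV = h^{-2N-\ell}\!\iint \chi(t)\overline{\chi(s)}\!\int q_\ell\bigl(y,\nabla_y\Phi\bigr) a(t,y)\overline{\tilde a(s,y)}\, e^{i[\Phi(t,y)-\overline{\tilde\Phi(s,y)}]/h}\,dy\,dt\,ds + O(h^\infty).
\]
I would apply complex stationary phase in $y$ first: $\mathrm{Im}[\Phi - \overline{\tilde\Phi}] = \tfrac12|y-x(t)|^2 + \tfrac12|y-x(s)|^2$ is positive definite with critical point near $y=(x(t)+x(s))/2$, producing a factor $(2\pi h)^{n/2}$ and a residual $(t,s)$-phase whose imaginary part vanishes precisely on $\{t=s\}$ (again by injectivity of $\gamma$). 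Stationary phase in $s-t$ about the diagonal gives another factor $(2\pi h)^{1/2}$, accounting for the overall power $h^{-(n+1)/2 + \ell}$ claimed in \eqref{concentration}. The leading amplitude at the critical point equals $q_\ell(\gamma(t))\,a_0(t,x(t))\,\overline{\tilde a_0(t,x(t))}$, and the transport factors above collapse to $\exp[-i\int_0^t (p_{m-1}-\tilde p_{m-1})(\gamma)\,ds]$; the universal Gaussian prefactors from the two stationary phase applications yield the nonzero constant $c_0$ in \eqref{czero_constant_formula}.

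The main obstacle is orchestrating the amplitude transport smoothly through cusps of $\dot x(t)$ and verifying that the iterated stationary phase genuinely localizes the pairing to the diagonal $\{t=s\}$ with positive-definite (complex) Hessian. This is precisely the advantage of the coherent-state integral representation over a single-phase Gaussian beam ansatz: all data vary smoothly in $t$ regardless of whether $\dot x(t)$ vanishes, so neither the wave front set analysis nor the transport equations encounter coordinate degeneracies, while the injectivity of $\gamma$ is exactly what ensures the off-diagonal nondegeneracy in the final stationary phase reduction.
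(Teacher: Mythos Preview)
Your overall architecture (superposition of coherent states along $\gamma$, integration by parts in $t$ to trade $p_m(y,\nabla_y\Phi)$ for $\partial_t\Phi$, transport ODEs, FBI for the wave front set, stationary phase for the pairing) matches the paper's. But there is a genuine gap in the phase/amplitude step that prevents your construction from producing a quasimode.

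With the fixed phase $\Phi(t,y)=\xi(t)\cdot(y-x(t))+\tfrac{i}{2}|y-x(t)|^2$, the quantity $r(t,y):=p_m(y,\nabla_y\Phi)+\partial_t\Phi$ vanishes only to \emph{first} order at $y=x(t)$. A direct computation of the second derivatives at $y=x(t)$ (using $H\equiv i\,\mathrm{Id}$) gives
\[
\partial_{y_jy_k}r\big|_{y=x(t)} \;=\; D_{jk}+iB_{jk}+iB_{kj}-C_{jk},
\]
with $B,C,D$ the Hessians of $p_m$ along $\gamma$; this is generically nonzero. In the paper's construction the analogous residual vanishes because the Hessian $H(t)$ of $\Phi$ is forced to solve the matrix Riccati equation $\dot H+HCH+BH+HB^t+D=0$, and then higher Taylor coefficients of $\Phi$ are determined by a further hierarchy of linear ODEs so that $r$ vanishes to \emph{infinite} order at $y=x(t)$. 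That Riccati step is not optional: after your integration by parts in $t$, the term $h^{-m}\int e^{i\Phi/h}r\,a_0\,dt$ survives with $r=O(|y-x(t)|^2)$, and on the Gaussian scale $|y-x(t)|\sim h^{1/2}$ this contributes at order $h^{-m+1}$. It cannot be cancelled by your transport ODE for $a_0(t,x(t))$ (that lives at order $h^{1-m}$ but only fixes the \emph{value} of the bracket at $y=x(t)$, not its quadratic part), nor by the higher amplitudes $a_1,a_2,\ldots$ (those enter with extra powers of $h$). Consequently $Pu_h$ is not $O_{L^2}(h^\infty)$ away from the endpoints, and $\WF_{\mathrm{scl}}(Pu_h)$ will contain all of $\gamma([0,T])$, not just $\gamma(0)\cup\gamma(T)$.

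The fix is exactly what the paper does: let the quadratic part of $\Phi$ evolve by Riccati (initial data $H(0)=i\,\mathrm{Id}$ is fine), then solve linear ODEs for all higher Taylor coefficients of $\Phi$ at $x(t)$, and only \emph{then} solve the transport hierarchy for the full Taylor series of each $a_k$ at $x(t)$. This is also what makes the stationary-phase Hessian in the pairing step nondegenerate uniformly in $s$, including at cusps where $\dot x=0$: the paper verifies $\det M(s)=\alpha_0\det(\mathrm{Im}\,H(s))$ via the Riccati equation, and this replaces your heuristic ``positive-definite complex Hessian'' claim. Your plan to do stationary phase in $y$ first and then in $s-t$ is viable, but the nondegeneracy at $t=s$ with $\dot x(t)=0$ again relies on $(\dot x,\dot\xi)\neq 0$ together with the Riccati-evolved $H(t)$, not on the fixed $i\,\mathrm{Id}$.
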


Note that the statement about $\WF_{\mathrm{scl}}(Pu)$ implies that $Pu = O(h^{\infty})$ away from the end points of $\gamma$, and hence $u$ is an approximate solution for $P$. The formula \eqref{concentration} is related to the semiclassical limit measure for the family $u$. In fact, one could also prove that for any $a \in C^{\infty}_c(T^* X)$ vanishing near the end points of $\gamma$ one has 
\[
\lim_{h \to 0} h^{-\frac{n+1}{2}} (\mathrm{Op}_h(a) u_h, u_h)_{L^2(X)} = \int_0^T r_{\gamma}(t) a(\gamma(t)) \,dt
\]
where $\mathrm{Op}_h(a)$ is the semiclassical Weyl quantization of $a$, and for any nonvanishing half density $\mu$ 
\[
r_{\gamma}(t) = c_0 \exp \left[-i \int_0^t (\sigma_\subp[P^\mu] - \sigma_\subp[(P^*)^\mu])(\gamma(s)) \,ds \right].
\]
The formulation \eqref{concentration} will be convenient for our applications, and it does not involve semiclassical quantization. The expression on the right hand side of \eqref{concentration} must be coordinate invariant since the left hand side is, however, this can also be seen directly as follows: 

\begin{Lemma}\label{lem_coordinv_subdiff}
Let $P$ and $\tilde P$ be two differential operators of order $m$ on $X$ with the same principal symbol, and denote by $\sum_{j=0}^m p_j$
and $\sum_{j=0}^m \tilde p_j$ their polyhomogeneous full symbols in a local coordinate system. Then $p_{m-1} - \tilde p_{m-1}$ is an invariantly defined function on $T^* X \setminus 0$. 
\end{Lemma}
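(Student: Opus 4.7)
The plan is to leverage the invariance of the subprincipal symbol $\sigma_\subp[P^\mu]$ already quoted from \cite{Hormander}, and then to show that the $\mu$-conjugation correction to $p_{m-1}$ depends only on the principal symbol $p_m$, so that it cancels in the difference $p_{m-1}-\tilde p_{m-1}$.

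First, since $P$ and $\tilde P$ have the same principal symbol $p_m$, conjugation by a scalar multiplier (which is what $\mu$ reduces to in local coordinates, since half densities form a rank-one bundle) does not change the principal symbol; hence $P^\mu$ and $\tilde P^\mu$ again share the same principal symbol. Writing the full symbols locally as $\sum p_j^\mu$ and $\sum \tilde p_j^\mu$, the definition \eqref{def_subprin_symb} gives
\begin{equation*}
\sigma_\subp[P^\mu] - \sigma_\subp[\tilde P^\mu]
= \bigl(p_{m-1}^\mu - \tilde p_{m-1}^\mu\bigr)
+ \frac{i}{2}\sum_{j=1}^n \p_{x_j\xi_j}\bigl(p_m^\mu - \tilde p_m^\mu\bigr)
= p_{m-1}^\mu - \tilde p_{m-1}^\mu,
\end{equation*}
and the left hand side is invariantly defined.

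Second, I would compute the conjugation correction explicitly. Writing $\mu = a|dx|^{1/2}$ locally with $a$ smooth and nonvanishing, we have $P^\mu u = a\, P(a^{-1}u)$, and a Leibniz expansion gives
\begin{equation*}
p_{m-1}^\mu(x,\xi) = p_{m-1}(x,\xi) - a^{-1}(x)\sum_{j=1}^n D_{x_j}a(x)\, \p_{\xi_j} p_m(x,\xi),
\end{equation*}
with the correction term depending only on $p_m$ (and on $a$). The same formula holds for $\tilde P$ with $p_m$ replaced by $\tilde p_m = p_m$, so the correction is identical and
\begin{equation*}
p_{m-1}^\mu - \tilde p_{m-1}^\mu = p_{m-1} - \tilde p_{m-1}.
\end{equation*}
Combining this with the previous display gives $p_{m-1} - \tilde p_{m-1} = \sigma_\subp[P^\mu] - \sigma_\subp[\tilde P^\mu]$, the right hand side being an invariantly defined function on $T^*X\setminus 0$, which proves the lemma.

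The only mildly technical step is the Leibniz computation showing that the $\mu$-correction to $p_{m-1}$ involves only $p_m$ and $\mu$; but this is the standard transformation rule and causes no real difficulty once one writes $P$ as $\sum c_\alpha(x) D^\alpha$ and collects the terms of order $m-1$ in the composition $a\circ P\circ a^{-1}$. No other obstacle is expected, since the invariance of $\sigma_\subp[P^\mu]$ is taken as known.
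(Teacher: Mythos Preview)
Your proof is correct and follows essentially the same route as the paper: both fix a half density $\mu$, observe that the $\mu$-conjugation correction to the order $m-1$ symbol depends only on $p_m$ (and $\mu$), so it cancels in the difference, and then identify $p_{m-1}-\tilde p_{m-1}$ with $\sigma_\subp[P^\mu]-\sigma_\subp[\tilde P^\mu]$. Your write-up is slightly more explicit in isolating the cancellation of the $\frac{i}{2}\sum \p_{x_j\xi_j}p_m$ terms, but the argument is the same.
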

\begin{proof}
Let $\mu$ be a half density on $X$ and write $\sum_{j=0}^m q_j$
and $\sum_{j=0}^m \tilde q_j$ for the polyhomogeneous full symbols of $P^\mu$ and $\tilde P^\mu$ in a local coordinate system.
Then $q_m = p_m$ and
    \begin{align*}
q_{m-1} = p_{m-1} - i \mu \p_{\xi_j} p_m\, \p_{x_j} \mu^{-1}.
    \end{align*}
Analogous statements hold for $\tilde P^\mu$, 
and recalling the definition of subprincipal symbol in
\eqref{def_subprin_symb}, we see that the function
    \begin{align}\label{subprin_diff}
\sigma_\subp[P^\mu] - \sigma_\subp[\tilde P^\mu] = p_{m-1} - \tilde p_{m-1}
    \end{align} 
is defined invariantly on $T^* X \setminus 0$. 
\end{proof}

In the following, we will assume that $P$ has order $m \geq 1$ and its principal symbol is denoted by $p_m \in C^{\infty}(T^* X)$. We will write in local coordinates 
\[
\gamma(t) = (x(t), \xi(t))
\]
so that $(x(t), \xi(t))$ satisfies the Hamilton equations 
\[
\dot{x}(t) = d_{\xi} p_m(x(t),\xi(t)), \qquad \dot{\xi}(t) = -d_x p_m(x(t), \xi(t)).
\]
More invariantly, we define $x(t) = \pi(\gamma(t))$ where $\pi: T^*X \to X$ is the natural projection.

We will use the following standard result related to conjugating a differential operator by exponentials. This is the "fundamental asymptotic expansion lemma" stated in \cite[Section VIII.7]{Taylor1981} or \cite[Section VI.3]{Treves2} for pseudodifferential operators and real valued phase functions, and in \cite[Section X.4]{Treves2} for complex valued phase functions. In our case of differential operators, the proof is just an elementary computation in local coordinates.

\begin{Lemma} \label{lemma_conjugated_p}
Let $\Phi$ be a smooth real or complex valued function. Then 
\[
e^{-i \Phi/h} P(e^{i \Phi/h} u) = \sum_{j=0}^m h^{j-m} R_j u, \qquad h > 0,
\]
where each $R_j$ is a differential operator of order $j$. In particular, $R_1 u = \frac{1}{i} Lu + bu$ where $L$ is a (possibly complex) vector field and $b = b_P$ is a function. In local coordinates one has 
\begin{align*}
R_0 u &= p_m(x, \nabla \Phi(x)) u, \\
L u &= \p_{\xi_j} p_m(x, \nabla \Phi(x)) \p_j u, \\
b &= \frac{1}{2i} \p_{\xi_j \xi_k} p_m(x, \nabla \Phi(x)) \p_{jk} \Phi(x) + p_{m-1}(x, \nabla \Phi(x)), 
\end{align*}
where $p_{m-1}$ is the order $m-1$ term in the polyhomogeneous expansion of the full symbol of $P$ in these coordinates.
\end{Lemma}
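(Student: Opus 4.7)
The plan is to do a direct computation in local coordinates. I would write $P = \sum_{|\alpha|\le m} a_\alpha(x) D^\alpha$ with $D_j = -i\p_j$, so that $p_j(x,\xi) = \sum_{|\alpha|=j} a_\alpha(x)\xi^\alpha$. The starting point is the Leibniz identity $e^{-i\Phi/h} D_j(e^{i\Phi/h} u) = A_j u$ with $A_j := D_j + h^{-1}\p_j\Phi$ (multiplication on the right). Since $[D_j, h^{-1}\p_k\Phi] = -ih^{-1}\p_{jk}\Phi$ is symmetric in $(j,k)$, the $A_j$'s pairwise commute, and iterating yields $e^{-i\Phi/h}D^\alpha(e^{i\Phi/h}u) = A^\alpha u$, hence
\[
e^{-i\Phi/h} P(e^{i\Phi/h} u) = \sum_{|\alpha|\le m} a_\alpha(x)\, A^\alpha u.
\]

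Next I would expand each $A_j^{\alpha_j}$ as a polynomial in $h^{-1}$ by writing every slot as either $D_j$ or $h^{-1}\p_j\Phi$ and reordering so that all $D$'s sit on the right, collecting one commutator factor $-ih^{-1}\p_{jk}\Phi$ every time a $D_k$ is pushed past an $h^{-1}\p_j\Phi$. This produces the finite expansion $e^{-i\Phi/h} P(e^{i\Phi/h}u) = \sum_{j=0}^m h^{j-m} R_j u$ with $R_j$ a differential operator of order $\le j$. Only the choice ``every slot gives $h^{-1}\p\Phi$'' with $|\alpha|=m$ contributes at level $h^{-m}$, giving $R_0 u = p_m(x,\nabla\Phi)\,u$. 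Three sources contribute at level $h^{-(m-1)}$: (a) $|\alpha|=m$ with exactly one slot producing a $D$ and no commutator used; (b) $|\alpha|=m$ with every slot producing $h^{-1}\p\Phi$ and exactly one commutator used in the reordering; (c) the top-in-$h^{-1}$ part of $|\alpha|=m-1$, all slots giving $h^{-1}\p\Phi$.

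For (a) I would use $\sum_{|\alpha|=m}\alpha_j a_\alpha \xi^{\alpha-e_j} = \p_{\xi_j}p_m$ to identify the contribution as $\p_{\xi_j}p_m(x,\nabla\Phi) D_j u$; for (c) the contribution is directly $p_{m-1}(x,\nabla\Phi) u$. Contribution (b) is the subtle one and is where I expect the main combinatorial obstacle: the number of slot pairs from which the commutator can arise is $\binom{\alpha_j}{2}$ when both slots are of type $j$ and $\alpha_j\alpha_k$ when of distinct types $j \ne k$, and one needs the identity $\p_{\xi_j\xi_k}\xi^\alpha = \alpha_j(\alpha_k - \delta_{jk})\xi^{\alpha-e_j-e_k}$ to rearrange the sum into the symmetric form $-\frac{i}{2}\p_{\xi_j\xi_k}p_m(x,\nabla\Phi)\p_{jk}\Phi\cdot u$. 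Using $D_j = -i\p_j$ (so that $\p_{\xi_j}p_m \cdot D_j u = \frac{1}{i}\p_{\xi_j}p_m\p_j u$) and $\frac{1}{2i} = -\frac{i}{2}$, the three contributions assemble into $R_1 u = \frac{1}{i}Lu + bu$ with $L$ and $b$ as stated. Before running the general argument I would verify the combinatorics against the model cases $\alpha = 2e_j$ and $\alpha = e_j + e_k$, for which the symbol of $A^\alpha$ can be computed by hand and matches the claimed formulas exactly.
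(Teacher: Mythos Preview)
Your approach is correct and is exactly the direct local computation the paper carries out. The only difference is notation: the paper writes $P=\sum_{r}\sum_{j_1,\ldots,j_r} p_{j_1\cdots j_r}(x)\,D_{j_1}\cdots D_{j_r}$ with ordered index tuples rather than multi-indices, so that the $h^{1-m}$ term is simply $\sum_k (\p_{j_1}\Phi)\cdots D_{j_k}\cdots(\p_{j_m}\Phi)\,u$ and a single application of the Leibniz rule gives $L$ and the $\p^2\Phi$ part of $b$ without the $\binom{\alpha_j}{2}$ versus $\alpha_j\alpha_k$ bookkeeping you anticipate. (Minor slip: your description of (b) should read ``one slot producing $D$, with one commutator used in the reordering so that the $D$ is consumed'' --- if every slot is $h^{-1}\p\Phi$ there is nothing to reorder --- but your subsequent combinatorial identity $\p_{\xi_j\xi_k}\xi^\alpha=\alpha_j(\alpha_k-\delta_{jk})\xi^{\alpha-e_j-e_k}$ is exactly what is needed and the argument goes through.)
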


There are several methods for constructing quasimodes concentrating near an injective null bicharacteristic segment $\gamma: [0,T] \to T^*X \setminus 0$ where $\gamma(t) = (x(t), \xi(t))$. We first give a brief discussion of such methods, to motivate the construction given in this paper.

\vspace{5pt}

\noindent {\bf Local case.} To construct a quasimode $u$ locally near $x(t_0)$ with $\dot{x}(t_0) \neq 0$ it is enough to use the geometrical optics ansatz $u = e^{i \varphi/h} a$ where $\varphi$ is a smooth real valued phase function solving the eikonal equation $p_m(x, d \varphi(x)) = 0$. If $\dot{x}(t_0) \neq 0$ this equation can always be solved locally near $x(t_0)$, but global solutions do not exist in general if there are caustic points (i.e.\ points where the projection $\Lambda \to X$ fails to have bijective differential where $\Lambda \subset T^*X \setminus 0$ is a Lagrangian manifold associated with the eikonal equation, see \cite[Section 6.4]{Hormander}).

\vspace{5pt}

\noindent {\bf Global case without cusps.} More generally, if $\gamma$ is injective on $[0,T]$ and has no cusps in the sense that $\dot{x}(t) \neq 0$ for $t \in [0,T]$, one can use a Gaussian beam type construction. If $x(t)$ is also injective on $[0,T]$, this amounts to looking for a quasimode $u = e^{i \Phi/h} a$ where $\Phi$ and $a$ are complex valued (more generally if  $x(t)$ is not injective the quasimode will be a finite sum of such functions). The phase function $\Phi$ is required to solve the eikonal equation only on the bicharacteristic in the sense that 
\begin{equation} \label{eikonal_equation_second}
p_m(x, d\Phi(x)) = 0 \text{ to infinite order on $x([0,T])$}.
\end{equation}
The construction of $\Phi$ and $a$ can be carried out very explicitly in global Fermi type coordinates near $x([0,T])$. This boils down to solving one nonlinear ODE (matrix Riccati equation) for the Hessian of $\Phi$ on $x([0,T])$, and linear ODEs for the derivatives of $\Phi$ and $a$ along $x([0,T])$. 

These Gaussian beam type constructions are very classical and go back at least to \cite{BabichLazutkin, Hormander1971} with further treatments e.g.\ in \cite{Ralston, Ralston_note, KKL}. A version of this construction is also given in \cite[Section 24.2]{Hormander}, where the existence of the required complex phase function is explained in terms of properties of complex Lagrangian planes in the complexification of $T^* X$. However, the construction always breaks down when the bicharacteristic has a cusp. In fact, if \eqref{eikonal_equation_second} holds near $x(t_0)$ where $\dot{x}(t_0) = 0$, then looking at first order derivatives in \eqref{eikonal_equation_second} and using the Hamilton equations implies that 
\[
0 = -\dot{\xi}(t_0) + \Phi''(x(t_0)) \dot{x}(t_0) = -\dot{\xi}(t_0).
\]
Hence one would have $dp_m(\gamma(t_0)) = 0$ and $\gamma(t) \equiv \gamma(t_0)$, which contradicts the assumption that $\gamma$ is injective on $[0,T]$.

We mention also the related construction of the propagator for hyperbolic equations as a global oscillatory integral with a complex-valued phase function \cite{LSV, SV, CLV}.

\vspace{5pt}

\noindent {\bf General case.} Let us now assume that $\gamma$ is injective on $[0,T]$ but $\dot{x}(t)$ may vanish. In the classical case (i.e.\ without a parameter), a construction of a quasimode $u$ associated with $\gamma$ such that $Pu \in C^{\infty}$ was given in \cite{DH72} and \cite[Section 26.1]{Hormander}. The argument proceeds by constructing a canonical transformation $\chi$ near $\gamma([0,T])$ that straightens $\gamma$ in phase space into the curve $\eta(x_1) = ((x_1,0), e_n)$ in $T^* \mR^n$, and by quantizing $\chi$ using suitable amplitudes to obtain Fourier integral operators $A$ and $B$ such that $B P A$ roughly corresponds to $D_{x_1}$ microlocally near $\gamma([0,T])$. One then constructs an explicit quasimode $U$ for $D_{x_1}$ in $\mR^n$ associated with $\eta$, and $u = AU$ will be the required quasimode for $P$. This phase space construction is not affected by the presence of cusps.

A semiclassical version of the above construction could be used to prove Theorem \ref{thm_quasimode_direct}. However, we will give a direct proof based on a modification of the Gaussian beam construction. This construction is motivated by the fact that a standard semiclassical quasimode $U$ for $D_{x_1}$ associated with $\eta$, 
\[
U(x_1,x') = e^{ix_n/h - \abs{x'}^2/h}
\]
can be thought of as a superposition of Gaussian wave packets along $\eta$. If $A$ is a semiclassical Fourier integral operator quantizing $\chi$, then $u = AU$ would be a superposition of Gaussian wave packets along the curve $x(t)$. We will thus look for the quasimode $u$ directly in the form 
\[
u = \int_0^T e^{i \Phi(x,t)/h} a(x,t) \,dt
\]
where $\Phi$ and $a$ are smooth complex valued functions in $M \times [0,T]$, and each $e^{i \Phi(\,\cdot\,,t)/h} a(\,\cdot\,,t)$ is a Gaussian wave packet at $x(t)$ oscillating in direction $\xi(t)$. The same idea appears in \cite{PaulUribe, KarasevVorobjev}. The phase function $\Phi$ will be chosen to satisfy 
\begin{equation} \label{eikonal_equation_third}
p_m(\,\cdot\,, d_x \Phi(\,\cdot\,,t)) + \p_t \Phi(\,\cdot\,,t) = 0 \text{ to infinite order at $x(t)$ for $t \in [0,T]$}.
\end{equation}
This generalizes \eqref{eikonal_equation_second} to the case where $\Phi$ may depend on $t$. The construction boils down to solving the same matrix Riccati equation as in the usual Gaussian beam construction, but it is not affected by the presence of cusps. By \eqref{eikonal_equation_third} we have that $p_m(x,d_x \Phi(x,t))$ is small near $x([0,T])$ modulo terms of the form $\p_t \Phi(x,t)$, but such terms can be dealt with using integration by parts in the formula for $u$.

After this motivating discussion, we will give a proof of the theorem. This will be done in detail since we will need to use the precise form of the quasimodes, including formulas within the proof, in the computations required for studying the inverse problems.

\begin{proof}[Proof of Theorem \ref{thm_quasimode_direct}]
The proof will be carried out in several steps. \\

\noindent {\it Step 1.} Setup. \\

We look for an approximate solution of $Pu = 0$ in the form 
\begin{equation} \label{quasimode_integral_ansatz}
u(x) = \int_0^T e^{i\Phi(x,t)/h} a(x,t) \,dt
\end{equation}
where $\Phi$ and $a$ are smooth complex valued functions in $X \times [0,T]$. By Lemma \ref{lemma_conjugated_p}, we have 
\begin{multline} \label{pu_conjugated_formula}
Pu(x) = \int_0^T e^{i\Phi(x,t)/h} \bigg[ h^{-m} p_m(x,d_x \Phi(x,t)) a(x,t) \\ 
+ h^{1-m}(\frac{1}{i} La + ba)(x,t) + \sum_{j=2}^m h^{j-m} R_j a(x,t) \bigg] \,dt.
\end{multline}
We wish to find $\Phi(x,t)$ so that 
\begin{equation} \label{phi_req1}
\Phi(x(t),t) = 0, \qquad d_x\Phi(x(t),t) = \xi(t),
\end{equation}
and so that 
\begin{equation} \label{eikonal_equation_fourth}
p_m(\,\cdot\,, d_x \Phi(\,\cdot\,,t)) + \p_t \Phi(\,\cdot\,,t) = 0 \text{ to infinite order at $x(t)$}.
\end{equation}
Moreover, we wish to find $a(x,t) \sim \sum_{j=0}^{\infty} h^j a_j(x,t)$ so that each $a_j$ is smooth and independent of $h$, and one has the transport equations 
\begin{align}
\frac{1}{i} (\p_t  + L) a_0 + ba_0 &= 0 \text{ to infinite order at $x(t)$}, \label{transport_eq1_timedependent} \\
\frac{1}{i} (\p_t + L) a_{1} + ba_{1} &= -R_2 a_0 \text{ to infinite order at $x(t)$}, \label{transport_eq2_timedependent} \\
\frac{1}{i} (\p_t + L ) a_{2} + ba_{2} &= -R_3 a_0 - R_2 a_{1} \text{ to infinite order at $x(t)$}, \label{transport_eq3_timedependent} \\
 &\vdots \notag
\end{align}

\noindent {\it Step 2.} Local construction of $\Phi$. \\

Fix $t_0 \in [0,T]$. We first construct $\Phi$ in a small neighborhood of $(x(t_0), t_0)$. Computing in local coordinates, we have (writing $\p_{x_{j}} p_m = \p_{x_{j}} p_m(x,\nabla_x \Phi(x,t))$ etc):
\begin{align}
\p_{x_{j}} (p_m(x,\nabla_x \Phi)) &= \p_{x_{j}} p_m + \p_{\xi_a} p_m \p_{x_a x_{j}} \Phi, \label{phi_req2} \\
\p_{x_{j} x_{k}} (p_m(x,\nabla_x \Phi)) &= \p_{x_{j} x_{k}} p_m + \p_{x_{j} \xi_a} p_m \p_{x_a x_{k}} \Phi + \p_{x_{k} \xi_a} p_m \p_{x_a x_{j}} \Phi \label{phi_req3} \\
 & \qquad + \p_{\xi_a \xi_b} p_m \p_{x_a x_{j}} \Phi \p_{x_b x_{k}} \Phi + \p_{\xi_a} p_m \p_{x_a x_{j} x_{k}} \Phi, \notag \\
\p_x^{\gamma}(p_m(x,\nabla_x \Phi)) &= \p_{\xi_a} p_m \p_{x_a} \p_x^{\gamma} \Phi + F^{\gamma}(\nabla^{\abs{\gamma}-1} p_m, \nabla^{\abs{\gamma}-1}_x \Phi) \nabla_x^{\abs{\gamma}} \Phi \label{phi_req4} \\
&\qquad + G^{\gamma}(\nabla^{\abs{\gamma}} p_m, \nabla^{\abs{\gamma}-1}_x \Phi), \qquad \abs{\gamma} \geq 3. \notag
\end{align}
In the last statement (which is easily proved by induction) $\nabla^k f$ collects all derivatives of $f$ up to order $k$, and $F^{\gamma}$ and $G^{\gamma}$ are polynomials in their arguments.

Motivated by \eqref{phi_req1}, we look for $\Phi$ locally having the form 
\[
\Phi(x,t) = \xi(t) \cdot (x-x(t)) + \frac{1}{2} H(t)(x-x(t)) \cdot (x-x(t)) + \Phi_3(x,t)
\]
where $H(t)$ is some complex symmetric matrix depending smoothly on $t$, and $\Phi_3(\,\cdot\,,t)$ vanishes to third order at $x(t)$. Then 
\begin{equation} \label{phi_first_formulas}
\Phi(x(t),t) = 0, \qquad \nabla_x \Phi(x(t),t) = \xi(t), \qquad \p_{x_j x_k} \Phi(x(t),t) = H_{jk}(t).
\end{equation}
One has $\xi(t) \cdot \dot{x}(t) = 0$ by the Hamilton equations and homogeneity. Thus it also follows that 
\begin{equation} \label{pt_phi_formula}
\p_t \Phi(x,t) = (\dot{\xi}(t) - H(t) \dot{x}(t)) \cdot (x-x(t)) + G(x,t)(x-x(t)) \cdot (x-x(t))
\end{equation}
for some matrix valued function $G(x,t)$.

We first show that \eqref{eikonal_equation_fourth} holds to first order. By \eqref{phi_first_formulas} and \eqref{pt_phi_formula}, one has 
\[
p_m(x,\nabla_x \Phi) + \p_t \Phi(x,t)\big|_{x=x(t)} = 0.
\]
We next use \eqref{phi_req2}, the Hamilton equations and \eqref{phi_first_formulas} to obtain 
\[
\p_{x_j}( p_m(x,\nabla_x \Phi) )\big|_{x=x(t)} = -\dot{\xi}_j(t) + H_{aj}(t) \dot{x}_a(t)
\]
By \eqref{pt_phi_formula} we also have 
\[
\p_{x_j} (\p_t \Phi(x,t) )\big|_{x=x(t)} = \dot{\xi}_j(t) - H_{ja}(t) \dot{x}_a(t)
\]
and therefore \eqref{eikonal_equation_fourth} holds to first order.

To show that \eqref{eikonal_equation_fourth} holds to second order, we rewrite \eqref{phi_req3} as 
\[
\p_{x_{j} x_{k}} (p_m(x,\nabla_x \Phi)) \big|_{x=x(t)} = (D + B H + H B^t + H C H)_{jk}(t) + \dot{x}_a(t) \p_{x_a x_j x_k} \Phi(x(t),t)
\]
where $B(t)$, $C(t)$ and $D(t)$ are the matrices 
\begin{equation} \label{b_c_d_definition}
D_{jk}(t) := \p_{x_j x_k} p_m(\gamma(t)), \qquad B_{ja}(t) = \p_{x_j \xi_a} p_m(\gamma(t)), \qquad C_{ab}(t) = \p_{\xi_a \xi_b} p_m(\gamma(t)).
\end{equation}
Moreover, we note that 
\[
\p_{x_{j} x_{k}}( \p_t \Phi(x,t) ) \big|_{x=x(t)} = \p_t (\p_{x_{j} x_{k}} \Phi(x(t),t)) - \dot{x}_a(t) \p_{x_a x_j x_k} \Phi(x(t),t).
\]
Adding these two identities and using \eqref{phi_first_formulas}, we have 
\begin{equation} \label{pjk_riccati_equation}
\p_{x_{j} x_{k}} (p_m(x,\nabla_x \Phi) + \p_t \Phi(x,t) ) \big|_{x=x(t)} = (\dot{H} + H C H + B H + H B^t + D)_{jk}(t).
\end{equation}
Given any complex symmetric matrix $H_0$ with $\mathrm{Im}(H_0)$ positive definite, the matrix Riccati equation 
\begin{equation} \label{matrix_riccati_equation_general}
\dot{H} + H C H + B H + H B^t + D = 0, \qquad H(t_0) = H_0,
\end{equation}
has a smooth complex symmetric matrix solution $H(t)$ where $\mathrm{Im}(H(t))$ is positive definite, see \cite[Lemma 2.56]{KKL}. The solution exists globally in the set where $B$, $C$ and $D$ are smooth, i.e.\ in the whole coordinate patch. Choosing such a solution ensures that \eqref{eikonal_equation_fourth} holds to second order.

Finally, let $r \geq 3$ and assume that we have prescribed $\p_x^{\beta} \Phi(x(t),t)$ for $\abs{\beta} \leq r-1$ so that \eqref{eikonal_equation_fourth} holds to order $r-1$. Let $\abs{\gamma} = r$. Evaluating \eqref{phi_req4} at $x(t)$ gives 
\[
\p_x^{\gamma}(p_m(x,\nabla_x \Phi))\big|_{x(t)} = \dot{x}_a(t) \p_{x_a} \p_x^{\gamma} \Phi + F^{\gamma}(\nabla^{\abs{\gamma}-1} p_m, \nabla^{\abs{\gamma}-1}_x \Phi) \nabla_x^{\abs{\gamma}} \Phi + G^{\gamma}(\nabla^{\abs{\gamma}} p_m, \nabla^{\abs{\gamma}-1}_x \Phi)\big|_{x(t)}.
\]
Moreover, one has 
\[
\p_x^{\gamma} \p_t \Phi(x(t), t) = \p_t (\p_x^{\gamma} \Phi(x(t),t)) - \dot{x}_a(t) \p_{x_a} \p_x^{\gamma} \Phi(x(t),t).
\]
Adding the above two equations gives 
\begin{align}
 &\p_x^{\gamma}(p_m(x,\nabla_x \Phi) + \p_t \Phi)\big|_{x(t)} \label{pxgamma_formula}\\
 &\qquad = \p_t (\p_x^{\gamma} \Phi(x(t),t)) + F^{\gamma}(\nabla^{\abs{\gamma}-1} p_m, \nabla^{\abs{\gamma}-1}_x \Phi) \nabla_x^{\abs{\gamma}} \Phi + G^{\gamma}(\nabla^{\abs{\gamma}} p_m, \nabla^{\abs{\gamma}-1}_x \Phi)\big|_{x(t)}. \notag
\end{align}
Thus \eqref{eikonal_equation_fourth} will hold to order $r$ provided that 
\[
\p_t (\p_x^{\gamma} \Phi(x(t),t)) + F^{\gamma}(\nabla^{\abs{\gamma}-1} p_m, \nabla^{\abs{\gamma}-1}_x \Phi) \nabla_x^{\abs{\gamma}} \Phi + G^{\gamma}(\nabla^{\abs{\gamma}} p_m, \nabla^{\abs{\gamma}-1}_x \Phi) = 0, \qquad \abs{\gamma} = r.
\]
These equations over all $\gamma$ with $\abs{\gamma}=r$ can be understood as a linear system of ODEs for the vector $(\p_x^{\gamma} \Phi(x(t),t))_{\abs{\gamma} = r}$, and given any initial data at $t_0$ this system has a smooth solution. Thus we have constructed the formal Taylor series of $\Phi$ at $(x(t),t)$, and applying Borel summation gives the required function $\Phi$ locally.

\vspace{15pt}

\noindent {\it Step 3.} Global construction of $\Phi$. \\

We wish to glue together the local constructions in Step 2. To do this, we cover $[0,T]$ by finitely many open intervals $I_1, \ldots, I_R$ so that $0 \in I_1$, $T \in I_R$, $I_j \cap I_{j+1} \neq \emptyset$, and each $x(I_j)$ is contained in some coordinate patch $(U_j,y_j)$. We first construct $\Phi_1$ near $x(I_1)$ as in Step 2 with $\mathrm{Im}(H(0))$ positive definite. Choosing some $t_1 \in I_1 \cap I_2$, we then construct $\Phi_2$ near $x(I_2)$ as in Step 2 so that the initial data for the ODEs for $\Phi_2$ at $(x(t_1),t_1)$ matches the Taylor series of $\Phi_1$ at $(x(t_1),t_1)$ when $\Phi_1$ is written in the $y_2$ coordinates.

We claim that one has 
\begin{equation}
\Phi_1 = \Phi_2 \text{ to infinite order at any $(x(t),t)$ where $t \in I_1 \cap I_2$.}
\end{equation}
This holds to first order since $\Phi_j(x(t),t) = 0$ and $d_x \Phi_j(x(t),t) = \xi(t)$. For the higher order case, it is enough to use the invariant statements 
\[
p_m(\,\cdot\,, d_x \Phi_j(\,\cdot\,,t)) + \p_t \Phi_j(\,\cdot\,,t)|_{x(t)} = 0 \text{ to high order for $t \in I_1 \cap I_2$ and $j=1,2$}
\]
and the formulas \eqref{pjk_riccati_equation} and \eqref{pxgamma_formula}, which imply that in some local coordinates $y$, both $\p_y^{\gamma} \Phi_1(x(t),t)$ and $\p_y^{\gamma} \Phi_2(x(t),t)$ satisfy the same first order ODEs and have the same initial data when $t=t_1$.

Repeating the above process finitely many times, we see that the formal Taylor series of $\Phi(\,\cdot\,,t)|_{x(t)}$ varies smoothly with $t$ for $t \in [0,T]$. We can obtain a global smooth function $\Phi$ with this Taylor series by a Borel summation scheme that is compatible across coordinate patches. To do this, we use the auxiliary Riemannian metric on $X$ with covariant derivative $\nabla$. Then the tensor fields 
\[
T_j(t) = \nabla_x^j \Phi(\,\cdot\,,t)|_{x(t)},
\]
defined in terms of the formal Taylor series of $\Phi$, are invariantly defined and depend smoothly on $t \in [0,T]$. The map $E(t,v) = (\exp_{x(t)}(v), t)$ for $t \in [0,T]$ and $v \in T_{x(t)} M$ is a local diffeomorphism near each $(t,0)$ and it is injective on $[0,T] \times \{0\}$. Hence it is a diffeomorphism onto a neighborhood of $\{ (x(t),t) \,;\, t \in [0,T] \}$ (see e.g.\ \cite[Lemma 7.3]{KenigSalo}). We may define a smooth function $\Phi$ as the Borel sum 
\[
\Phi(x,t) = \sum_{j=0}^{\infty} \frac{T_j(t)(V(x,t), \ldots, V(x,t))}{j!} \chi(\abs{V(x,t)}/\eps_j)
\]
where $V(x,t)$ is the projection of $E^{-1}(x,t)$ to the $v$ component, $\chi \in C^{\infty}_c(\mR)$ satisfies $\chi=1$ near $0$, and $\eps_j$ are suitable small numbers (see \cite[Theorem 1.2.6]{Hormander}). To check that $\Phi$ indeed has the right Taylor series, observe that for small $s$ 
\[
\Phi(\exp_{x(t)}(sv), t) = \sum_{j=0}^{\infty} \frac{T_j(t)(v, \ldots, v)}{j!} s^j \chi(s \abs{v}/\eps_j).
\]
If $\eta(s)$ is a geodesic, the formula $\p_s^j (f(\eta(s)) = \nabla_{\dot{\eta}(s)}( \nabla_{\dot{\eta}(s)} ( \cdots (\nabla_{\dot{\eta}(s)} f))) = \nabla^j f(\dot{\eta}(s), \ldots, \dot{\eta}(s))$ shows that $\Phi$ defined as above satisfies $\nabla_x^j \Phi(\,\cdot\,,t)|_{x(t)} = T_j(t)$. \\

\noindent {\it Step 4.} Construction of $a$. \\

Recall that we wish to find $a(x,t) \sim \sum_{j=0}^{\infty} h^j a_{j}(x,t)$ so that $a_{j}$ solve the transport equations \eqref{transport_eq1_timedependent}--\eqref{transport_eq3_timedependent}. Evaluating the first equation at $x(t)$ gives that 
\[
\frac{1}{i} (\p_t  + L) a_0 + ba_0 \big|_{x=x(t)} = \frac{1}{i} \p_t (a_0(x(t),t)) + b(x(t)) a_0(x(t),t).
\]
This will vanish provided that 
\begin{equation} \label{azero_exponential_value}
a_0(x(t),t) = \exp \left[ -i \int_0^t b(x(s)) \,ds \right]
\end{equation}
where, for the sake of definiteness, we have chosen the initial value 
\[
a(x(0),0) = 1.
\]
Taking first order derivatives in \eqref{transport_eq1_timedependent} gives 
\[
\p_{x_j} \left[ \frac{1}{i} (\p_t  + L) a_0 + ba_0 \right] \big|_{x=x(t)}  = \frac{1}{i} \p_t (\p_{x_j} a_0(x(t),t)) + q_j^k(t) \p_{x_k} a_0(x(t),t))+ r_j(t) a_0(x(t),t) 
\]
where $q_j^k, r_j$ are independent of $a_0$. Requiring this to vanish leads to a linear ODE system for $(\p_{x_j} a_0(x(t),t))_{j=1}^n$, which can be solved as above. Continuing this process gives the formal Taylor series of $a_0(\,\cdot\,,t)$ at $x(t)$ within a coordinate patch. As in Step 4, covering $x([0,T])$ by finitely many coordinate patches and solving the ODEs for $a_0$ in the next coordinate patch with initial data obtained from the previous coordinate patch, we see that the formal Taylor series of $a_0(\,\cdot\,,t)|_{x(t)}$ depends smoothly on $t \in [0,T]$. Applying Borel summation as in Step 4 yields a smooth function $a_0$ satisfying \eqref{transport_eq1_timedependent}. The amplitudes $a_{1}, a_{2}, \ldots$ are constructed analogously. The support of $a$ can be chosen to be contained in any fixed small neighborhood of $\{ (x(t),t) \,;\, t \in [0,T] \}$. \\

\noindent {\it Step 5.} Further properties of $\Phi$. \\

Here we collect some further properties of the phase function $\Phi(x,t)$ for later use. Again, in order to have coordinate invariant statements, we use the auxiliary Riemannian metric $g$ on $X$. We also equip $X \times [0,T]$ with the product metric $g \oplus e$, where $e$ is the Euclidean metric on $\mR$.

First we fix the choice for the initial data for $H(t)$ by assuming that the quasimode near $x(0)$ is constructed using Riemannian normal coordinates at $x(0)$, and when $H(t)$ is written in these coordinates one chooses in \eqref{matrix_riccati_equation_general} the initial value 
\[
H(0) = i \mathrm{Id}.
\]
Next recall from \eqref{phi_req1} that
\begin{equation} \label{phi_req1_second}
\Phi(x(t), t) = 0, \qquad d_x \Phi(x(t), t) = \xi(t), \qquad \p_{x_j x_k} \Phi(x(t), t) = H_{jk}(t).
\end{equation}
In particular, $\mathrm{Im}(\Phi(\,\cdot\,,t))$ vanishes to first order at $x(t)$. Consequently the Riemannian Hessian $\nabla_x^2 \,\mathrm{Im}(\Phi)|_{(x(t),t)} = (\p_{x_j x_k} \mathrm{Im}(\Phi) - \Gamma_{jk}^l \p_{x_l} \mathrm{Im}(\Phi)) \,dx^j \otimes dx^k$ is equal to $\mathrm{Im}(H_{jk}(t)) \,dx^j \otimes dx^k$. This shows that $\mathrm{Im}(H(t))$ is an invariantly defined $2$-tensor field along $x(t)$, and 
\[
\nabla_x^2 \,\mathrm{Im}(\Phi)|_{(x(t),t)} = \mathrm{Im}(H(t)).
\]
Now $\mathrm{Im}(H(0))$ is positive definite, and this property is preserved for solutions of the matrix Riccati equation. This shows that there is $c > 0$ with 
\[
\nabla_x^2 \,\mathrm{Im}(\Phi)|_{(x(t),t)}(v, v) \geq c \abs{v}^2, \qquad \text{uniformly over }t \in [0,T].
\]
Thus if $\supp(a)$ is chosen sufficiently small, we will have 
\begin{equation} \label{im_phi_positive}
\mathrm{Im}(\Phi(x,t)) \geq c d(x, x(t))^2, \qquad (x,t) \in \supp(a).
\end{equation}

We will next consider derivatives of $\Phi$ with respect to $t$. By \eqref{pt_phi_formula} one has 
\begin{equation} \label{dtphi_formula}
\p_t \Phi(x(t), t) = 0.
\end{equation}
Differentiating \eqref{pt_phi_formula} further, we have 
\begin{equation} \label{dxtphi_formula}
d_x \p_t \Phi(x(t),t) = (\dot{\xi}_j(t) - H_{jk}(t) \dot{x}^k(t)) \,dx^j
\end{equation}
which is an invariantly defined $1$-form along $x(t)$ since the left hand side is invariant. Similarly 
\begin{equation} \label{dttphi_formula}
\p_t^2 \Phi(x(t),t) = -(\dot{\xi}_j(t) - H_{jk}(t) \dot{x}^k(t)) \dot{x}^j(t).
\end{equation}

\vspace{5pt}

\noindent {\it Step 6.} Wave front set of $Pu$. \\

We now study the wave front set of $Pu$ using the formula \eqref{pu_conjugated_formula}. By \eqref{eikonal_equation_fourth} we have 
\[
p_m(x,d_x \Phi(x,t)) = -\p_t \Phi(x,t) + r(x,t)
\]
where $r(\,\cdot\,,t)$ vanishes to infinite order at $x(t)$. We observe that 
\begin{align*}
 &\int_0^T e^{i\Phi(x,t)/h} h^{-m} (-\p_t \Phi(x,t)) a(x,t)  \,dt = -h^{-m} \int_0^T \frac{h}{i} \p_t(e^{i\Phi(x,t)/h}) a(x,t)  \,dt \\
 &= h^{1-m} \int_0^T e^{i\Phi(x,t)/h} \frac{1}{i} \p_t a(x,t)  \,dt + \frac{1}{i} h^{1-m} (e^{i \Phi(x,0)/h} a(x,0) - e^{i \Phi(x,T)/h} a(x,T)).
\end{align*}
Thus, using the formula for $Pu$ in \eqref{pu_conjugated_formula}, we have 
\begin{align*}
 &Pu(x) = \frac{1}{i} h^{1-m} (e^{i \Phi(x,0)/h} a(x,0) - e^{i \Phi(x,T)/h} a(x,T)) \\
 &+ \int_0^T e^{i\Phi(x,t)/h} \left[ h^{-m} (r a)(x,t) + h^{1-m}(\frac{1}{i} (\p_t a + La )+ ba)(x,t) + \sum_{j=2}^m h^{j-m} R_j a(x,t) \right] \,dt.
\end{align*}
By \eqref{transport_eq1_timedependent}--\eqref{transport_eq3_timedependent} we further have 
\begin{align} \label{pux_wavefrontset_formula}
Pu(x) = \frac{1}{i} h^{1-m} (e^{i \Phi(x,0)/h} a(x,0) - e^{i \Phi(x,T)/h} a(x,T)) + \int_0^T e^{i\Phi(x,t)/h} \left[ \sum_{j=0}^m h^{j-m} r_j(x,t) \right] \,dt
\end{align}
where each $r_j$ is a smooth function with $\supp(r_j) \subset \supp(a)$ and $r_j(\,\cdot\,,t)$ vanishes to infinite order at $x(t)$. By \eqref{im_phi_positive}, for any $N$ there is $C_N$ so that 
\[
\Big\lvert \int_0^T e^{i\Phi(x,t)/h} \left[ \sum_{j=0}^m h^{j-m} r_j(x,t) \right] \,dt \Big\rvert \leq C_N \int_0^T e^{-c d(x,x(t))^2/h} \left[ \sum_{j=0}^m h^{j-m} d(x,x(t))^N \right] \chi(x,t) \,dt
\]
where $\chi$ is the characteristic function of $\supp(a)$. Thus the $L^2(X)$ norm of the last term in \eqref{pux_wavefrontset_formula} is $O(h^{\infty})$. Since the first two terms on the right of \eqref{pux_wavefrontset_formula} are Gaussian wave packets at $\gamma(0)$ and $\gamma(T)$ with nonvanishing amplitudes, it follows (see \cite[Section 8.4.2]{Zworski}) that 
\[
\WF_{\mathrm{scl}}(Pu) = \gamma(0) \cup \gamma(T).
\]

\vspace{5pt}

\noindent {\it Step 7.} Wave front set of $u$. \\

Let $(x_0,\xi_0) \in T^* X$. We will test whether $(x_0,\xi_0)$ is in $\WF_{\mathrm{scl}}(u)$ by integrating $u$ against the wave packet $e^{i\Psi/h} b$, where $\Psi = \Psi(\,\cdot\,;x_0,\xi_0)$ and $b$ are as in Proposition \ref{prop_wavefront_fbi}. Note in particular that 
\[
\Psi(x_0) = 0, \qquad d \Psi(x_0) = \xi_0, \qquad \nabla^2 \,\mathrm{Im}(\Psi)(x_0) > 0, \qquad b(x_0) \neq 0.
\]
Moreover, since $b$ is supported close to $x_0$, we have 
\[
d\Psi \neq 0 \text{ in $\supp(b) \setminus \{x_0\}$}, \qquad \mathrm{Im}(\Psi)(x) \geq c d(x,x_0)^2 \text{ in $\supp(b)$}.
\]
We thus need to study the integral 
\[
I(h) := \int_X u \ol{e^{i\Psi/h} b} \,dV = \int_0^T \int_X e^{i \Theta(x,t)/h} r(x,t) \,dV \,dt
\]
where 
\[
\Theta(x,t) := \Phi(x,t) - \ol{\Psi(x)}, \qquad r(x,t) := a(x,t) \ol{b(x)}.
\]
We will show that $I(h) = O(h^{\infty})$ if $(x_0, \xi_0) \notin \gamma([0,T])$ (see cases 7a and 7b below), and $I(h) \sim h^{\frac{n+1}{2}}$ if $(x_0,\xi_0) \in \gamma([0,T])$ (see case 7c below). Then Proposition \ref{prop_wavefront_fbi} proves that $\WF_{\mathrm{scl}}(u) = \gamma([0,T])$.

{\it Case 7a.} 
First assume that $x_0 \notin x([0,T])$. By the properties of $\Phi$ and $\Psi$ and by the triangle inequality, 
\begin{equation} \label{imtheta_wfu_estimate}
\mathrm{Im}(\Theta(x,t)) \geq c (d(x,x(t))^2 + d(x,x_0)^2) \geq \frac{c}{2} d(x(t),x_0)^2 \geq c' > 0
\end{equation}
uniformly over $\supp(r)$. Thus in this case $I(h) = O(e^{-C/h})$.

{\it Case 7b.} 
Let $x_0 \in x([0,T])$, but $\xi_0 \neq \xi(t)$ for any $t \in I_0$ where $I_0 = \{Êt \in [0,T] \,;\, x(t) = x_0 \}$. Note that 
\[
d_{x,t} \Theta(x(t),t) = (\xi(t) - d_x \Psi(x(t)), 0).
\]
It follows that $d_{x,t} \Theta(x(t),t)$ is nonvanishing for any $t \in I_0$. Thus if we choose $\chi \in C^{\infty}_c([0,T])$ supported close to $I_0$ with $\chi=1$ near $I_0$, then nonstationary phase \cite[Theorem 7.7.1]{Hormander}, together with its boundary version \cite[Theorem 7.7.17(i)]{Hormander} if $0 \in I_0$ or $T \in I_0$, implies that  
\[
\int_0^T \int_X e^{i \Theta(x,t)/h} r(x,t) \chi(t) \,dV \,dt = O(h^{\infty}).
\]

If we replace $\chi$ by $1-\chi$ in the integrand, we are in the region where $d(x(t),x_0) \geq c > 0$ and by \eqref{imtheta_wfu_estimate} the resulting integral is $O(e^{-C/h})$.

{\it Case 7c.}
Let now $(x_0, \xi_0) = (x(t_0), \xi(t_0))$ for some $t_0 \in [0,T]$. The phase function satisfies 
\[
\im(\Theta) \geq 0, \qquad \Theta(x(t_0),t_0) = 0, \qquad d_{x,t} \Theta(x(t_0), t_0) = 0.
\]
In order to use stationary phase, we need to show that the Hessian $\nabla_{x,t}^2 \Theta|_{(x(t_0),t_0)}$ is invertible. Computing in Riemannian normal coordinates at $(x(t_0), t_0)$ and using \eqref{dxtphi_formula}--\eqref{dttphi_formula}, we have 
\begin{equation} \label{theta_hessian_first}
\nabla_{x,t}^2 \Theta|_{(x(t_0),t_0)} = \left( \begin{array}{cc} H + G & \dot{\xi} - H \dot{x} \\ (\dot{\xi} - H \dot{x})^t & (H \dot{x} - \dot{\xi}) \cdot \dot{x} \end{array} \right)
\end{equation}
where everything is evaluated at $t=t_0$ and $G := -\nabla^2 \bar{\Psi}(x_0)$ satisfies $\im(G) > 0$. If $(\nabla_{x,t}^2 \Theta) \zeta = 0$ where $\zeta = \left(\begin{array}{c} v \\ z \end{array} \right) \in \mC^{n+1}$ with $v \in \mC^n$ and $z \in \mC$, looking at imaginary parts gives 
\[
0 = \im(\nabla_{x,t}^2 \Theta) \zeta \cdot \bar{\zeta} = \abs{\im(G)^{1/2} v}^2 + \abs{\im(H)^{1/2}(v - \dot{x}z)}^2.
\]
Thus $v=0$ and $\dot{x} z = 0$. If $\dot{x}(t_0) \neq 0$ this implies that $\zeta = 0$, showing that $\nabla_{x,t}^2 \Theta|_{(x(t_0),t_0)}$ is invertible. If $\dot{x}(t_0) = 0$ we additionally use that $0 = \nabla_{x,t}^2 \Theta \left(\begin{array}{c} 0 \\ z \end{array} \right) = \left(\begin{array}{c} \dot{\xi} z \\ 0  \end{array} \right)$, so $\dot{\xi}(t_0) z = 0$. But the condition $\dot{x}(t_0) = 0$ implies that $\dot{\xi}(t_0) \neq 0$ (otherwise $dp$ would vanish at $\gamma(t_0)$ so $\gamma(t) \equiv \gamma(t_0)$, contradicting the assumption that $\gamma$ is injective). Thus $z = 0$, showing that $\nabla_{x,t}^2 \Theta|_{(x(t_0),t_0)}$ is invertible also when $\dot{x}(t_0) = 0$.

Now if $\chi \in C^{\infty}_c([0,T])$ is supported near $t_0$ and satisfies $\chi=1$ near $t_0$, then stationary phase \cite[Theorem 7.7.5]{Hormander}, or its boundary version \cite[Theorem 7.7.17(iii)]{Hormander} if $t_0=0$ or $t_0=T$, implies that 
\[
\int_0^T \int_X e^{i \Theta(x,t)/h} r(x,t) \chi(t) \,dV \,dt = c_0 h^{\frac{n+1}{2}} + O(h^{\frac{n+1}{2}+1})
\]
where $c_0 \neq 0$ since $r(x_0,t_0) \neq 0$. When we replace $\chi$ by $1-\chi$ in the integrand, then we are either in the region where $x(t)$ is away from $x_0$ (case 7a) or $x(t)$ is close to $x_0$ but $\xi(t)$ is away from $\xi_0$ (case 7b), and the resulting integral is $O(h^{\infty})$. Combining these facts gives that $I(h) \sim h^{\frac{n+1}{2}}$. \\

\noindent {\it Step 8.} Semiclassical measure. \\

Let $u$ be as above, and let $v$ be the corresponding quasimode for $\tilde{P}^*$. Then $u$ and $v$ have the form 
\[
u(x) = \int_0^T e^{i\Phi(x,t)/h} a(x,t) \,dt, \qquad v(x) = \int_0^T e^{i \Phi(x,s)/h} \beta(x,s) \,ds.
\]
Note that $u$ and $v$ have the same phase function $\Phi$ since both $P$ and $\tilde{P}$ have principal symbol $p_m$. Moreover, $\beta$ is an amplitude analogous to $a$ satisfying 
\[
\beta(x(t),t) = \exp \left[ -i \int_0^t b_{\tilde{P}^*}(x(s)) \,ds \right] + O(h).
\]
Using Lemma \ref{lemma_conjugated_p}, we have 
\[
Qu(x) = \int_0^T e^{i\Phi(x,t)/h} \bigg[ h^{-{\ell}} q_{\ell}(x,d_x \Phi(x,t)) a(x,t) + \sum_{j=1}^{\ell} h^{j-{\ell}} R_j a(x,t) \bigg] \,dt
\]
where $R_j$ is now defined in terms of $Q$. We only need to consider the $h^{-{\ell}}$ term, since the argument below will show that the other terms will be lower order as $h \to 0$. Then 
\begin{multline*}
\int_X Qu \bar{v} \,dV =  h^{-{\ell}} \int_0^T \left[ \int_0^T \int_X e^{i\Phi(x,t)/h} q_{\ell}(x,d_x \Phi(x,t)) a(x,t) \ol{e^{i\Phi(x,s)/h} \beta(x,s)} \,dV \,dt \right] \,ds \\
 + \text{lower order terms}.
\end{multline*}

For any fixed $s \in [0,T]$ we will study the integral inside the brackets, i.e.\ the integral 
\[
I_s(h) := \int_0^T \int_X e^{i \Theta_s(x,t)/h} r_s(x,t) \,dV \,dt
\]
where 
\[
\Theta_s(x,t) := \Phi(x,t) - \ol{\Phi(x,s)}, \qquad r_s(x,t) = a(x,t) \ol{\beta(x,s)} q_{\ell}(x, d_x \Phi(x,t)).
\]

Using the conditions for $\Phi$, one has 
\[
\im(\Theta_s) \geq 0 \text{ on $\supp(r_s)$}, \qquad \Theta_s(x(s),s) = 0, \qquad d_{x,t} \Theta_s(x(s), s) = 0.
\]
The formula \eqref{theta_hessian_first} with the choices $\Psi(x) = \Phi(x,s)$ and $t_0=s$ shows that 
\begin{equation} \label{def_Ms}
M(s) := \nabla_{x,t}^2 \Theta_s|_{(x(s),s)} = \left( \begin{array}{cc} 2i\im(H) & \dot{\xi} - H \dot{x} \\ (\dot{\xi} - H \dot{x})^t & (H \dot{x} - \dot{\xi}) \cdot \dot{x} \end{array} \right).
\end{equation}
Again, this is the representation of $\nabla_{x,t}^2 \Theta_s|_{(x(s),s)}$ in Riemannian normal coordinates at $(x(s),s)$. The argument after \eqref{theta_hessian_first} shows that $M(s)$ is invertible and hence we can use stationary phase to evaluate $I_s(h)$. However, since there is an additional integration over $s$, we need to ensure that all constants are uniform over $s \in [0,T]$. To do this, first we show that there is $c > 0$ with 
\begin{equation} \label{ms_estimate_below}
\abs{M(s) \zeta}Ê\geq c \abs{\zeta}, \qquad \text{uniformly over $s \in [0,T]$}.
\end{equation}
In fact, it is enough to prove that 
\begin{equation} \label{ms_determinant_computation}
\det(M(s)) = \alpha_0 \det(\im(H(s)))
\end{equation}
for some constant $\alpha_0 \neq 0$. The computation below will show that 
\[
\alpha_0 = \frac{i}{2} (2i)^n (\abs{\dot{x}(0)}^2 + \abs{\dot{\xi}(0)}^2).
\]
Since $\im(H(s))$ is positive definite for $s \in [0,T]$, \eqref{ms_estimate_below} is a consequence of \eqref{ms_determinant_computation} and the Cramer rule for computing $M(s)^{-1}$.

For proving \eqref{ms_determinant_computation}, we write 
\[
\rho(s) := H(s) \dot{x}(s) - \dot{\xi}(s), \qquad \mathcal{R}(s) := \mathrm{Re}(H(s)), \qquad \mathcal{I}(s) := \mathrm{Im}(H(s)).
\]
Here $\mathcal{R}(s)$ and $\mathcal{I}(s)$ are real symmetric matrices. The Schur complement formula for the determinant of a block matrix yields 
\[
\det(M(s)) = \det(2i \mathcal{I}(s)) f(s) 
\]
where 
\[
f(s) := \rho(s) \cdot (\dot{x}(s) - (2i\mathcal{I}(s))^{-1} \rho(s)) = \frac{i}{2} \mathcal{I}(s)^{-1} \rho(s) \cdot \ol{\rho(s)}.
\]
In the last equality we used the fact that $\dot{x} = \im(\mathcal{I}^{-1} \rho)$. We thus have 
\begin{equation} \label{f_dot_formula}
\dot{f}(s) = \frac{i}{2} \left[ -\mathcal{I}^{-1} \dot{\mathcal{I}} \mathcal{I}^{-1} \rho \cdot \bar{\rho} + \mathcal{I}^{-1} \dot{\rho} \cdot \bar{\rho} + \mathcal{I}^{-1} \rho \cdot \bar{\dot{\rho}} \right].
\end{equation}
To compute $\dot{\rho}$, note that from the Hamilton equations and \eqref{b_c_d_definition} we obtain the formulas 
\[
\ddot{x} = B^t \dot{x} + C \dot{\xi}, \qquad \ddot{\xi} = -D \dot{x} - B \dot{\xi}.
\]
Using these formulas together with the matrix Riccati equation \eqref{matrix_riccati_equation_general}, we obtain 
\begin{equation} \label{rho_dot_formula}
\dot{\rho} = -(HC+B)\rho.
\end{equation}
Finally, taking the imaginary part of the matrix Riccati equation \eqref{matrix_riccati_equation_general} we have 
\begin{equation} \label{i_dot_formula}
\dot{\I}  + \I C \R + \R C \I + B \I + \I B^t = 0.
\end{equation}
Inserting the formulas \eqref{rho_dot_formula} and \eqref{i_dot_formula} into \eqref{f_dot_formula} proves that $\dot{f}(s) \equiv 0$. This shows \eqref{ms_determinant_computation}, where $\alpha_0 = (2i)^n f(0)$ and $f(0) = \frac{i}{2} \rho(0) \cdot \ol{\rho(0)} = \frac{i}{2} (\abs{\dot{x}(0)}^2 + \abs{\dot{\xi}(0)}^2)$.

Having proved \eqref{ms_estimate_below}, the Taylor formula shows that there is $\delta > 0$ with 
\[
\abs{d_{x,t} \Theta_s(\exp_{x(s)}(v), s+z)} \geq \frac{c}{2} \left\lvert \left( \begin{array}{c} v \\Êz \end{array} \right) \right\rvert,  \qquad \left\lvert \left( \begin{array}{c} v \\Êz \end{array} \right) \right\rvert \leq \delta, \ \ s \in [0,T].
\]
In normal coordinates at $(x(s),s)$ this means that 
\[
\frac{\abs{(x-x(s), t-s)}}{\abs{\Theta_s'(x,t)}} \leq C, \qquad \abs{(x-x(s), t-s)} \leq \delta, \ \ s \in [0,T].
\]
This is precisely the condition in the stationary phase theorem \cite[Theorem 7.7.5]{Hormander} that will make the constants uniform over $s \in [0,T]$. Note that since $Q$ vanishes near the end points of $\gamma$, we do not need to worry about boundary contributions. Thus, if $\chi \in C^{\infty}_c(\mR)$ satisfies $\chi = 1$ for $\abs{t} \leq \delta/2$ and $\supp(\chi) \subset (-\delta,\delta)$, and if the support of $\beta(x,s)$ is chosen sufficiently small depending on $\delta$, then the stationary phase theorem implies that 
\[
\left\lvert \int_0^T \int_X e^{i \Theta_s(x,t)/h} r_s(x,t) \chi(t-s) \,dV \,dt - h^{\frac{n+1}{2}} (\det(M(s)/2\pi i))^{-1/2} r_s(x(s),s) \right \rvert \leq C  h^{\frac{n+3}{2}}
\]
uniformly over $s \in [0,T]$. There is no $\abs{g}^{1/2}$ term coming from $dV$ since the computation is done in Riemannian normal coordinates. Moreover, if we replace $\chi(t-s)$ by $1-\chi(t-s)$ in the integrand, then one is either in the region where $x(t)$ is close to $x(s)$ but $\abs{t-s} \geq \delta/2$, so that the integral will be $O(h^{\infty})$ uniformly over $s \in [0,T]$ by nonstationary phase as in Case 7b above, or one is in the region where $x(t)$ is away from $x(s)$ and one can use Gaussian decay as in Case 7a above. It follows that 
\[
\abs{I_h(s) - h^{\frac{n+1}{2}} (\det(M(s)/2\pi i))^{-1/2} r_s(x(s),s)} \leq C  h^{\frac{n+3}{2}}
\]
uniformly over $s \in [0,T]$. By \eqref{ms_determinant_computation} this may be rewritten as 
\[
I_h(s) = c_0 h^{\frac{n+1}{2}} \det(\mathcal{I}(s))^{-1/2} \exp\left[ -i \int_0^s \left[ b_P(x(\sigma)) - \ol{b_{\tilde{P}^*}(x(\sigma)}) \right] \,d\sigma \right] q_{\ell}(\gamma(s)) + O(h^{\frac{n+3}{2}})
\]
uniformly over $s \in [0,T]$, where 
\begin{equation} \label{czero_constant_formula}
c_0 = (2\pi i)^{\frac{n+1}{2}} \alpha_0^{-\frac{1}{2}} = \frac{2 \pi^{\frac{n+1}{2}} }{(\abs{\dot{x}(0)}^2 + \abs{\dot{\xi}(0)}^2)^{1/2}}.
\end{equation}
This proves that, as $h \to 0$,  
\[
h^{{\ell}-\frac{n+1}{2}} \int_X Qu \bar{v} \,dV \to c_0 \int_0^T \det(\mathcal{I}(s))^{-1/2} \exp\left[ -i \int_0^s \left[ b_P(x(\sigma)) - \ol{b_{\tilde{P}^*}(x(\sigma))}) \right] \,d\sigma \right] q_{\ell}(\gamma(s)) \,ds.
\]

Using the properties of $\Phi$ and the notation \eqref{b_c_d_definition}, the function $b = b_P$ in Lemma \ref{lemma_conjugated_p} satisfies 
\[
b_P(x(t)) = \frac{1}{2i} \tr(C(t)H(t)) + p_{m-1}(\gamma(t)).
\]
As $\tilde p_m = p_m$ is real, the adjoint $\tilde P^*$, defined with respect the $L^2(M)$ inner product associated to $g$, has in some local coordinates the full symbol
\[
p_m + \frac{1}{i} \sum_{j=1}^n (\p_{x_j \xi_j} p_m + |g|^{-1/2} \p_{\xi_j} p_m \p_{x_j} |g|^{1/2})  + \overline{\tilde p_{m-1}} + r,
\]
where $r$ contains the terms of order $\le m-2$. Thus, in normal coordinates at $x(t)$,  
\[
b_{\tilde{P}^*}(x(t)) = \frac{1}{2i} \tr(C(t)H(t)) + \frac{1}{i} \tr(B(t)) + \ol{\tilde{p}_{m-1}(\gamma(t))}.
\]
It follows that 
\[
b_P(x(t)) - \ol{b_{\tilde{P}^*}(x(t))} = \frac{1}{i} \tr(C(t) \mathcal{R}(t) + B(t)) + p_{m-1}(\gamma(t)) - \tilde{p}_{m-1}(\gamma(t)).
\]
Moreover, using \eqref{i_dot_formula} and symmetries of trace, 
\[
\p_t (\log \det \I) = \tr (\I^{-1} \dot{\I})
=
- \tr(C \R + \I^{-1} \R C \I + \I^{-1 }B \I + B^t)
= - 2 \tr(C \R + B)
\]
which gives, using that $\I(0) = \mathrm{Id}$, 
\[
\det \I(t) = \exp \left[ -2 \int_0^t \tr(C(s)\R(s) + B(s)) \,ds \right].
\]
These facts imply that 
\[
\det(\mathcal{I}(t))^{-1/2} \exp\left[ -i \int_0^t \left[ b_P(x(s)) - \ol{b_{\tilde{P}^*}(x(s))}) \right] ds \right] = \exp \left[ -i \int_0^t (p_{m-1}(\gamma(s)) - \tilde{p}_{m-1}(\gamma(s))) \,ds \right].
\]
This finally proves that 
\[
h^{{\ell}-\frac{n+1}{2}} \int_X Qu \bar{v} \,dV \to c_0 \int_0^T q_{\ell}(\gamma(t)) \exp \left[ -i \int_0^t (p_{m-1}(\gamma(s)) - \tilde{p}_{m-1}(\gamma(s))) \,ds \right]\,dt
\]
where $c_0 \neq 0$ is given in \eqref{czero_constant_formula}.
\end{proof}

\begin{Remark}
For later purposes, we observe that if $M$ is a compact subset of $X$ disjoint from the end points of $\gamma$, then for $k \geq 0$ one has 
\begin{equation} \label{u_hsm_norm}
\norm{u}_{H^k(M)} = O(h^{\frac{n+1}{4}-k}).
\end{equation}
This follows from
\eqref{concentration} upon taking $\tilde{P} = P^*$, $u = v$ and $Q = R^* \chi^2 R$ where $R$ is elliptic of order $k$ (one can take $k$ even) and $\chi \in C^{\infty}_c(X)$ satisfies $\chi = 1$ near $M$ and $\chi = 0$ near the end points of $\gamma$.
\end{Remark}

\begin{Remark}
It is possible to view the construction in Theorem \ref{thm_quasimode_direct} formally in terms of a quasimode construction in $X \times \mR$ for the operator 
\[
Q(x,t,D_x,D_t) := h^{1-m} D_t + P(x, D_x).
\]
If $\gamma(s) = (x(s),\xi(s))$ is an injective null bicharacteristic segment for $P$, then $(x(s), s, \xi(s), 0)$ is a similar bicharacteristic segment for $Q$ and its spatial projection $\eta(s) = (x(s), s)$ satisfies $\dot{\eta}(s) \neq 0$ everywhere. One can then think of $e^{i \Phi(x,t)/h} a(x,t)$ as a quasimode for $Q$ near the curve $\eta$, and integrating in $t$ produces a quasimode for $P$.
\end{Remark}

We will also formulate a result related to computing the values of $u$ at $x(t)$ away from cusp points. This involves stationary phase in the $t$ variable. Near cusp points the phase function will have a degenerate critical point in $t$ and the behaviour is more complicated. For example, if $P$ is the Tricomi operator, one can check from the construction that the phase function will have cubic behaviour near cusps. For such phase functions, at least if they are real valued, the asymptotics involve Airy functions \cite[Theorem 7.7.18]{Hormander}.

\begin{Lemma} \label{lemma_bicharacteristic_solution_values}
If $u$ is the function in Theorem \ref{thm_quasimode_direct} and if $t_0 \in (0,T)$ is such that $\dot{x}(t_0) \neq 0$ and $x(t) \neq x(t_0)$ for $t \neq t_0$, then 
\[
u(x(t_0)) = h^{1/2} \frac{1}{c_{\gamma}(t_0)} \exp \left[ -i \int_0^{t_0} b_P(x(s)) \,ds \right] + O(h^{3/2})
\]
where $c_{\gamma}$ is a smooth function depending on $\gamma$ that is nonvanishing at any $t_0$ satisfying the assumption above.
\end{Lemma}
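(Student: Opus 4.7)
The plan is to view $u(x(t_0))$ as a one-dimensional oscillatory integral in $t$ and apply complex stationary phase. Explicitly,
\[
u(x(t_0)) = \int_0^T e^{i\phi(t)/h}\, a(x(t_0), t)\, dt, \qquad \phi(t) := \Phi(x(t_0), t).
\]
The bound \eqref{im_phi_positive} gives $\mathrm{Im}(\phi(t)) \geq c\, d(x(t_0), x(t))^2$. Combined with the hypothesis that $x(t) \neq x(t_0)$ for $t \neq t_0$ and the continuity of $t \mapsto d(x(t_0), x(t))$ on the compact set $\supp(a(x(t_0),\cdot\,))$, this shows that for any $\delta > 0$ the contribution to $u(x(t_0))$ from $\{|t - t_0| \geq \delta\}$ is $O(e^{-c_\delta/h})$. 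Thus only a neighborhood of $t_0$ matters, and since $t_0 \in (0,T)$, no boundary contributions arise after a cutoff.

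Near $t_0$ one has, by \eqref{phi_req1_second}, $\phi(t_0) = \Phi(x(t_0), t_0) = 0$; by \eqref{dtphi_formula}, $\phi'(t_0) = \partial_t \Phi(x(t_0), t_0) = 0$; and by \eqref{dttphi_formula},
\[
\phi''(t_0) = \partial_t^2 \Phi(x(t_0), t_0) = (H(t_0)\dot{x}(t_0) - \dot{\xi}(t_0)) \cdot \dot{x}(t_0).
\]
Taking imaginary parts, $\mathrm{Im}\,\phi''(t_0) = \mathrm{Im}(H(t_0))\dot{x}(t_0)\cdot \dot{x}(t_0) > 0$, since $\mathrm{Im}(H(t_0))$ is positive definite by Step 5 of the proof of Theorem \ref{thm_quasimode_direct} and $\dot{x}(t_0) \neq 0$ by assumption. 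Hence $\phi''(t_0) \neq 0$ and $-i\phi''(t_0)$ lies in the (open) right half-plane, so its principal square root is well defined and nonzero.

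Complex stationary phase (\cite[Theorem 7.7.5]{Hormander}, extended to complex-valued phases as in Theorem 7.7.12 there) applied to the isolated nondegenerate critical point $t_0$ yields
\[
u(x(t_0)) = \left(\frac{2\pi h}{-i\phi''(t_0)}\right)^{1/2} a(x(t_0), t_0) + O(h^{3/2}).
\]
Since $a(x(t_0), t_0) = a_0(x(t_0), t_0) + O(h)$ and, by \eqref{azero_exponential_value}, $a_0(x(t_0), t_0) = \exp\left[-i\int_0^{t_0} b_P(x(s))\,ds\right]$, the claimed asymptotic holds with
\[
c_\gamma(t_0) := \left(\frac{-i\phi''(t_0)}{2\pi}\right)^{1/2}.
\]
Smoothness of $c_\gamma$ as a function of $t_0$ is inherited from the smooth dependence of $H$, $\dot{x}$, $\dot{\xi}$ on $t_0$ along $\gamma$, and its nonvanishing follows from the nondegeneracy of $\phi''(t_0)$ already established.

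The main technical point is ensuring applicability of the complex stationary phase expansion with the correct $O(h^{3/2})$ remainder; this is standard once the critical point has been localized and shown to be nondegenerate, and the global isolation of the critical point provided by the hypothesis $x(t)\ne x(t_0)$ for $t\ne t_0$ is precisely what removes all other contributions exponentially.
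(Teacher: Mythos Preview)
Your proof is correct and follows essentially the same approach as the paper: localize via \eqref{im_phi_positive}, identify the unique nondegenerate critical point using \eqref{phi_req1_second}--\eqref{dttphi_formula} together with the positivity of $\mathrm{Im}(H)$, and apply complex stationary phase (H\"ormander, Theorem 7.7.5) and \eqref{azero_exponential_value}. The paper additionally phrases the argument for the slightly more general situation where $x(t)=x(t_0)$ at finitely many times before specializing, but for the stated hypothesis your version is the same.
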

\begin{proof}
We will give the proof under the more general assumption that $\dot{x}(t) \neq 0$ for any $t$ with $x(t) = x(t_0)$, and $x(t_0)$ is not one of $x(0)$ or $x(T)$. Write $x_0 = x(t_0)$, and recall that 
\[
u(x_0) = \int_0^T e^{i\Phi(x_0,t)/h} a(x_0,t) \,dt.
\]
Let $I_{t_0} = \{ t \in [0,T] \,;\, x(t) = x_0 \}$. Since $\dot{x}(t) \neq 0$ when $t \in I_{t_0}$, points of $I_{t_0}$ are isolated and hence $I_{t_0} = \{ s_1, \ldots, s_N \}$ where $s_1 < s_2 < \ldots < s_N$ and one of the $s_j$ is $t_0$.

Note first that if $t$ is away from $I_{t_0}$, then $x(t)$ is bounded away from $x_0$ and thus by \eqref{im_phi_positive} the contribution of this region to $u(x_0)$ is $O(e^{-C/h})$. At points of $I_{t_0}$, since $x(s_j) = x_0$, the formulas \eqref{phi_req1_second}--\eqref{dttphi_formula} imply that 
\[
\Phi(x_0, s_j) = 0, \qquad \p_t \Phi(x_0, s_j) = 0, \qquad \p_t^2 \Phi(x_0, s_j) = (H\dot{x} - \dot{\xi}) \cdot \dot{x}|_{s_j}
\]
where $H, \dot{x}, \dot{\xi}$ are written in terms of Riemannian normal coordinates at $x_0$. Since $\im(H(s_j)) > 0$ and since $\dot{x}(s_j) \neq 0$ by assumption, we have $\p_t^2 \Phi(x_0, s_j) \neq 0$ and hence we can use stationary phase \cite[Theorem 7.7.5]{Hormander} near each $s_j$. This implies that 
\[
u(x_0) = h^{1/2} \sum_{j=1}^N \frac{1}{c_{\gamma}(s_j)} a(x_0, s_j) + O(h^{3/2})
\]
where 
\[
c_{\gamma}(s_j) = \left( \frac{2\pi i}{H(s_j) \dot{x}(s_j) \cdot \dot{x}(s_j)} \right)^{1/2}
\]
and, since $x_0 = x(s_j)$, by \eqref{azero_exponential_value} one has 
\[
a(x_0, s_j) = \exp \left[ -i \int_0^{s_j} b_P(x(s)) \,ds \right].
\]
Now if $t_0$ satisfies the assumption of the theorem, then $N=1$ and the result follows.
\end{proof}

\section{Determining the scattering relation and ray transforms} \label{sec_scattering_relation}

We will next prove Theorem \ref{thm_main1}. The main idea for recovering the scattering relation is the following: if $\gamma_1$ is a maximal null bicharacteristic for $P_1$ in $M$ starting at $(x_0,\xi_0)$, one constructs a solution $u_1$ of $P_1 u_1 = 0$ in a neighborhood $X$ of $M$ so that $\WF_{\mathrm{scl}}(u_1)$ is on the extension $\breve{\gamma}_1$ of $\gamma_1$ to $X$. Using the assumption $C_{P_1} = C_{P_2}$, there is a solution $\tilde{u}_2$ of $P_2 \tilde{u}_2 = 0$ in $M$ having the same Cauchy data as $u_1$ on $\p M$. In fact one can extend $\tilde{u}_2$ so that $P_2 \tilde{u}_2 = 0$ near $M$ and $u_1 = \tilde{u}_2$ outside $M$ (this is the mix-and-match construction mentioned in the introduction). One then considers the maximal null bicharacteristic $\gamma_2$ of $P_2$ in $M$ going through $(x_0,\xi_0)$.

The above construction ensures that $\WF_{\mathrm{scl}}(u_1|_{X \setminus M})$ contains a part near $(x_0,\xi_0)$ and also a part near $\alpha_{P_1}(x_0,\xi_0)$. Since $u_1 = \tilde{u}_2$ outside $M$, also $\WF_{\mathrm{scl}}(\tilde{u}_2|_{X \setminus M})$ contains these two parts. On the other hand, since $P_2 \tilde{u}_2 = 0$ near $M$ and since $\gamma_2$ contains $(x_0,\xi_0)$, by semiclassical propagation of singularities $\WF_{\mathrm{scl}}(\tilde{u}_2|_{X \setminus M})$ must also contain parts near the start and end points of $\gamma_2$. Combining these conditions eventually leads to the fact that $\alpha_{P_1}(x_0,\xi_0) = \alpha_{P_2}(x_0,\xi_0)$. In the proof we need to deal with the possibility of tangential contacts with $\p M$, which adds some technicalities.

If the principal parts of $P_1$ and $P_2$ agree, the recovery of subprincipal information in Theorem \ref{thm_main1} is also based on constructing solutions concentrating near a maximal null bicharacteristic. However, instead of using the mix-and-match construction, it is more convenient to employ the integral identity in Lemma \ref{lemma_integral_identity_potential} and the formula for the semiclassical limit measure in \eqref{concentration}.

\begin{proof}[Proof of Theorem \ref{thm_main1}]
The proof will be done in several steps.

\vspace{10pt}

\noindent {\it Step 1.} Preparation.

\vspace{10pt}

We begin by extending both $P_1$ and $P_2$ smoothly to a slightly larger manifold $X$, in such a way that $P_1 = P_2$ in $X \setminus M$ (this is possible since $P_1 = P_2$ to infinite order at $\p M$). The principal symbols $p_j = p_{j,m}$ of $P_j$ then satisfy 
\begin{equation} \label{pone_ptwo_exterior_condition}
p_1 = p_2 \text{ in $T^*(X \setminus M^{\mathrm{int}})$}.
\end{equation}
In particular, the null directions on $\p(T^* M)$ are the same for $P_1$ and $P_2$. Note also that if $X$ is chosen sufficiently small, both $P_1$ and $P_2$ will be real principal type in $X$ (i.e.\ the nontrapping condition will hold in $X$), see \cite[proof of Theorem 26.1.7]{Hormander}.




Let $(x_0, \xi_0) \in \p'_{\mathrm{null},P_1}(T^* M)$, and let $\gamma_1: [0,T_1] \to T^* M \setminus 0$ be the maximal null bicharacteristic for $P_1$ with $\gamma_1(0) = (x_0,\xi_0)$ (the case where $\gamma_1(T_1) = (x_0,\xi_0)$ is analogous). We denote by $\breve{\gamma}_1$ the maximal continuation of $\gamma_1$ as a null bicharacteristic of $P_1$ in $X$. Since $\gamma_1$ is maximal, there is a strictly increasing sequence $(t_j^-)_{j=1}^{\infty}$ with $t_j^- \to 0$ and a strictly decreasing sequence $(t_j^+)_{j=1}^{\infty}$ with $t_j^+ \to T_1$ such that 
\[
\gamma_1(t_j^{\pm}) \in T^*(X \setminus M).
\]
Since $d(x_1(t_j^{\pm}), M) \to 0$ as $j \to \infty$, where $x_1(t) = \pi(\gamma_1(t))$, after passing to subsequences we may choose a strictly decreasing sequence $(\kappa_j)$ with 
\begin{equation} \label{kappaj_conditions}
d(x_1(t_{j+1}^{\pm}), M) < \kappa_j < d(x_1(t_j^{\pm}), M), \qquad \kappa_j \to 0.
\end{equation}
Then $X_j := \{ x \in X \,;\, d(x, M) < \kappa_j \}$ is an open manifold so that $M \subset X_j \subset \subset X$ and 
\begin{equation} \label{gammaone_description}
\gamma_1(t_j^{\pm}) \in T^*(X \setminus \ol{X}_j).
\end{equation}
Let $\gamma_2: [-S_2,T_2] \to T^* M \setminus 0$ be the maximal null bicharacteristic for $P_2$ in $M$ with $\gamma_2(0) = (x_0,\xi_0)$ and $S_2, T_2 \geq 0$.

\vspace{10pt}

\noindent {\it Step 2.} Strategy.

\vspace{10pt}

We wish to show that $S_2=0$ or $T_2=0$, i.e.\ $(x_0,\xi_0) \in \p'_{\mathrm{null},P_2}(T^* M)$, and that 
\[
\{ \gamma_1(0), \gamma_1(T_1) \} = \{ \gamma_2(-S_2), \gamma_2(T_2) \}.
\]
This will imply that $\p'_{\mathrm{null},P_1}(T^* M) \subset \p'_{\mathrm{null},P_2}(T^* M)$ and $\alpha_{P_1} = \alpha_{P_2}$ on $\p'_{\mathrm{null},P_1}(T^* M)$. Changing the roles of $P_1$ and $P_2$ then implies that $\p'_{\mathrm{null},P_1}(T^* M) = \p'_{\mathrm{null},P_2}(T^* M)$ and $\alpha_{P_1} = \alpha_{P_2}$.

Below we will work with $j$ fixed, and we will suppress $j$ from the notation of $u_1$ etc.

\vspace{10pt}

\noindent {\it Step 3.} There is $u_1 \in H^m(X_j)$ with $P_1 u_1 = 0$ in $X_j$ and $\WF_{\text{scl}}(u_1) = \breve{\gamma}_1([t_j^-,t_j^+]) \cap T^* X_j$.

\vspace{10pt}

Let $v_1$ be the quasimode associated to $\breve{\gamma}_1|_{[t_j^-,t_j^+]}$ in $X$ constructed in Theorem \ref{thm_quasimode_direct}, satisfying 
\[
\WF_{\mathrm{scl}}(v_1) = \breve{\gamma}_1([t_j^-,t_j^+]), \qquad \WF_{\mathrm{scl}}(P_1 v_1) = \breve{\gamma}_1(t_j^-) \cup \breve{\gamma}_1(t_j^+).
\]
Then $\norm{P_1 v_1}_{H^s(X_j)} = O(h^{\infty})$ for any fixed $s$ since $\WF_{\mathrm{scl}}(P_1 v_1)$ is away from $\ol{X}_j$ using \eqref{gammaone_description}. Let $u_1 = v_1|_{X_j} + r_1$, where $r_1 \in H^{s+m-1}(X_j)$ is the solution given in Proposition \ref{prop_real_principal_type_solvability} of the equation 
\[
P_1 r_1 = -P_1 v_1 \text{ in $X_j$}
\]
with 
\[
\norm{r_1}_{H^{s+m-1}(X_j)} \lesssim \norm{P_1 v_1}_{H^s(X_j)} = O(h^{\infty}).
\]
Then $\WF_{\text{scl}}(u_1) = \WF_{\text{scl}}(v_1|_{X_j}) = \breve{\gamma}_1([t_j^-,t_j^+]) \cap T^* X_j$, and $P_1 u_1 = 0$ in $X_j$.

\vspace{10pt}

\noindent {\it Step 4.} There is $\tilde{u}_2 \in H^m(X_j)$ so that $P_2 \tilde{u}_2 = 0$ in $X_j$, $\tilde{u}_2 = u_1$ in $X_j \setminus M$, and $\tilde{u}_2$ is $L^2$-tempered.

\vspace{10pt}

To prove this we invoke the assumption $C_{P_1} = C_{P_2}$, which implies that there is $w_2 \in H^m(M)$ such that 
\[
P_2 w_2 = 0 \text{ in $M$}, \qquad \nabla^k w_2|_{\p M} = \nabla^k u_1|_{\p M} \text{ for $k \leq m-1$.}
\]
Define the function 
\[
\tilde{u}_2 := \left\{ \begin{array}{cl} w_2, & \text{ in $M$}, \\ u_1, & \text{ in $X_j \setminus M$}. \end{array} \right.
\]
Then $\tilde{u}_2 \in H^m(X_j)$, and $P_2 \tilde{u}_2 = 0$ in $X_j$ since this holds separately in $M$ and $X_j \setminus M$. It remains to show that $\tilde{u}_2$ is $L^2$-tempered. First note that $u_1 - \tilde{u}_2$ is in $H^m_M(X_j)$ and solves 
\[
P_2(u_1-\tilde{u}_2) = -P_2 u_1.
\]
We may add a function in $N(P_2)$ to $\tilde{u}_2$ so that one still has $P_2 \tilde{u}_2 = 0$ in $X_j$ and $\tilde{u}_2 = u_1$ in $X_j \setminus M$, and additionally $u_1-\tilde{u}_2 \in N(P_2)^{\perp}$. Then \eqref{v_htm_equation} implies that 
\begin{equation} \label{u1_tildeu2_estimate}
\norm{u_1-\tilde{u}_2}_{L^2_M} \lesssim \norm{P_2(u_1-\tilde{u}_2)}_{L^2_M} = \norm{P_2 u_1}_{L^2_M} \lesssim \norm{u_1}_{H^m(M)}.
\end{equation}
Thus $\norm{\tilde{u}_2}_{L^2_M} \lesssim \norm{u_1}_{H^m(M)} = O(h^{\frac{n+1}{4}-m})$ by \eqref{u_hsm_norm}. It follows that $\tilde{u}_2$ is $L^2$-tempered.

\vspace{10pt}

\noindent {\it Step 5.} $\gamma_1(\sigma_-) = \gamma_2(-S_2)$ and $\gamma_1(\sigma_+) = \gamma_2(T_2)$ for some $\sigma_{\pm} \in \{ 0, T_1 \}$.

\vspace{10pt}

By Step 3 we have $\WF_{\text{scl}}(u_1)= \breve{\gamma}_1([t_j^-,t_j^+]) \cap T^* X_j$. Since $\tilde{u}_2 = u_1$ in $X_j \setminus M$, we further have 
\begin{equation} \label{wfscl_tildeutwo}
\WF_{\text{scl}}(\tilde{u}_2|_{X_j \setminus M}) = (\breve{\gamma}_1([t_j^-,0)) \cup \breve{\gamma}_1((T_1,t_j^+])) \cap T^*(X_j \setminus M).
\end{equation}
Using \eqref{kappaj_conditions} we have $\breve{\gamma}_1(t_k^{\pm}) \in \WF_{\text{scl}}(\tilde{u}_2|_{X_j \setminus M})$ for $k > j$, and since the wave front set is closed this implies that $\gamma_1(0), \gamma_1(T_1) \in \WF_{\text{scl}}(\tilde{u}_2)$. However, since $\gamma_1(0) = \gamma_2(0)$, semiclassical propagation of singularities for the solution $\tilde{u}_2$ of $P_2 \tilde{u}_2 = 0$ in $X_j$, together with the maximality of $\gamma_2$, implies that $\breve{\gamma}_2(\tau_k^{j,\pm}) \in \WF_{\text{scl}}(\tilde{u}_2|_{X_j \setminus M})$ for some increasing sequence $\tau_k^{j,-} \to -S_2$ and decreasing sequence $\tau_k^{j,+} \to T_2$ where $\breve{\gamma}_2$ is the continuation of $\gamma_2$ to $X$. After passing to subsequences, we may assume that $-S_2-1/k < \tau_k^{j,-} < -S_2$ and $T_2 < \tau_k^{j,+} < T_2 + 1/k$.

Up to now we have been working with a fixed $j$. Next we consider the diagonal sequences $\tau_j^{j,\pm}$ as $j \to \infty$, and note that by \eqref{wfscl_tildeutwo} one has 
\begin{equation} \label{breve_gammaone_gammatwo_points}
\breve{\gamma}_2(\tau_j^{j,\pm}) = \breve{\gamma}_1(\sigma_j^{\pm})
\end{equation}
for some $\sigma_j^{\pm}$ with $\sigma_j^{\pm} \in [t_j^-,0] \cup [T_1,t_j^+]$.

It follows that $\tau_j^{j,-} \to -S_2$ and $\tau_j^{j,+} \to T_2$, as well as (possibly after passing to subsequences) $\sigma_j^{\pm} \to \sigma^{\pm}$ for some $\sigma_{\pm} \in \{0, T_1 \}$. Thus by \eqref{breve_gammaone_gammatwo_points} one has 
\[
\gamma_1(\sigma_-) = \gamma_2(-S_2), \qquad \gamma_1(\sigma_+) = \gamma_2(T_2).
\]

\vspace{10pt}

\noindent {\it Step 7.} $\p'_{\mathrm{null},P_1}(T^* M) = \p'_{\mathrm{null},P_2}(T^* M)$ and $\alpha_{P_1} = \alpha_{P_2}$.

\vspace{10pt}

Now if $\sigma_- \neq \sigma_+$, then necessarily $T_1 > 0$ and $\{ \sigma_-, \sigma_+ \} = \{ 0, T_1 \}$. Thus 
\[
\{ \gamma_1(0), \gamma_1(T_1) \} = \{ \gamma_2(-S_2), \gamma_2(T_2) \}.
\]
In particular, $(x_0,\xi_0) = \gamma_1(0) = \gamma_2(0)$ is one of the endpoints of $\gamma_2$, so either $S_2=0$ or $T_2 = 0$.

On the other hand, assume that $\sigma_- = \sigma_+ = 0$ (the case $\sigma_{\pm} = T_1$ is analogous). Then $\gamma_2(-S_2) = \gamma_2(T_2)$ and consequently $S_2 = T_2 = 0$ (since $\gamma_2|_{[-S_2,T_2]}$ is injective by the real principal type assumption). We wish to prove that also $T_1 = 0$. If this is not true, then $T_1 > 0$ and $\gamma_1(0) \neq \gamma_1(T_1)$ using that $\gamma_1|_{[0,T_1]}$ is injective. Returning to \eqref{breve_gammaone_gammatwo_points}, after passing to subsequences if necessary, it follows that $\sigma_j^{\pm} \leq 0$ for all $j$. Now in local coordinates in $T^* M$, the curves $\breve{\gamma}_j(t)$ for small $t$ are very close to the line $(x_0,\xi_0) + t\dot{\gamma}_1(0)$ where $\dot{\gamma}_1(0) = \dot{\gamma}_2(0) \neq 0$ by \eqref{pone_ptwo_exterior_condition} and the real principal type assumption. Since $\tau_j^{j,-} < 0$, $\tau_j^{j,+} > 0$ and $\sigma_j^{\pm} \leq 0$, this leads to a contradiction with \eqref{breve_gammaone_gammatwo_points}. This proves that $T_1 = 0$ and 
\[
\{ \gamma_1(0), \gamma_1(T_1) \} = \{ \gamma_2(-S_2), \gamma_2(T_2) \}
\]
also when $\sigma_+ = \sigma_-$.

We have now proved the claims stated in Step 2, and thus Step 7 is complete.

\vspace{10pt}

\noindent {\it Step 8.} If $P_1$ and $P_2$ have the same principal symbol and if $\mu$ is a half density on $M$, one has 
\[
\exp \left[ i\int_0^T \sigma_\subp[P_1^\mu](\gamma(t)) \,dt \right] 
= 
\exp \left[ i\int_0^T \sigma_\subp[P_2^\mu](\gamma(t)) \,dt \right]
\]
whenever $\gamma: [0,T] \to T^* M \setminus 0$ is a maximal null bicharacteristic in $M$.

\vspace{10pt}

By the Remark after Theorem \ref{thm_main1}, it is enough to prove that 
\[
\exp \left[ i\int_0^T (p_{m-1,1} - p_{m-1,2})(\gamma(t)) \,dt \right] 
= 0.
\]
Write $Q = P_1 - P_2$. Since $P_1 = P_2$ in $X \setminus M$, the operator $Q$ is a differential operator with smooth coefficients in $X$ that vanishes in $X \setminus M$ and has principal symbol 
\[
q_{m-1} = p_{m-1,1} - p_{m-1,2}.
\]
We denote by $\breve{\gamma}$ the continuation of $\gamma$ as a null bicharacteristic curve in $X$.

We now use Theorem \ref{thm_quasimode_direct} with $P = P_1$ and $\tilde{P} = P_2$ and construct quasimodes $v_1, v_2$ associated with $\breve{\gamma}|_{[-\eps_1,T+\eps_2]}$ with $\eps_j$ chosen so that the end points are outside $M$. Then $\norm{P_1 v_1}_{H^s(M)} = \norm{P_2^* v_2}_{H^s(M)} = O(h^{\infty})$. As in Step 3, we construct solutions $u_j = v_j + r_j$ of $P_1 u_1 = 0$ and $P_2^* u_2 = 0$ in $M$ with $\norm{r_j}_{H^{s+m-1}(M)} = O(h^{\infty})$.

Since $C_{P_1} = C_{P_2}$ and $Q = P_1-P_2$ vanishes outside $M$, the integral identity in Lemma \ref{lemma_integral_identity_potential} implies that 
\[
0 = (Q u_1, u_2)_{L^2(M)} = (Q v_1, v_2)_{L^2(X)} + O(h^{\infty}).
\]
Multiplying this identity by $h^{-\frac{n+1}{2}+m-1}$ and using \eqref{concentration}, we obtain 
\[
\int_0^T q_{m-1}(\gamma(t)) \exp \left[ -i \int_0^t q_{m-1}(\gamma(s)) \,ds \right] \,dt = 0.
\]
The integrand is equal to $i \p_t (\exp \left[ -i \int_0^t q_{m-1}(\gamma(s)) \,ds \right])$, hence it follows that 
\[
\exp \left[ -i \int_0^T q_{m-1}(\gamma(t)) \,dt \right] = 1.
\]
This is the required statement.
\end{proof}

\begin{Remark}
If the maximal null bicharacteristic $\gamma: [0,T] \to T^*M \setminus 0$ is sufficiently nice, e.g.\ $\dot{x}(t) \neq 0$ and $x(t)$ is injective on $[0,T]$, one can give an alternative proof of Step 8 by using the mix-and-match construction and a similar argument as in Steps 3--5. That is, one constructs solutions of $P_j u_j = 0$ associated with $\gamma$, and defines another solution of $P_2 \tilde{u}_2 = 0$ in $X_1$ so that $\tilde{u}_2 = u_1$ outside $M$. By semiclassical propagation of singularities, $u_2 - \tilde{u}_2$ is $O(h^{\infty})$ at the end point $x(T)$. Comparing the values of $u_1$ and $u_2$ at $x(T)$ using Lemma \ref{lemma_bicharacteristic_solution_values} proves Step 8.
\end{Remark}

\begin{proof}[Proof of Theorem \ref{thm_main2}]
The proof is the same as in Step 8 in the proof of Theorem \ref{thm_main1}. By Lemma \ref{lemma_integral_identity_potential}, the assumption $C_{P+Q_1} = C_{P+Q_2}$ implies the integral identity  
\[
( (Q_1-Q_2) u_1, u_2)_{L^2(M)} = 0
\]
whenever $u_j \in H^m(M)$ solve $(P + Q_1) u_1 = (P^* + Q_2^*) u_2 = 0$ in $M$. We wish to find such solutions $u_j$ that are close to the quasimodes constructed in Theorem \ref{thm_quasimode_direct}.

Let $\gamma: [0,T] \to T^* M \setminus 0$ be a maximal null bicharacteristic in $M$. We extend $P$ as a smooth differential operator to some slightly larger open manifold $X$ and let $\breve{\gamma}$ be the continuation of $\gamma$ to $X$. We also extend $Q_j$ smoothly to $X$ so that the coefficients of $Q_1-Q_2$ vanish outside $X$. Choose some $\eps_j > 0$ so that $\breve{\gamma}(-\eps_1)$ and $\breve{\gamma}(T+\eps_2)$ are in $X \setminus M$. Let $v_1 = v_{1,h} \in C^{\infty}_c(X)$ be the quasimode given by Theorem \ref{thm_quasimode_direct} related to $\breve{\gamma}|_{[-\eps_1,T+\eps_2]}$ satisfying $\norm{(P+Q_1) v_1}_{H^s(M)} = O(h^{\infty})$, and let $v_2 = v_{2,h}$ be the corresponding quasimode satisfying $\norm{(P^* + Q_2^*) v_2}_{H^s(M)} = O(h^{\infty})$.

We define $u_j = v_j + r_j$, where $r_j$ are correction terms satisfying 
\[
(P+Q_1)r_1 = f_1 \text{ in $M$}, \qquad (P^* + Q_2^*)r_2 = f_2 \text{ in $M$},
\]
where $f_1 = -(P+Q_1)v_1|_M$ and $f_2 = -(P^*+Q_2^*)v_2|_M$. Note that $f_j \in C^{\infty}(M)$ with $f_1 \in N(P_1^*+Q_1^*)^{\perp}$ and $f_2 \in N(P+Q_2)^{\perp}$. By Proposition \ref{prop_real_principal_type_solvability}, there exist solutions $r_j \in H^{s+m-1}(M)$ of the above equations with the norm estimates 
\[
\norm{r_j}_{H^{s+m-1}(M)} \lesssim \norm{f_j}_{H^s(M)} = O(h^{\infty}).
\]
Then $u_j \in H^{s+m-1}(M)$ also solve $(P + Q_1) u_1 = (P^* + Q_2^*) u_2 = 0$ in $M$.

Inserting the solutions $u_j$ in the integral identity, we obtain that 
\[
\int_M (Q_1-Q_2) v_1 \bar{v}_2 \,dV = O(h^{\infty}).
\]
Using Theorem \ref{thm_quasimode_direct} and the fact that the coefficients of $Q_1-Q_2$ vanish outside $M$, this implies that 
\[
\int_0^T \sigma_{\pr}[(Q_1-Q_2)](\gamma(t)) \,dt = 0.
\]
Here we also used that the subprincipal symbols of $P+Q_1$ and $P+Q_2$ agree, so that the exponential factor in \eqref{concentration} goes away.
\end{proof}

We next consider the case of strictly hyperbolic operators and prove Theorem \ref{thm_main_hyper_intro}. First we verify that strictly hyperbolic operators are indeed real principal type (this should be well known, but we give a proof for completeness in Appendix \ref{sec_appendix}).

\begin{Lemma} \label{lemma_hyperbolic_real_principal_type}
Let $X$ be an open manifold, let $\phi \in C^{\infty}(X,\mR)$, and let $P$ be a strictly hyperbolic differential operator of order $m$ in $X$ with respect to the level surfaces of $\phi$. Then $P$ is of real principal type in $X$.
\end{Lemma}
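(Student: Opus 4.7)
What needs proving is the nontrapping condition, since strict hyperbolicity already gives a real homogeneous principal symbol (possibly after replacing $P$ by $iP$, using \cite[Lemma~8.7.3]{Hormander}). The plan is to exhibit $\phi$ as a strictly monotone ``time function'' along every null bicharacteristic, and then use compactness in the cosphere bundle to rule out trapping.

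The key pointwise fact is that whenever $(x,\xi) \in T^*X \setminus 0$ satisfies $p_m(x,\xi)=0$, one has $H_{p_m}\phi(x,\xi) = \partial_\xi p_m(x,\xi) \cdot d\phi(x) \neq 0$. Indeed, $\xi$ must be linearly independent from $d\phi(x)$, since otherwise $\xi = \lambda\,d\phi(x)$ with $\lambda \neq 0$ would give $p_m(x,\xi) = \lambda^m p_m(x,d\phi(x)) \neq 0$, a contradiction with $p_m(x,\xi)=0$, while $\lambda=0$ would give $\xi=0$. Strict hyperbolicity then says that $\tau \mapsto p_m(x, \xi + \tau\, d\phi(x))$ has $m$ distinct real roots, and $\tau = 0$ is one of them (since $p_m(x,\xi) = 0$), hence a simple root; differentiating at $\tau = 0$ gives $\partial_\xi p_m(x,\xi) \cdot d\phi(x) \neq 0$. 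Consequently, along any null bicharacteristic $\gamma(t) = (x(t),\xi(t))$, the function $\phi\circ x$ has nowhere vanishing derivative and is therefore strictly monotone.

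For nontrapping, I would argue by contradiction: suppose $\gamma: I \to T^*X \setminus 0$ is a maximal null bicharacteristic with $x(I) \subset K$ for some compact $K \subset X$. Fix an auxiliary Riemannian norm $|\xi|$ on fibers of $T^*X$ and reparametrize by $s$ with $ds/dt = |\xi(t)|^{m-1}$; write $\omega(s) = \xi(s)/|\xi(s)|$. The curve $(x(s),\omega(s))$ lies in the compact set $\Sigma_K := S^*X \cap \{p_m = 0\} \cap \pi^{-1}(K)$, where $\pi$ is the base projection. By degree-zero homogeneity, $\tilde{H}\phi := |\xi|^{-(m-1)} H_{p_m}\phi$ descends to a continuous function on $\Sigma_K$; by the pointwise fact just established it is nowhere zero, so $|\tilde H\phi| \geq c > 0$ on $\Sigma_K$. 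Since $d\phi(x(s))/ds = \tilde H \phi$ and $\phi(x(s)) \in \phi(K)$ is bounded, the $s$-image $s(I)$ is contained in a bounded interval. On this bounded interval, the equation $\frac{d\log|\xi|}{ds} = -\partial_x p_m(x(s),\omega(s)) \cdot \omega(s)$ (derived from the bicharacteristic equations together with the degree-$m$ homogeneity of $\partial_x p_m$ in $\xi$) has bounded right-hand side, so $|\xi(s)|$ stays in a compact subinterval of $(0,\infty)$ throughout $I$. Hence $ds/dt = |\xi|^{m-1}$ is bounded above and below by positive constants on $I$, forcing $I$ itself to be bounded. But maximality of $I$ together with $x(I) \subset K$ would force $|\xi(t)|$ to tend either to $0$ or to $\infty$ at an endpoint of $I$, contradicting the bounds just established.

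The main technical point I anticipate is the bookkeeping between the $t$- and $s$-parametrizations: one has to verify that the reparametrization is well defined on all of $I$ and that the derived ODE for $\log|\xi|$ does yield \emph{uniform} two-sided bounds on $|\xi|$ over the entire interval $I$ (rather than only on some subinterval where the reduced flow is \emph{a priori} known to stay in $\Sigma_K$). Once those bounds are in hand the contradiction with the maximality of $I$ is immediate, completing the proof that $P$ is of real principal type.
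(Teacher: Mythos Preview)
Your proposal is correct and follows essentially the same strategy as the paper: argue by contradiction, use the simple-root condition to show $H_{p_m}\phi \neq 0$ on the characteristic set, project to the cosphere bundle over $K$, and derive two-sided bounds on $|\xi|$ that force the bicharacteristic to extend past its maximal interval. The one organizational difference is your explicit reparametrization $ds/dt = |\xi|^{m-1}$, which packages the estimates a bit more cleanly than the paper's direct $t$-variable argument (the paper separately proves $\int_I |\xi|^{m-1}\,dt < \infty$ and $|\xi|^{m-1} \gtrsim (1+|t|)^{-1}$ before bounding $|\xi|$ from above); your worry about the $t$--$s$ bookkeeping is unfounded, since the ODE $d\log|\xi|/ds = -\langle \nabla_x p_m(x,\omega),\omega\rangle$ is valid on all of $s(I)$ and its right-hand side is uniformly bounded on the compact set $\Sigma_K$.
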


\begin{proof}[Proof of Theorem \ref{thm_main_hyper_intro}]
We begin by establishing suitable analogues of
Proposition \ref{prop_real_principal_type_solvability} (see Step 1 below) and
Lemma \ref{lemma_integral_identity_potential} (Step 2 below),
and then indicate the minor modifications required in the proofs of Theorems \ref{thm_main1} and  \ref{thm_main2} by giving a proof of \eqref{hyper_parallel_tr} (Step 3 below). The other claims can be proven using analogous modifications. 

Choose an open manifold $Y_1 \subset X$ so that $M_1 \subset Y_1$ and $\overline{Y_1}$ is compact.  
Define     
    \begin{align*}
X_1 = \{x \in Y_1 \,;\, 0 < \phi(x) < T \}.
    \end{align*}

\vspace{10pt}

\noindent {\it Step 1.} Given any $f \in H_0^{s}(X_1)$, with $s \ge 1$, there is a solution $u \in H^{s+m-1}(X_1)$ of $P_1 u = f$ satisfying 
$\tr_{\Gamma_-}^{m-1} u = 0$ and 
    \begin{align}\label{hyper_cont}
\norm{u}_{H^{s+m-1}(X_1)} \le C \norm{f}_{H^{s}(X_1)}.
    \end{align}

\vspace{10pt}

We define the space
    \begin{align*}
\mathcal H &= \{u \in H^{s+m-1}(X_1) \,;\, P_1 u \in H^{s}_0(X_1),\ \tr_{\Gamma_-}^{m-1} u = 0\},
    \end{align*}
equipped with the norm defined by
    \begin{align*}
\norm{u}_{\mathcal H}^2 = \norm{u}_{H^{s+m-1}(X_1)}^2 + \norm{P_1 u}_{H^{s}(X_1)}^2.
    \end{align*}
Now $\mathcal H$ is a Hilbert space
and writing $P = P_1|_{\mathcal H}$, $P : \mathcal H \to H_0^{s}(X_1)$ is clearly continuous.
Let us show that $P$ is surjective. 
Let $f \in H_0^{s}(X_1)$. We may extend $f$ by zero to obtain a function in $H^{s}(X)$, still denoted by $f$. By \cite[Theorem 23.2.4]{Hormander} there is $u \in H^{s+m-1}_{loc}(X)$ such that $\supp(u) \subset \phi^{-1}([0,\infty))$ and $P_1 u=f$ on $X_1$. 
Hence $P$ is surjective. 

It follows now (see \cite[Proposition 2.3 and 2.4]{EnglHankeNeubauer}) that the pseudoinverse $P^\dagger$ of $P$ is continuous 
$P^\dagger : H_0^{s}(X_1) \to \mathcal H$
and $PP^\dagger f = f$. Thus $u = P^\dagger f$ solves $Pu = f$ with the continuous dependence (\ref{hyper_cont}).

\vspace{10pt}

\noindent {\it Step 2.} If $C_{P_1}^\lat  = C_{P_2}^\lat $, then 
    \begin{align}\label{hyper_alessandrini}
( (P_1 - P_2) u_1, u_2)_{L^2(M)} = 0
    \end{align}
whenever $u_j \in H^m(M)$ solve $P_1 u_1 = P_2^* u_2 = 0$ in $M$,
 $\tr_{\Gamma_-}^{m-1} u_1 = 0$ and $\tr_{\Gamma_+}^{m-1} u_2 = 0$. 
\vspace{10pt}

As $C_{P_1}^\lat \subset C_{P_2}^\lat$
we can find $\tilde u_2 \in H^m(M)$ such that $P_2 \tilde u_2 = 0$ in $M$ and $\tr_{\Gamma \cup \Gamma_-}^{m-1} \tilde u_2 = \tr_{\Gamma \cup \Gamma_-}^{m-1} u_1$. Then, using also $\tr_{\Gamma_+}^{m-1} u_2 = 0$,
    \begin{align*}
((P_1 - P_2) u_1, u_2) = -(P_2 u_1, u_2) = - (P_2(u_1-\tilde u_2),u_2) 
= -(u_1-\tilde u_2,P_2^* u_2) = 0.
    \end{align*}

\vspace{10pt}

\noindent {\it Step 3.} If $P_1$ and $P_2$ have the same principal symbol and if $\mu$ is a half density on $M$, one has 
\[
\exp \left[ i\int_0^T \sigma_\subp[P_1^\mu](\gamma(t)) \,dt \right] 
= 
\exp \left[ i\int_0^T \sigma_\subp[P_2^\mu](\gamma(t)) \,dt \right]
\]
whenever $\gamma: [0,T] \to T^* M \setminus 0$ is a maximal null bicharacteristic in $M$ that does not meet $\p(T^* M)$ away from $\Gamma$.

\vspace{10pt}

It is again enough to prove 
    \begin{align}\label{hyper_subp}
\exp \left[ i\int_0^T q_{m-1}(\gamma(t)) \,dt \right]
= 1,
    \end{align}
where $q_{m-1}$ is the principal symbol of $Q = P_1 - P_2$. 
We denote by $\breve{\gamma}$ the continuation of $\gamma$ as a null bicharacteristic curve in $X$.

We use Theorem \ref{thm_quasimode_direct} with $P = P_1$ and $\tilde{P} = P_2$ to obtain quasimodes $v_1, v_2$ associated with $\breve{\gamma}|_{[-\eps_1,T+\eps_2]}$ with $\eps_j$ chosen so that the end points are outside $X_1$. 
Observe that, when $X_1$ is chosen small enough, $v_j$ can be constructed so that, writing $\Sigma =  \phi^{-1}(0) \cup \phi^{-1}(T)$, one has $\supp(v_j) \cap  \Sigma = \emptyset$.
We choose $\chi \in C^\infty(X_1)$ so that $\chi = 1$ in $M$
and that $\chi = 0$ near $\p X_1 \setminus \Sigma$.
Finally we choose  $r_1 \in H^m(X_1)$ so that $P_1 r_1 = - \chi P_1 v_1$,
$\tr_{\Gamma_-}^{m-1} r_1 = 0$ and that (\ref{hyper_cont}) holds with $s=1$.
Then
    \begin{align*}
\norm{r_1}_{H^m(X_1)} \le C \norm{\chi P_1 v_1}_{H^1(X_1)} = O(h^{\infty}),
    \end{align*}
and setting $u_1 = v_1 + r_1$, it follows that $P_1 u_1 = 0$ in $M$.

Note that $P_2$ is strictly hyperbolic also with respect to the level surfaces of $-\phi$. Repeating the argument in Step 1, with $P_1$ replaced by $P_2$ and $\phi$ by $-\phi$, we see that  
there is a solution $r_2 \in H^{s+m-1}(X_1)$ of $P_2 r_2 = -\chi P_2 v_2$ satisfying 
$\tr_{\Gamma_+}^{m-1} r_2 = 0$ and 
$\norm{r_2}_{H^{1}(X_1)} = O(h^\infty)$.
We set $u_2 = v_2 + r_2$.

Since $C_{P_1} = C_{P_2}$ and $Q = P_1-P_2$ vanishes outside $M$, the integral identity (\ref{hyper_alessandrini}) implies that 
\[
0 = (Q u_1, u_2)_{L^2(M)} = (Q v_1, v_2)_{L^2(X)} + O(h^{\infty}).
\]
Multiplying this identity by $h^{-\frac{n+1}{2}+m-1}$ and using \eqref{concentration}, we obtain (\ref{hyper_subp}) as in the proof of Theorem~\ref{thm_main1}.
\end{proof}

We will show next that by small modifications of the above proofs, one can obtain partial data results in the corresponding inverse problems.
Let $\Gamma \subset \p M$ be a nonempty open set.
We use the notation
    \begin{align*}
\norm{\tr_{\Gamma}^{m-1} u}_{\mathcal H^{m-1}(\Gamma)}^2
= \sum_{j=0}^{m-1} \norm{\nabla^j u|_\Gamma}_{L^2(\Gamma)}^2,
    \end{align*}
and consider the partial Cauchy data set 
    \begin{align*}
C_{P,\Gamma} = \{ (u|_{\Gamma}, \nabla u|_{\Gamma}, \ldots, \nabla^{m-1} u|_{\Gamma}) \,;\, 
&u \in H^m(M) \text{ solves } Pu = 0 \text{ in $M$
and } 
\\&
\norm{\tr_{\p M \setminus \Gamma}^{m-1} u}_{\mathcal H^{m-1}(\p M \setminus \Gamma)} \le \norm{\tr_{\Gamma}^{m-1} u}_{\mathcal H^{m-1}(\Gamma)} \}.
    \end{align*}
The last condition means that we are only using solutions of $Pu = 0$ whose Cauchy data on $\p M \setminus \Gamma$ is not much larger than the Cauchy data on $\Gamma$ (the constant $1$ in the inequality is quite arbitrary and could be replaced by any other fixed constant). For simplicity we do not consider the scattering relation in the following result, whose proof is given in Appendix \ref{sec_appendix}.

\begin{Theorem} \label{thm_partial_data}
Let $M$ be a compact manifold with smooth boundary, let $P_1, P_2$ be real principal type differential operators of order $m \geq 1$ in $M$, and let $\Gamma \subset \p M$ be a nonempty open set. 
Assume that $P_1 = P_2$ to infinite order on $\p M$. 
If the principal symbols of $P_1$ and $P_2$ coincide, then for any nonvanishing half density $\mu$ on $M$ one has 
    \begin{align}\label{subprin_part_data}
\exp \left[ i\int_0^T \sigma_\subp[P_1^\mu](\gamma(t)) \,dt \right] 
= 
\exp \left[ i\int_0^T \sigma_\subp[P_2^\mu](\gamma(t)) \,dt \right]
    \end{align}
whenever $\gamma : [0,T] \to T^* M$ is a maximal null bicharacteristic curve for $P_1$ 
whose spatial projection 
intersects $\Gamma$ transversally and 
does not meet $\p M$ away from $\Gamma$.

Finally, if $m \geq 2$ and if $P_j = P + Q_j$ where $Q_j$ has order $\leq m-2$ for $j=1,2$, then 
then 
    \begin{align}\label{lot_part_data}
\int_{0}^T \sigma_{\pr}[Q_1](\gamma(t)) \,dt = \int_{0}^T \sigma_{\pr}[Q_2](\gamma(t)) \,dt 
    \end{align}
whenever $\gamma : [0,T] \to T^* M$ is a maximal null bicharacteristic curve for $P$ 
whose spatial projection 
intersects $\Gamma$ transversally and 
does not meet $\p M$ away from $\Gamma$.
\end{Theorem}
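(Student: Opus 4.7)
The plan is to adapt Step~8 of the proof of Theorem~\ref{thm_main1} and the proof of Theorem~\ref{thm_main2} to the partial-data setting, replacing the clean integral identity of Lemma~\ref{lemma_integral_identity_potential} with an approximate identity carrying an $O(h^\infty)$ error. The geometric hypothesis enters at one crucial point: the maximal null bicharacteristic $\gamma$ has its spatial projection meeting $\p M$ only inside $\Gamma$ and transversally so, which guarantees that the quasimodes of Theorem~\ref{thm_quasimode_direct} concentrating on $\gamma$ have boundary traces of all orders $\le m-1$ that are genuinely $O(h^\infty)$ off a small neighborhood of the intersection points in $\Gamma$.

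First I would extend $P_1,P_2$ (respectively $P$) to a slightly larger open manifold $X\supset M$ so that the extensions agree outside $M$ and are still real principal type, and extend $\gamma$ to a bicharacteristic $\breve\gamma|_{[-\eps_1,T+\eps_2]}$ with endpoints in $X\setminus M$. Theorem~\ref{thm_quasimode_direct} then yields $v_1,v_2\in C^{\infty}_c(X)$ concentrating on this segment, with $v_1$ a quasimode for $P_1$ (respectively $P+Q_1$) and $v_2$ a quasimode for $P_2^*$ (respectively $(P+Q_2)^*$). Proposition~\ref{prop_real_principal_type_solvability}, used as in Step~3 of the proof of Theorem~\ref{thm_main1}, upgrades these to exact solutions $u_j=v_j+r_j\in H^m(M)$ of the respective equations, with $\|r_j\|_{H^k(M)}=O(h^\infty)$ for every $k$. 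Combining the Gaussian lower bound \eqref{im_phi_positive} on $\im(\Phi)$ with the transversality hypothesis, the distance from any point of $\p M\setminus\Gamma$ to $\pi(\breve\gamma(t))$ stays bounded below uniformly in $t$, so the Gaussian-packet integrand defining $v_j$ is exponentially small there; this gives $\|\tr_{\p M\setminus\Gamma}^{m-1}v_j\|_{\mathcal H^{m-1}(\p M\setminus\Gamma)}=O(h^\infty)$, and the Sobolev trace theorem transfers the bound to $u_j$. In particular each $u_j$ lies in the corresponding partial Cauchy data set for all sufficiently small $h$.

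Under the hypothesis $C_{P_1,\Gamma}=C_{P_2,\Gamma}$, pick $\tilde u_2\in H^m(M)$ with $P_2\tilde u_2=0$ and $\tr_\Gamma^{m-1}\tilde u_2=\tr_\Gamma^{m-1}u_1$; the side condition built into $C_{P_2,\Gamma}$ then forces
\[
\|\tr_{\p M\setminus\Gamma}^{m-1}\tilde u_2\|_{\mathcal H^{m-1}(\p M\setminus\Gamma)}\le \|\tr_\Gamma^{m-1}u_1\|_{\mathcal H^{m-1}(\Gamma)}=O(h^{-N})
\]
for some fixed $N$. Green's identity for the order-$m$ operator $P_2$, combined with $P_2^*u_2=0$ and $P_2\tilde u_2=0$, gives
\[
((P_1-P_2)u_1,u_2)_{L^2(M)}=-(P_2u_1,u_2)_{L^2(M)}=-B(u_1-\tilde u_2,u_2),
\]
where $B$ is the boundary sesquilinear form expressed in traces of order $\le m-1$. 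The contribution of $B$ on $\Gamma$ vanishes because $\tr_\Gamma^{m-1}(u_1-\tilde u_2)=0$; the contribution on $\p M\setminus\Gamma$ pairs the $O(h^{-N})$ trace of $\tilde u_2$ against the $O(h^\infty)$ trace of $u_2$ and the $O(h^\infty)$ trace of $u_1$ against the $O(h^\infty)$ trace of $u_2$, and is therefore $O(h^\infty)$. This yields the partial-data integral identity $((P_1-P_2)u_1,u_2)_{L^2(M)}=O(h^\infty)$.

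Finally, replacing $u_j$ by $v_j$ introduces another $O(h^\infty)$ error, and multiplying by the appropriate power of $h$ and invoking the limit formula \eqref{concentration} reduces the identity for the subprincipal statement to
\[
\int_0^T (p_{m-1,1}-p_{m-1,2})(\gamma(t))\exp\!\left[-i\int_0^t(p_{m-1,1}-p_{m-1,2})(\gamma(s))\,ds\right]dt=0,
\]
which integrates to \eqref{subprin_part_data} exactly as at the end of Step~8 of Theorem~\ref{thm_main1}. For the lower-order statement the quasimodes are taken for $P$ and $P^*$, the exponential factor in \eqref{concentration} becomes trivial since the order $m-1$ symbols of $P+Q_1$ and $P+Q_2$ both equal that of $P$, and the identity reduces to $\int_0^T\sigma_{\pr}[Q_1-Q_2](\gamma(t))\,dt=0$, which is \eqref{lot_part_data}. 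The main obstacle is the trace estimate of the second paragraph: one must verify carefully, using the explicit Gaussian form of the wave packet and the transversality of $\pi(\breve\gamma)$ with $\p M$, that every boundary trace of $v_j$ of order $\le m-1$ is truly $O(h^\infty)$ on a fixed neighborhood of $\p M\setminus\Gamma$. Once this is in hand, the partial-data proof is a direct translation of the full-data arguments.
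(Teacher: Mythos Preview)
Your overall strategy is the same as the paper's: build quasimodes $v_j$ along $\breve\gamma$, correct to exact solutions $u_j=v_j+r_j$, use $C_{P_1,\Gamma}=C_{P_2,\Gamma}$ to produce $\tilde u_2$ matching $u_1$ on $\Gamma$, and derive an approximate integral identity $((P_1-P_2)u_1,u_2)=O(h^\infty)$ via a boundary pairing argument. The endgame via \eqref{concentration} is exactly right.

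There is, however, a genuine gap. To invoke $C_{P_1,\Gamma}\subset C_{P_2,\Gamma}$ you must first check that $\tr_\Gamma^{m-1}u_1\in C_{P_1,\Gamma}$, and by definition this requires the inequality
\[
\|\tr_{\p M\setminus\Gamma}^{m-1}u_1\|_{\mathcal H^{m-1}(\p M\setminus\Gamma)}\;\le\;\|\tr_{\Gamma}^{m-1}u_1\|_{\mathcal H^{m-1}(\Gamma)}.
\]
Your $O(h^\infty)$ upper bound on the left is fine (and, incidentally, uses only the hypothesis that the spatial projection avoids $\p M\setminus\Gamma$, not transversality). But you never establish a \emph{lower} bound on the right; you simply assert membership in $C_{P_1,\Gamma}$. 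If $\|\tr_\Gamma^{m-1}u_1\|$ were itself $O(h^\infty)$ the inequality could fail, and the whole argument collapses. This is where transversality is actually used in the paper: one computes $\|v_1|_{\tilde\Gamma}\|_{L^2(\tilde\Gamma)}^2$ as a double quasimode integral over $\tilde\Gamma\times\mR^2$ with phase $\Theta(x,t,s)=\Phi(x,t)-\overline{\Phi(x,s)}$, and applies stationary phase at $(x(0),0,0)$. The Hessian $\nabla_{x,t,s}^2\Theta$ restricted to $T_{x(0)}\Gamma\times\mR^2$ is nondegenerate precisely because $\dot x(0)\notin T_{x(0)}\Gamma$; this yields $\|v_1|_{\tilde\Gamma}\|_{L^2(\tilde\Gamma)}\sim h^{(n+1)/2}$, which is not $O(h^\infty)$ and therefore dominates the left-hand side for small $h$. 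So the ``main obstacle'' you flag at the end is mis-identified: the upper bound off $\Gamma$ is straightforward Gaussian decay, while the nontrivial step---and the one that consumes transversality---is the lower bound on $\Gamma$.
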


Finally, we show that for first order operators a converse of Theorem \ref{thm_main1} holds: the scattering relation and integrals of the subprincipal symbol modulo $2\pi \mZ$ determine the Cauchy data set, at least under a strict convexity assumption. The result is stated for $C^{\infty}$ Cauchy data sets since we use a regularity result from \cite{PestovUhlmann, PSU2} stated in the $C^{\infty}$ case. Note that any first order differential operator with real principal symbol in $M$ is of the form $P = \frac{1}{i} L + V$, where $L$ is a real vector field in $M$ and $V \in C^{\infty}(M)$. Let $X$ be an open manifold containing $M$, and extend $L$ smoothly to $X$. We say that $M$ is \emph{strictly convex for $L$} if 
\[
\frac{d^2}{dt^2} (\rho(x(t)) \bigg|_{t=0} < 0
\]
whenever $\rho \in C^{\infty}(X)$ is a boundary defining function for $M$ (i.e.\ $M = \{ \rho \geq 0 \}$ and $d\rho \neq 0$ on $\p M$), and $x(t)$ is an integral curve of $L$ so that $x(0) \in \p M$ and $\dot{x}(0) \in T(\p M)$. A local coordinate computation shows that this condition does not depend on the extension or on the choice of $\rho$. The proof of the following theorem is in Appendix \ref{sec_appendix}.

\begin{Theorem} \label{thm_first_order_equivalence}
Let $M$ be compact with smooth boundary, and let $P_j = \frac{1}{i} L_j + V_j$ be real principal type differential operators of order $1$ in $M$. Assume that is $M$ is strictly convex for $L_1$ and $L_2$. Suppose that 
\[
\alpha_{P_1} = \alpha_{P_2}, \qquad \exp \left[ -i \int_0^{T_1} V_1(x_1(t)) \,dt \right] = \exp \left[ -i \int_0^{T_2} V_2(x_2(t)) \,dt \right]
\]
whenever $x_j: [0,T_j] \to M$ are maximal integral curves of $L_j$ with $x_1(0) = x_2(0)$. Then the $C^{\infty}$ Cauchy data sets of $P_1$ and $P_2$ are equal:
\[
\{ u|_{\p M} \,;\, u \in C^{\infty}(M), \ P_1 u = 0 \text{ in $M$} \} = \{ u|_{\p M} \,;\, u \in C^{\infty}(M), \ P_2 u = 0 \text{ in $M$} \}.
\]
\end{Theorem}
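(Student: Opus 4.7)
The overall plan is to construct, for each $u_1 \in C^\infty(M)$ solving $P_1 u_1 = 0$, a function $u_2 \in C^\infty(M)$ with $P_2 u_2 = 0$ and $u_2|_{\p M} = u_1|_{\p M}$, by transporting the boundary values of $u_1$ along integral curves of $L_2$. First I would extract from $\alpha_{P_1} = \alpha_{P_2}$ that $\mathrm{Char}(P_1) = \mathrm{Char}(P_2)$, at least on the domains of the scattering relations. Since $\mathrm{Char}(P_j) = \{(x,\xi) : L_j(x)\cdot\xi = 0\}$ and $L_j$ is nowhere vanishing (as required by real principal type on a connected $M$), this forces $L_2 = c L_1$ pointwise for some nowhere vanishing $c\in C^\infty(M)$; the requirement that the incoming boundaries $\p^+_{\mathrm{null},P_j}(T^*M)$ agree then pins down $c > 0$. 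Consequently the maximal $L_1$- and $L_2$-integral curves through a common basepoint coincide as unparametrized curves in $M$ and share their endpoints on $\p M$.

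Next, writing $\phi^{(2)}_t$ for the flow of $L_2$ and using strict convexity together with nontrapping to define the backward exit-time function $s_-(y) \ge 0$ with $\phi^{(2)}_{-s_-(y)}(y) \in \p M$, I would set
\[
u_2(y) := u_1\bigl(\phi^{(2)}_{-s_-(y)}(y)\bigr)\,\exp\!\left[-i\int_{-s_-(y)}^{0} V_2\bigl(\phi^{(2)}_s(y)\bigr)\,ds\right].
\]
A direct ODE computation gives $L_2 u_2 = -i V_2 u_2$, i.e.\ $P_2 u_2 = 0$, and by construction $u_2 = u_1$ on the interior of the incoming boundary for $L_2$. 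To verify agreement at outgoing boundary points, fix $y \in \p M$ at the end of an $L_2$-curve $\beta: [0,T_2]\to M$ with $\beta(0) = x_0$ in the incoming boundary, and let $\gamma:[0,T_1]\to M$ be the maximal $L_1$-curve with $\gamma(0) = x_0$; by the first step $\gamma(T_1) = \beta(T_2) = y$. Propagating $P_1 u_1 = 0$ along $\gamma$ and combining with the hypothesis
\[
\exp\!\left[-i\int_0^{T_1} V_1(\gamma(t))\,dt\right] = \exp\!\left[-i\int_0^{T_2} V_2(\beta(t))\,dt\right]
\]
yields $u_2(y) = u_1(x_0)\exp[-i\int_0^{T_2} V_2\circ\beta] = u_1(x_0)\exp[-i\int_0^{T_1} V_1\circ\gamma] = u_1(\gamma(T_1)) = u_1(y)$. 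Hence $u_1 = u_2$ on $\p M$ off the glancing set, and by continuity everywhere on $\p M$.

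The main obstacle is the smoothness of $u_2$ across the glancing set $\{x\in\p M : L_2(x)\in T_x(\p M)\}$: the entry time $s_-$ and the entry point map $y \mapsto \phi^{(2)}_{-s_-(y)}(y)$ are smooth in $M^{\mathrm{int}}$ and near transversal boundary points, but develop square-root-type singularities at glancing rays, so smoothness of $u_2$ there is not automatic. This is exactly where the strict convexity hypothesis is used; I would invoke the regularity result for transport along a nontrapping, strictly convex flow from Pestov--Uhlmann \cite{PestovUhlmann,PSU2}, which guarantees that a function obtained by transporting smooth boundary data along such a flow and twisting by a smooth zero-order factor extends smoothly to all of $M$. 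Interchanging the roles of $P_1$ and $P_2$ gives the reverse inclusion, completing the proof that the $C^\infty$ Cauchy data sets coincide.
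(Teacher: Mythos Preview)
Your proposal is essentially the paper's own proof: define $u_2$ by transporting $u_1|_{\p M}$ along $L_2$-integral curves with the exponential weight given by $V_2$, verify $u_2|_{\p M}=u_1|_{\p M}$ at incoming points trivially and at outgoing points via the hypothesis on the exponentials together with the endpoint matching coming from $\alpha_{P_1}=\alpha_{P_2}$, and finally appeal to the Pestov--Uhlmann regularity lemma for smoothness across glancing.

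One unnecessary detour deserves comment. You argue that $\alpha_{P_1}=\alpha_{P_2}$ forces $L_2=cL_1$ on $M$ and hence that the $L_1$- and $L_2$-integral curves through a common basepoint coincide as unparametrized curves. But the scattering relations live over $\p M$, so your argument only yields $L_2=cL_1$ on $\p M$; the interior conclusion, and with it the claim about coinciding curves, is not justified. Fortunately you never use that stronger claim: the only consequence you invoke later is the endpoint identity $\gamma(T_1)=\beta(T_2)$, and this follows directly from $\alpha_{P_1}=\alpha_{P_2}$ by taking the spatial projection of any common point $(x_0,\xi_0)\in\p'_{\mathrm{null}}(T^*M)$ and its image (spatial projections of null bicharacteristics are integral curves of $L_j$, independent of $\xi_0$). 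The paper proceeds directly this way, bypassing the $L_2=cL_1$ claim entirely.
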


\section{Boundary determination} \label{sec_boundary_determination}

In this section we assume that $M$ is a compact manifold with smooth boundary and that $P$ is a real principal type differential operator of order $m \geq 2$. We consider the problem of determining the boundary values of coefficients of $P$ and their derivatives (possibly up to gauge) from the knowledge of $C_P$. For $(x_0, \xi_0) \in T^*(\p M)$, we give two arguments for determining boundary values at $x_0$ (these arguments were already described in the introduction, recall also that $\nu$ is the \emph{inner} unit conormal vector to $\p M$ with respect to the auxiliary Riemannian metric on $M$):

\begin{itemize}
\item 
(Elliptic region) If $t \mapsto p_m(x_0,\xi_0 + t\nu)$ has a simple non-real root, we use exponentially decaying solutions that concentrate near $x_0$ to give an analogue of boundary determination results for second order elliptic equations.
\item 
(Hyperbolic region) If $t \mapsto p_m(x_0,\xi_0 + t\nu)$ has at least two distinct real roots, we use solutions concentrating near two null bicharacteristics through $x_0$ and obtain an analogue of boundary determination results for the wave equation.
\end{itemize}

In fact the regions could be mixed, and we will use a combination of both methods. Here are some examples:

\begin{itemize}
\item 
If $P = \p_t^2 - \Delta_{g_0}$ is the wave operator in $M := M_0 \times (0,T)$, then the elliptic region (resp.\ hyperbolic region) at $(x_0, t_0) \in \p M_0 \times (0,T)$ is the set of spacelike covectors (resp.\ timelike covectors) in $T_{(x_0,t_0)}^*(\p M)$.
\item 
If $P$ is elliptic, then all roots of $t \mapsto p_m(x_0,\xi_0+t\nu)$ are non-real.
\item 
If $m$ is odd, the map $t \mapsto p_m(x_0,\xi_0+t\nu)$ always has a real root unless $x_0$ is a characteristic boundary point.
\item 
If $P$ is strictly hyperbolic with respect to the level surfaces of $\phi \in C^{\infty}(X, \mR)$ (see Section \ref{sec_introduction}), and if $x_0$ in an interior point of $\p M \cap \{ \phi = c \}$ for some $c$, then by definition $t \mapsto p_m(x_0,\xi_0+t\nu)$ has $m$ distinct real roots.
\item 
If $P = (\p_t^2 - \Delta_{g_0})(\p_t^2 + \Delta_{g_0})$ in $M_0 \times (0,T)$, there can be both real and non-real roots.
\end{itemize}

The special solutions in the hyperbolic region will be the ones constructed in Theorem \ref{thm_quasimode_direct}. In the elliptic region, new special solutions will be required. Their construction is based on the fact that whenever $x \in \p M$ and $t \mapsto p_m(x,\xi+t\nu)$ has a simple non-real root, there is an approximate solution of $Pu = 0$ in $M$ with complex phase that concentrates near $x$, decays exponentially in the interior, and whose boundary value oscillates in direction $\xi$. The argument is similar to the construction of a boundary parametrix for the wave equation in the elliptic region, see e.g.\ \cite[Chapter IX]{Taylor1981}.

\begin{Theorem} \label{prop_exponentially_localized_solution}
Let $P$ be a differential operator of order $m \geq 2$ in $M$ (here the principal symbol $p_m$ is allowed to be complex valued). Let $x \in \p M$, and assume that for some $\xi \in T_x^* \p M$ the map $z \mapsto p_m(x, \xi + z \nu)$ has a simple root $z$ with positive imaginary part. Fix $s > 0$. For $0 < h \leq 1$ there exists $u = u_{h} \in C^{\infty}(M)$ having the form 
\[
u = e^{i\Phi/h} a
\]
where $d\Phi|_x = \xi + z\nu$, $a$ is supported near $x$, $a|_{\p M}$ can be prescribed arbitrarily near $x$ independently of $h$, $\mathrm{Im}(\Phi) \geq 0$ on $\mathrm{supp}(a)$, and 
\[
\norm{u}_{L^2(M)} \sim h^{1/2}, \qquad \norm{u}_{H^k(M)} \lesssim h^{-k+1/2}, \qquad \norm{Pu}_{H^s(M)} = O(h^{\infty}).
\]
\end{Theorem}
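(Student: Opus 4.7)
I would construct $u = e^{i\Phi/h} a$ as a complex WKB solution concentrated at $x$, where $\Phi$ is a complex phase whose positive imaginary part forces exponential decay away from $\p M$ in the interior, and $a \sim \sum_j h^j a_j$ is an asymptotic amplitude. Work in boundary normal coordinates $(x', x_n)$ near $x = (x_0', 0)$, with $x_n \geq 0$ in $M$ and $\nu = dx_n$, and use Lemma \ref{lemma_conjugated_p}, which expands $e^{-i\Phi/h} P(e^{i\Phi/h} \,\cdot\,) = \sum_{j=0}^m h^{j-m} R_j$ with $R_0 = p_m(\,\cdot\,, d\Phi)$ the eikonal and $R_1 = \tfrac{1}{i}L + b$ the first transport operator, where $L = \p_{\xi_a} p_m(\,\cdot\,,d\Phi) \p_a$.

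\textbf{Phase construction.} Take $\Phi|_{x_n=0}(x') = \xi \cdot (x' - x_0')$, a real linear function. Because $z$ is a simple root, the implicit function theorem provides a smooth complex function $\zeta(x')$ near $x_0'$ with $\zeta(x_0') = z$, $\mathrm{Im}(\zeta(x')) > 0$, and $p_m((x',0), \xi + \zeta(x')\nu) = 0$. Setting $\p_{x_n}\Phi|_{x_n=0} = \zeta$ makes the eikonal hold on $\p M$ near $x$. Differentiating the eikonal equation in $x_n$ and using that $\p_{\xi_n} p_m(\,\cdot\,, \xi + \zeta\nu) \neq 0$ (again by simplicity), I inductively solve for $\p_{x_n}^{k+1}\Phi|_{x_n=0}$ in terms of the lower-order data, and Borel summation in $x_n$ yields a smooth $\Phi$ near $x$ such that $p_m(\,\cdot\,, d\Phi)$ vanishes to infinite order on $\p M$ near $x$. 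Since $\p_{x_n} \mathrm{Im}(\Phi)(x_0', 0) = \mathrm{Im}(z) > 0$, one has $\mathrm{Im}(\Phi(x', x_n)) \geq c x_n$ on a sufficiently small neighborhood of $x$.

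\textbf{Amplitude and estimates.} The amplitude is built so that the transport equations
\begin{align*}
\tfrac{1}{i} L a_0 + b a_0 &= 0, \\
\tfrac{1}{i} L a_{j+1} + b a_{j+1} &= -\sum_{k=2}^{j+2} R_k a_{j+2-k}, \quad j \geq 0,
\end{align*}
hold to infinite order in $x_n$ on $\p M$ near $x$. Since the $\p_{x_n}$-component of $L$ at the boundary is $\p_{\xi_n} p_m(\,\cdot\,, \xi + \zeta\nu) \neq 0$, each equation determines the $x_n$-Taylor series of $a_j$ uniquely from its boundary value. I prescribe $a_0|_{\p M}$ as the arbitrary $h$-independent datum of the theorem and set $a_j|_{\p M} = 0$ for $j \geq 1$; Borel summation carried out in the $x_n$-variable, with cutoffs chosen to equal $1$ on $\p M$ near $x$, then a further cutoff supported in a small neighborhood of $x$ in $M$, produces a smooth amplitude $a$ with $a|_{\p M}$ exactly equal to the prescribed datum. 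The $L^2$ estimate follows from the Laplace-type identity
\[
\norm{u}_{L^2(M)}^2 = \int_M \abs{a}^2 e^{-2\mathrm{Im}(\Phi)/h}\, dV \sim \int_0^\delta e^{-2cx_n/h}\, dx_n \sim h,
\]
and derivatives of $u$ bring down factors of $h^{-1}$, giving $\norm{u}_{H^k(M)} \lesssim h^{-k+1/2}$. For $\norm{Pu}_{H^s(M)}$, by construction $Pu = e^{i\Phi/h} \rho$ where $\rho$ vanishes to infinite order in $x_n$ at $\p M$ near $x$; using $\abs{\rho} \lesssim x_n^N$ for any $N$ together with the exponential decay, the same Laplace-type estimate yields $\norm{Pu}_{H^s(M)} = O(h^\infty)$.

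\textbf{Main obstacle.} The crucial technical point is that the inductive construction of both $\Phi$ and the $a_j$ in the $x_n$-direction hinges on the non-vanishing of $\p_{\xi_n} p_m(\,\cdot\,, \xi + \zeta\nu)$, which is exactly encoded by the simplicity hypothesis on $z$. The complex phase with $\mathrm{Im}(\zeta) > 0$ plays the role of the Gaussian localizer appearing in the interior beam construction of Theorem \ref{thm_quasimode_direct}, but now directed transversally into $M$; the matrix Riccati equation that controls lateral Gaussian decay along a bicharacteristic is replaced here by a simple algebraic root selection combined with recursive linear relations in the $x_n$-direction, and no caustics arise because $\mathrm{Im}(\zeta)$ remains strictly positive throughout.
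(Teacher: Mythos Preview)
Your proposal is correct and follows essentially the same approach as the paper's proof: both construct $\Phi$ by prescribing its Taylor series in the normal variable $x_n$ via the eikonal equation (using the simple-root condition to invert $\p_{\xi_n} p_m$), then build the amplitude hierarchy $a_j$ by solving the transport equations to infinite order at $x_n=0$ (again exploiting the nonvanishing of $\p_{\xi_n} p_m$), and finally derive the norm estimates from $\mathrm{Im}(\Phi) \sim x_n$. Your write-up matches the paper's argument step for step, with only cosmetic differences in notation.
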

\begin{proof}
Choose local coordinates near $x$ so that $x$ corresponds to $0$, $M$ is given by $\{ x_n \geq 0 \}$ near $0$, the unit conormal $\nu$ corresponds to $(0,1)$, and the cotangent vector $\xi$ corresponds to $(\xi', 0)$ with $\xi' \in \mR^{n-1}$. Write $u = e^{i\Phi/h} a$. By Lemma \ref{lemma_conjugated_p}, we have 
\[
Pu = e^{i\Phi/h} \left[ h^{-m} p_m(x, d\Phi) a + h^{1-m}(\frac{1}{i} La + ba) + \sum_{j=2}^m h^{j-m} R_j a \right].
\]
We first construct a smooth complex function $\Phi$ so that near $0$, 
\begin{align} \label{phi_localized_conditions}
\left\{ \begin{array}{rcl}
\Phi(x',0) \!\!\! &=& \!\!\! x' \cdot \xi', \\[3pt]
\mathrm{Im}(\p_n \Phi(x',0)) \!\!\! &>& \!\!\! 0, \\[3pt]
p_m(x,d\Phi) \!\!\! &=& \!\!\! 0 \text{ to infinite order on $\{ x_n = 0 \}$.}
\end{array} \right.
\end{align}
In fact, if we define $\Phi(x',0) = x' \cdot \xi'$, then 
\[
p_m(x,d\Phi)|_{x_n = 0} = p_m(x',0,\xi',\p_n \Phi(x',0)).
\]
This vanishes if $\p_n \Phi(x',0)$ is a root of $z \mapsto p_m(x',0, \xi', z)$. By assumption there exists a simple root with positive imaginary part when $x'=0$, hence also for $x'$ near $0$, and we denote this root by $z(x')$. Then $z(x')$ depends smoothly on $x'$ since it is a simple root, and we may define 
\begin{equation} \label{phi_localized_normal_derivative}
\p_n \Phi(x',0) = z(x').
\end{equation}
Next we compute 
\[
\p_{x_n}(p_m(x,d\Phi))|_{x_n = 0} = \p_{x_n} p_m(x',0,\xi',z(x')) + \p_{\xi_j} p_m(x',0,\xi',z(x')) \p_{jn} \Phi(x',0).
\]
We wish to choose $\p_n^2 \Phi(x',0)$ so that this vanishes. However, writing 
\[
p_m(x',0,\xi',\xi_n) = (\xi_n - z(x')) q(x',\xi',\xi_n)
\]
where $q(x',\xi',z(x')) \neq 0$, we see that $\p_{\xi_n} p_m(x',0,\xi',z(x')) \neq 0$, and hence we may define 
\begin{equation*}
\p_n^2 \Phi(x',0) = -\frac{1}{\p_{\xi_n} p_m} ( \p_{x_n} p_m + \sum_{\alpha=1}^{n-1} \p_{\xi_{\alpha}} p_m \p_{\alpha n} \Phi)\Big|_{(x',0,\xi',z(x'))}.
\end{equation*}
With this choice we have $\p_{x_n}(p_m(x,d\Phi))|_{x_n = 0} = 0$. Continuing this process allows us to prescribe $\p_n^3 \Phi(x',0)$, $\p_n^4 \Phi(x',0)$, $\ldots$ so that $\p_{x_n}^2(p_m(x,d\Phi))$, $\p_{x_n}^3(p_m(x,d\Phi))$, $\ldots$ vanish on $\{ x_n = 0 \}$. Using Borel summation we obtain a smooth function $\Phi$ with the required Taylor series at $x_n = 0$, so that $\Phi$ satisfies \eqref{phi_localized_conditions}.

We now construct the amplitude $a$ by Borel summation in the form 
\[
a \sim \sum_{j=0}^{\infty} h^{j} a_{j}
\]
where the $a_{k}$ are smooth functions independent of $h$, and they satisfy the following transport equations to infinite order on $\{ x_n = 0 \}$:
\begin{align*}
\frac{1}{i} La_0 + ba_0 &= 0, \\
\frac{1}{i} La_{1} + ba_{1} &= -R_2 a_0, \\
\frac{1}{i} La_{2} + ba_{2} &= -R_3 a_0 - R_2 a_{1}, \\
 & \ \,\vdots \notag
\end{align*}
The first equation on $\{ x_n = 0 \}$ reads as 
\[
\frac{1}{i} \p_{\xi_j} p_m(x',0,\xi',z(x')) \p_j a_0(x',0) + b(x',0) a_0(x',0) = 0.
\]
We define $a_0(x',0) = \eta(x')$ where $\eta$ is any given function in $C^{\infty}_c(\mR^{n-1})$ supported close enough to $0$. We have seen that $\p_{\xi_n} p_m(x',0,\xi',z(x')) \neq 0$, hence we may solve $\p_n a(x',0)$ from the above equation as 
\[
\p_n a_0(x',0) = -\frac{i}{\p_{\xi_n} p_m} (\frac{1}{i} \sum_{\alpha=1}^{n-1} \p_{\xi_{\alpha}} p_m \p_{\alpha} a_0 + b a_0) \big|_{(x',0,\xi',z(x'))}.
\]
Continuing in this way, we may define $\p_n^j a_0(x',0)$ for $j \geq 2$ and apply Borel summation to obtain a smooth function $a_0$ with the required Taylor series at $\{ x_n = 0 \}$, so that $\frac{1}{i} La_0 + ba_0 = 0$ to infinite order on $\{ x_n = 0 \}$. The functions $a_{j}$ for $j \geq 1$ are constructed in a similar way (one may set $a_{j}(x',0) = 0$ for $j \geq 1$), so that all the transport equations are satisfied to infinite order on $\{ x_n = 0 \}$. This completes the construction of the amplitude.

We have now constructed a smooth function 
\[
u = e^{i\Phi/h} a
\]
so that 
\[
Pu \sim e^{i\Phi/h} \sum_{j=0}^{\infty} h^{j-m} f_j
\]
where each $f_j$ vanishes to infinite order on $\{ x_n = 0 \}$ and $\abs{f_j} \lesssim 1$ uniformly over $h \leq 1$. Near $0$ one has 
\[
c x_n \leq \mathrm{Im}(\Phi(x',x_n)) \leq C x_n
\]
for some $c, C > 0$. Thus if $\supp(a)$ is chosen to be sufficiently close to $0$, one has 
\begin{align*}
\norm{u}_{L^2(M)}^2 &\sim \int_{ \{ \abs{x'} \leq 1 \}} \int_0^1 e^{-x_n/h} \,dx_n \,dx' \sim h, \\
\norm{u}_{H^k(M)}^2 &\lesssim h^{-2k} \int_{ \{ \abs{x'} \leq 1 \}} \int_0^1 e^{-x_n/h} \,dx_n \,dx' \sim h^{1-2k}, \\
\end{align*}
and 
\[
\norm{Pu}_{H^N(M)} \lesssim h^{-m-N} \norm{e^{-c x_n/h} O(x_n^{\infty})}_{L^2(\supp(a))} = O(h^{\infty}).
\]
The result follows.
\end{proof}

The next example gives a useful interpretation for a boundary determination result for the wave equation. This will motivate the corresponding proof for general real principal type operators.

\begin{Example}
Let $(M_0,g_0)$ be compact with smooth boundary, let $V \in C^{\infty}(M_0 \times (0,T))$, and consider the Dirichlet problem for the wave equation 
\[
(\p_t^2 - \Delta_{g_0} + V) u = 0 \text{ in $M_0 \times (0,T)$}, \qquad u|_{\p M_0 \times (0,T)} = f, \qquad u|_{t=0} = \p_t u|_{t=0} = 0.
\]
The Dirichlet-to-Neumann map is given by 
\[
\Lambda_V: f \mapsto \p_{\nu} u|_{\p M_0 \times (0,T)}.
\]
The boundary determination result in \cite{StefanovYang} shows that for a small neighborhood $\Gamma \subset \p M_0 \times (0,T)$ of a boundary point $(x_0, t_0)$, the localized map $\Lambda_V^{\Gamma}: C^{\infty}_c(\Gamma) \to C^{\infty}(\Gamma), \ f \mapsto \Lambda_V f|_{\Gamma}$ is a pseudodifferential operator and the Taylor series of $V$ at $(x_0,t_0)$ can be computed from the symbol of $\Lambda_V^{\Gamma}$.

Consider coordinates $(x_1,x',t)$ and assume that $\Gamma$ corresponds to $\{x_1 = 0\}$. The symbol of $\Lambda_V^{\Gamma}$ can be computed by testing against highly oscillatory functions $f(x',t) = \chi(x',t) e^{i(x'\cdot\xi' + t)/h}$ where $\chi$ is a smooth cutoff and $\xi'$ is in the hyperbolic region (i.e.\ there are two null directions at $(x_0,t_0)$ whose projection to $T^* (\p M_0 \times (0,T))$ is $(0,\xi',1)$). The argument in \cite{StefanovYang} gives roughly (when the cutoff $\chi$ is made to depend on $h$ in a suitable way) that 
\[
\lim_{h \to 0} h^{\alpha} ( (\Lambda_V-\Lambda_0) f, f)_{L^2(\Gamma)} = c V(x_0,t_0)
\]
where $c \neq 0$ and $\alpha$ is a suitable number depending on $n$.

On the other hand, one has the integral identity 
\[
( (\Lambda_V-\Lambda_0) f, f)_{L^2(\Gamma)} = \int_{M_0} \int_0^T V u \bar{u}_0 \,dt \,dV
\]
where $u$ is the solution given above, and $u_0$ solves $(\p_t^2 - \Delta_{g_0}) u_0 = 0$ in $M_0 \times (0,T)$ with Dirichlet data $f$ and vanishing Cauchy data on $\{t=T\}$. Since $u$ has vanishing Cauchy data on $\{t=0\}$ and highly oscillatory boundary data, it is related to a null bicharacteristic (possibly with reflections) that starts at $(x_0,t_0)$ and moves forward in time. Similarly, since $u_0$ has vanishing Cauchy data on $\{t=T\}$, it is related to a null bicharacteristic starting at $(x_0,t_0)$ and moving backward in time. Thus the product $u \bar{u}_0$ concentrates near the intersection of the projections of these two bicharacteristics, i.e. near the point $(x_0,t_0)$. Analyzing the integral over $M_0 \times (0,T)$ when $h \to 0$ then leads to a proof for recovering $V(x_0,t_0)$.
\end{Example}

\begin{proof}[Proof of Theorem \ref{thm_boundary_determination_principal_intro}]
Let $P_1, P_2$ be real principal type differential operators of order $m \geq 2$. Assume that  
\[
C_{P_1} = C_{P_2}.
\]
The integral identity in Lemma \ref{lemma_integral_identity_potential} implies that 
\begin{equation} \label{integral_identity_boundary_determination_first}
((P_1-P_2) u_1, u_2)_{L^2(M)} = 0
\end{equation}
whenever $u_j \in H^m(M)$ are solutions of $P_1 u_1 = 0$ and $P_2^* u_2 = 0$ in $M$. Depending on the roots of the characteristic polynomial, we will use different special solutions $u_j$ to prove the theorem. The  strategy however will be the same: the main point is to prove that $p_{m,2}(x,\xi+\cdot\,\nu)$ vanishes at the roots of $p_{m,1}(x,\xi+\cdot\,\nu)$ whenever $(x,\xi) \in T^*(\p M)$ is close to $(x_0,\xi_0)$, i.e.\ that 
\begin{equation} \label{pm2_tau_roots}
\left\{ \begin{array}{c} \text{$p_{m,2}(x,\xi+\tau \nu) = 0$ for any $\tau \in \mC$ for which $p_{m,1}(x,\xi+\tau \nu) = 0$,} \\[3pt]
\text{whenever $(x,\xi) \in T^*(\p M)$ is close to $(x_0,\xi_0)$.} \end{array} \right.
\end{equation}
Here and below, we will use the fact that the assumptions of the theorem remain true for any $(x,\xi) \in T^* (\p M)$ that is close enough to $(x_0,\xi_0)$.

Let us show how \eqref{pm2_tau_roots} implies the claim \eqref{boundary_determination_principal_claim} for $j=0$ (the case $j \geq 1$ will be done later). By \eqref{pm2_tau_roots} the polynomial $\tau \mapsto p_{m,2}(x,\xi+\tau \nu)$ vanishes at each of the $m$ distinct roots of $\tau \mapsto p_{m,1}(x,\xi+\tau \nu)$. The fact that $x$ is a noncharacteristic boundary point for $P_j$ implies that both maps are polynomials of degree exactly $m$ in $\tau$. Thus one must have, for some $c(x,\xi) \neq 0$, that  
\[
p_{m,2}(x,\xi+\tau \nu) = c(x,\xi) p_{m,1}(x,\xi+\tau \nu), \qquad \tau \in \mR.
\]
Multiplying by $\tau^{-m}$ and letting $\tau \to \infty$ gives that 
\[
c(x,\xi) = \frac{p_{m,2}(x,\nu_x)}{p_{m,1}(x,\nu_x)}.
\]
Thus in fact $c(x,\xi) = c(x)$. For $(x,\xi) \in T^*(\p M)$ close to $(x_0,\xi_0)$ we obtain 
\[
p_{m,2}(x,\xi+\tau \nu_x) = c(x) p_{m,1}(x,\xi+\tau \nu_x), \qquad \tau \in \mR.
\]
For any $x \in \p M$ near $x_0$, the set $\{ \xi + \tau \nu_x \,;\, \text{$\tau \in \mR$ and $(x,\xi)$ is near $(x_0,\xi_0)$} \}$ is open in $T_x^* M$. By real-analyticity in the fiber variable one then has 
\[
p_{m,2}(x,\xi) = c(x) p_{m,1}(x,\xi), \qquad \text{$x \in \p M$ near $x_0$, \ $\xi \in T_x^* M$},
\]
where $c$ is a smooth nonvanishing function near $x_0$ on $\p M$  given by 
\[
c(x) = \frac{p_{m,2}(x,\nu_x)}{p_{m,1}(x,\nu_x)}.
\]
This proves the desired claim \eqref{boundary_determination_principal_claim} for $j=0$.

The proof of \eqref{pm2_tau_roots} will be divided in several cases.

\vspace{10pt}

\noindent {\it Case (1).} We first assume condition (1) in the theorem, i.e.\ that $p_{m,2}(x,\xi + \sigma \nu) = 0$ for some non-real $\sigma \in \mC$ (if this holds for $p_{m,1}$ instead we obtain the result by interchanging $P_1$ and $P_2$; note that the conclusion of the theorem remains invariant under this change). Since $P_2$ has real principal symbol, the non-real roots come in complex conjugate pairs and hence we may assume that $\im(\sigma) > 0$.

Using Theorem \ref{prop_exponentially_localized_solution} for $P_2^*$, there are smooth functions 
\[
v_2 = v_{2,h} = h^{-1/2} e^{i\Phi_2/h} a_2
\]
where $d\Phi_2|_x = \xi + \sigma \nu$, $a_2|_{\p M}$ is independent of $h$ with $a_2(x) \neq 0$, $\norm{v_2}_{L^2} \sim 1$, and for any fixed but suitably large $s$ 
\[
\norm{P_2 v_2}_{H^s(M)} = O(h^{\infty}).
\]
By the solvability result for real principal type operators in Proposition \ref{prop_real_principal_type_solvability}(d), there exist smooth solutions $u_2 = u_{2,h} = v_2 + r_2$ of $P_2 u_2 = 0$ in $M$ such that 
\[
\norm{r_2}_{H^{s+m-1}(M)} = O(h^{\infty}).
\]

Let $\{ \tau_1, \ldots, \tau_m \}$ be the complex roots of $\tau \mapsto p_{m,1}(x,\xi+\tau \nu)$. By assumption these are all distinct. Let $\tau$ be one of these roots. We wish to prove that $p_{m,2}(x,\xi+\tau \nu) = 0$, which establishes \eqref{pm2_tau_roots}. We will consider two subcases.

\vspace{10pt}

\noindent {\it Subcase (1a).} 
Suppose that $\tau$ is non-real. We may assume that $\im(\tau) > 0$ (since if $\im(\tau) < 0$, then $\bar{\tau}$ is also a root and the argument below shows that $p_{m,2}(x,\xi+\bar{\tau} \nu) = 0$, and hence $p_{m,2}(x,\xi+\tau \nu) = 0$ since the roots come in complex conjugate pairs).

Using Theorem \ref{prop_exponentially_localized_solution} for $P_1$, we obtain solutions $u_1 = u_{1,h}$ of $P_1 u_1 = 0$ in $M$ of the form $u_1 = v_1 + r_1$, and 
\[
v_1 = h^{-1/2} e^{i\Phi_1/h} a_1
\]
where $d\Phi_1|_x = \xi + \tau \nu$, $a_1|_{\p M}$ is independent of $h$ with $a_1(x) \neq 0$, $\norm{v_1}_{L^2} \sim 1$, and 
\[
\norm{r_1}_{H^{s+m-1}(M)} = O(h^{\infty}).
\]
Inserting the solutions $u_1$ and $u_2$ into \eqref{integral_identity_boundary_determination_first}, we obtain  that 
\[
\int_M (P_1-P_2) (v_1) \bar{v}_2 \,dV = O(h^{\infty}).
\]
The explicit form of $v_1$ and $v_2$ together with Lemma \ref{lemma_conjugated_p} gives that 
\begin{align}
0 &= \lim_{h \to 0} h^{m} \int_M (P_1-P_2)(v_1) \bar{v}_2 \,dV \notag \\
 &= \lim_{h \to 0} h^{-1} \int_M (p_{m,1}(x,d\Phi_1)-p_{m,2}(x,d\Phi_1)) e^{i (\Phi_1-\bar{\Phi}_2)/h} a_1 \bar{a}_2 \,dV. \label{subcaseonea_first_identity}
\end{align}
Write $q(x) := p_{m,1}(x,d\Phi_1(x))-p_{m,2}(x,d\Phi_1(x))$ and $\Theta := \Phi_1-\bar{\Phi}_2$. By the construction of the functions $\Phi_j$, see \eqref{phi_localized_conditions}--\eqref{phi_localized_normal_derivative}, in boundary normal coordinates (for $g$) near $0$ one has 
\[
\Theta(x',x_n) =(\tau-\bar{\sigma}) x_n + O(x_n^2).
\]
Note that $\tau$ and $\sigma$ depend on $x'$, and $\im(\tau-\bar{\sigma}) > 0$. If the support of $a_j$ is chosen small enough, one has as $h \to 0$ 
\begin{align*}
 &h^{-1} \int_M e^{i\Theta/h} q a_1 \bar{a}_2 \,dV \\
 &= h^{-1} \int_{\{ x_n > 0 \}} e^{i(\tau-\bar{\sigma}) x_n/h + O(x_n^2)/h} q a_1 \bar{a}_2 \abs{g}^{1/2} \,dx \\
 &= \int_{\{ x_n > 0 \}}  e^{i(\tau-\bar{\sigma})x_n + h O(x_n^2)} (q a_1 \bar{a}_2 \abs{g}^{1/2})(x',h x_n) \,dx \\
 &\to \int_{\{ x_n > 0 \}}e^{i(\tau-\bar{\sigma})x_n}  (q a_1 \bar{a}_2 \abs{g}^{1/2})(x',0) \,dx \\
 &= \int_{\mR^{n-1}} \frac{i}{\tau-\bar{\sigma}} (q a_1 \bar{a}_2 \abs{g}^{1/2})(x',0) \,dx'.
\end{align*}
In the last step we used that $\int_0^{\infty} e^{iwx_n} \,dx_n = \frac{i}{w}$ when $\im(w) > 0$. Since $a_j(x',0)$ can be chosen arbitrarily near $0$, we obtain that $q(x',0) = 0$ near $0$. This means that 
\[
p_{m,1}(x',0,\xi',\tau(x')) = p_{m,2}(x',0,\xi',\tau(x')).
\]
But $\tau(x')$ was a root of $t \mapsto p_{m,1}(x',0,\xi',t)$, so it is also a root for $p_{m,2}$, i.e.\ $p_{m,2}(x,\xi+\tau \nu) = 0$. This proves \eqref{pm2_tau_roots} whenever $\tau$ is non-real.

\vspace{10pt}

\noindent {\it Subcase (1b).} 
Now suppose that $\tau$ is real. Let $\gamma: [0,T] \to T^*M$ be the maximal null $P_1$-bicharacteristic curve with $\gamma(0) = (x_0,\xi_0+\tau \nu_{x_0})$. Extend $M$ to a slightly larger open manifold $X$, extend $P_1$ smoothly as a real principal type operator to $X$, and let $\breve{\gamma}(t) = (x(t), \xi(t))$ be the extension of $\gamma$. Choose $t_- < 0$ and $t_+ > T$ so that $\breve{\gamma}(t_{\pm})$ are outside $M$, and let $v_1$ be the quasimode in Theorem \ref{thm_quasimode_direct} associated with $\breve{\gamma}|_{[t_-,t_+]}$ with $\norm{P_1 v_1}_{H^s(M)} = O(h^{\infty})$ for some fixed large $s > 0$. Use Proposition \ref{prop_real_principal_type_solvability}(d) to find $r_1$ with $\norm{r_1}_{H^{s+m-1}(M)} = O(h^{\infty})$ so that $u_1 = v_1 + r_1$ solves $P_1 u_1 = 0$ in $M$. Recall that $v_1$ has the form 
\[
v_1(x) = \int_{t_-}^{t_+} e^{i\Phi_1(x,t)/h} a_1(x,t) \,dt
\]
Recall also from \eqref{phi_req1_second}--\eqref{dtphi_formula} that 
\begin{equation} \label{phione_first_facts}
\Phi_1(x(t), t) = 0, \qquad d_x \Phi_1(x(t),t) = \xi(t), \qquad \p_t \Phi_1(x(t),t) = 0,
\end{equation}
and 
\begin{equation} \label{phione_second_fact}
\im(\Phi_1(x,t)) \geq c d(x,x(t))^2, \qquad (x,t) \in \supp(a_1).
\end{equation}
As discussed in \eqref{phi_req1_second}--\eqref{dttphi_formula}, the quantity $\I = \im((\p_{x_j x_k} \Phi_1))|_{(x_0,0)}$ is invariantly defined and we may write 
\begin{equation} \label{phione_third_fact}
\nabla_{x,t}^2 \im(\Phi_1)|_{(x_0,0)} = \left( \begin{array}{cc} \I & - \I \dot{x} \\ -(\I \dot{x})^t & \I \dot{x} \cdot \dot{x} \end{array} \right)
\end{equation}

Inserting the solutions $u_1$ and $u_2$ in \eqref{integral_identity_boundary_determination_first}, we obtain that 
\[
\int_M ((P_1-P_2)v_1) \bar{v}_2 \,dV = O(h^{\infty}).
\]
Inserting the expressions for $v_1$ and $v_2$ and using Lemma \ref{lemma_conjugated_p}, it follows that 
\begin{multline*}
\int_M \int_{t-}^{t_+} (p_{m,1}(x,d_x \Phi_1(x,t)) - p_{m,2}(x,d_x \Phi_1(x,t))) e^{i(\Phi_1(x,t)-\bar{\Phi}_2(x))/h} a_1(x,t) \bar{a}_2(x) \,dt \,dV \\ + \text{lower order terms} = O(h^{\infty}).
\end{multline*}
Let $x'$ be normal coordinates at $x_0$ on $\p M$ (for the metric induced by $g$), and let $(x',x_n)$ be corresponding boundary normal coordinates so that $x_0$ corresponds to $0$. Since $a_2$ is supported near $x_0$, the $M$-integral can be written in these coordinates and we have 
\begin{equation} \label{boundary_subcaseoneb_integral}
\int_{\{x_n > 0\}} \int_{t_-}^{t_+}  e^{i \Theta(x,t)/h} q(x,t) \,dt \,dx + \text{lower order terms} = O(h^{\infty})
\end{equation}
where $q(x,t) = (p_{m,1}(x,d_x \Phi_1(x,t)) - p_{m,2}(x,d_x \Phi_1(x,t))) a_1(x,t) \bar{a}_2(x) \abs{g(x)}^{1/2}$ and the phase $\Theta$ is given by $\Theta(x,t) = \Phi_1(x,t)-\bar{\Phi}_2(x)$.

We wish to use stationary phase to show that the main contribution to the integral \eqref{boundary_subcaseoneb_integral} comes from the region near $t=0$. Note first that 
\[
d_x \Theta(0,0) = \xi(0) - \xi_0 - \bar{\sigma} \nu = (\tau-\bar{\sigma})\nu, \qquad \p_t \Theta(0,0) = 0.
\]
We next study the Hessian of $\im(\Theta)$ in the $(x',x_n,t)$ coordinates. By \eqref{phione_third_fact} we have 
\[
\nabla_{x,t}^2 \im(\Theta)|_{(0,0)} = \left( \begin{array}{cc} \I & - \I \dot{x} \\ -(\I \dot{x})^t & \I \dot{x} \cdot \dot{x} \end{array} \right) + \nabla_{x,t}^2 \im(\Phi_2)|_{(0,0)}
\]
By \eqref{phi_localized_conditions} we have $\nabla_{x',t}^2 \im(\Phi_2)|_{(0,0)} = 0$. If $\zeta' = (v', z)^t$ with $v' \in \mC^{n-1}$, and if $\zeta = (v',0,z)^t$, it follows that 
\begin{equation} \label{nablaxprimet_im_theta_computation}
\nabla_{x',t}^2 \im(\Theta)|_{(0,0)} \zeta' \cdot \bar{\zeta}' = \left( \begin{array}{cc} \I & - \I \dot{x} \\ -(\I \dot{x})^t & \I \dot{x} \cdot \dot{x} \end{array} \right) \zeta \cdot \bar{\zeta} = \abs{\I^{1/2} (\left( \begin{array}{c} v' \\ 0 \end{array} \right) - \dot{x} z)}^2.
\end{equation}
If the right hand side vanishes, then $\dot{x}_n z = 0$ and $v' = \dot{x}' z$. But the fact that $\tau$ is a simple root of $t \mapsto p_{m,1}(x_0,\xi_0+t\nu)$ implies that 
\begin{equation} \label{xndot_simple_condition}
\dot{x}_n(0) = \p_{\xi_n} p_{m,1}(x_0,\xi_0+\tau \nu) \neq 0.
\end{equation}
Thus we get $z = 0$ and $v' = 0$. This proves that $\nabla_{x',t}^2 \Theta|_{(0,0)}$ is invertible.

If $\chi(t)$ is a smooth cutoff supported near $0$ with $\chi = 1$ near $0$, the above discussion together with stationary phase on manifolds with boundary \cite[Theorem 7.7.17(ii)]{Hormander} implies that 
\[
\int_{\{x_n > 0\}} \int_{t_-}^{t_+}  e^{i \Theta(x,t)/h} q(x,t) \chi(t) \,dt \,dx = c_0 h^{\frac{n+2}{2}} q(0,0) + O(h^{\frac{n+4}{2}}), \qquad c_0 \neq 0.
\]
If we replace $\chi(t)$ by $1-\chi(t)$ in the integrand, then the fact that the $x$-integral is over a small neighborhood of $x_0$ and the assumption that $\gamma$ never returns to $x_0$ after $t=0$ implies that 
the corresponding integral is over the region where $x(t)$ is bounded away from $x_0$, and hence the corresponding integral is $O(e^{-C/h})$. Multiplying \eqref{boundary_subcaseoneb_integral} by $h^{-\frac{n+2}{2}}$ and letting $h \to 0$ implies that $q(0,0) = 0$, i.e. 
\[
p_{m,1}(x,d_x \Phi_1(x,t)) - p_{m,2}(x,d_x \Phi_1(x,t))|_{(0,0)} = p_{m,1}(x_0,\xi_0+\tau \nu) - p_{m,2}(x_0,\xi_0+\tau \nu) = 0.
\]
Since $p_{m,1}(x_0,\xi_0+\tau \nu) = 0$, we obtain $p_{m,2}(x_0,\xi_0+\tau \nu) = 0$. This also holds for $(x,\xi) \in T^*(\p M)$ close to $(x_0,\xi_0)$, which proves \eqref{pm2_tau_roots} for real $\tau$.

\vspace{10pt}

\noindent {\it Case (2).} We now assume condition (2) in the theorem, i.e.\ that all roots for $\tau \mapsto p_{m,j}(x_0,\xi_0+\tau \nu)$ are real and simple and have the stated properties. Let $\sigma$ be such that $p_{m,2}(x_0,\xi_0+\sigma \nu) = 0$, and let $\tau$ be any root of $t \mapsto p_{m,1}(x_0,\xi_0+t\nu) = 0$. We want to prove that $p_{m,2}(x_0,\xi_0+\tau \nu) = 0$, which would imply \eqref{pm2_tau_roots}.

We again use the integral identity \eqref{integral_identity_boundary_determination_first} with suitable special solutions $u_j$. Let $\gamma_j: [0,T_j] \to T^* M$ be the maximal null $P_j$-bicharacteristic curves with $\gamma_1(0) = (x_0,\xi_0+\tau \nu)$ and $\gamma_2(0) = (x_0,\xi_0+\sigma \nu)$, and let $\breve{\gamma}_j(t) = (x_j(t), \xi_j(t))$ be extensions to a slightly larger manifold $X$ so that the end points of $\breve{\gamma}_j|_{[t_j^-, t_j^+]}$ are outside $M$. As in Subcase (1b) above, let $u_j = v_j + r_j$ solve $P_1 u_1 = 0$ and $P_2^* u_2 = 0$ in $M$, where $v_j$ is the quasimode provided by Theorem \ref{thm_quasimode_direct} associated with $\breve{\gamma}_j|_{[t_j^-,t_j^+]}$ and $\norm{r_j}_{H^{s+m-1}(M)} = O(h^{\infty})$ for some fixed large $s$. Then $v_j$ has the form 
\[
v_j(x) = \int_{t_j^-}^{t_j^+} e^{i\Phi_j(x,t)/h} a_j(x,t) \,dt
\]
where the phase functions $\Phi_j$ satisfy the counterparts of \eqref{phione_first_facts}--\eqref{phione_third_fact} with $x(t)$, $\xi(t),$ and $\I$ replaced by $x_j(t)$, $\xi_j(t)$, and $\I_j$. Moreover, $a_j$ is supported in a small neighborhood of the curve $(x_j(t),t)$.

Inserting the solutions $u_j$ in \eqref{integral_identity_boundary_determination_first}, we obtain 
\begin{equation} \label{boundary_subcasetwo_integral}
\int_M \int_{t_1^-}^{t_1^+} \int_{t_2^-}^{t_2^+} e^{i\Theta(x,t,s)/h} q(x,t,s) \,ds \,dt \,dV(x) + \text{lower order terms} = O(h^{\infty})
\end{equation}
where $\Theta(x,t,s) = \Phi_1(x,t) - \ol{\Phi_2(x,s)}$ and 
\[
q(x,t,s) =  (p_{m,1}(x,d_x \Phi_1(x,t)) - p_{m,2}(x,d_x \Phi_1(x,t))) a_1(x,t) \ol{a_2(x,s)}.
\]
The integral in \eqref{boundary_subcasetwo_integral} is over a fixed small neighborhood of the set 
\[
E := \{ (\bar{x},t,s) \,;\, \bar{x} \in M, \ x_1(t) = x_2(s) = \bar{x} \}.
\]
Again, we want to use stationary phase to show that the main contribution comes from $t=s=0$. First note that if $x_1(t) = x_2(s) = \bar{x}$, then 
\[
\Theta(\bar{x},t,s) = 0, \qquad d_x \Theta(\bar{x},t,s) = \xi_1(t) - \xi_2(s), \qquad \p_{t,s} \Theta(\bar{x},t,s) = 0.
\]
By the assumption that the bicharacteristics intersect nicely, one has $E = \{ (x_0,0,0) \} \cup K$ where $K$ is a compact subset of $M^{\mathrm{int}} \times [\eps,T_1] \times [\eps,T_2]$ for some $\eps > 0$, and in $K$ one always has $\xi_1(t) \neq \xi_2(s)$. This shows that when $(t,s)$ is away from $(0,0)$, and if the supports of $a_j$ were chosen small enough, the integral in \eqref{boundary_subcasetwo_integral} is $O(h^{\infty})$ by nonstationary phase.

It remains to evaluate the integral 
\[
\int_M \int_{\mR^2} e^{i\Theta(x,t,s)/h} q(x,t,s) \chi(t,s) \,ds \,dt \,dV(x)
\]
where $\chi(t,s)$ is a cutoff function with small support and with $\chi = 1$ near $(0,0)$. Note that the $M$-integral is over a small neighborhood of $x_0$. Now 
\[
\Theta(x_0,0,0) = 0, \qquad d_x \Theta(x_0,0,0) = \xi_1(0) - \xi_2(0) = (\tau-\sigma) \nu, \qquad \p_{t,s} \Theta(x_0,0,0) = 0.
\]
Let $x'$ be normal coordinates on $\p M$ near $x_0$, and let $(x',x_n)$ be corresponding boundary normal coordinates so that $x_0$ corresponds to $0$. Let $v' \in \mC^{n-1}$ and write $\zeta' = (v', z, w)^t$ and $\zeta = (v', 0, z, w)^t$. Arguing as in \eqref{nablaxprimet_im_theta_computation}, we compute 
\begin{align*}
\nabla_{x',t,s}^2 \im(\Theta)|_{(0,0,0)} \zeta' \cdot \bar{\zeta}' &= \nabla_{x,t,s}^2 \im(\Theta)|_{(0,0,0)} \zeta \cdot \bar{\zeta} \\
 &= \abs{\I_1^{1/2} (\left( \begin{array}{c} v' \\ 0 \end{array} \right) - \dot{x}_1 z)}^2 + \abs{\I_2^{1/2} (\left( \begin{array}{c} v' \\ 0 \end{array} \right) - \dot{x}_2 w)}^2.
\end{align*}
Assume that the last quantity vanishes. Using that $\tau$ and $\sigma$ are simple roots, we have $\dot{x}_{1,n}(0) \neq 0$ and $\dot{x}_{2,n}(0) \neq 0$ as in \eqref{xndot_simple_condition}. Since $\I_1$ and $\I_2$ are positive definite, we obtain $z=w=0$ and $v' = 0$. This proves that $\nabla_{x',t,s}^2 \Theta|_{(0,0,0)}$ is invertible. Thus stationary phase \cite[Theorem 7.7.17(ii)]{Hormander} yields that 
\[
\int_M \int_{\mR^2} e^{i\Theta(x,t,s)/h} q(x,t,s) \chi(t,s) \,ds \,dt \,dV(x) = c_0 h^{\frac{n+3}{2}} q(x_0,0,0) + O(h^{\frac{n+5}{2}}), \qquad c_0 \neq 0.
\]
Multiplying \eqref{boundary_subcasetwo_integral} by $h^{-\frac{n+3}{2}}$ and letting $h \to 0$ we obtain that $q(x_0,0,0) = 0$, i.e.\ 
\[
p_{m,1}(x_0,\xi_0+\tau \nu) = p_{m,2}(x_0,\xi_0+\tau \nu).
\]
Thus $p_{m,2}(x_0,\xi_0+\tau \nu) = 0$. This remains true for $(x,\xi) \in T^* (\p M)$ close to $(x_0,\xi_0)$, which proves \eqref{pm2_tau_roots}.

\vspace{10pt}

\noindent {\it Concluding the proof.} We have now proved \eqref{pm2_tau_roots} in all cases. As discussed after \eqref{pm2_tau_roots}, this gives the claim \eqref{boundary_determination_principal_claim} for $j=0$. To prove this for all $j$, we assume that the claim has already been proved for $j \leq k-1$. Extend the function $c$ as a smooth nonvanishing function to $M$, and replace $P_1$ by $c^{-1} P_1$ (so $p_{m,1}$ is replaced by $c^{-1} p_{m,1}$). By Lemma \ref{pm2_tau_roots} we still have $C_{P_1} = C_{P_2}$, and 
\begin{equation} \label{pxnj_condition_first}
\p_{x_n}^j (p_{m,1} - p_{m,2})(x',0,\eta) = 0 \text{ when $j \leq k-1$, $x'$ is close to $0$, $\eta \in \mR^n$}.
\end{equation}
Thus by the Taylor formula, for $x$ near $0$ one has  
\begin{equation} \label{pxnj_condition_second}
p_{m,1}(x,\eta) - p_{m,2}(x,\eta) = x_n^k f(x,\eta), \qquad f(x',0,\eta) = \frac{1}{k!} \p_{x_n}^k(p_{m,1} - p_{m,2})(x',0,\eta).
\end{equation}

Let us assume that we are in Subcase (1a), with $p_{m,1}(x,\xi+\tau \nu) = 0$ and $\im(\tau) > 0$ where $\tau = \tau(x')$. Multiplying \eqref{integral_identity_boundary_determination_first} by $h^{m-k}$, we have 
\begin{align*}
0 &= \lim_{h \to 0} h^{m-k} \int_M (P_1-P_2)(v_1) \bar{v}_2 \,dV \\
 &= \lim_{h \to 0} h^{-k-1} \int_M (p_{m,1}(x,d\Phi_1)-p_{m,2}(x,d\Phi_1)) e^{i (\Phi_1-\bar{\Phi}_2)/h} a_1 \bar{a}_2 \,dV.
\end{align*}
Note that $p_{m,1}(x,d\Phi_1(x))-p_{m,2}(x,d\Phi_1(x)) = x_n^k r(x)$ where $r(x) := f(x,d\Phi_1(x))$. Arguing as in Subcase (1a), we get 
\begin{align*}
 &h^{-k-1} \int_M e^{i\Theta/h} x_n^k r a_1 \bar{a}_2 \,dV \\
 &= h^{-k-1} \int_{\{ x_n > 0 \}} e^{i(\tau-\bar{\sigma}) x_n/h + O(x_n^2)/h} x_n^k r a_1 \bar{a}_2 \abs{g}^{1/2} \,dx \\
 &= \int_{\{ x_n > 0 \}}  x_n^k e^{i(\tau-\bar{\sigma})x_n + h O(x_n^2)} (r a_1 \bar{a}_2 \abs{g}^{1/2})(x',h x_n) \,dx \\
 &\to \int_{\{ x_n > 0 \}} x_n^k e^{i(\tau-\bar{\sigma})x_n}  (r a_1 \bar{a}_2 \abs{g}^{1/2})(x',0) \,dx' \\
 &= \int_{\mR^{n-1}} \frac{\Gamma(k+1)}{(-i(\tau-\bar{\sigma}))^{k+1}} (r a_1 \bar{a}_2 \abs{g}^{1/2})(x',0) \,dx'
\end{align*}
using that $\int_0^{\infty} t^k e^{iwt} \,dt = \frac{\Gamma(k+1)}{(-iw)^{k+1}}$ when $\im(w) > 0$. Since $a_j(x',0)$ can be chosen arbitrarily near $0$, we obtain that $r(x',0) = 0$ near $0$, i.e.\ that for $(x,\xi) \in T^*(\p M)$ close to $(x_0,\xi_0)$ 
\begin{equation} \label{pxnk_difference_tau}
\p_{x_n}^k(p_{m,1} - p_{m,2})(x,\xi+\tau\nu) = 0.
\end{equation}
This is true for each of the $m$ distinct zeros $\tau$ of $t \mapsto p_{m,1}(x',0,\xi',t)$, hence by real-analyticity in the fiber variable it follows that 
\[
\p_{x_n}^k(p_{m,1} - p_{m,2})(x',0,\eta) = \tilde{c}(x') p_{m,1}(x',0,\eta)
\]
where $\tilde{c}(x')$ is smooth near $0'$ and has the invariant expression 
\[
\tilde{c}(y) = \frac{\p_{x_n}^k(p_{m,1} - p_{m,2})(y,\nu_y)}{p_{m,1}(y,\nu_y)}.
\]
We now replace $P_1$ by $(1-\chi(x) \tilde{c}(x') x_n^k/(k!)) P_1$ where $\chi$ is a cutoff to a small neighborhood of $0$ with $\chi=1$ near $0$. Then $C_{P_1}$ and condition \eqref{pxnj_condition_first} remain unchanged, and we have 
\[
\p_{x_n}^k(p_{m,1} - p_{m,2})(x',0,\eta) = 0.
\]

The argument above shows that if \eqref{boundary_determination_principal_claim} holds for $j \leq k-1$, then possibly after replacing $c$ by some $c_k$, \eqref{boundary_determination_principal_claim} holds for $j \leq k$. The Taylor coefficients of $c_k$ at $x_n=0$ up to order $k$ are uniquely determined by \eqref{boundary_determination_principal_claim}. Thus we may use Borel summation to construct a smooth nonvanishing function $c \in C^{\infty}(M)$ with this Taylor series at $x_n=0$ near $0$, and with this choice of $c$ the condition \eqref{boundary_determination_principal_claim} holds for all $j$. This concludes the proof if we are in Subcase (1a).

The arguments for Subcase (1b) and Case 2 are quite analogous, and we will only give a sketch for Subcase (1b). Again we arrange so that \eqref{pxnj_condition_second} holds. Then \eqref{boundary_subcaseoneb_integral} is replaced by 
\[
\int_{\{x_n > 0\}} \int_{t_-}^{t_+}  e^{i \Theta(x,t)/h} x_n^k r(x,t) \,dt \,dx + \text{lower order terms} = O(h^{\infty})
\]
where $r(x,t) = f(x,d_x \Phi_1(x,t)) a_1(x,t) \bar{a}_2(x) \abs{g(x)}^{1/2}$. As discussed in Subcase (1b), the main contribution comes from the integral 
\[
\int_{\{x_n > 0\}} \int_{\mR}  e^{i \Theta(x,t)/h} x_n^k r(x,t) \chi(t) \,dt \,dx
\]
where $\chi(t)$ is a cutoff to the region near $t=0$. We write 
\[
e^{i\Theta/h} = \left( \frac{h}{i \p_{x_n} \Theta} \p_{x_n} \right)^k (e^{i\Theta/h}).
\]
Integrating by parts and noting that the boundary terms always vanish due to the $x_n^k$ factor, we see that the largest contribution with respect to $h$ comes when all the $\p_{x_n}$ derivatives hit the $x_n^k$ factor. This term has the form 
\[
(ih)^k (k!) \int_{\{x_n > 0\}} \int_{\mR}  e^{i \Theta(x,t)/h} (\p_{x_n} \Theta)^{-k} r(x,t) \chi(t) \,dt \,dx.
\]
The stationary phase argument in Subcase (1b) gives that $r(0,0) = 0$, i.e.\ that $f(x_0,d\Phi_1(x_0,0)) = 0$, which means that 
\[
\p_{x_n}^k(p_{m,1} - p_{m,2})(x_0,\xi_0+\tau \nu) = 0.
\]
This is the counterpart of \eqref{pxnk_difference_tau}. The argument after \eqref{pxnk_difference_tau} can now be repeated to conclude the proof.
\end{proof}

\begin{Remark}
The proof shows that even when the principal parts of $P_j$ have complex coefficients (and if the equations $P_j u = f$ are solvable in a suitable sense), it is possible to obtain boundary determination results when $t \mapsto p_m(x,\xi+t\nu)$ has sufficiently many simple non-real roots whose imaginary parts have suitable signs.
\end{Remark}

Finally, we give the proof of Theorem \ref{thm_boundary_determination_lower_order_intro} on boundary determination for lower order terms. The proof is very similar to that of Theorem \ref{thm_boundary_determination_principal_intro}, so we will only sketch the required modifications.

\begin{proof}[Proof of Theorem \ref{thm_boundary_determination_lower_order_intro}]
Let $P_j := P + Q_j$ and $Q := Q_1-Q_2$, and let $q(x,\xi)$ be the principal symbol of $Q$. We need to show that 
\[
\p_{x_n}^j q(x_0,\eta) = 0, \qquad j \geq 0, \ \eta \in \mR^n.
\]
The integral identity \eqref{integral_identity_boundary_determination_first} becomes 
\[
(Qu_1, u_2)_{L^2(M)} = 0
\]
for solutions of $P_1 u_1 = 0$ and $P_2^* u_2 = 0$ in $M$.

Now $p_{m,1} = p_{m,2} = p_m$. By assumption $t \mapsto p_m(x_0,\xi_0 + t \nu)$ has $s$ distinct simple roots $\{ \tau_1, \ldots, \tau_s \}$, and these roots have nonnegative imaginary parts. We want to prove that 
\begin{equation} \label{qxzeroxizero_vanishing}
q(x_0,\xi_0 + \tau_j \nu) = 0, \qquad 1 \leq j \leq s.
\end{equation}

Assume first that some root $\sigma \in \{ \tau_1, \ldots, \tau_s \}$ is non-real, with $\im(\sigma) > 0$. This is the counterpart of Case (1). Let $\tau$ be one of the roots $\tau_1, \ldots, \tau_s$. If $\tau$ is non-real, then  $\im(\tau) > 0$, and we argue as in Subcase (1a) above. The counterpart of \eqref{subcaseonea_first_identity} is  
\[
0 = \lim_{h \to 0} h^{-1} \int_M q(x, d\Phi_1(x)) e^{i (\Phi_1-\bar{\Phi}_2)/h} a_1 \bar{a}_2 \,dV.
\]
Evaluating the limit as in Subcase (1a), we obtain that 
\[
q(x_0, \xi_0 + \tau \nu) = 0.
\]
Thus $t \mapsto q(x_0, \xi_0 + t \nu)$ vanishes at each non-real root $\tau$ in the set $\{ \tau_1, \ldots, \tau_s \}$. However, if $\tau$ is a real root, the argument in Subcase (1b) implies that $q(x_0,\xi_0 + \tau \nu) = 0$ also in this case. This proves \eqref{qxzeroxizero_vanishing} if one of the roots is non-real.

Next assume that all roots in $\{ \tau_1, \ldots, \tau_s \}$ are real, and $s \geq \max(r+1, 2)$. Let $\sigma$ be one of these roots, and let $\tau \neq \sigma$ be another one of these roots. The argument in Case (2) implies that 
\[
q(x_0, \xi_0 + \tau \nu) = 0.
\]
Now $q(x_0, \xi_0 + \tau \nu) = 0$ at each root $\tau \neq \sigma$, and choosing $\sigma$ to be a different root (this is where we need $s \geq 2$) implies \eqref{qxzeroxizero_vanishing}.

Finally, if $q(x,\xi)$ is real valued and if some $\tau = \tau_j$ is such that $\im(\tau) < 0$, then the argument above shows that $q(x_0,\xi_0+\bar{\tau} \nu) = 0$. Taking complex conjugates gives $q(x_0,\xi_0+\tau \nu) = 0$. Thus if $Q_1-Q_2$ has real principal symbol, then \eqref{qxzeroxizero_vanishing} holds without the assumption that the roots $\tau_j$ have nonnegative imaginary parts.

Now, the polynomial $t \mapsto q(x_0, \xi_0 + t \nu)$ has degree $r$, and \eqref{qxzeroxizero_vanishing} implies that it has at least $s \geq r+1$ distinct roots. Thus 
\[
q(x_0, \xi_0 + t\nu) = 0, \qquad t \in \mR.
\]
The same result holds when $(x_0,\xi_0)$ is varied slightly, and real-analyticity in $\eta$ implies that 
\[
q(x_0,\eta) = 0, \qquad \eta \in T_{x_0}^* M.
\]
Arguing as in the end of proof of Theorem \ref{thm_boundary_determination_principal_intro} gives that 
\[
\p_{x_n}^j q(x_0,\eta) = 0, \qquad j \geq 0, \ \eta \in T_{x_0}^* M.
\]
This finishes the proof.
\end{proof}

\section{Semilinear equations} \label{sec_semilinear}

In this section we prove Theorem \ref{thm_semilinear_uniqueness_intro} related to semilinear equations of the form 
\[
Pu + a(x,u) = 0 \text{ in $M$}
\]
where $a(x,z)$ is a nonlinearity satisfying the following conditions. Fix an integer $s > \max(m, n/2)$, and assume initially that 
\begin{gather}
\text{$a(x,z)$ is analytic in $z$ near $0$ as a $H^s(M)$-valued function}, \label{semilinear_condition_one} \\
a(x,0) = \p_z a(x,0) = 0. \label{semilinear_condition_two}
\end{gather}

The proof is based on constructing solutions $u_{\eps_1, \ldots, \eps_r}$ to the semilinear equation that are close to $\eps_1 v_1 + \ldots + \eps_r v_r$, where $v_j$ are suitable solutions of the linearized equation $P v_j = 0$ concentrating near null bicharacteristics, and on higher order linearization with respect to the parameters $\eps_j$. The following result will allow us to construct such solutions under the weak uniqueness assumption $N(P^*) = \{0\}$.

\begin{Lemma} \label{lemma_semilinear_solution_existence}
Let $M$ be a compact manifold with smooth boundary, and let $P$ be a real principal type differential operator on $M$ with $N(P^*) = \{0\}$. Assume that $a(x,z)$ satisfies \eqref{semilinear_condition_one}--\eqref{semilinear_condition_two}. There exist $\delta, C > 0$ such that for any $v$ in the set 
\[
X_{\delta} = \{v \in H^s(M) \,;\, \norm{v}_{H^s(M)} < \delta \},
\]
there is a solution $u = S(v) \in H^s(M)$ of 
\[
Pu + a(x,u) = Pv \text{ in $M$}
\]
which satisfies 
\begin{equation} \label{u_v_difference_quadratic_estimate}
\norm{u-v}_{H^s(M)} \leq C \norm{v}_{H^s(M)}^2.
\end{equation}
The map $S: X_{\delta} \to H^s(M), \ v \mapsto u$ is $C^{\infty}$. In particular, if we define 
\[
U_{\delta} = \{v \in X_{\delta} \,;\, Pv = 0 \},
\]
then for any $v \in U_{\delta}$ the function $u = S(v)$ solves $Pu + a(x,u) = 0$ in $M$.
\end{Lemma}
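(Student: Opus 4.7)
The plan is to solve the equation by writing $u = v + w$ and reducing to a fixed point problem for the small correction $w$. Substituting, the equation $Pu + a(x,u) = Pv$ becomes
\[
Pw = -a(\cdot, v+w).
\]
The hypothesis $N(P^*) = \{0\}$ makes the orthogonality condition in Proposition \ref{prop_real_principal_type_solvability}(d) vacuous, so one has a bounded right inverse $E := E_s : H^s(M) \to H^{s+m-1}(M)$ with $PE = \mathrm{Id}$. Composing with the continuous embedding into $H^s(M)$ (valid since $m \geq 1$), one gets a bounded linear operator, still denoted $E$, from $H^s(M)$ to $H^s(M)$ satisfying $PE = \mathrm{Id}$. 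Then I would look for a fixed point of $F(v, w) := -E(a(\cdot, v+w))$ in a small ball of $H^s(M)$.

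To run the fixed point argument I would first establish the quadratic Nemytskii-type estimates
\[
\norm{a(\cdot,u)}_{H^s(M)} \leq C \norm{u}_{H^s(M)}^2, \qquad \norm{a(\cdot,u_1) - a(\cdot,u_2)}_{H^s(M)} \leq C(\norm{u_1}_{H^s(M)} + \norm{u_2}_{H^s(M)}) \norm{u_1-u_2}_{H^s(M)},
\]
for $u, u_1, u_2$ in a small $H^s$-ball. These follow from three ingredients: the assumed analyticity of $a(x,z)$ as an $H^s(M)$-valued function near $z=0$, giving a convergent expansion $a(x,z) = \sum_{k \geq 2} a_k(x) z^k/k!$; the vanishing of the constant and linear terms from \eqref{semilinear_condition_two}, which ensures the lowest order is quadratic; and the algebra property of $H^s(M)$ (since $s > n/2$), which is needed to control the products $u^k$ termwise and sum the series.

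Given these estimates, setting $R = 2 C_E C \norm{v}_{H^s(M)}^2$ and choosing $\delta$ small enough, a direct calculation shows $F(v, \cdot)$ maps the closed ball of radius $R$ in $H^s(M)$ into itself and is a strict contraction there. Banach's fixed point theorem then produces a unique $w = w(v)$ in this ball with $\norm{w}_{H^s(M)} \leq R$, which gives the desired bound \eqref{u_v_difference_quadratic_estimate}. Smooth dependence of $S(v) = v + w(v)$ on $v$ can be obtained either directly from the uniform contraction together with analyticity of $F$, or by the implicit function theorem in Banach spaces applied to $\mathcal{G}(v,w) := w + E(a(\cdot, v+w))$, noting that $\partial_w \mathcal{G}(0,0) = \mathrm{Id}$. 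The last assertion is then immediate, since $v \in U_\delta$ means $Pv = 0$, so the equation solved by $u = S(v)$ reduces to $Pu + a(x,u) = 0$.

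The main technical hurdle is the Nemytskii step: turning pointwise analyticity of $a(x,z)$ in $z$ (as an $H^s(M)$-valued analytic function) into uniform $H^s$-bounds and Lipschitz estimates on a ball, robust enough that a Banach fixed point applies. This is where the algebra property $s > n/2$ and the precise vanishing in \eqref{semilinear_condition_two} are essential; everything else is a standard contraction plus implicit function theorem in a Banach space.
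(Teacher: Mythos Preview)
Your proposal is correct and follows essentially the same approach as the paper: write $u=v+w$, use $N(P^*)=\{0\}$ with Proposition~\ref{prop_real_principal_type_solvability}(d) to obtain a bounded right inverse $E$, derive quadratic $H^s$-estimates for $a(\cdot,u)$ from analyticity, \eqref{semilinear_condition_two}, and the algebra property, then apply Banach's fixed point theorem and conclude smoothness via the implicit function theorem. The only cosmetic difference is that the paper contracts on the fixed ball $X_\delta$ and extracts \eqref{u_v_difference_quadratic_estimate} afterward from the fixed point equation, whereas you contract on a $v$-dependent ball of radius $\sim\norm{v}_{H^s}^2$ and read off the estimate directly; both variants are standard and equivalent.
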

\begin{proof}
Since $N(P^*) = \{0 \}$, Proposition \ref{prop_real_principal_type_solvability} shows that there is a bounded linear map $E: H^s(M) \to H^{s+m-1}(M)$ with $PE = \mathrm{Id}$. Given $v \in X_{\delta}$ we wish to find a solution $u = v + r$ of $Pu + a(x,u) = Pv$. It is enough to find $r$ solving the fixed point equation 
\begin{equation} \label{fixed_point_equation}
r = T_v(r)
\end{equation}
where $T_v(r) = -E(a(x,v+r))$.

We now study \eqref{fixed_point_equation} for $v \in X_{\delta}$, and show that $T_v$ is a contraction on $X_{\delta}$ for $\delta$ small enough. All constants below will be uniform over $v \in X_{\delta}$. Note first that by \eqref{semilinear_condition_one}, for some $\delta_0, R > 0$ one has 
\begin{equation} \label{semilinear_condition_one_sum}
\sum_{j=0}^{\infty} \frac{\norm{\p_z^j a(x,0)}_{H^s(M)}}{j!} \delta_0^j \leq R.
\end{equation}
Then \eqref{semilinear_condition_two} and the fact that $H^s(M)$ for $s > n/2$ is an algebra imply that 
\begin{equation} \label{axw_sobolev_regularity}
\norm{a(x,w)}_{H^s(M)} \leq \sum_{j=2}^{\infty} \norm{\frac{\p_u^j a(x,0)}{j!} w^j}_{H^s(M)} \leq C_s \sum_{j=2}^{\infty} \frac{\norm{\p_u^j a(x,0)}_{H^s(M)}}{j!} C_s^j \norm{w}_{H^s(M)}^j.
\end{equation}
If $\delta$ is chosen small enough, \eqref{semilinear_condition_one_sum} yields 
\begin{equation} \label{axw_quadratic_bound}
\norm{a(x,w)}_{H^s(M)} \leq C \norm{w}_{H^s(M)}^2, \qquad w \in X_{\delta},
\end{equation}
and similarly 
\begin{equation} \label{puaxw_linear_bound}
\norm{\p_u a(x,w)}_{H^s(M)} \leq C \norm{w}_{H^s(M)}, \qquad w \in X_{\delta}.
\end{equation}
Now 
\begin{equation} \label{tvw_bound}
\norm{T_v(w)}_{H^s} \leq \norm{E(a(x,v+w))}_{H^{s+m-1}} \leq C \norm{a(x,v+w)}_{H^s} \leq C \norm{v+w}_{H^s}^2, \qquad w \in X_{\delta},
\end{equation}
showing that $T$ maps $X_{\delta}$ to itself when $\delta$ is small. Similarly, 
\begin{align*}
\norm{T_v(u)-T_v(w)}_{H^s} &\leq \norm{E(a(x,v+u)-a(x,v+w))}_{H^{s+m-1}} \leq C \norm{a(x,v+u)-a(x,v+w)}_{H^s} \\
 &\leq C \left( \int_0^1 \norm{\p_u a(x,v + (1-t)u + tw)} _{H^s} \,dt \right) \norm{u-w}_{H^s}.
\end{align*}
By \eqref{puaxw_linear_bound}, if $\delta$ is small enough one has 
\begin{equation} \label{tvu_tvw_bound}
\norm{T_v(u)-T_v(w)}_{H^s} \leq \frac{1}{2} \norm{u-w}_{H^s}, \qquad u, w \in X_{\delta}.
\end{equation}

We have proved that there is $\delta > 0$ so that for any $v \in X_{\delta}$, the map $T_v: X_{\delta} \to X_{\delta}$ satisfies \eqref{tvw_bound}--\eqref{tvu_tvw_bound}. By the Banach fixed point theorem, there is a unique $r = r_v \in X_{\delta}$ solving $r = T_v(r)$. Writing $u = S(v) = v + r$ we obtain a solution of $Pu + a(x,u) = Pv$. The estimate \eqref{tvw_bound} yields 
\[
\norm{r}_{H^s} = \norm{T_v(r)}_{H^s} \leq C \norm{v+r}_{H^s}^2 \leq C (\norm{v}_{H^s}^2 + \norm{r}_{H^s}^2),
\]
and since $r \in X_{\delta}$ it follows that for $\delta$ small enough 
\[
\norm{u-v}_{H^s} = \norm{r}_{H^s} \leq C \norm{v}_{H^s}^2.
\]
To show that $S$ is $C^{\infty}$, consider the map 
\[
F: X_{\delta} \times X_{\delta} \to X_{\delta}, \ \ F(v,r) = r - T_v(r) = r + E(a(x,v+r)).
\]
Now $F$ is $C^{\infty}$ by \eqref{semilinear_condition_one}. It satisfies $F(0,0) = 0$ and 
\[
D_r F_{(0,0)}(h) = h + E(\p_u a(x,0)h) = h.
\]
Thus $D_r F_{(0,0)}$ is a homeomorphism, and the implicit function theorem \cite[Theorem 10.6]{RenardyRogers} shows that there is a smooth map $G$ near $0$ in $X_{\delta}$ so that $F(v,r) = 0$ near $0$ iff $r = G(v)$. But $F(v,r) = 0$ iff $r = r_v$, which shows that $G(v) = r_v$ and that $v \mapsto r_v$ is $C^{\infty}$. It follows that also  $S$ is $C^{\infty}$.
\end{proof}

The next technical lemma shows that if $u$ is a small solution to $Pu + a(x,u) = 0$ depending smoothly on a solution $v$ of $Pv = 0$, and if the Cauchy data sets for nonlinearities $a$ and $\tilde{a}$ coincide, then the corresponding small solution of $P \tilde{u} + \tilde{a}(x,\tilde{u}) = 0$ with $u - \tilde{u} \in H^m_0(M)$ also depends smoothly on $v$. The argument requires the uniqueness assumption $N(P) = \{0\}$.

\begin{Lemma} \label{lemma_utilde_differentiability}
Let $M$ be a compact manifold with smooth boundary, and let $P$ be a real principal type differential operator on $M$ with $N(P) = N(P^*) = \{0\}$. Suppose that $a, \tilde{a}$ satisfy \eqref{semilinear_condition_one}--\eqref{semilinear_condition_two} and they agree to high order on $\p M$ in the sense that 
\begin{equation} \label{a_atilde_agree_boundary}
a(\,\cdot\,,z)-\tilde{a}(\,\cdot\,,z) \in H^s_0(M) \text{ for $z$ near $0$.}
\end{equation}
Assume that  for any sufficiently small $\delta > 0$ there is $\delta_1 < \delta$ so that $C_{a,\delta_1} \subset C_{\tilde{a},\delta}$.

If $\delta$ is small enough, given any $v \in U_{\delta}$ and solution $u = S(v)$ of $Pu + a(x,u) = 0$ there is a small solution $\tilde{u} \in H^s(M)$ of $P \tilde{u} + \tilde{a}(x,\tilde{u}) = 0$ such that $u-\tilde{u} \in H^m_0(M)$, the map $v \mapsto \tilde{u}$ is $C^{\infty}$ from $U_{\delta}$ to $H^s(M)$, and 
\begin{equation} \label{utilde_v_difference_estimate}
\norm{\tilde{u} - v}_{H^s(M)} \leq C \norm{v}_{H^s(M)}^2.
\end{equation}
\end{Lemma}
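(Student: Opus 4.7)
The plan is to extract $\tilde u$ from the Cauchy-data hypothesis, to establish uniqueness and the quadratic estimate via a perturbation argument based on Proposition \ref{prop_real_principal_type_solvability}, and finally to promote uniqueness to smooth dependence via an implicit function theorem in the Cauchy-data space. First I shrink $\delta$ using \eqref{u_v_difference_quadratic_estimate} so that $\norm{S(v)}_{H^s(M)} \leq \delta_1$ whenever $v \in U_\delta$. The containment $C_{a,\delta_1} \subset C_{\tilde a,\delta}$ then supplies $\tilde u \in H^s(M)$ of norm at most $\delta$ with $P\tilde u + \tilde a(x,\tilde u) = 0$ and sharing Cauchy data up to order $m-1$ with $u := S(v)$, so that $w := \tilde u - u \in H^m_0(M)$ solves
\begin{equation}\label{plan_eq_w}
(P + B(u,w))\,w = (a - \tilde a)(x,u), \qquad B(u,w) := \int_0^1 \partial_z \tilde a(x, u + tw)\,dt.
\end{equation}
Since $\partial_z \tilde a(x,0) = 0$ and $s > n/2$ makes $H^s(M)$ a Banach algebra, $B(u,w)$ acts as a small multiplier with $\norm{B(u,w)}_{H^s(M)} = O(\norm{u}_{H^s} + \norm{w}_{H^s})$. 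The hypothesis $a(\cdot,z) - \tilde a(\cdot,z) \in H^s_0(M)$ for $z$ near $0$, together with $a(x,0) = \tilde a(x,0) = 0$, $\partial_z a(x,0) = \partial_z \tilde a(x,0) = 0$, the analyticity \eqref{semilinear_condition_one_intro}, and the fact that $H^s_0(M)$ is a module over $H^s(M)$, yield $(a - \tilde a)(x,u) \in H^s_0(M)$ with $\norm{(a-\tilde a)(x,u)}_{H^s_0(M)} \leq C \norm{u}_{H^s(M)}^2$.

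For uniqueness of $\tilde u$, any two candidates $\tilde u_1, \tilde u_2$ yield $w := \tilde u_1 - \tilde u_2 \in H^m_0(M)$ solving $(P + B')w = 0$ with $\norm{B'}_{L^\infty(M)} = O(\delta)$. Applying the symmetric version of Proposition \ref{prop_real_principal_type_solvability}(c) with $s = 0$ (applied to $P$ rather than $P^*$, with $N(P) = \{0\}$ used to drop the orthogonality condition) one gets
\[
\norm{w}_{H^{m-1}_M(X)} \leq C \norm{Pw}_{L^2_M(X)} \leq C \norm{B'}_{L^\infty} \norm{w}_{H^{m-1}_M(X)},
\]
forcing $w = 0$ for $\delta$ small and thereby making $\tilde u$ well-defined. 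The same estimate applied to \eqref{plan_eq_w}, combined with a finite Sobolev-scale bootstrap based on Proposition \ref{prop_real_principal_type_solvability}(a) and the module structure (each step gaining $m-1$ orders of regularity until the exponent exceeds $s$), yields $\norm{w}_{H^{s+m-1}_M(X)} \leq C\norm{u}_{H^s(M)}^2$; combined with \eqref{u_v_difference_quadratic_estimate} this gives $\norm{\tilde u - v}_{H^s(M)} \leq \norm{u-v}_{H^s} + \norm{w}_{H^s} \leq C\norm{v}_{H^s(M)}^2$, which is \eqref{utilde_v_difference_estimate}.

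Finally, for the smooth dependence $v \mapsto \tilde u$, let $\tilde S : X_\delta \to H^s(M)$ denote the $C^\infty$ solution operator supplied by Lemma \ref{lemma_semilinear_solution_existence} for $\tilde a$, so $\tilde S(v')$ solves $P\tilde u + \tilde a(x,\tilde u) = Pv'$ and satisfies $\tilde S(v') = v' + O(\norm{v'}_{H^s}^2)$. I would apply the implicit function theorem to the $C^\infty$ map
\[
F : U_\delta \times U_\delta \to Y, \qquad F(v', v) := \left(\nabla^k(\tilde S(v') - S(v))|_{\partial M}\right)_{k=0}^{m-1},
\]
at $(0,0)$, where $Y$ is an appropriate Cauchy-data Banach space. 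Since $D\tilde S(0) = \mathrm{Id}$, the derivative $D_{v'} F|_{(0,0)}$ is the Cauchy-data trace restricted to $\{v' \in H^s(M) : Pv' = 0\}$, and injectivity is immediate from $N(P) = \{0\}$: any $v'$ in its kernel lies in $H^m_0(M) \cap \ker P = N(P)$. The IFT then produces a smooth map $v'(v) \in U_\delta$ with $F(v'(v),v) = 0$, so $\tilde S(v'(v))$ shares Cauchy data with $S(v)$, and by the uniqueness established above $\tilde u = \tilde S(v'(v))$, which gives the required smoothness in $v$. The main obstacle in this plan is the bounded invertibility of $D_{v'}F|_{(0,0)}$, i.e.\ the closed-range estimate $\norm{v'}_{H^s(M)} \leq C \norm{F(v',0)}_Y$ for $v'$ with $Pv' = 0$; under $N(P) = N(P^*) = \{0\}$ this should follow from Proposition \ref{prop_real_principal_type_solvability} via duality between $P$ and $P^*$, but identifying the correct target space $Y$ and verifying the closed-range condition there is the technical heart of the argument.
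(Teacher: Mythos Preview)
Your treatment of existence, uniqueness, and the quadratic estimate \eqref{utilde_v_difference_estimate} is sound and close in spirit to the paper's argument (the paper also bootstraps $w=u-\tilde u$ from $H^m_0$ to $H^{s+m-1}_0$ using Proposition~\ref{prop_real_principal_type_solvability}(a), and then reads off the estimate from the equation for $w$). The genuine gap is in your smooth-dependence step.

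Your plan is to parametrize $\tilde u$ as $\tilde S(v')$ for some $v'\in U_\delta$ chosen via the implicit function theorem applied to the Cauchy-data map $F(v',v)=\operatorname{tr}_{\partial M}^{m-1}(\tilde S(v')-S(v))$. For this you need $D_{v'}F|_{(0,0)}$, which is the trace map restricted to $\{v':Pv'=0\}\cap H^s(M)$, to be a bounded \emph{isomorphism} onto some Banach space $Y$. Injectivity is indeed $N(P)=\{0\}$, but you have not identified $Y$, and the required closed-range estimate $\|v'\|_{H^s}\le C\|\operatorname{tr}_{\partial M}^{m-1}v'\|_Y$ is precisely a well-posedness statement for a boundary problem for $P$ that is \emph{not} available for general real principal type operators and does not follow from Proposition~\ref{prop_real_principal_type_solvability}; that proposition only gives estimates for functions supported in $M$, not boundary-to-interior control of solutions. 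There is also a second obstruction: even if $Y$ were the (closed) range of the linear trace, you would need $F$ to take values in $Y$, but $\tilde S(v')-S(v)$ is not a solution of $Pw=0$, so its Cauchy data need not lie in $Y$.

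The paper sidesteps both issues by applying the implicit function theorem directly to the difference $w=u-\tilde u$, which lives in the closed subspace $Y:=\{\varphi\in H^{s+m-1}_0(M):P\varphi\in H^s_0(M)\}$. On this space $P:Y\to Z:=P(Y)\subset H^s_0(M)$ is injective (by $N(P)=\{0\}$) with closed range (by the a~priori estimate \eqref{v_htm_equation}), hence has a bounded inverse $G$ by the open mapping theorem. The equation $Pw=\tilde a(x,u-w)-a(x,u)$ is then rewritten as $w=GQ(\tilde a(x,S(v)-w)-a(x,S(v)))$ with $Q$ the orthogonal projection onto $Z$, and the implicit function theorem applies to the map $(v,w)\mapsto w-GQ(\cdots)$ at $(0,0)$ because its $w$-derivative there is the identity. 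This is the missing idea: work with $w$ in a space of functions with \emph{zero} Cauchy data, where invertibility of $P$ is exactly what Proposition~\ref{prop_real_principal_type_solvability} provides, rather than trying to invert the boundary trace on the solution space.
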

\begin{proof}
Fix any small $\delta > 0$. By assumption there is $\delta_1 < \delta$ with $C_{a,\delta_1} \subset C_{\tilde{a},\delta}$. Moreover, by \eqref{u_v_difference_quadratic_estimate} there is $\delta_2 < \delta_1$ so that $v \in U_{\delta_2}$ implies that $u = S(v) \in X_{\delta_1}$. Thus for any $v \in U_{\delta_2}$ there is $\tilde{u} \in X_{\delta}$ solving $P \tilde{u} + \tilde{a}(x,\tilde{u}) = 0$ such that $u-\tilde{u} \in H^m_0(M)$. We need to show that the map $\tilde{S}: U_{\delta_2} \to H^s(M)$, $v \mapsto \tilde{u}$ is smooth. In order to do this, write $w = w_v := u-\tilde{u} \in H^s(M) \cap H^m_0(M)$, and note that $w$ solves 
\[
Pw = f, \qquad f := \tilde{a}(x,u-w) - a(x,u).
\]
We wish to express $w$ as the solution of a fixed point equation as in Lemma \ref{lemma_semilinear_solution_existence}, and show that the map $v \mapsto w_v$ is smooth.

We first prove that in fact $w \in H^{s+m-1}_0(M)$ and $Pw = f \in H^s_0(M)$. Let $M$ be contained in some larger manifold $X$, let $\breve{u}$ be some $H^s_{\mathrm{comp}}(X)$ extension of $u$, and let $\breve{w}$ be the zero extension of $w$ outside $M$. Then $\breve{w} \in H^m_M(X)$ solves 
\[
P \breve{w} = \tilde{a}(x,\breve{u}-\breve{w}) - a(x,\breve{u}) \text{ a.e.\ in $X$}
\]
since the right hand side vanishes outside $M$ by the assumption \eqref{a_atilde_agree_boundary}. By the argument leading to \eqref{axw_sobolev_regularity}, the right hand side is in fact in $H^m_M(X)$ since $H^m_{\mathrm{comp}}(X) \cap L^{\infty}(X)$ is an algebra, see \cite{KP} for the latter fact. Regularity for real principal type operators (see Proposition \ref{prop_real_principal_type_solvability}(a)) then implies that $\breve{w} \in H^{m+(m-1)}_M(X)$. Bootstrapping this regularity argument gives $\breve{w} \in H^{s+m-1}_M(X)$, which yields $w \in H^{s+m-1}_0(M)$ and $f \in H^s_0(M)$ as required.

Next we define the spaces 
\[
Y := \{\varphi \in H^{s+m-1}_0(M) \,;\, P\varphi \in H^s_0(M) \}
\]
and 
\[
Z := \{f \,;\, \text{$f = P\varphi$ for some $\varphi \in Y$} \}.
\]
Now $Y$ with norm $\norm{\varphi}_Y = \norm{\varphi}_{H^{s+m-1}} + \norm{P \varphi}_{H^s}$ is a Hilbert space. Moreover, $Z$ is a closed subspace of $H^s(M)$ (and of $H^s_0(M)$): if $P\varphi_j \to f$ in $H^s(M)$ where $\varphi_j \in Y$, then the condition $N(P) = \{0\}$ and the estimate \eqref{v_htm_equation} imply that 
\[
\norm{\varphi_j - \varphi_k}_{H^{s+m-1}} \lesssim \norm{P \varphi_j - P \varphi_k}_{H^s}.
\]
Thus $(\varphi_j)$ is a Cauchy sequence in $H^{s+m-1}_0(M)$ and converges to some $\varphi$ in this space. Since $P \varphi_j \to f$ in $H^s_0(M)$, we have $f = P \varphi \in H^s_0(M)$ showing that $Z$ is closed in $H^s(M)$.

The map $P: Y \to Z, \ \varphi \to P \varphi$ is linear, bounded and bijective by the assumption $N(P) = \{0 \}$. The open mapping theorem ensures that there is a bounded inverse $G: Z \to Y$. Let $Q$ be the orthonormal projection to $Z$ in $H^s(M)$. We may now rewrite the equation 
\[
Pw = \tilde{a}(x,u-w) - a(x,u), \qquad w \in Y,
\]
equivalently as 
\begin{equation} \label{w_gq_equation}
w = GQ(\tilde{a}(x,u-w) - a(x,u)).
\end{equation}
Consider the map 
\[
F: X_{\delta} \times X_{\delta} \to H^s(M), \ \ F(v,w) = w - GQ(\tilde{a}(x,S(v)-w) - a(x,S(v))).
\]
Then $F$ is $C^{\infty}$ and $D_w F_{(0,0)} h = h$, hence by the implicit function theorem there is a smooth map $H$ near $0$ in $X_{\delta}$ so that $F(v,w) = 0$ near $0$ iff $w = H(v)$. But $F(v,w_v) = 0$, showing that the map $v \mapsto w_v$ is smooth. Moreover, writing $u = v + r$ where $r$ satisfies 
\[
\norm{r}_{H^s} \lesssim \norm{v}_{H^s}^2,
\]
the equation \eqref{w_gq_equation} together with \eqref{semilinear_condition_two} implies that 
\[
\norm{w}_{H^s} \lesssim \norm{v+r-w}_{H^s}^2 + \norm{v+r}_{H^s}^2 \lesssim \norm{w}_{H^s}^2 + \norm{v}_{H^s}^2.
\]
Since $\norm{w}_{H^s}$ is small, this gives $\norm{w}_{H^s} \lesssim \norm{v}_{H^s}^2$, and \eqref{utilde_v_difference_estimate} follows.
\end{proof}

\begin{proof}[Proof of Theorem \ref{thm_semilinear_uniqueness_intro}]
We begin by fixing solutions $v_1, v_2, v_3 \in H^s(M)$ of $P v_j = 0$ (we will later choose $v_j$ to concentrate near certain null bicharacteristics). Let $\eps = (\eps_1, \eps_2, \eps_3)$, and define $u_{\eps} = S(\eps_1 v_1 + \eps_2 v_2 + \eps_3 v_3)$. Then $u_{\eps}$ depends smoothly on each $\eps_j$, and by \eqref{u_v_difference_quadratic_estimate} one has 
\[
u_0 = 0, \qquad \p_{\eps_j} u_{\eps}|_{\eps = 0} = v_j.
\]
Differentiating the equation $Pu_{\eps} + a(x,u_{\eps}) = 0$ and using the conditions $\p_u^k a(x,0) = 0$ for $k = 0,1,2$, we see that $v_{123} := \p_{\eps_1 \eps_2 \eps_3} u_{\eps}|_{\eps = 0}$ satisfies 
\begin{equation} \label{vonetwothree_equation}
Pv_{123} + \p_u^3 a(x,0) v_1 v_2 v_3 = 0.
\end{equation}
We now let $\tilde{u}_{\eps} = \tilde{S}(\eps_1 v_1 + \eps_2 v_2 + \eps_3 v_3)$ be the function in Lemma \ref{lemma_utilde_differentiability}. Then $\tilde{u}_{\eps}$ depends smoothly on the $\eps_j$, and using \eqref{utilde_v_difference_estimate} we have 
\[
\tilde{u}_0 = 0, \qquad \p_{\eps_j} \tilde{u}_{\eps}|_{\eps=0} = v_j.
\]
Similarly, differentiating the equation $P\tilde{u}_{\eps} + \tilde{a}(x,\tilde{u}_{\eps}) = 0$ shows that  $\tilde{v}_{123} := \p_{\eps_1 \eps_2 \eps_3} \tilde{u}_{\eps}|_{\eps = 0}$ satisfies  
\begin{equation} \label{vtildeonetwothree_equation}
P \tilde{v}_{123} + \p_u^3 \tilde{a}(x,0) v_1 v_2 v_3 = 0.
\end{equation}
Since $u_{\eps} - \tilde{u}_{\eps} \in H^m_0(M)$, also $v_{123} - \tilde{v}_{123} \in H^m_0(M)$. Subtracting the equations \eqref{vonetwothree_equation} and \eqref{vtildeonetwothree_equation} and integrating against $v_4$ where $\bar{P}^* v_4 = 0$ (so that $P^* \bar{v}_4$ = 0) yields that 
\begin{equation} \label{third_derivative_orthogonality}
\int_M (\p_u^3 a(x,0) - \p_u^3 \tilde{a}(x,0)) v_1 v_2 v_3 v_4 \,dV = 0.
\end{equation}

We have proved that \eqref{third_derivative_orthogonality} holds for any solutions $v_j$ with $P v_j = 0$ for $1 \leq j \leq 3$ and $\bar{P}^* v_4 = 0$. We now assume that $x_0 \in B$, i.e.\ there are two maximal null bicharacteristics $\gamma_j: [-S_j,T_j] \to T^* M \setminus 0$, $\gamma_j(t) = (x_j(t), \xi_j(t))$, $j=1,2$, so that the curves $x_1$ and $x_2$ only intersect at $x_0$ when $t=0$ and $\dot{x}_1(0) \nparallel \dot{x}_2(0)$. Let $\gamma_3(t) = (x_3(t), \xi_3(t))$ be the null bicharacteristic with $x_3(0) = x_0$ and $\xi_3(0) = - \xi_1(0)$ (if $m$ is odd $\gamma_3(t) = (x_1(t),-\xi_1(t))$, while if $m$ is even $\gamma_3(t) = (x_1(-t), -\xi_1(-t))$). Similarly, let $\gamma_4(t) = (x_4(t), \xi_4(t))$ be the null bicharacteristic with $x_4(0) = x_0$ and $\xi_4(0) = - \xi_2(0)$ (this is a null bicharacteristic for $\bar{P}^*$ since $P$ has real principal symbol). We consider solutions 
\[
v_j = q_j + r_j
\]
where $q_j$ is a quasimode given by Theorem \ref{thm_quasimode_direct} associated with $\gamma_j$ extended slightly outside $M$, so that $\norm{P q_j}_{H^s(M)} = O(h^{\infty})$ and $q_j$ is supported in a small neighborhood of the curve $x_j(t)$, and $r_j$ are solutions of $Pr_j = -Pq_j$ in $M$ obtained from Proposition \ref{prop_real_principal_type_solvability} and satisfy $\norm{r_j}_{H^{s+m-1}(M)} = O(h^{\infty})$. Then \eqref{third_derivative_orthogonality} gives 
\begin{equation} \label{third_derivative_orthogonality_second}
\int_M (\p_u^3 a(x,0) - \p_u^3 \tilde{a}(x,0)) q_1 q_2 q_3 q_4 \,dV = O(h^{\infty}).
\end{equation}

Since $x_1(t)$ and $x_2(t)$ only intersect at $x_0$, the product $q_1 q_2 q_3 q_4$ is supported in a small neighborhood of $x_0$. By \eqref{quasimode_integral_ansatz} each $q_j$ has the form 
\[
q_j(x) = \int_{-S_j}^{T_j} e^{i\Phi_j(x,t_j)/h} a_j(x,t_j) \,dt_j
\]
where the maximal null bicharacteristic $\gamma_j$ is defined on $[-S_j,T_j]$ with $x_j(0) = x_0$ for $1 \leq j \leq 4$. Using \eqref{phi_req1_second}--\eqref{dttphi_formula}, the phase functions satisfy 
\[
\im(\Phi_j) \geq 0, \qquad \Phi_j(x_0,0) = 0, \qquad d_{x,t_j} \Phi_j(x_0,0) = (\xi_j(0),0),
\]
and  
\[
\nabla_{x,t_j}^2 \Phi_j(x_0,0) = \left( \begin{array}{cc} H_j & -H_j \dot{x}_j+\dot{\xi}_j \\ (-H_j \dot{x}_j + \dot{\xi}_j)^t & (H_j \dot{x}_j - \dot{\xi}_j) \cdot \dot{x}_j \end{array} \right)
\]
where the last expression is computed in Riemannian normal coordinates at $x_0$ and evaluated at $t_j = 0$. Recall that $\im(H_j)$ are positive definite matrices.

Inserting the formulas for $q_j$ into \eqref{third_derivative_orthogonality_second} and writing $\mathbf{t} = (t_1, \ldots, t_4)$, we obtain that 
\begin{equation} \label{third_derivative_orthogonality_third}
\int_M \int_{\mR^4} (\p_u^3 a(x,0) - \p_u^3 \tilde{a}(x,0)) e^{i\Theta(x,\mathbf{t})/h} b(x,\mathbf{t}) \,d\mathbf{t} \,dV = O(h^{\infty})
\end{equation}
where $b(x,\mathbf{t})$ is supported in a small neighborhood of $(x_0,\mathbf{0})$ in $M^{\mathrm{int}} \times \mR^4$ by the assumption that $x_1(t)$ and $x_2(t)$ only intersect once at $t=0$. Here we have written 
\[
\Theta(x,\mathbf{t}) = \Phi_1(x,t_1) + \ldots + \Phi_4(x,t_4), \qquad b(x,\mathbf{t}) = a_1(x,t_1) \cdots a_4(x,t_4).
\]
Since we have arranged that 
\[
\xi_1(0) + \xi_2(0) + \xi_3(0) + \xi_4(0) = 0,
\]
the properties of $\Phi_j$ above ensure that 
\[
\im(\Theta) \geq 0, \qquad \Theta(x_0,\mathbf{0}) = 0, \qquad d_{x,\mathbf{t}} \Theta(x_0,\mathbf{0}) = 0.
\]
To show that the Hessian of $\Theta$ is invertible at $(x_0,\mathbf{0})$, we write $\mathbf{z} = (z_1, \ldots, z_4)$ and observe that 
\begin{equation} \label{theta_hessian_semilinear}
\nabla_{x,\mathbf{t}}^2 \Theta(x_0,\mathbf{0})\left( \begin{array}{c} v \\ \mathbf{z} \end{array} \right) = \left( \begin{array}{c} \sum_{j=1}^4 [ H_j (v - \dot{x}_j z_j)  + \dot{\xi}_j z_j ] \\ -(H_1 \dot{x}_1 - \dot{\xi}_1) \cdot (v - \dot{x}_1 z_1) \\ \vdots \\ -(H_4 \dot{x}_4 - \dot{\xi}_4) \cdot (v - \dot{x}_4 z_4) \end{array} \right).
\end{equation}
If \eqref{theta_hessian_semilinear} vanishes, this would imply that 
\[
0 = \nabla_{x,\mathbf{t}}^2 \im(\Theta)(x_0,\mathbf{0})\left( \begin{array}{c} v \\ \mathbf{z} \end{array} \right) \cdot \ol{\left( \begin{array}{c} v \\ \mathbf{z} \end{array} \right)} = \sum_{j=1}^4 \,\abs{\im(H_j)^{1/2}(v-\dot{x}_j z_j)}^2
\]
and hence $v-\dot{x}_j z_j = 0$ for $1 \leq j \leq 4$. Thus in particular $\dot{x}_1 z_1 = \dot{x}_2 z_2$, and the condition $\dot{x}_1(0) \nparallel \dot{x}_2(0)$ implies that $z_1 = z_2 = 0$.
But the choice of $\gamma_3$ and $\gamma_4$ implies that when $t_j=0$, 
\[
\dot{x}_3 = (-1)^{m-1} \dot{x}_1, \qquad \dot{\xi}_3 = (-1)^m \dot{\xi}_1, \qquad \dot{x}_4 = (-1)^{m-1} \dot{x}_2, \qquad \dot{\xi}_4 = (-1)^m \dot{\xi}_2.
\]
Since $v-\dot{x}_j z_j = 0$ and since $\dot{x}_1(0)$ and $\dot{x}_2(0)$ are nonvanishing, this yields $z_3 = (-1)^{m-1} z_1$ and $z_4 = (-1)^{m-1} z_2$. Thus $z_j = 0$ for $1 \leq j \leq 4$ and also $v = 0$, showing that $\nabla_{x,\mathbf{t}}^2 \Theta(x_0,\mathbf{0})$ is invertible.

Now using stationary phase in \eqref{third_derivative_orthogonality_third}, we obtain 
\[
c_0 (\p_u^3 a(x_0,0) - \p_u^3 \tilde{a}(x_0,0)) h^{\frac{n+4}{2}} = O(h^{\frac{n+4}{2}+1})
\]
where $c_0 \neq 0$ since the amplitudes $a_j(x,t_j)$ are nonvanishing at $(x_0,0)$. This proves that $\p_u^3 a(x_0,0) = \p_u^3 \tilde{a}(x_0,0)$ whenever $x_0 \in B$.

We have proved that 
\[
\p_u^k a(x_0,0) = \p_u^k \tilde{a}(x_0,0), \qquad k \leq 3.
\]
To prove that this holds also for $k = 4$, we consider $u_{\eps} = S(\eps_1 v_1 + \ldots + \eps_4 v_4)$ where $P v_j = 0$ and repeat the above argument (now looking at the $\p_{\eps_1 \eps_2 \eps_3 \eps_4}$ derivative at $\eps=0$) to obtain that 
\[
\int_M (\p_u^4 a(x,0) - \p_u^4 \tilde{a}(x,0)) v_1 v_2 v_3 v_4 v_5 \,dV = 0
\]
whenever $\bar{P}^* v_5 = 0$. We now let $v_j$ be a solution related to $\gamma_j$ above for $1 \leq j \leq 4$, with the difference that now $v_4$ solves $P v_4 = 0$ instead of $\bar{P}^* v_4 = 0$. We also choose $v_5$ to be any solution of $\bar{P}^* v_5 = 0$ with $v_5(x_0) \neq 0$ so that $v_5$ is independent of $h$. For instance, it is enough to choose $v_5$ to be related to the null bicharacteristic $\gamma_1$ through $x_0 = x_1(0)$, so that $\dot{x}_1(0) \neq 0$ and $x_1(t) \neq x_1(0)$ for $t \neq 0$ by the assumption that $x_0 \in B$. Then Lemma \ref{lemma_bicharacteristic_solution_values} gives that $v_5(x_0) \neq 0$, when $h=h_0$ and $h_0$ is fixed but sufficiently small. The argument above now gives that 
\[
(\p_u^4 a(x_0,0) - \p_u^4 \tilde{a}(x_0,0)) v_5(x_0) = 0.
\]
Since $v_5(x_0) \neq 0$, we have proved that 
\[
\p_u^k a(x_0,0) = \p_u^k \tilde{a}(x_0,0), \qquad k \leq 4.
\]
Continuing in this way shows that all derivatives of $a(x_0,\,\cdot\,) - \tilde{a}(x_0,\,\cdot\,)$ vanish at $0$, which implies that $a(x_0,\,\cdot\,) = \tilde{a}(x_0,\,\cdot\,)$ by analyticity.
\end{proof}

\appendix

\section{Additional proofs} \label{sec_appendix}

In this appendix we give for completeness the proofs of some auxiliary results used in the article.

\begin{proof}[Proof of Lemma \ref{lemma_conjugated_p}]
Fix some local coordinates and write 
\[
Pu = \sum_{r=0}^m \sum_{j_1, \ldots, j_r=1}^n p_{j_1 \cdots j_r}(x) D_{j_1} \cdots D_{j_r}.
\]
In terms of these local coordinates 
\begin{align*}
p_m(x,\xi) &= \sum_{j_1, \ldots, j_m=1}^n p_{j_1 \cdots j_m}(x) \xi_{j_1} \cdots \xi_{j_m}, \\
\p_{\xi_a} p_m(x,\xi) v_a &= \sum_{j_1, \ldots, j_m=1}^n p_{j_1 \cdots j_m}(x) (v_{j_1} \xi_{j_2} \cdots \xi_{j_m} + \ldots + \xi_{j_1} \cdots \xi_{j_{m-1}} v_{j_m}), \\
\p_{\xi_a \xi_b} p_m(x,\xi) v_{ab} &= \sum_{j_1, \ldots, j_m=1}^n p_{j_1 \cdots j_m}(x) \Big[ (v_{j_1 j_2}+v_{j_2 j_1}) \xi_{j_3} \cdots \xi_{j_m} \\
 &\qquad + (v_{j_1 j_3} + v_{j_3 j_1}) \xi_{j_2} \xi_{j_4} \cdots \xi_{j_m} + \ldots + (v_{j_{m-1} j_m} + v_{j_m j_{m-1}})\xi_{j_1} \cdots \xi_{j_{m-2}} \Big].
\end{align*}
One also has $p_{m-1}(x,\xi) = \sum_{j_1, \ldots, j_{m-1}=1}^n p_{j_1 \cdots j_{m-1}}(x) \xi_{j_1} \cdots \xi_{j_{m-1}}$. We now compute 
\begin{align*}
e^{-i\Phi/h} P(e^{i\Phi/h} u) &= \sum_{r=0}^m \sum_{j_1, \ldots, j_r=1}^n p_{j_1 \cdots j_r}(x) (D_{j_1} + h^{-1} \p_{j_1} \Phi) \cdots (D_{j_r} + h^{-1} \p_{j_r} \Phi) u \\
 &= \sum_{j=0}^m h^{j-m} R_j u
\end{align*}
where each $R_j$ is a differential operator of order $j$ and 
\begin{align*}
R_0 u &= p_m(x,\nabla \Phi) u, \\
R_1 u &= p_{m-1}(x,\nabla \Phi) + \sum_{j_1,\ldots,j_m} p_{j_1 \cdots j_m}(x) \big[ D_{j_1} (\p_{j_2} \Phi \cdots \p_{j_m} \Phi \,u) \\
 &\qquad + \p_{j_1} \Phi D_{j_2} (\p_{j_3} \Phi \cdots \p_{j_m} \Phi \,u) + \ldots + \p_{j_1} \Phi \cdots \p_{j_{m-1}} \Phi D_{j_m} u \big] \\
 &= p_{m-1}(x,\nabla \Phi) + \frac{1}{i} \p_{\xi_a} p_m(x,\nabla \Phi) \p_a u + \frac{1}{2i} \p_{\xi_a \xi_b} p_m(x,\nabla \Phi) \p_{ab} \Phi. \qedhere
\end{align*}
\end{proof}

\begin{proof}[Proof of Lemma \ref{lemma_hyperbolic_real_principal_type}]
We argue by contradiction and assume that there is a maximal null bicharacteristic $\gamma: I \to T^* X \setminus 0$ and a compact set $K \subset X$ with $\gamma(I) \subset T^* K$. As before, we write $x(t) = \pi(\gamma(t))$ where  $\pi: T^* X \to X$ is the standard projection.

We fix an auxiliary Riemannian metric $g$ on $X$ and consider the projection $\eta(t)$ of $\gamma(t)$ to the unit cosphere bundle $S^* X$. We think of $\xi(t)$ (i.e.\ of $\gamma(t)$) as a covector field on $x(t)$, and then $\eta(t)$ corresponds to $\omega(t) = \xi(t)/\abs{\xi(t)}$. Note that $\eta(I)$ is contained in the compact set $S^* K \cap p^{-1}(0)$. The metric $g$ induces the Sasaki metric on $T^*X$ and corresponding horizontal and vertical subbundles, see \cite{Paternain}. The Hamilton vector field $H_p$ on $T^* X$ decomposes in horizontal and vertical parts as 
\[
H_{p} = (d_{\xi} p, -\nabla_x p)
\]
where $d_{\xi} p = \pi_*(H_p) \in T X$ and $\nabla_x p \in T^* X$. By the Hamilton equations and homogeneity 
\begin{align*}
\dot{x}(t) &= d_{\xi} p(\gamma(t)) = \abs{\xi(t)}^{m-1} d_{\xi} p(\eta(t)), \\
D_t \xi(t) &= -\nabla_x p(\gamma(t)) = - \abs{\xi(t)}^m \nabla_x p(\eta(t)).
\end{align*}
Here $D_t$ is the covariant derivative along $x(t)$.

If $[t_1,t_2] \subset I$, we have 
\[
\phi(x(t_2)) - \phi(x(t_1)) = \int_{t_1}^{t_2} \frac{d}{dt} \phi(x(t)) \,dt = \int_{t_1}^{t_2} d\phi(d_{\xi} p(\gamma(t)) \,dt = \int_{t_1}^{t_2} \abs{\xi(t)}^{m-1} d\phi(d_{\xi} p(\eta(t))) \,dt.
\]
Note that since $\tau \mapsto p(y,\zeta_0 + \tau d\phi(y))$ has only simple zeros, differentiating in $\tau$ yields that 
\[
d\phi(d_{\xi} p(y,\zeta)) \neq 0, \qquad (y,\zeta) \in p^{-1}(0).
\]
Now $\eta(I)$ is contained in a compact set, so $\abs{d\phi(d_{\xi} p(\eta(t)))} \sim 1$ uniformly over $t \in I$. Taking limits where $t_j$ converge to the endpoints of $I$, we see that 
\begin{equation} \label{xit_integral}
\int_I \abs{\xi(t)}^{m-1} \,dt < \infty.
\end{equation}
On the other hand, we compute 
\[
\frac{d}{dt} \frac{1}{\abs{\xi(t)}^{m-1}} = -(m-1) \frac{\langle D_t \xi(t), \xi(t) \rangle}{\abs{\xi(t)}^{m+1}} = (m-1) \langle \nabla_x p(\eta(t)), \omega(t) \rangle.
\]
Again, since $\eta(I)$ is contained in a compact set, we have $1/\abs{\xi(t)}^{m-1} \leq C(1+\abs{t})$. In other words, for some $c > 0$ one has 
\begin{equation} \label{xit_lower_bound}
\abs{\xi(t)}^{m-1} \geq \frac{c}{1+\abs{t}}, \qquad t \in I.
\end{equation}
Now the combination of \eqref{xit_integral} and \eqref{xit_lower_bound} implies that $I$ must be a finite interval.

We have also
\[
\frac{d}{dt} \abs{\xi(t)} 
= 
\frac{\langle D_t \xi(t), \xi(t) \rangle}{\abs{\xi(t)}}
= -\abs{\xi(t)}^{m} \langle \nabla_x p(\eta(t)), \omega(t) \rangle
\le C \abs{\xi(t)}^{m}, \quad t \in I.
\]
Hence using \eqref{xit_integral} we see that $\ln \abs{\xi(t)}$
is bounded from above on $I$.
Due to this and \eqref{xit_lower_bound} one has $C^{-1} \leq \abs{\xi(t)} \leq C$ for some $C > 0$ and all $t \in I$. It follows that $\gamma(I)$ is contained in a compact set of $T^* X \setminus 0$.
This again implies, together with $\dot \gamma(t) = H_p(\gamma(t))$, that $\gamma : I \to T^* X$ is uniformly continuous. 
Hence $\gamma$ extends to $\overline I$ as a continuous function.
We may assume without loss of generality that $\inf I = 0$. Then $\lim_{t \to 0} \dot \gamma(t) = H_p(\gamma(0))$ and also $\dot \gamma$ is continuous up to $0$.
Therefore $\gamma$ is the bicharacteristic from $\gamma(0)$
and thus it can be extended on $(-\epsilon, 0)$ for small $\epsilon > 0$, a contradiction with maximality of $\gamma$.
\end{proof}

\begin{proof}[Proof of Theorem \ref{thm_partial_data}]
Let us show (\ref{subprin_part_data}). The proof is similar with Step 8 in the proof of Theorem \ref{thm_main1}, and it is enough to show 
    \begin{align}\label{subp_part_data}
\exp \left[ i\int_0^T q_{m-1}(\gamma(t)) \,dt \right]
= 1,
    \end{align}
where $q_{m-1}$ is the principal symbol of $Q = P_1 - P_2$. 
We denote by $\breve{\gamma}(t) = (x(t), \xi(t))$ the continuation of $\gamma$ as a null bicharacteristic curve in $X$.

We use Theorem \ref{thm_quasimode_direct} with $P = P_1$ and $\tilde{P} = P_2$ and construct quasimodes $v_1, v_2$ associated with $\breve{\gamma}|_{[-\eps_1,T+\eps_2]}$ with $\eps_j$ chosen so that the end points are outside $M$ and $x|_{[-\eps_1,T+\eps_2]}$ does not intersect $\p M \setminus \Gamma$. As $x(t)$ was assumed to intersect $\Gamma$ transversally, we can also arrange that $x|_{[-\eps_1,T+\eps_2]}$ only intersects $\p M$ at $x(0)$ and $x(T)$. The quasimodes $v_j$ can be constructed so that $\supp(v_j) \cap (\p M \setminus \Gamma) = \emptyset$.
Then $\norm{P_1 v_1}_{H^s(M)} = \norm{P_2^* v_2}_{H^s(M)} = O(h^{\infty})$. Using Proposition \ref{prop_real_principal_type_solvability}, we construct again solutions $u_j = v_j + r_j$ of $P_1 u_1 = 0$ and $P_2^* u_2 = 0$ in $M$ with $\norm{r_j}_{H^{m}(M)} = O(h^{\infty})$.

Let us show that $\tr_\Gamma^{m-1} u_1 \in C_{P_1, \Gamma}$.
As 
    \begin{align*}
\norm{\tr_{\p M \setminus \Gamma}^{m-1} u_1}_{\mathcal H^{m-1}(\p M \setminus \Gamma)} = \norm{\tr_{\p M \setminus \Gamma}^{m-1} r_1}_{\mathcal H^{m-1}(\p M \setminus \Gamma)} = O(h^\infty),
    \end{align*}
it is enough to show that $\norm{v_1|_\Gamma}_{L^2(\Gamma)}$ is not $O(h^\infty)$. 
Since $x|_{[-\eps_1,T+\eps_2]}$ only meets $\p M$ at $x(0)$ and $x(T)$, it is enough to show that 
$\norm{v_1|_{\tilde \Gamma}}_{L^2(\tilde \Gamma)} \sim h^{\frac{n+1}{2}}$
where $\tilde \Gamma \subset \Gamma$ is a small neighborhood 
of $x(0)$.

We argue similarly to Step 8 of the proof of Theorem 
\ref{thm_quasimode_direct}. One has 
    \begin{align*}
\int_{\tilde \Gamma} |v_1|^2\, dS 
= \int_{-\eps_1}^{T+\eps_2} \int_{-\eps_1}^{T+\eps_2} \int_{\tilde \Gamma} e^{i \Theta(x,t,s)/h} a(x,t,s)\, dS \,dt \,ds,
    \end{align*}
where $a$ does not vanish at $(x(s),s,s)$ and 
    \begin{align*}
\Theta(x,t,s) = \Phi(x,t) - \overline{\Phi(x,s)},
    \end{align*} 
with $\Phi$ as in the proof of Theorem 
\ref{thm_quasimode_direct}. Note also that if $\chi(t,s)$ is a cutoff function supported near $(0,0)$ one has 
    \begin{align*}
\int_{\tilde \Gamma} |v_1|^2\, dS 
= \int_{-\eps_1}^{T+\eps_2} \int_{-\eps_1}^{T+\eps_2} \int_{\tilde \Gamma} e^{i \Theta(x,t,s)/h} a(x,t,s) \chi(t,s) \, dS \,dt \,ds + O(e^{-C/h})
    \end{align*}
since $\im(\Theta) \geq c > 0$ when $x \in \tilde{\Gamma}$ and either $s$ or $t$ is bounded away from $0$.
We have
    \begin{align*}
\im(\Theta) \geq 0 \text{ on $\supp(a)$}, \qquad \Theta(x(0),0,0) = 0, \qquad d_{x,t,s} \Theta(x(0), 0, 0) = 0,
    \end{align*}
and $\norm{v_1|_{\tilde \Gamma}}_{L^2(\tilde \Gamma)} \sim h^{\frac{n+1}{2}}$ follows from the method of stationary phase after we have shown that $\nabla_{x,t,s}^2 \Theta$ is non-singular at $(x(0), 0, 0)$. 
Analogously to (\ref{def_Ms}), the Hessian of $\Theta$ is given by 
    \begin{align*}
\nabla_{x,t,s}^2 \Theta|_{(x(0),0,0)} = 
\left( \begin{array}{ccc} 
2i\im(H) & \dot{\xi} - H \dot{x} & -\dot{\xi} +  \overline{H} \dot{x}
\\ 
(\dot{\xi} - H \dot{x})^t & (H \dot{x} - \dot{\xi}) \cdot \dot{x} & 0
\\
(-\dot{\xi} +  \overline{H} \dot{x})^t & 0 & -(\overline H \dot{x} - \dot{\xi}) \cdot \dot{x} 
\end{array} \right). 
    \end{align*}
Let $\zeta = (v,z,w)^t$, with $v = v_1 + iv_2$, $v_j \in T_{x(0)} \Gamma$, $z,w \in \mathbb C$, be in the kernel of the above matrix, then
    \begin{align*}
0 = \im(\nabla_{x,t,s}^2 \Theta|_{(x(0),0,0)}) \zeta \cdot \overline{\zeta}
= |\im(H)^{1/2}(v - z \dot x)|^2 + |\im(H)^{1/2}(v - w \dot x)|^2.
    \end{align*}
As $\im(H)$ is positive definite and $\dot x(0) \notin T_{x(0)} \Gamma$,
we see that $v = 0$ and $z=w=0$.
Hence $\nabla_{x,t,s}^2 \Theta$ is non-singular at $(x(0), 0, 0)$ and $\norm{v_1|_{\tilde \Gamma}}_{L^2(\tilde \Gamma)}\sim h^{\frac{n+1}{2}}$.

We have now proved that $\tr_\Gamma^{m-1} u_1 \in C_{P_1, \Gamma}$. We will need the following variation of Lemma \ref{lemma_integral_identity_potential}, 
\begin{equation} \label{pone_ptwo_v_part_data}
\abs{((P_1-P_2)u_1, u_2)_{L^2(M)}} = O(h^\infty).
\end{equation}
To show (\ref{pone_ptwo_v_part_data}), we observe that 
due to $C_{P_1,\Gamma} \subset C_{P_2,\Gamma}$ 
there is $\tilde u_2 \in H^{m}(M)$ such that
    \begin{align}\label{tildeu2_props_part_data}
P_2 \tilde u_2 = 0 \quad \text{and} \quad
\nabla^{k} u_1|_{\Gamma} = \nabla^{k} \tilde u_2|_{\Gamma}
\ \text{for} \ k=0,1,\dots,m-1. 
    \end{align}
Therefore
\begin{align*}
|( (P_1-P_2) u_1, u_2)_{L^2(M)}| 
&= |(P_2 (u_1 - \tilde{u}_2), u_2)_{L^2(M)}| 
\\&\le 
C \norm{\tr^{m-1}_{\p M \setminus \Gamma} (u_1 - \tilde u_2)}_{\mathcal H^{m-1}(\p M \setminus \Gamma)} \norm{\tr^{m-1}_{\p M \setminus \Gamma} u_2}_{\mathcal H^{m-1}(\p M \setminus \Gamma)}.
\end{align*}
We have $\norm{\tr^{m-1}_{\p M \setminus \Gamma} u_2}_{\mathcal H^{m-1}(\p M \setminus \Gamma)} = O(h^\infty)$.
The definition of $C_{P_j, \Gamma}$, together with (\ref{tildeu2_props_part_data}), implies that
    \begin{align*}
\norm{\tr^{m-1}_{\p M \setminus \Gamma} (u_1 - \tilde u_2)}_{\mathcal H^{m-1}(\p M \setminus \Gamma)}
\le 2 \norm{\tr^{m-1}_{\Gamma} u_1}_{\mathcal H^{m-1}(\Gamma)}.
    \end{align*}
Finally, for $k$ large the trace theorem gives that $\norm{\tr^{m-1}_{\Gamma} u_1}_{\mathcal H^{m-1}(\Gamma)} \lesssim \norm{u_1}_{H^k(M)} = O(h^{\frac{n+1}{4}-k})$ by (\ref{u_hsm_norm}).

As $Q = P_1-P_2$ vanishes outside $M$, the identity (\ref{pone_ptwo_v_part_data}) implies that 
\[
O(h^\infty) = (Q u_1, u_2)_{L^2(M)} = (Q v_1, v_2)_{L^2(X)} + O(h^{\infty}).
\]
Multiplying this identity by $h^{-\frac{n+1}{2}+m-1}$ and using \eqref{concentration}, we obtain (\ref{subp_part_data}) as in the proof of Theorem~\ref{thm_main1}.

We omit giving a proof of (\ref{lot_part_data}). The modifications needed in the proof of Theorem \ref{thm_main2} are analogous to those above.
\end{proof}

\begin{proof}[Proof of Theorem \ref{thm_first_order_equivalence}]
We first give some initial remarks related to a first order real principal type differential operator $P$ on $M$. Note that $P$ is of the form $P = \frac{1}{i} L + V$ where $L$ is a real vector field on $M$ and $V \in C^{\infty}(M)$. The null bicharacteristic curves for $P$ are of the form $\gamma(t) = (x(t), \xi(t))$ where $x(t)$ is an integral curve for $L$ in $M$. The real principal type condition implies that for any $x_0 \in M$, the maximal integral curve $x: [-\tau_-^L(x_0), \tau_+^L(x_0)] \to M$ of $L$ with $x(0) = x_0$ has finite length, i.e.\ $\tau_{\pm}^L(x_0) < \infty$. Moreover, the equation $Pu = 0$ in $M$ becomes an ODE along each fixed integral curve, and thus any solution $u$ satisfies 
\[
u(x(t)) = u(x(0)) \exp \left[ -i \int_0^t V(x(s)) \,ds \right], \qquad 0 \leq t \leq T,
\]
whenever $x: [0,T] \to M$ is a maximal integral curve of $L$.

Now suppose that $P_j = \frac{1}{i} L_j + V_j$ are as in the theorem. The assumption that $\alpha_{P_1} = \alpha_{P_2}$, together with the fact that the spatial projections of null bicharacteristics are integral curves, implies that 
\[
x_1(T_1) = x_2(T_2)
\]
whenever $x_j: [0,T_j] \to M$ are maximal integral curves of $L_j$ with $x_1(0) = x_2(0)$.

Let $f_1 \in C_{P_1}$, so that $f_1 = u_1|_{\p M}$ where $u_1 \in C^{\infty}(M)$ and $P_1 u_1 = 0$. For any maximal $L_2$-integral curve $x_2: [0,T_2] \to M$, define 
\[
u_2(x_2(t)) := f_1(x_2(0)) \exp \left[ -i \int_0^t V_2(x_2(s)) \,ds \right].
\]
This defines a function $u_2: M \to \mC$ that is smooth along integral curves of $L_2$ and satisfies $P_2 u_2 = 0$. We claim that 
\[
u_2|_{\p M} = u_1|_{\p M}.
\]
In fact, this holds at any $x \in \p M$ with $\tau^{L_2}_-(x) = 0$ by definition. On the other hand, if $x \in \p M$ and $\tau^{L_2}_+(x) = 0$, then $x = x_2(T_2)$ for some maximal $L_2$-integral curve $x_2: [0,T_2] \to M$, and 
\[
u_2(x) = u_2(x_2(T_2)) = f_1(x_2(0)) \exp \left[ -i \int_0^{T_2} V_2(x_2(s)) \,ds \right].
\]
By assumption, if $x_1: [0,T_1] \to M$ is the maximal $L_1$-integral curve with $x_1(0) = x_2(0)$, then also $x_1(T_1) = x_2(T_2)$ and $\exp \left[ -i \int_0^{T_1} V_1(x_1(s)) \,ds \right] = \exp \left[ -i \int_0^{T_2} V_2(x_2(s)) \,ds \right]$. It follows that 
\[
u_2(x) =  f_1(x_1(0)) \exp \left[ -i \int_0^{T_1} V_1(x_1(s)) \,ds \right] = u_1(x_1(T_1)) = u_1(x).
\]
Thus we have proved that $u_1(x) = u_2(x)$ at each $x \in \p M$ so that  $\tau^{L_2}_-(x) = 0$ or $\tau^{L_2}_+(x) = 0$, but this covers all points of $\p M$ by strict convexity.

For any $f_1 \in C_{P_1}$, we have produced a function $u_2$ in $M$ that is smooth along integral curves of $L_2$ and satisfies $P_2 u_2 = 0$ with $u_2|_{\p M} = f_1$. If we can show that $u_2 \in C^{\infty}(M)$, then we have 
\[
C_{P_1} \subset C_{P_2}.
\]
The converse inclusion follows by changing the roles of $P_1$ and $P_2$. Thus it remains to prove a regularity statement: if $M$ is compact with smooth boundary, if $L$ is a nontrapping real vector field in $M$ so that $M$ is strictly convex for $L$, and if $u: M \to \mC$ solves $\frac{1}{i} Lu + Vu = 0$ in $M$ with $u|_{\p M} \in C^{\infty}(\p M)$, then $u \in C^{\infty}(M)$. This is proved in \cite[Lemma 1.1]{PestovUhlmann} when $M$ is the unit sphere bundle, $L$ is the geodesic vector field and $V=0$, and in \cite[Lemma 5.1]{PSU2} for $V \neq 0$. However, inspecting the arguments in \cite{PestovUhlmann, PSU2} shows that the proofs also give the more general result stated above. The details will appear in \cite{PSU_2Dbook}.
\end{proof}

\bibliographystyle{alpha}

\end{document}